\theoremstyle{theorem}
\newtheorem{thm}{Theorem}
\newtheorem{prop}{Proposition}
\newtheorem{lemm}{Lemma}
\newtheorem{coro}{Corollary}
\theoremstyle{definition}
\newtheorem{defi}{Definition}
\newtheorem{exam}{Example}
\newcommand{\defarrow}{\stackrel{\mathrm{def.}}{\Leftrightarrow}}
\newcommand{\realn}{\mathbb{R}}
\newcommand{\cmplx}{\mathbb{C}}
\newcommand{\natn}{\mathbb{N}}
\newcommand{\wt}[1]{\widetilde{#1}}
\newcommand{\ovl}[1]{\overline{#1}}
\newcommand{\barPsi}{\overline{\Psi}}
\newcommand{\barL}{\overline{\Lambda}}
\newcommand{\barG}{\overline{\Gamma}}
\newcommand{\tPhi}{\widetilde{\Phi}}
\newcommand{\tG}{\widetilde{\Gamma}}
\newcommand{\tL}{\widetilde{\Lambda}}
\newcommand{\vph}{\varphi}
\newcommand{\s}{\mathrm{s}}
\newcommand{\vphx}{\varphi_x}
\newcommand{\vphxin}{(\varphi_x)_{x\in X}}
\newcommand{\A}{\mathcal{A}}
\newcommand{\E}{\mathcal{E}}
\newcommand{\F}{\mathcal{F}}
\newcommand{\Ga}{\Gamma}
\newcommand{\La}{\Lambda}
\newcommand{\DF}{\mathcal{D}(F)}
\newcommand{\Ch}{\mathbf{Ch}}
\newcommand{\Chw}{\mathbf{Ch}_{\mathrm{w}\ast}}
\newcommand{\evm}{\mathbf{EVM}}
\newcommand{\evmsub}{\mathbf{EVM}^{\mathrm{sub}}}
\newcommand{\evmsimL}{\mathbf{EVM}_{\mathfrak{sim}(\mathfrak{L})}}
\newcommand{\evmcomp}{\mathbf{EVM}_\mathrm{comp}}
\newcommand{\oA}{\mathsf{A}}
\newcommand{\oB}{\mathsf{B}}
\newcommand{\oM}{\mathsf{M}}
\newcommand{\oN}{\mathsf{N}}
\newcommand{\oK}{\mathsf{K}}
\newcommand{\GM}{\Gamma^\mathsf{M}}
\newcommand{\ME}{\mathfrak{M}(E)}
\newcommand{\MfinE}{\mathfrak{M}_{\mathrm{fin}}(E)}
\newcommand{\MmaxE}{\mathfrak{M}_{\mathrm{max}}(E)}
\newcommand{\MirrE}{\mathfrak{M}_{\mathrm{irr}}(E)}
\newcommand{\Meas}{\mathbf{Meas}(E)}
\newcommand{\meas}{\mathbf{Meas}}
\newcommand{\Ens}{\mathbf{Ens}}
\newcommand{\Pg}{P_{\mathrm{g}}}
\newcommand{\Runs}{R_{\mathrm{uns}}}
\newcommand{\Rinc}{R_{\mathrm{inc}}}
\newcommand{\conv}{\mathrm{conv}}
\newcommand{\cconv}{\overline{\mathrm{conv}}}
\newcommand{\lin}{\mathrm{lin}}
\newcommand{\cone}{\mathrm{cone}}
\newcommand{\ccone}{\overline{\mathrm{cone}}}
\newcommand{\ssimu}{\mathfrak{sim}_\mathrm{str}}
\newcommand{\simu}{\mathfrak{sim}}
\newcommand{\fL}{\mathfrak{L}}
\newcommand{\ssimuL}{\mathfrak{sim}_\mathrm{str}(\mathfrak{L})}
\newcommand{\simL}{\mathfrak{sim}(\mathfrak{L})}
\newcommand{\Ac}{A_\mathrm{c}}
\newcommand{\Ab}{A_\mathrm{b}}
\newcommand{\Aalg}{A_{\mathrm{alg}}}
\newcommand{\abs}[1]{\left| #1 \right|}
\newcommand{\cstar}{$C^\ast$}
\newcommand{\Wstar}{$W^\ast$}
\newcommand{\wstar}{$\mathrm{w}\ast$}
\newcommand{\sa}{\mathrm{sa}}
\newcommand{\unit}{\mathbbm{1}}
\newcommand{\cH}{\mathcal{H}}
\newcommand{\cK}{\mathcal{K}}
\newcommand{\LH}{\mathcal{L} (\mathcal{H})}
\newcommand{\TcH}{\mathcal{T}(\mathcal{H})}
\newcommand{\tr}{\mathrm{tr}}
\newcommand{\pp}{\preceq_{\mathrm{post}}}
\newcommand{\ppeq}{\sim_{\mathrm{post}}}
\newcommand{\pprime}{{\prime \prime}}
\newcommand{\aast}{{\ast \ast}}
\newcommand{\de}{\partial_\mathrm{e}}
\newcommand{\BX}{\mathcal{B}(X)}
\newcommand{\MX}{\mathbf{M} (X)}
\newcommand{\linf}{\ell^\infty}
\newcommand{\id}{\mathrm{id}}
\newcommand{\condi}{\mathbb{E}}
\newcommand{\iin}{{i \in I}}
\newcommand{\jin}{{j \in J}}
\newcommand{\xin}{{x \in X}}
\newcommand{\yin}{{y \in Y}}
\newcommand{\wto}{\xrightarrow{\mathrm{weakly}}}
\newcommand{\bwto}{\xrightarrow{\mathrm{BW}}}
\newcommand{\wsto}{\xrightarrow{\mathrm{weakly}\ast}}
\newcommand{\la}{\lambda}
\newcommand{\ola}{(1-\lambda)}
\newcommand{\nono}{\neq \varnothing}
\newcommand{\onon}{\varnothing \neq}
\newcommand{\loc}[1]{{\downarrow #1}}
\newcommand{\ikj}{{i(j)}}
\newcommand{\ChL}{\mathbf{Ch}^\mathfrak{L}}
\newcommand{\ChwsimL}{\mathbf{Ch}^{\mathfrak{sim}(\mathfrak{L})}}
\newcommand{\vecE}{\overrightarrow{\mathcal{E}}}
\newcommand{\interi}{\mathrm{int}}
\newcommand{\Mcomp}{\mathfrak{M}_\mathrm{comp}}
\newcommand{\Mincomp}{\mathfrak{M}_\mathrm{incomp}}
\newcommand{\McompXE}{\mathfrak{M}_\mathrm{comp}^X(E)}
\newcommand{\MincompXE}{\mathfrak{M}_\mathrm{incomp}^X(E)}
\newcommand{\Pgcomp}{P_\mathrm{g}^\mathrm{comp}}
\newcommand{\Yxin}{(Y_x)_{x\in X}}
\newcommand{\Yx}{{Y_x}}
\newcommand{\voM}{\overrightarrow{\mathsf{M}}}
\newcommand{\Gxin}{(\Gamma_x)_{x\in X}}
\newcommand{\Lxin}{(\Lambda_x)_{x\in X}}
\newcommand{\FX}{\mathbb{F}(X)}
\newcommand{\Fxin}{(F_x)_{x\in X}}
\newcommand{\Chcomp}{\mathbf{Ch}^\mathrm{comp}}
\newcommand{\ordA}{\preceq_A}
\newcommand{\ordB}{\preceq_B}
\newcommand{\Cprec}{\mathcal{C}_\preceq}
\newcommand{\bfE}{\mathbf{E}}
\newcommand{\bfF}{\mathbf{F}}
\newcommand{\phth}{\varphi_\theta}
\newcommand{\phthin}{(\varphi_\theta)_{\theta \in \Theta}}
\newcommand{\psth}{\psi_\theta}
\newcommand{\psthin}{(\psi_\theta)_{\theta \in \Theta}}
\newcommand{\thin}{{\theta \in \Theta}}
\newcommand{\Exper}{\mathbf{Exper}}
\newcommand{\qsucc}{q_{\mathrm{succ}}}
\newcommand{\Gtriv}{\Gamma_{\mathrm{triv}}}
\begin{document}

\title
{Compact convex structure of measurements and its applications to simulability, incompatibility, and convex resource theory of continuous-outcome measurements}

\author{Yui Kuramochi%
\thanks{Email: kuramochi@qi.t.u-tokyo.ac.jp}
\thanks{This work was supported by
Cross-Ministerial Strategic Innovation Promotion Program (SIP) 
(Council for Science, Technology and Innovation (CSTI)).
}
\\ \small \emph{Photon Science Center, Graduate School of Engineering,}
\\ \small \emph{The University of Tokyo, 7-3-1 Hongo, Bunkyo-ku, Tokyo 113-8656, Japan}
}
\date{}
\maketitle

\begin{abstract}
We introduce the post-processing preorder and equivalence relations 
for general measurements on a possibly infinite-dimensional
general probabilistic theory described by an order unit Banach space $E$
with a Banach predual.
We define the measurement space $\mathfrak{M}(E)$ as the set of 
post-processing equivalence classes of continuous measurements on $E .$
We define the weak topology on $\mathfrak{M} (E)$ as the weakest topology
in which the state discrimination probabilities for any finite-label ensembles 
are continuous and show that $\mathfrak{M}(E)$ equipped with the convex operation
corresponding to the probabilistic mixture of measurements
can be regarded as a compact convex set regularly embedded in
a locally convex Hausdorff space.
We also prove that the measurement space $\mathfrak{M}(E) $ is infinite-dimensional
except when the system is $1$-dimensional
and give a characterization of the post-processing monotone affine functional.
We apply these general results to the problems of simulability 
and incompatibility of measurements.
We show that the robustness measures of unsimulability and incompatibility
coincide with the optimal ratio of the state discrimination probability of measurement(s)
relative to that of simulable or compatible measurements, respectively.
The latter result for incompatible measurements generalizes 
the recent result for finite-dimensional quantum measurements.
Throughout the paper, the fact that any weakly$\ast$ continuous measurement can be arbitrarily approximated
in the weak topology 
by a post-processing increasing net of finite-outcome measurements
is systematically used to reduce the discussions to finite-outcome cases.
\end{abstract}

\vspace{2pc}
\noindent
\textit{Keywords}: general probabilistic theory, weak topology of measurements, simulability, incompatibility, robustness measure, convex resource theory, comparison of statistical experiments
\\
\textit{Mathematics Subject Classification (2010)}:
46A55 
\and 46B40 
\and 81P16 
\and 81P15

\section{Introduction} \label{sec:intro}
The measurement process is one of the indispensable
constituents of the quantum theory, or more generally any kind of operational physical theory,
since it connects the predictions by an abstract mathematical model
to the observed experimental events,
making the theory comparable with the real world. 
In spite of such a general importance, 
little is known for 
the property of the \textit{totality} of measurements of a given system.
One of the reason for this might be its mathematical difficulty,
especially that the class of measurements is a proper class,
i.e.\ a class larger than any set,
because we have no restrictions to the outcome space of a measurement.

A related important problem of the measurement we investigate in this paper is 
how we should consider continuous-outcome  measurements.
In quantum theory and technology,
continuous-outcome measurements, like the homodyne detection of a photon field,
play fundamental roles,
for example in the continuous-variable quantum key distribution~\cite{PhysRevLett.102.180504}.
We cannot however naively think that the continuous  measurement
described by a positive-operator valued measure (POVM)
is exactly realized in a real experiment
because it is impossible for an experimental device to exactly record a continuous variable, e.g.\ a real number, 
which requires infinite bits of information.
One way to reconcile such a contradiction is to think
that the theoretical description of a continuous-outcome
measurement approximates in some sense the real measurement process
which has a finite outcome space.
If we take this standpoint, 
then we have to answer in what sense this \lq\lq{}approximation\rq\rq{} is.

Another related mathematical problem is that the operation of the probabilistic mixture of two (or generally more than two) measurements that does not post-process the measurement outcome is not closed in a certain set, but is defined on the \textit{class} of measurements.
For instance, two general measurements on a quantum system have different outcome spaces $X$ and $Y$ and  the outcome space of the probabilistic mixture of the two measurements is the disjoint union of $X$ and $Y .$
Thus the outcome space becomes larger if we take probabilistic mixture and this operation cannot be closed within some set of measurements.
Presumably because of this kind of difficulty, the probabilistic mixture operation has not been sufficiently studied, 
while in some works it is natural to consider this operation.
For example, as we will see in the main part of this paper, the class of measurements simulable~\cite{doi:10.1063/1.4994303,PhysRevLett.119.190501,PhysRevA.97.062102} by a certain set of measurements and the class of pairs of compatible (i.e.\ jointly measurable) measurements~\cite{1751-8121-49-12-123001,PhysRevA.98.012133,kuramochi2018incomp,PhysRevLett.122.130402} are closed under this operation.
Moreover the state discrimination probability recently considered in the context of convex resource theory of measurements (POVMs)~\cite{PhysRevLett.122.130403,PhysRevLett.122.130404,Oszmaniec2019operational} is affine with respect to this operation.

The purpose of this paper is to study the \textit{measurement space}
$\ME,$ which is the \textit{set} of 
\textit{post-processing equivalence classes}
of measurements on a given (possibly infinite-dimensional)
order unit Banach space $E$ with a predual.
Such an ordered Banach space $E$ corresponds to 
the set of observables on the state space of 
a general probabilistic theory (GPT)~\cite{Gudder1973,hartkamper1974foundations,1751-8121-47-32-323001,PhysRevA.97.062102,PhysRevA.94.042108}.
We also apply this general formulation of measurements to the problems of the simulability and (in)compatibility of measurements.

This paper is organized as follows.
In Section~\ref{sec:prel}, we give preliminary results for order unit Banach spaces 
(GPTs).
We introduce two kinds of formulations of GPT.
The first formulation is based on compact state space
and considers the continuous affine functionals as the observables,
while the second one only requires the norm completeness of the state space
and considers the bounded affine functionals as the observables.
In this paper the former one will appear as the measurement space in the main part,
while we consider the state space of the second type as the physical system. 
This is because ordinary formulation of the quantum theory in infinite dimensions
is described by the second one, but not by the first one
since the set of density operators is not compact in the trace-norm topology in infinite dimensions.

In section~\ref{sec:meas},
we give some basic facts on measurement, which is in this paper 
defined as an abstract GPT-to-classical channel,
and post-processing relations among measurements.
The results in Section~\ref{sec:meas} is essentially the same as those in restricted situations,
for example when the system is quantum or that described by a von Neumann algebra~\cite{kuramochi2018incomp}.

In Section~\ref{sec:ccs}, based on the Blackwell-Sherman-Stein (BSS) theorem 
for measurements (Theorem~\ref{thm:bss}), 
we introduce the measurement space and the weak topology on it.
We show that the measurement space equipped with the weak topology and 
convex combination corresponding to the probabilistic mixture
can be regarded as a compact convex set in a locally convex Hausdorff space
(Theorems \ref{thm:compact} and \ref{thm:cconvME}).
We also prove that
any \wstar-measurement can be approximated by finite-outcome ones
(Theorem~\ref{thm:finapp})
and that the measurement space
$\ME$ is an infinite-dimensional convex set except when $E$ is $1$-dimensional
(Theorem~\ref{thm:infinite}).
The weak topology is known in the area of theory of statistical experiments (statistical decision theory)~\cite{lecam1986asymptotic,torgersen1991comparison},
a branch of mathematical statistics,
and our formalism contain this theory as a special case.
How the theory of statistical experiments is reduced to that of measurements is addressed in Appendix~\ref{app:se}.

In Section~\ref{sec:mono}, we consider more general class of preorders on a compact convex set that is characterized by a set of continuous affine functionals.
By the BSS theorem, the post-processing order on the measurement space, the main subject of this paper, is an example of such an order.  
We give characterizations of post-processing monotone affine functionals (Theorem~\ref{thm:monoGen} and Corollary~\ref{coro:monoME}).
Moreover, by using the condition when the order is a partial or total order (Proposition~\ref{prop:orcon}) and the infinite-dimensionality of the measurement space, we prove that the post-processing order on the measurement space is not total (Corollary~\ref{coro:nontotal}). 
Finally in Theorem~\ref{thm:vnm} we will see that the class of preorders in consideration is characterized by the independence and continuity axioms, which is a result analogous to the von Neumann-Morgenstern utility theorem~\cite{vnm1953,DUBRA2004118}.

The following Sections~\ref{sec:sim}, \ref{sec:irr}, and \ref{sec:incomp}
are devoted to the applications of the general theory of the compact convex structure
to the simulability and incompatibility of measurements.
In Section~\ref{sec:sim}, we introduce the notion of simulability based on 
the weak topology, which is a weaker notion than the previously known
simulability~\cite{doi:10.1063/1.4994303,PhysRevLett.119.190501,PhysRevA.97.062102} which we call in this paper the strong simulability.
We show that the simulability is characterized by the outperformance 
on the state discrimination probability
(Theorem~\ref{thm:simdisc}), which generalizes the finite-dimensional result
\cite{PhysRevLett.122.130403}.
As an application of Theorem~\ref{thm:simdisc}, we show a formula that characterizes the maximal success probability of simulation in terms of the state discrimination probabilities (Theorem~\ref{thm:qsucc}).
We define the robustness of unsimulability of a measurement as 
the minimal noise needed to make the measurement simulable
and prove in Theorem~\ref{thm:RoU}
that the robustness measure is the optimal ratio of the state discrimination probability of the measurement relative to that of simulable ones.

In section~\ref{sec:irr}, we consider 
related classes of extremal, maximal, and simulation irreducible measurements.
Based on the characterization of the extremality (Theorem~\ref{thm:extremal}) and simulation irreducibility (Proposition~\ref{prop:irrexmax}),
we show that any measurements is simulable 
by the simulation irreducible measurements 
(Theorem~\ref{thm:irrsim}), 
which is known in the finite-dimensional quantum systems~\cite{Haapasalo2012} and finite-dimensional GPTs~\cite{PhysRevA.97.062102}.

In Section~\ref{sec:incomp}, we consider incompatibility of measurements
and prove 
that any incompatible measurements outperform the compatible ones 
in the state discrimination task
(Theorem~\ref{thm:incompdisc}) generalizing the result for finite-dimensional quantum systems \cite{PhysRevLett.122.130402}.
We also introduce the quantity called the robustness of incompatibility for a family of measurements
as the minimal noise needed to make the measurements compatible and show 
that this quantity coincides with the optimal ratio of state discrimination probabilities 
with pre- and post-measurement information
(Theorem~\ref{thm:RoI}).
The results in Section~\ref{sec:incomp} generalize the finite-dimensional results in \cite{PhysRevLett.122.130402,PhysRevLett.122.130403,PhysRevLett.122.130404}.

Section~\ref{sec:concl} concludes the paper.

\subsection{Summary of the results in the quantum case}
Before going into the main part, for the reader not acquainted with the GPT, we describe our main results, especially Theorems~\ref{thm:RoU}, in the case of quantum measurements.

Let us fix a separable complex Hilbert space $\cH$ corresponding to the system and denote by $\LH$ and $\mathcal{T}(\cH)$ the sets of bounded and trace-class operators on $\cH ,$ respectively.
A POVM~\cite{davies1976quantum,holevo2011probabilistic,busch2016quantum} is a mapping $\oM \colon \Sigma \to \LH$ 
such that $\Sigma$ is a $\sigma$-algebra on a some set $\Omega ,$
$\oM(\Omega ) = \unit_\cH$ (the identity operator on $\cH$),
$\oM(A) \geq 0$ $(A \in \Sigma),$
and $\oM(\cup_n A_n) = \sum_{n \in \natn} \oM(A_n)$ (in the weak operator topology)
for any disjoint sequence $(A_n)_{n \in \natn} $ in $\Sigma .$
For each trace-class operator $T \in \TcH$ we define a complex measure 
$\mu_T^\oM $ on $(\Omega , \Sigma)$ by $\mu^\oM_T (E) : = \tr (T \oM(E)) .$
If $\rho$ is a density operator (i.e.\ a positive operator with unit trace), $\mu^\oM_\rho$ is the outcome probability distribution of the measurement $\oM$ when the state of the system is prepared to be $\rho .$
In this subsection we assume that all the outcome $\sigma$-algebras of POVMs are standard Borel spaces~\cite{srivastava1998course}.
Since all the results are invariant under the following notion of classical post-processing equivalence \cite{Martens1990,Dorofeev1997349,HEINONEN200577,jencova2008}, we does not lose generality by this simplification~\cite{10.1063/1.4934235}.

Let $\oM_j \colon \Sigma_j \to \LH$ be a POVM with a outcome space $(\Omega_j , \Sigma_j)$ $(j=1,2) .$
$\oM_1$ is said to be \textit{post-processing} of $\oM_2,$ 
written as $\oM_1 \pp \oM_2 ,$
if there exists a mapping $p(\cdot| \cdot) \colon \Sigma_1 \times \Omega_2 \to [0,1]$ 
such that 
\begin{enumerate}[(i)]
\item \label{i:MK1}
$p(\cdot | \omega_2) \colon \Sigma_1 \ni A \mapsto p(A|\omega_2)$
is a probability measure for all $\omega_2 \in \Omega_2 ;$
\item \label{i:MK2}
$p(A|\cdot) \colon \Omega_2 \ni \omega_2 \mapsto  p(A|\omega_2) $
is $\Sigma_2$-measurable for all $A \in \Sigma_1 ; $
\item \label{i:MK3}
$\oM_1 (A) = \int_{\Omega_2} p(A|\omega_2 ) d \oM_2 (\omega_2)$
for all $A \in \Sigma_1 .$
\end{enumerate}
A mapping $p(\cdot | \cdot)$ satisfying the above conditions \eqref{i:MK1} and \eqref{i:MK2} is called a (regular) \textit{Markov kernel}.
The relation $\oM_1 \pp \oM_2$ says that the measurement $\oM_1$ is realized by first performing $\oM_2$ and then post-processing operation corresponding to a Markov kernel.
In this sense $\oM_1$ is less informative than $\oM_2 .$
$\oM_1$ and $\oM_2$ are said to be post-processing equivalent, written as $\oM_1 \ppeq \oM_2 , $
if $\oM_1 \pp \oM_2$ and $\oM_2 \pp \oM_1$ hold.
Post-processing equivalent POVMs bring us essentially the same information on the system.
It can be shown that the class of post-processing equivalence classes of POVMs on $\cH$ forms a set, which we write as $\mathfrak{M}(\LH) $
(Proposition~\ref{prop:small}).
For each POVM $\oM,$ the equivalence class in $\mathfrak{M}(\LH)$  
to which $\oM$ belongs is denoted as $[\oM] .$
Each element $[\oM]$ of $\mathfrak{M}(\LH)$ is called a measurement.
We also define the post-processing partial order on $\mathfrak{M}(\LH)$ by 
\[
	[\oM_1 ] \pp [\oM_2] :\defarrow \oM_1 \pp \oM_2 .
\]

In this paper, almost all the important concepts and results are related to or based on the following quantity of the \textit{state discrimination probability} (or \textit{gain functional}) which is defined as follows.
For a finite set $X,$ an indexed family $\E = (\rho_x)_{x \in X}$ of positive trace-class operators on $\cH$ 
is called an ensemble if the normalization condition $\sum_\xin \tr (\rho_x) =1 $ holds.
For a finite set $X$ and a measurable space $(\Omega, \Sigma) ,$ a \textit{decision rule}
is a mapping $p (\cdot | \cdot) \colon X\times \Omega \to [0,1]$ such that 
\begin{enumerate}[(i)]
\item
$p(x|\cdot) \colon \Omega \to [0,1]$ is $\Sigma$-measurable for all $\xin ;$
\item
$\sum_\xin p(x|\omega)=1$ for all $\omega \in \Omega .$
\end{enumerate}
Let $\E = (\rho_x)_\xin$ be an ensemble and let $\oM \colon \Sigma \to \LH$ be POVM with the outcome space $(\Omega ,\Sigma) .$
We define the state discrimination probability by
\begin{equation}
	\Pg (\E ; \oM)
	:= \sup_{p : \text{ decision rule}}
	\sum_\xin \int_\Omega p(x|\omega) d \mu^\oM_{\rho_x} (\omega) .
	\label{eq:Pgintro}
\end{equation}
The operational meaning of \eqref{eq:Pgintro} is as follows.
Consider that Alice prepares the state of the system as $\tr(\rho_x)^{-1} \rho_x$
with probability $\tr(\rho_x) ,$ Bob performs the measurement $\oM$ on the system,
and then, based on the measurement outcome $\omega \in \Omega,$ Bob guesses which label $x\in X$ is prepared by Alice.
The quantity \eqref{eq:Pgintro} is then the optimal probability of the event that Bob can correctly guess the label $x.$ 
Each decision rule $p$ corresponds to Bob\rq{}s guessing strategy.

The state-discrimination functionals characterize the post-processing relation in the following sense:
for any POVMs $\oM_1$ and $\oM_2 ,$ the post-processing relation $\oM_1 \pp \oM_2$ holds if and only if $\Pg (\E ; \oM_1) \leq \Pg (\E ; \oM_2)$ for any ensemble $\E $ (the Blackwell-Sherman-Stein theorem for POVMs (Theorem~\ref{thm:bss})).
This implies that $\Pg(\E ; [\oM]) := \Pg (\E ; \oM)$ is a well-defined function on the measurement space $\mathfrak{M}(\LH) . $
We define the \textit{weak topology} on $\mathfrak{M}(\LH)$ as the weakest topology in which $\mathfrak{M}(\LH) \ni [\oM] \mapsto \Pg(\E ; [\oM])$ is continuous for all ensemble $\E .$
The weak topology is a compact Hausdorff topology.
We also define the probabilistic mixture (or convex combination) operation on $\mathfrak{M}(\LH)$ by 
\[
	[0,1] \times \mathfrak{M}(\LH) \times \mathfrak{M}(\LH) 
	\ni (\la , [\oM_1] , [\oM_2])
	\mapsto 
	[\la \oM_1 \oplus \ola \oM_2] 
	\in \mathfrak{M}(\LH) ,
\]
where each $\oM_j$ has the outcome space $(\Omega_j , \Sigma_j)$
and $\la \oM_1 \oplus \ola \oM_2$ is the POVM with the outcome space 
\begin{gather*}
	(\coprod_{j=1,2} \Omega_j , \Sigma_1 \oplus \Sigma_2) , \\
	\Sigma_1 \oplus \Sigma_2 :=
	\set{ A_1 \sqcup A_2 | A_1 \in \Sigma_1 , \, A_2 \in \Sigma_2}
\end{gather*}
defined by
\[
	(\la \oM_1 \oplus \ola \oM_2)(A_1 \sqcup A_2  )
	:=
	\la \oM_1 (A_1) + \ola \oM_2 (A_2) .
\]
Here $\sqcup$ denotes the disjoint union of sets.
The POVM $\la \oM_1 \oplus \ola \oM_2$ corresponds to the measurement realized by performing $\oM_1$ with probability $\la$ and $\oM_2$ with probability $1-\la .$
Under this convex operation and the weak topology, the measurement space $\mathfrak{M}(\LH)$ can be regarded as a compact convex set on a locally Hausdorff $V$ so that by this identification we may write as $[\la \oM_1 \oplus \ola \oM_2] = \la [\oM_1] + \ola [\oM_2] .$

The first main result (Theorem~\ref{thm:RoU}) relates the state discrimination functional and the measurement simulability, which is defined as follows.
For a set $\fL \subset \mathfrak{M}(\LH),$ 
a POVM $\oM$ on $\cH$ (or its equivalence class $[\oM]$) is \textit{simulable} by $\fL$ if there exists a measurement $[\oN] \in \cconv(\fL ) $ such that $[\oM] \pp [\oN] ,$ where $\cconv (\cdot)$ denotes the closed convex hull with respect to the weak topology.
This condition says that $\oM$ is realized by classical pre and post-processings 
of the measurement belonging to $\fL .$

We also define the \textit{robustness of unsimulability} as follows:
for a POVM $\oM$ with the outcome space $(\Omega , \Sigma)$ and a set $\fL \subset \mathfrak{M} (\LH)$ of measurements, the robustness of unsimulability is defined by
\begin{equation}
\begin{aligned}
\Runs (\oM ; \fL) := \inf_{r , \oN} \quad &  r
\\
\textrm{subject to} \quad 
& r \in [0,\infty) \\ 
&  \text{$\oN$ is a POVM with the outcome space $(\Omega, \Sigma)$}   \\
  &\text{the POVM }\frac{\oM + r \oN}{1+r} \text{ is simulable by $\fL ,$}
\end{aligned}
\label{eq:RoUpovm}
\end{equation}
where $\Runs (\oM ; \fL) : = \infty$ when the feasible region of \eqref{eq:RoUpovm}
is empty. 
This quantifies how much noise $\oN$ should be mixed to $\oM$ to make $\oM$ simulable by $\fL .$
Note that $\Runs (\oM ; \fL) = 0$ if and only if $\oM$ is simulable by $\fL .$

Theorem~\ref{thm:RoU} for POVMs states that the robustness measure~\eqref{eq:RoUpovm} can be written as 
\begin{equation}
	1+ \Runs (\oM ; \fL)
	= 
	\sup_{\E \colon \mathrm{ensemble}}  \frac{\Pg (\E ; \oM)}{\Pg (\E ; \fL)} ,
	\notag
\end{equation}
where 
\begin{equation}
	\Pg(\E ; \fL)
	:=
	\sup_{[\oN] \in \fL} \Pg (\E ;\oN)
	\notag
\end{equation}
is the optimal state discrimination probability of the ensemble $\E$ when we have ability to perform the measurements belonging to $\fL .$

We also have a similar result for the robustness of incompatibility in Theorem~\ref{thm:RoI}, which generalizes the finite-dimensional result~\cite{PhysRevLett.122.130403,PhysRevLett.122.130404}.
Since the physical significance of this result is sufficiently described in \cite{PhysRevLett.122.130403}, we do not repeat it here.
We still remark that our Theorem~\ref{thm:RoI} generalizes the previous works~\cite{PhysRevLett.122.130403,PhysRevLett.122.130404} in the points that the outcome spaces of measurements can be continuous and that the number of incompatible measurements in consideration can be infinite.

In Theorems~\ref{thm:RoU} and \ref{thm:RoI} and the BSS theorem, it is sufficient to consider finite-outcome ensembles; infinite or continuous ensembles are not necessary.
This simplicity comes from the fact that any POVM can be approximated by a post-processing increasing net of finite-outcome POVMs (Theorem~\ref{thm:finapp}).
In this sense our theory places the known result for the robustness of incompatibility in the more general theory of measurement spaces and the weak topology on it.

\section{Preliminaries} \label{sec:prel}
In this preliminary section, we review basic properties of order unit 
Banach spaces (GPTs), (compact) convex structures, and classical spaces 
as well as fix the notation.
For general references on ordered topological linear spaces, we refer to \cite{jameson1970ordered,alfsen1971compact,%
hartkamper1974foundations,schaefer1999topological}.
For a more complete review of the GPT and ordered vector spaces, 
see \cite{ddd.uab.cat:187745} (Chapter~1).

\subsection{Order unit Banach spaces (with preduals)} \label{subsec:ouB}
In this subsection we introduce the notions of the order unit Banach space
and that with a predual.
In this paper the former appears as the space of continuous affine functionals on the 
measurement space,
while the latter as the the space of observables on a physical state space. 

Throughout the paper linear spaces are assumed to be over the reals $\realn$
unless otherwise stated.
For a normed linear space $E,$
its Banach dual and double dual are denoted as $E^\ast$ and $E^{\ast \ast} ,$
respectively.
The scalar $\psi (x)$ $(x \in E , \, \psi \in E^\ast)$
is occasionally written in the bilinear form as 
$\braket{\psi ,x}$
or 
$\braket{x ,\psi} .$
For a subset $A$ of a normed linear space $E$ and $r \in [0 , \infty ) ,$ we write as
$(A)_r := \set{ x \in A | \| x \| \leq r} .$

Let $(E, F)$ be a pair of dual pair of linear spaces (e.g.\ a Banach space $E$
and its dual $E^\ast$)
separated with the bilinear form $\braket{\cdot , \cdot} \colon E \times F \to \realn .$
For a subset $A \subset E ,$ the polar $A^\circ$ and the bipolar $A^{\circ \circ}$ of $A$ in the pair $(E,F)$ 
are defined by
\[
	A^\circ 
	:= 
	\set{ y \in F | \braket{x,y} \geq -1 \, (\forall x \in A)}
\]
and
\[
	A^{\circ \circ} 
	:= 
	\set{x \in E | \braket{x,y} \geq -1 \, (\forall y \in A^\circ)},
\]
respectively.
According to the bipolar theorem, $A^{\circ \circ}$
is the $\sigma(E,F)$-closed convex hull of $A \cup \{0\} .$

A subset $K$ of a linear space $E$ is called a cone
if it satisfies
\begin{enumerate}[(i)]
\item \label{i:cone1}
$K+K \subset K ,$ 
\item  \label{i:cone2}
$\lambda K \subset K$ $(\forall \lambda \in [0,\infty ) ).$
\end{enumerate}
A subset $K \subset E$ is called a positive cone (or proper cone) 
if $K$ is a cone satisfying
\begin{enumerate}
\item[(iii)] \label{i:cone3}
$K \cap (-K) = \{ 0 \} .$
\end{enumerate}
A positive cone $K$ on $E$ induces a partial order $\leq$ by
$x \leq y$ $: \defarrow$ $y - x \in K$
$(x , y \in E) .$
An order $\leq$ on a linear space induced by a positive cone is called a linear order. 
Conversely any partial order $\leq$ on $E$
induces the positive cone $K = \set{x \in E | x \geq 0 } $ 
and the order induced by $K$ coincides with $\leq$
if
\begin{enumerate}[(a)]
\item
$x \leq y$ $\implies$ $x+z \leq y+z $ $\quad (x,y,z \in E) ,$  
\item
$x \leq y$ $\implies$ $\lambda x \leq \lambda y $ 
$\quad (x,y \in E ; \, \la \in [0 , \infty) ). $
\end{enumerate}
A linear space $E$ equipped with such a positive cone or a linear order is called
an ordered linear space. 
The positive cone of an ordered linear space $E$ is denoted by $E_+$
and 
each element of $E_+$ is called positive.

An ordered linear space $E$ is called Archimedean if 
for any $x \in E ,$
if there exists $y \in E$ such that $nx \leq y$ for all positive integer $n ,$
then $ x \leq 0 .$
A positive element $u \in E_+$ is called an order unit if for any $x \in E$
there exists $\lambda \in [0,\infty)$ such that
$ - \lambda u \leq x \leq \lambda u .$
For an Archimedean ordered linear space $E$ with an order unit $u ,$
we define the order unit norm on $E$ by
$
	\| x \|
	:= 
	\inf
	\set{ \lambda \in [ 0, \infty ) | - \lambda u \leq x \leq \lambda u }
$
$(x \in E) . $
This norm satisfies $ - \| x \| u \leq x \leq \| x \| u$
for any $x \in E.$
We call $(E, u_E)$ an order unit Banach space if $E$ is an Archimedean ordered linear 
space with the order unit $u_E$ and the order unit norm induced by $u_E$ is complete.
Throughout this paper the order unit of an order unit Banach space $E$ is always
written as $u_E .$

Let $E$ be an ordered linear space.
A convex subset $B \subset E_+ $ is called a base of the positive cone $E_+$
if for each positive element $x \in E_+$ there exists a unique 
$\lambda \in [0,\infty )$ and $b \in B$ such that $x = \lambda b .$
For $x \in E_+ - E_+ = \lin (E_+)$
(here $\lin (\cdot)$ denotes the linear span),
we define the base norm 
$\| x \|_B := \inf \set{\alpha + \beta | x = \alpha b_1 - \beta b_2 ;
\, b_1 , b_2 \in B ; \alpha , \beta \in [0,\infty )} .$
The base norm $\| \cdot \|_B$ on $\lin (E_+)$ coincides with
Minkowski functional of $\conv (B \cup (-B)) ,$ 
where $\conv (\cdot)$ denotes the convex hull.
An ordered linear space $E$ is called base-normed if $E_+$ is generating,
i.e.\ $E = \lin (E_+) ,$
and $E_+$ has a base $B .$
If the base $B$ of a base-normed space $E$ induces a complete norm,
then $E$ is called a base-normed Banach space.

For a compact convex set $K$ on a locally convex Hausdorff space $V ,$
we denote by $\Ac (K)$ the set of continuous real affine functionals
on $K .$
Then $(\Ac (K) , 1_K)$ is an order unit Banach space and the order unit norm 
coincides with the supremum norm
$\| f \| = \sup_{x \in K} | f(x)| ,$
where $1_S (\cdot) \equiv 1$ denotes the unit constant function on a set $S .$

Conversely, any order unit Banach space $(E,u_E)$ can be regarded as 
$(\Ac (K) , 1_K)$ for some compact convex set $K$ in the following way.
The dual space $E^\ast$ is an ordered linear space with the dual positive cone
$E_+^\ast := \set{\psi \in E^\ast | \braket{\psi , x} \geq 0 \, (\forall x \in E_+)}$
and a positive linear functional $ \psi \in E_+^\ast$ is called a state (on $E$) if 
$\| \psi \| =1,$
or equivalently $\braket{\psi , u_E} =1 .$
The set of states on $E$ is written as $S(E),$ 
which is a weakly$\ast$ compact convex subset of $E^\ast$
and $(E, u_E)$ is isomorphic to $(\Ac (S(E))  ,   1_{S(E)})$
by the following correspondence:
\begin{gather*}
	E \ni x \mapsto f_x \in \Ac (S(E)) ,
	\\
	f_x (\psi) := \braket{ \psi , x}
	\quad
	(x \in E , \psi \in S(E)) 
\end{gather*}
(\cite{alfsen1971compact}, Theorem~II.1.8).
The dual space $E^\ast$ is a base-normed Banach space with the base $S(E)$
and the base norm on $E^\ast$ coincides with the dual norm
$\| \psi \| = \sup_{x \in (E)_1 } |\braket{\psi , x}| $
(\cite{alfsen1971compact}, Theorem~II.1.15).

A similar base norm property also holds for a Banach predual of 
an order unit Banach space.
A Banach space $E$ is said to have a predual $E_\ast$ if $E$ is isometrically
isomorphic to the Banach dual $(E_\ast )^\ast$ of the normed linear space $E_\ast .$
We can and do take a predual $E_\ast$ as a norm closed linear subspace of 
$E^\ast $ and such $E_\ast$ is called a Banach predual of $E.$
Let $(E,u_E)$ be an order unit Banach space with a Banach predual $E_\ast .$
Then $E_\ast$ is ordered by the predual positive cone
$E_{\ast +} := \set{\psi \in E_\ast | \braket{\psi , x} \geq 0 \, (\forall x \in E_+)} .$
It is known that $E_+$ is weakly$\ast$ closed \cite{Ellis1964duality,Olubummo1999}
and hence by the bipolar theorem $E_+$ is the dual cone of $E_{\ast +} .$
The positive cone $E_{\ast +}$ generates $E_\ast $ and 
has the base $S_\ast (E) := S(E) \cap E_\ast ,$
which is the set of weakly$\ast$ continuous states on $E .$
Furthermore the base norm on $E_\ast$ induced by $S_\ast (E)$ coincides with 
the original norm \cite{Ellis1964duality,Olubummo1999}, 
i.e.\ for $\psi \in E_\ast $
\[
	\sup_{x \in (E)_1 } \abs{  \braket{\psi , x}  } =: \| \psi \|
	=
	\inf
	\set{\alpha + \beta | 
	\psi = \alpha \phi_1 -\beta \phi_2 ; 
	\, \alpha , \beta \in [0,\infty ) ; \, \phi_1 , \phi_2 \in S_\ast (E)} .
\]

An order unit Banach space with a Banach predual
can be represented as the 
set of bounded affine functionals on a convex set as follows.
We denote by $\Ab (C)$ by the set of bounded real affine functionals 
on a convex set $C.$
Then $(\Ab (C) , 1_C)$ is an order unit Banach space.
Furthermore the pointwise convergence topology on $\Ab(C)$ is defined,
which is the weakest topology such that $\Ab(C) \in f \mapsto f(x) \in \realn$
is continuous for any $x\in C.$
Now let $(E,u_E)$ be an order unit Banach space with a Banach predual $E_\ast .$
Then $E$ and $\Ab (S_\ast (E))$ are isomorphic by the correspondence
\begin{gather*}
	E \ni x \mapsto g_x \in \Ab (S_\ast (E)) ,\\
	g_x (\psi) := \braket{\psi , x } \quad (x \in E , \psi \in S_\ast (E)) .
\end{gather*}
Moreover, by this identification the weak$\ast$ topology $\sigma(E , E_\ast)$ on $E$ and the pointwise convergence topology on $\Ab (S_\ast (E))$ coincide.

\begin{exam}[Operator algebraic and quantum theories \cite{takesakivol1}]\label{ex:oa}
Let $\A$ be a \cstar-algebra with a unit $\unit_\A $
and let $\A_\sa$ denote the set of self-adjoint elements of $\A .$
By taking the ordinary positive cone 
$\A_+ := \set{a^\ast a | a \in \A} \subset \A_\sa,$
$(\A_\sa , \unit_\A)$ is an order unit Banach space
and the order unit norm on $\A_\sa$ coincides with the \cstar-norm 
restricted to $\A_\sa .$
If we further assume that $\A$ is a \Wstar-algebra,
which is a \cstar-algebra with a (unique) complex Banach predual $\A_\ast ,$
then the unique Banach predual of $\A_\sa$ is given by the self-adjoint part 
$\A_{\ast \sa}$ of $\A_\ast .$

An important example of this is the ordinary quantum theory.
Let $\cH$ be a complex Hilbert space.
Then the set $\LH$ of bounded linear operators on $\cH$ is a special
kind of \Wstar-algebra and the predual $\LH_\ast$ can be identified with 
the set $\mathcal{T} (\cH )$ of trace-class operators on $\cH$ by the bilinear form
$\braket{T , a} := \mathrm{tr} (Ta)$
$(T \in \mathcal{T} (\cH) , a \in \LH) ,$
where $\mathrm{tr} (\cdot)$ denotes the trace.
By this identification $S_\ast (\LH_\sa)$ corresponds to the set of density operators on 
$\cH .$
Note that if $\cH$ is infinite-dimensional, 
the Banach dual $\LH^\ast$ and the state space $S (\LH_\sa)$ do not coincide with
$\LH_\ast$ and $S_\ast (\LH_\sa) ,$ respectively. \qed
\end{exam}

Let $(E_i , u_{E_i})$ $(i \in I)$ be a (possibly infinite) family of order unit Banach 
spaces.
Then we can define another order unit Banach space $(\wt{E} , u_{\wt{E}} ) ,$ 
called the direct sum space, 
by
\begin{gather*}
	\wt{E}
	:=
	\set{(x_i)_{i \in I} \in \prod_{i \in I}E_i | 
	\sup_{i \in I} \| x_i \| < \infty
	} ,
	\\
	\wt{E}_+
	:=
	\set{
	(x_i)_{i \in I} \in \wt{E} 
	|
	x_i \geq 0 \, (\forall i \in I)
	}
	, \\
	u_{\wt{E}}
	:=
	(u_{E_i})_{i \in I} .
\end{gather*}
The order unit norm on $\wt{E}$ is then given by 
$\| (x_i)_{i \in I} \| = \sup_{i \in I} \| x_i \|$
$((x_i)_{i \in I} \in \wt{E} ) .$
The Banach space $\wt{E}$ is occasionally written as $\bigoplus_{i \in I} E_i .$

Suppose further that each $E_i$ has a Banach predual $E_{i \ast} .$
Then $\wt{E}$ has the predual
$\wt{E}_\ast := \set{(\psi_i)_{i \in I} \in \prod_{i \in I} E_{i \ast}  |
\sum_{i \in I} \| \psi_i \| < \infty
}$
with the bilinear form
\begin{equation}
	\braket{(\psi_i)_{i \in I} ,  (x_i)_{i \in I}}
	:=\sum_{i \in I}
	\braket{\psi_i , x_i}
	\label{eq:pair}
\end{equation}
$((\psi_i)_{i \in I} \in \wt{E}_\ast , (x_i)_{i \in I} \in \wt{E} ) .$
The positive cone $\wt{E}_{\ast+}$ and the base norm (dual norm) of $\wt{E}_\ast$
are respectively given by
\begin{gather}
	\wt{E}_{\ast +}
	=
	\set{(\psi_i)_{i \in I} \in \wt{E}_\ast | 
	\psi_i \in E_{i \ast +} \, (\forall i \in I)} ,
	\notag
	\\
	\| (\psi_i)_{i \in I} \|
	=
	\sum_{i \in I} \| \psi_i \| .
	\notag
\end{gather}
If $I$ is a finite set, the dual space $\wt{E}^\ast$ 
can be identified with $\prod_{i \in I } E_i^\ast$
with the positive cone $\prod_{i \in I } E_{i+}^\ast$
by the bilinear form \eqref{eq:pair}.
Note that this identification of $\wt{E}^\ast$ is not true when $I$ is infinite.

For a finite number of order unit Banach spaces
$(E_1 , u_{E_1}) ,  (E_2 , u_{E_2}) \dots , (E_n , u_{E_n}) ,$
the direct sum space $\wt{E}$ and each element $(x_i)_{i=1}^n \in \wt{E}$
are occasionally written as
$E_1 \oplus E_2 \oplus \dots \oplus E_n$
and 
$x_1 \oplus x_2 \oplus \dots \oplus x_n ,$
respectively.

\subsection{Abstract convex structures} \label{subsec:aconv}
The order unit Banach space and that with a predual 
introduced in Section~\ref{subsec:ouB}
can be regarded as the spaces of observables of physical systems.
Here we conversely derive these notions from 
abstract state spaces based on the line of 
Gudder \cite{Gudder1973,gudder1979stochastic}.

\begin{defi}[Convex structures] \label{def:convex}
\begin{enumerate}[1.]
\item
A set $S$ endowed with a map
\[
	[0,1] \times S \times S \ni (\lambda , s,t)
	\mapsto 
	\braket{\lambda ; s,t} 
	\in S
\]
is called a \textit{convex prestructure} \cite{Gudder1973,gudder1979stochastic} 
and $\braket{\cdot ; \cdot , \cdot}$ is called the convex combination on $S .$
We always assume that any convex subset $C$ of a linear space is equipped with 
the usual convex combination
$ \braket{\lambda ;s,t} = \lambda s + (1-\lambda ) t$
$(\lambda \in [0,1] ; s,t \in C) .$
\item
Let $(S_i , \braket{\cdot ; \cdot , \cdot}_i)$
$(i=1,2)$
be convex prestructures.
A map $\Psi \colon S_1 \to S_2$ is called \textit{affine} if
$\Psi (\braket{\lambda ; s,t}_1) = \braket{\lambda ; \Psi (s) , \Psi (t)}_2$
$(\forall \lambda \in [0,1] ; \forall s,t \in S_1) .$
An affine bijection $\Psi \colon S_1 \to S_2$ is called an affine isomorphism.
Note that if $\Psi$ is an affine isomorphism, its inverse $\Psi^{-1}$ is also affine.
\item
Let $(S,\braket{\cdot ; \cdot , \cdot})$ be a convex prestructure.
An affine map $f \colon S \to \realn$ is called an \textit{affine functional} on $S.$
We denote by $\Ab (S)$ the set of bounded affine functionals on $S .$
$\Ab (S)$ endowed with the supremum norm $\| f \| := \sup_{s \in S} \abs{f(s)}$
is a Banach space.
If $S$ is a topological space, 
we denote by $\Ac (S)$ the set of continuous affine functionals on $S.$
\item
A convex prestructure $(S,\braket{\cdot ; \cdot , \cdot})$
is called a \textit{compact convex structure} if $S$ is a compact Hausdorff topological 
space and $\Ac (S)$ separates points of $S,$
i.e.\
for any $s, t \in S ,$
$f(s) = f(t)$
$(\forall f \in \Ac (S))$
implies $s=t .$
\item
A convex prestructure $(S,\braket{\cdot ; \cdot , \cdot})$
is called a \textit{norm-complete convex structure}
if $\Ab (S)$ separates points of $S$ and the metric $d$ on $S$ defined by
\[
	d (s,t) := \sup \set{\abs{f(s) - f(t)} | f \in \Ab (S) , \| f \| \leq 1 }
	\quad 
	(s,t \in S)
\]
is complete. \qed
\end{enumerate}
\end{defi}
The notion of compact (norm-complete) convex structure 
corresponds to that of order unit Banach space (with a Banach predual)
as in the following proposition

\begin{prop} \label{prop:convst}
\begin{enumerate}[1.]
\item \label{i:convst1}
Let $(S,\braket{\cdot ; \cdot , \cdot})$ be a compact convex structure.
Then $(\Ac (S) , 1_S)$ endowed with the positive cone
$\Ac (S)_+ := \set{f \in \Ac (S) | f (s) \geq 0 \, (\forall s \in S)}$
is an order unit Banach space.
If we define 
$
	\Psi \colon S \ni s \mapsto \Psi (s) \in \Ac (S)^\ast
$
by 
\[
\braket{\Psi (s) , f} : =f (s)
\quad
(s \in S , f \in \Ac (S)),
\]
then the map $\Psi$ is a continuous affine isomorphism between 
$S$ and $S (\Ac (S))  $ so that we can identify 
$S$ with $S (\Ac (S)) . $
\item \label{i:convst2}
Let $(S,\braket{\cdot ; \cdot , \cdot})$ be a norm-complete convex structure.
Then $(\Ab (S) , 1_S)$ endowed with the positive cone 
$\Ab (S)_+ := \set{f \in \Ab (S) | f(s) \geq 0 \, (\forall s \in S)} $
is an order unit Banach space.
The map $\Phi \colon S \to \Ab (S)^\ast$
defined by 
\[
\braket{\Phi (s) , f} := f(s)
\quad
(s \in S , f \in \Ab (S))
\]
is an isometry so that $S$ may be identified with the norm-closed 
convex subset $\Phi (S)$ of $\Ab (S)^\ast .$
The linear subspace $E_\ast := \lin (S) \subset \Ab (S)^\ast$
is a Banach predual of $\Ab (S)$
and $S$ coincides with the base $S_\ast (\Ab (S))$ of $E_\ast .$
\end{enumerate}
\end{prop}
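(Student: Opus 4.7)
The proof of part~\ref{i:convst1} I would organize as three steps. First, check that $(\Ac(S),1_S)$ with the pointwise positive cone is an order unit Banach space: properness of $\Ac(S)_+$ uses that $\Ac(S)$ separates points of $S$; the Archimedean property and the order unit property of $1_S$ (exploiting compactness of $S$ to bound any continuous affine $f$) are immediate; the order unit norm coincides with the supremum norm, under which $\Ac(S)$ is closed in $C(S)$ and hence complete. Second, verify directly that $\Psi(s) \in S(\Ac(S))$ (unital and positive on $\Ac(S)_+$), that $\Psi$ is affine (by pointwise computation), weak$\ast$-continuous (by the very definition of the weak$\ast$ topology on $\Ac(S)^\ast$), and injective (by point-separation). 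Third, handle the one nontrivial step, surjectivity of $\Psi$ onto $S(\Ac(S))$: since $\Psi(S)$ is weak$\ast$-compact and convex, any hypothetical $\phi \in S(\Ac(S)) \setminus \Psi(S)$ could be strictly separated from $\Psi(S)$ by Hahn-Banach, yielding $f \in \Ac(S)$ and $c \in \realn$ with $f(s) \leq c < \phi(f)$ for all $s \in S$; then $c \cdot 1_S - f \in \Ac(S)_+$ while $\phi(c \cdot 1_S - f) < 0$, contradicting positivity of $\phi$.

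For part~\ref{i:convst2}, that $(\Ab(S),1_S)$ is an order unit Banach space is essentially built in: boundedness is imposed by definition, $1_S$ is automatically an order unit with order unit norm equal to the supremum norm, and completeness of $\Ab(S)$ under the supremum norm is standard. The isometry claim for $\Phi$ is a tautology from the definition of $d,$
\[
d(s,t) = \sup_{f \in (\Ab(S))_1} \abs{f(s) - f(t)} = \sup_{f \in (\Ab(S))_1} \abs{(\Phi(s) - \Phi(t))(f)} = \| \Phi(s) - \Phi(t) \|_{\Ab(S)^\ast},
\]
so completeness of $(S,d)$ makes $\Phi(S)$ norm-closed in $\Ab(S)^\ast.$ Each $\Phi(s)$ is a state, and since $\Phi$ is affine, $E_\ast = \lin(\Phi(S)) = \set{ \alpha \Phi(s) - \beta \Phi(t) | s,t \in S,\ \alpha, \beta \geq 0}$ with $\Phi(S)$ a base of the positive cone $E_{\ast+}.$

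The substantive content is verifying that $E_\ast$ is a Banach predual of $\Ab(S).$ The natural contractive map $\iota \colon \Ab(S) \to (E_\ast)^\ast$ sending $f$ to the evaluation $\phi \mapsto \phi(f)$ is isometric since $\abs{\iota(f)(\Phi(s))} = \abs{f(s)}$ combined with $\| \Phi(s) \|_{\Ab(S)^\ast} = 1$ forces $\| \iota(f) \| \geq \| f \|.$ Surjectivity is clean: given $F \in (E_\ast)^\ast,$ set $g(s) := F(\Phi(s)),$ which is affine (since $\Phi$ is affine and $F$ linear) and bounded by $\| F \|,$ so $g \in \Ab(S)$ with $\iota(g) = F$ on $\Phi(S)$ and hence on all of $E_\ast$ by linearity. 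The main obstacle, and what I expect to require the most work, is showing $E_\ast$ is actually norm-complete---equivalently, that the dual norm on $E_\ast$ agrees with the base norm generated by $\Phi(S).$ The inequality $\| \cdot \|_{\Ab(S)^\ast} \leq \| \cdot \|_B$ is immediate from the triangle inequality; for the reverse I would apply Hahn-Banach separation inside $\Ab(S)$ between the positive cone $\Ab(S)_+$ and a suitable order-unit-centered half-space determined by $\phi$ to realize the base-norm infimum via an optimal positive decomposition. Once $E_\ast$ is identified as a base-normed Banach space in this way, the equality $S = S_\ast(\Ab(S))$ under $\Phi$ follows: $\Phi(S) \subset S_\ast(\Ab(S))$ is by construction, and conversely any weak$\ast$-continuous state $\psi \in E_\ast$ has base norm $1,$ so lies in the base $\Phi(S)$ of $E_{\ast+}.$
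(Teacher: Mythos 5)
Part~1 of your proposal is correct and follows the paper's proof almost verbatim; your variant of the surjectivity contradiction (apply positivity of $\phi$ to $c\,1_S-f$) is equivalent to the paper's norm computation $\|f\|<\|f\|$ and equally valid.

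Part~2 takes a genuinely different route to the duality. The paper cites Kaijser's theorem --- the unit ball of $\Ab(S)$ is compact in the topology of pointwise convergence on $S$, hence $\Ab(S)$ is isometrically the dual of the norm closure $\overline{\lin(S)}$ --- and then upgrades $\overline{\lin(S)}$ to $\lin(S)$. Your direct construction of the isomorphism $\iota\colon\Ab(S)\to(E_\ast)^\ast$, with surjectivity obtained from $g(s):=F(\Phi(s))$, is correct and is an attractive elementary substitute for that citation; the isometry of $\Phi$ and the norm-closedness of $\Phi(S)$ are also fine.

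The gap is in your final step. Two things remain unproved: that $\lin(\Phi(S))$ is norm-closed in $\Ab(S)^\ast$ (required for it to be a \emph{Banach} predual in the sense of Section~2.1), and that every weakly$\ast$ continuous state lying in $\lin(\Phi(S))$ actually belongs to $\Phi(S)$ (the claim $S=S_\ast(\Ab(S))$). Your argument for the second --- ``base norm $1$, so lies in the base'' --- tacitly assumes $\lin(\Phi(S))\cap\Ab(S)^\ast_+=\cone(\Phi(S))$, but a priori $\alpha\Phi(s)-\beta\Phi(t)$ can be positive on $\Ab(S)_+$ without being a nonnegative multiple of any $\Phi(u)$. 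Your route to the first --- Hahn--Banach separation ``between $\Ab(S)_+$ and an order-unit-centered half-space'' --- only reproduces Ellis's decomposition $\phi=\phi_+-\phi_-$ with $\phi_\pm$ in the full dual cone $\Ab(S)^\ast_+$ and $\|\phi\|=\|\phi_+\|+\|\phi_-\|$; nothing there places $\phi_\pm$ in $\cone(\Phi(S))$, which is what the base norm generated by $\Phi(S)$ requires. Both issues are resolved by the single argument you are missing, which is exactly the paper's: letting $B$ denote the base of the positive cone of the Banach predual $\overline{\lin(\Phi(S))}$, any $\psi\in B\setminus\Phi(S)$ can be strictly separated from the norm-complete convex set $\Phi(S)$ by some $g\in\Ab(S)=(\overline{\lin(\Phi(S))})^\ast$, and after replacing $g$ by $g+\|g\|1_S$ the computation of part~1 gives $\|g\|=\sup_{s\in S}g(s)<\braket{\psi,g}\le\|g\|$, a contradiction; hence $B=\Phi(S)$ and $\overline{\lin(\Phi(S))}=\lin(B)=\lin(\Phi(S))$. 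You should insert this separation step; note also that ``the two norms agree'' does not by itself yield completeness of $E_\ast$ --- that still needs either this argument or a convergent-series argument exploiting the completeness of $(S,d)$.
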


Proposition~\ref{prop:convst}.\ref{i:convst2} is what is called in \cite{ddd.uab.cat:187745} Ludwig's embedding theorem \cite{ludwig1985derivation}
(IV, Theorem~3.7).
The claim~\ref{i:convst1} can be shown similarly as claim~\ref{i:convst2}.
For completeness short proofs are included in Appendix~\ref{app:convst}.

We can rephrase Proposition~\ref{prop:convst}.1
in terms of the regular embedding (\cite{alfsen1971compact}, Section~II.2).
For a compact convex structure $(S , \braket{\cdot ; \cdot , \cdot} ) ,$
a continuous affine injection
$\Psi \colon S \to V$ into a locally convex Hausdorff space $E$
is called a regular embedding
if $E = \lin (\Psi (S)) $ and $0 \notin \mathrm{aff}(\Psi(S)) ,$
where $\mathrm{aff} (\cdot )$ denotes the affine hull.
If such $\Psi$ exists, $S$ is said to be regularly embedded into $E .$

\begin{prop} \label{prop:emb}
Let $(S , \braket{\cdot ; \cdot , \cdot} ) $ be a compact convex structure
and let $\Psi \colon S \to \Ac (S)^\ast$ be the map in Proposition~\ref{prop:convst}.
Then $\Psi $ is a regular embedding into $\Ac (S)$ equipped with the 
weak$\ast$ topology.
Furthermore, such a regular embedding is unique in the following sense:
if $\Phi \colon S \to E$ is another regular embedding into a locally convex Hausdorff
space $E ,$ there exists a continuous linear isomorphism 
$J \colon E \to \Ac (S)^\ast$ such that 
$\Psi = J \circ \Phi .$
\end{prop}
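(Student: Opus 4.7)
My approach separates into the existence claim (that $\Psi$ is a regular embedding) and the uniqueness claim. For existence, I would verify the four defining conditions directly, reusing ingredients from Proposition~\ref{prop:convst}.\ref{i:convst1}. Weak$\ast$ continuity of $\Psi$ is immediate since $s \mapsto \braket{\Psi(s), f} = f(s)$ is continuous for every $f \in \Ac(S)$ by the very definition of $\Ac(S)$. Affineness follows by evaluating $\Psi(\braket{\lambda ; s, t})$ against an arbitrary $f$ and using affineness of $f$. Injectivity is just the separation-of-points axiom for a compact convex structure. Finally $0 \notin \mathrm{aff}(\Psi(S))$ holds because any affine combination $\sum_i \lambda_i \Psi(s_i)$ with $\sum_i \lambda_i = 1$ pairs with $1_S \in \Ac(S)$ to give $1 \ne 0$. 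Together with the observation that $\lin(\Psi(S)) = \Ac(S)^\ast$ (the state space $\Psi(S) = S(\Ac(S))$ spans the dual via the Jordan decomposition on an order unit Banach space), this produces the asserted regular embedding.

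For uniqueness, let $\Phi \colon S \to E$ be a second regular embedding. Since $\Phi$ is a continuous affine injection from a compact space into a Hausdorff space, it is a homeomorphism onto the compact convex set $\Phi(S) \subset E$, hence an affine homeomorphism $S \cong \Phi(S)$. For each $f \in \Ac(S)$ set $\hat f := f \circ \Phi^{-1} \in \Ac(\Phi(S))$. The crucial construction is to extend $\hat f$ to a continuous linear functional $\tilde f$ on $\lin(\Phi(S)) = E$. This proceeds in two steps: first produce, from $0 \notin \mathrm{aff}(\Phi(S))$ together with compactness of $\Phi(S)$, a functional $\ell \in E^\ast$ with $\ell \equiv 1$ on $\Phi(S)$ via Hahn--Banach separation, exhibiting $\Phi(S)$ as a base of the convex cone $\cone(\Phi(S))$; then extend $\hat f$ positively homogeneously by $\tilde f(\lambda \Phi(s)) := \lambda f(s)$ on $\cone(\Phi(S))$, with additivity on the cone following from affineness of both $\Phi$ and $\hat f$, and finally extend by linearity to all of $\lin(\Phi(S))$.

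Having the $\tilde f$, I define $J \colon E \to \Ac(S)^\ast$ by $J(x)(f) := \tilde f(x)$. By construction $J(\Phi(s))(f) = f(s) = \braket{\Psi(s), f}$, so $\Psi = J \circ \Phi$. Linearity of $J$ in $x$ is immediate, and continuity from the topology of $E$ to the weak$\ast$ topology on $\Ac(S)^\ast$ reduces to continuity of each $\tilde f$. Injectivity follows because $J(x) = 0$ forces $\tilde f(x) = 0$ for every $f$, which combined with separation of points of $S$ by $\Ac(S)$ yields $x = 0$. Surjectivity onto $\Ac(S)^\ast = \lin(\Psi(S))$ holds by construction. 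For continuity of $J^{-1}$, each $\ell \in E^\ast$ composed with $J^{-1}$ becomes the weak$\ast$-continuous functional $\phi \mapsto \braket{\phi, \ell \circ \Phi^{-1}}$ on $\Ac(S)^\ast$ (after identifying $\ell \circ \Phi^{-1}$ with its affine representative in $\Ac(S)$), so $J^{-1}$ is weakly continuous and hence $J$ is the desired (topological) linear isomorphism.

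The main obstacle is the extension step for $\tilde f$, specifically producing the base functional $\ell \in E^\ast$ from the bare condition $0 \notin \mathrm{aff}(\Phi(S))$. Care is required because $\mathrm{aff}(\Phi(S))$ need not be closed in $E$, so one cannot simply pass to the quotient by the parallel linear subspace; instead one strictly separates $0$ from the compact convex $\Phi(S)$ by a continuous linear functional, then uses $0 \notin \mathrm{aff}(\Phi(S))$ together with affineness to normalize it to be identically $1$ on $\Phi(S)$. Continuity of the resulting $\tilde f$ on $\lin(\Phi(S))$ then follows from the cone estimate $|\tilde f(x)| \le \|\hat f\|_\infty \, \ell(x)$ on $\cone(\Phi(S))$ combined with the decomposition $\lin(\Phi(S)) = \cone(\Phi(S)) - \cone(\Phi(S))$. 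Everything else in the argument is definitional or routine formal verification.
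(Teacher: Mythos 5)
Your existence argument is correct and is essentially what the paper does: it follows from Proposition~\ref{prop:convst} together with the fact that $\Psi(S)=S(\Ac(S))$ is a base of the generating cone $\Ac(S)^\ast_+$, so $\lin(\Psi(S))=\Ac(S)^\ast$ and pairing with $1_S$ shows $0\notin\mathrm{aff}(\Psi(S))$. For uniqueness, however, the paper does not construct $J$ at all: it checks that $(S,\Ac(S))$ is an ``abstract convex'' in the sense of \cite{alfsen1971compact} (Section~II.2) and invokes Theorem~II.2.4 there. Your hands-on construction is a genuinely different route, and it breaks down at exactly the point you single out as the main obstacle.

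There are two gaps. First, strictly separating $0$ from the compact convex set $\Phi(S)$ only yields $\ell\in E^\ast$ with $\ell>0$ on $\Phi(S)$; no normalization makes a nonconstant positive functional identically $1$ there. To get $\ell\equiv 1$ on $\Phi(S)$ you must separate $0$ from $\mathrm{aff}(\Phi(S))$, which requires $0\notin\overline{\mathrm{aff}(\Phi(S))}$ --- strictly stronger than the stated hypothesis. (Well-definedness of $\tilde f$ is not affected: that $\Phi(S)$ is a base of $\cone(\Phi(S))$ follows directly from $0\notin\mathrm{aff}(\Phi(S))$, since $\lambda_1 b_1=\lambda_2 b_2$ with $\lambda_1\neq\lambda_2$ would exhibit $0$ as an affine combination of $b_1,b_2$.) Second, and fatally, the continuity step fails: the estimate $|\tilde f|\le\|\hat f\|_\infty\,\ell$ on the cone only bounds $\tilde f$ on the absorbing convex set $\conv(\Phi(S)\cup(-\Phi(S)))$, and an absorbing convex set in a locally convex space need not be a neighbourhood of $0$; you obtain continuity for the associated base seminorm, not for the topology of $E$. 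This cannot be repaired from the hypotheses you use. Indeed, take $E=\Ac(S)^\ast$ equipped with $\sigma(\Ac(S)^\ast,M)$ for a proper norm-dense subspace $M\subset\Ac(S)$ containing $1_S$ (e.g.\ the kernel of a discontinuous linear functional on $\Ac(S)$ vanishing at $1_S$), and let $\Phi=\Psi$. Then $\Phi$ is a continuous affine injection (the two topologies coincide on the compact set $\Psi(S)$), $E=\lin(\Phi(S))$, $0\notin\mathrm{aff}(\Phi(S))$, and $\ell=1_S\in E^\ast=M$; yet for $f\in\Ac(S)\setminus M$ the linear extension $\tilde f=f$ is not $E$-continuous, and the only candidate $J=\mathrm{id}$ fails to be continuous into $(\Ac(S)^\ast,\sigma(\Ac(S)^\ast,\Ac(S)))$. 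The moral is that continuity of the extensions $\tilde f$ is precisely the extra content carried by Alfsen's notion of regular embedding that the paper's citation relies on; to make your construction work you need an additional hypothesis guaranteeing that every continuous affine function on $\Phi(S)$ is the restriction of some element of $E^\ast$ (equivalently, that $\{\ell\circ\Phi:\ell\in E^\ast\}\supseteq\Ac(S)$), after which the definition of $J$, its bijectivity, and $\Psi=J\circ\Phi$ go through as you describe.
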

\begin{proof}
The first part of the claim is immediate from Proposition~\ref{prop:convst}
and from that $S (\Ac (S)) = \Psi (S)$ is a base of the positive cone $\Ac (S)_+^\ast .$
It also follows that
$\Ac (S)$ separates points of $S,$
$1_S \in \Ac (S) ,$ 
and $S(\Ac (S)) = S .$
Therefore $(S , \Ac (S))$ is an abstract convex in the sense of \cite{alfsen1971compact}
(Section~II.2) and the rest of the claim follows from Theorem~II.2.4 of 
\cite{alfsen1971compact}.
\end{proof}
As we have seen in Sections~\ref{subsec:ouB} and \ref{subsec:aconv},
the order unit Banach space and the convex state space are dual notions
and we can always translate a general statement on the one side to the other.
In physical terms, these notions correspond to the descriptions of the systems
in the Heisenberg and Schr\"odinger pictures, respectively.
In what follows in this paper we mainly consider order unit Banach spaces
with preduals as the spaces of observables of physical systems,
while the measurement space will be introduced in Section~\ref{sec:ccs} as a 
special kind of compact convex structure.

\subsection{Classical space}
\label{subsec:classical}
An order unit Banach space $E$ is called \textit{classical}
if it satisfies either (all) of the following equivalent conditions
(\cite{alfsen1971compact}, Theorem~II.4.1):
\begin{enumerate}[(i)]
\item
The state set $S(E)$ is a Bauer simplex, i.e.\
the set $\de S(E)$ of extremal points (or pure states) of $S(E)$ is compact and 
any $\phi \in S(E)$ is a barycenter of a unique simplicial boundary measure \cite{alfsen1971compact}.
\item
$(E,u_E)$ is ismorphic to 
$(C(X) , 1_X) $
for some compact Hausdorff space $X$
as an order unit Banach space,
where $C(X)$ denotes the set of real continuous functions on $X$
equipped with the positive cone
$C(X)_+ = \set{f \in C(X) | f(x) \geq 0 \, (\forall x \in X)} .$
\item
The partially ordered set $(E , \leq )$ is a lattice.
\end{enumerate}

Now, by generalizing the finite-dimensional result in 
\cite{barnum2006cloning} (Corollary~1),
we give another characterization of a classical space 
in terms of a well-behaving product operation,
or a universal broadcasting channel as in the following proposition.
See also \cite{torgersen1991comparison} (Corollary~5.7.9)
for the uniqueness part.
\begin{prop}
\label{prop:nb}
An order unit Banach space $E$ is classical if and only if 
there exists a bilinear map $B \colon E \times E \to E$ such that
\begin{enumerate}[(i)]
\item \label{i:nb1}
(broadcasting property)
$B(a , u_E) = B(u_E , a) = a$
$(\forall a \in E) ;$
\item \label{i:nb2}
(bipositivity)
$B(a,b) \geq 0$ $(\forall a, b \in E_+) .$
\end{enumerate}
Furthermore, such a bilinear map $B$ is, if exists, unique
and satisfies the commutativity $B(a,b) = B(b,a)$
and the associativity $B(B(a,b) , c) = B(a, B(b,c))$
$(\forall a, b, c \in E) .$
\end{prop}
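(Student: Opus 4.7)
The easy direction, that a classical $E \cong (C(X),1_X)$ admits such a $B$, is immediate by taking $B$ to be pointwise multiplication: broadcasting, bipositivity, commutativity, and associativity are all obvious. For the converse, the strategy is to show every pure state is multiplicative for $B$ and then realize $E$ as $C(X)$ via Stone-Weierstrass.

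The key step is to prove, for $\phi \in \partial_{e} S(E)$, that $\phi(B(a,b)) = \phi(a)\phi(b)$ for all $a,b \in E$. Fix an effect $a$ with $0 \leq a \leq u_E$. By bipositivity the linear functionals $\mu_a(b) := \phi(B(a,b))$ and $\mu_{u_E - a}(b) := \phi(B(u_E - a, b))$ are positive, and by broadcasting they sum to $\phi$ with $\mu_a(u_E) = \phi(a)$ and $\mu_{u_E - a}(u_E) = 1 - \phi(a)$. If $0 < \phi(a) < 1$ this exhibits $\phi$ as a proper convex combination of two states, so purity forces $\mu_a = \phi(a)\phi$; the boundary cases $\phi(a) \in \{0,1\}$ follow because a positive functional vanishing at $u_E$ must be zero. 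Bilinearity and the fact that any $a \in E$ is an affine combination of effects extend multiplicativity to all $a, b \in E$.

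Next let $X \subseteq S(E)$ be the weak$\ast$-closure of $\partial_{e} S(E)$; this is weak$\ast$-compact, and by weak$\ast$-continuity every $\phi \in X$ remains multiplicative. The restriction map $\iota \colon E \to C(X)$, $\iota(a)(\phi) := \phi(a)$, is then a unital homomorphism (with the algebra structure on $E$ given by $B$), and it is an isometry since $\|a\| = \sup_{\phi \in S(E)} |\phi(a)|$ with the supremum attained on $\partial_{e} S(E) \subseteq X$ by Bauer's maximum principle. Stone-Weierstrass applied to the unital point-separating subalgebra $\iota(E) \subseteq C(X)$, together with completeness of $E$, yields $\iota(E) = C(X)$, so $E$ is classical. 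Uniqueness of $B$, commutativity, and associativity all then follow from multiplicativity on pure states combined with the fact that pure states separate $E$: if $\phi(a) = 0$ on $\partial_{e} S(E)$, then by weak$\ast$-continuity of $a$ on $S(E)$ and Krein-Milman the same holds on $S(E)$, hence $a = 0$.

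The main obstacle is the multiplicativity step, in which one has to correctly identify the convex decomposition of a pure state induced by $B$ and dispose of the $\phi(a) \in \{0,1\}$ boundary. Thereafter the Gelfand-type wrap-up via Stone-Weierstrass is routine, as are the uniqueness, commutativity, and associativity assertions.
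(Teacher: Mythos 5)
Your proof is correct and follows essentially the same route as the paper's: multiplicativity of pure states (the Barnum et al.\ decomposition argument, which you spell out and the paper merely cites), passage to a compact set of multiplicative states, the Stone--Weierstrass wrap-up for surjectivity, and uniqueness/commutativity/associativity via separation of points by pure states and Krein--Milman. The only cosmetic differences are your choice of $X = \overline{\partial_{\mathrm{e}} S(E)}$ instead of the full set of multiplicative states, and your leaving implicit the (automatic) fact that the unital isometry $\iota$ is also an order isomorphism; neither affects the argument.
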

The proof of Proposition~\ref{prop:nb} is analogous to the finite-dimensional case
\cite{barnum2006cloning} and to the Gelfand's representation theorem for abelian $C^\ast$-algebras \cite{takesakivol1}.
See Appendix~\ref{app:nb} for detail.

For a classical space $E,$ the unique bilinear map $B(a,b)$ in Proposition~\ref{prop:nb}
is written as $a \cdot b $ $(a, b \in E)$ and called the product on $E.$ 

A possibly infinite direct sum of classical spaces is also classical.
If $E$ is a classical space with a Banach predual $E_\ast ,$
then $E$ is isomorphic to the set of self-adjoint elements of an abelian \Wstar-algebra. 
By the uniqueness of the complex Banach predual of a \Wstar-algebra, 
the Banach predual of a classical space is, if exists, unique.
The double dual $E^\aast$ of a classical space $E$ is also a classical space.

An element $P $ of a classical space $E$ is called a projection if $P \cdot P = P .$
A projection $P$ always satisfies $0 \leq P \leq u_E .$
The following proposition, which is immediate from the general properties from 
\Wstar-algebras (von Neumann algebras), will be used in the main part.
\begin{prop} \label{prop:proj}
Let $E$ be a classical space with a Banach predual $E_\ast .$
\begin{enumerate}[1.]
\item
For each $a \in E$ there exists a sequence $(a_n)_{n \in \natn}$ 
of finite sums of projections on $E$ such that $\| a - a_n \| \to 0 .$
\item
For each positive weakly$\ast$ continuous linear functional
$\vph \in E_{\ast +} $ there exists the smallest projection
$P \in E$ 
such that $\braket{\vph , P} = \| \vph \| .$
Such $P$ is called the support projection of $\vph$ and written as 
$\s (\vph) .$
\end{enumerate}
\end{prop}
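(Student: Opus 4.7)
The plan is to reduce both claims to standard results about abelian W*-algebras via the isomorphism stated in the paragraph preceding the proposition, identifying $E$ with the self-adjoint part $\A_{\sa}$ of an abelian W*-algebra $\A .$ Under this identification, $E_\ast$ corresponds to $\A_{\ast \sa}$ (the self-adjoint part of the unique complex Banach predual of $\A $), projections of $E$ (in the sense $P \cdot P = P$) correspond to orthogonal projections in $\A ,$ and weakly$\ast$ continuous positive functionals on $E$ correspond to normal positive functionals on $\A .$ All subsequent arguments take place in $\A $ and can be cited from \cite{takesakivol1}.

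First I would prove part~1 by the Borel functional calculus in $\A .$ For $a \in E$ identified with a self-adjoint element of $\A ,$ there is a projection-valued spectral measure $e \colon \mathcal{B}([-\| a \| , \| a \|]) \to \A$ with $a = \int \lambda \, d e(\lambda) .$ For each $n$ partition $[-\| a \| , \| a \|]$ into $n$ equal half-open intervals $I_{n,k} = [\lambda_{n,k-1} , \lambda_{n,k} )$ with $\lambda_{n,k} = -\| a \| + 2k \| a \|/n$ and set
\[
	a_n := \sum_{k=1}^n \lambda_{n,k-1} e(I_{n,k}) .
\]
Since the $e(I_{n,k})$ are mutually orthogonal projections summing to $u_E ,$ the element $a_n$ is a finite (real) linear combination of projections of $E ,$ and the uniform estimate $\| a - a_n \| \leq 2 \| a \| / n$ gives $\| a - a_n \| \to 0 .$

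For part~2, consider the family
\[
	\mathfrak{F}_\vph
	:=
	\set{P \in E | \text{$P$ is a projection and } \braket{\vph , P} = \| \vph \|} .
\]
Since $\vph \geq 0$ and $\| \vph \| = \braket{\vph , u_E} ,$ the defining condition is equivalent to $\braket{\vph , u_E - P} = 0 ,$ i.e.\ $\vph$ vanishes on $u_E - P \geq 0 .$ The set $\mathfrak{F}_\vph$ contains $u_E$ and is downward directed: for $P , Q \in \mathfrak{F}_\vph$ the meet $P \wedge Q$ (which in the abelian case is $P \cdot Q$) is again a projection and $\braket{\vph , u_E - P \wedge Q} \leq \braket{\vph , u_E - P} + \braket{\vph , u_E - Q} = 0 .$ Since the projection lattice of $\A$ is complete, $\s (\vph) := \bigwedge_{P \in \mathfrak{F}_\vph} P$ exists in $E ,$ and it can be realized as the limit of a decreasing net from $\mathfrak{F}_\vph .$ Normality (weak$\ast$ continuity) of $\vph$ then yields $\braket{\vph , \s (\vph)} = \lim_{P \downarrow \s (\vph)} \braket{\vph , P} = \| \vph \| ,$ so $\s (\vph) \in \mathfrak{F}_\vph$ and is by construction its minimum.

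The main obstacle is nothing conceptually deep: one only has to verify that the W*-algebraic notions invoked (projection, spectral resolution, normality, completeness of the projection lattice, order-continuous behaviour on decreasing nets of projections) transfer correctly across the order unit Banach space isomorphism $E \cong \A_{\sa} ,$ after which both claims are standard.
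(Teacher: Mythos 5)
Your proof is correct and follows exactly the route the paper intends: the paper gives no argument of its own, stating only that the proposition is ``immediate from the general properties of $W^\ast$-algebras,'' and your reduction to the abelian $W^\ast$-algebra picture with the spectral-measure approximation for part~1 and the standard downward-directed-family construction of the support projection for part~2 is precisely that standard argument spelled out. (Note that ``finite sums of projections'' in the statement must be read as finite \emph{linear combinations} of projections, as your construction and the paper's subsequent use of the proposition for density of simple elements both confirm.)
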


\section{Measurements} \label{sec:meas}
In this section we introduce and prove basic facts on the channels, measurements, and the post-processing order and equivalence relations between them.

\subsection{Channels and post-processing relations} \label{subsec:chpp}
Before introducing measurements, we consider more general class of channels 
between order unit Banach spaces.

For simplicity, in what follows in this paper, 
if we say that $E$ is an order unit Banach space, 
we understand that $E $ is endowed with an order unit which is written as $u_E .$

A linear map $\Psi \colon E \to F$ between ordered linear spaces 
$E$ and $F$ is called positive if $\Psi (E_+) \subset F_+ .$
If $E$ and $F$ are order unit Banach spaces,
a positive linear map $\Psi \colon E \to F$ that is unital, i.e.\
$\Psi (u_E) = u_F ,$
is called a channel (in the Heisenberg picture).
The domain $E$ and the codomain $F$ of a channel $\Psi \colon E \to F$
are called the outcome and input spaces of $\Psi  ,$ respectively.
We write the set of channels from $E$ to $F$ as
$\Ch (E\to F) .$

\begin{prop}\label{prop:pos}
Let $E$ and $F$ be order unit Banach spaces.
Then any positive linear map $\Psi \colon E \to F$ is bounded and 
the uniform norm is given by
$\| \Psi \| = \| \Psi (u_E) \| .$
If $\Psi$ is channel, then $\| \Psi \| = 1 .$
\end{prop}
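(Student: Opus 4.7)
The plan is to exploit the defining property of the order unit norm to bound $\|\Psi(x)\|$ by $\|x\|\cdot\|\Psi(u_E)\|$, with the main subtlety being to justify the inequality $y \leq \|y\|u_F$ for positive elements $y \in F_+$.

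First I would record the fact, noted in Section~\ref{subsec:ouB}, that the order unit norm satisfies $-\|x\|u_E \leq x \leq \|x\|u_E$ for every $x \in E$. Applying the positive linear map $\Psi$ to this sandwich yields
\begin{equation*}
-\|x\|\,\Psi(u_E) \leq \Psi(x) \leq \|x\|\,\Psi(u_E).
\end{equation*}
The next ingredient is that for every $y \in F_+$ one has $y \leq \|y\|u_F$. I would prove this small lemma by the Archimedean property: by definition of the order unit norm, for every $\varepsilon > 0$ we have $y \leq (\|y\|+\varepsilon)u_F$, so $n(y-\|y\|u_F) \leq u_F$ for every positive integer $n$ (taking $\varepsilon = 1/n$), which forces $y \leq \|y\|u_F$ since $F$ is Archimedean.

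Applying this lemma to $y := \Psi(u_E) \in F_+$ and combining with the previous sandwich gives
\begin{equation*}
-\|x\|\,\|\Psi(u_E)\|\,u_F \leq \Psi(x) \leq \|x\|\,\|\Psi(u_E)\|\,u_F,
\end{equation*}
which by the definition of the order unit norm on $F$ means $\|\Psi(x)\| \leq \|\Psi(u_E)\|\cdot\|x\|$. Hence $\Psi$ is bounded with $\|\Psi\| \leq \|\Psi(u_E)\|$. The reverse inequality is immediate upon noting that $\|u_E\| = 1$ (since trivially $-u_E \leq u_E \leq u_E$, and any smaller $\lambda$ would violate $u_E \leq \lambda u_E$), so that $\|\Psi(u_E)\| \leq \|\Psi\|\cdot\|u_E\| = \|\Psi\|$. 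Combining gives the equality $\|\Psi\| = \|\Psi(u_E)\|$.

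For the final statement, if $\Psi$ is a channel, then $\Psi(u_E) = u_F$ by definition, and $\|u_F\| = 1$ by the same argument used for $\|u_E\|$, giving $\|\Psi\| = 1$. The only genuinely nontrivial step here is the Archimedean lemma used to pass from the order unit bound to a norm bound; everything else is a direct unwinding of definitions.
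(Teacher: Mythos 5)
Your proof is correct and follows essentially the same route as the paper: sandwich $x$ between $\pm\|x\|u_E$, apply positivity, pass to $\pm\|x\|\,\|\Psi(u_E)\|u_F$, and conclude from the definition of the order unit norm on $F$. The only difference is that you re-derive the bound $y\leq\|y\|u_F$ from the Archimedean property, whereas the paper already records in Section~2.1 that the order unit norm satisfies $-\|x\|u\leq x\leq\|x\|u$ for every $x$, so that step is available without further argument.
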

\begin{proof}
For any $a \in E ,$ we have 
$-\| a \| u_E \leq a \leq \| a \| u_E $
and the positivity of $\Psi $ implies
$- \| a \| \Psi (u_E) \leq \Psi (a) \leq  \| a \| \Psi (u_E) $
and hence
$-\| a \| \| \Psi (u_E)\| u_F \leq \Psi (a) \leq \| a \| \| \Psi (u_E)\| u_F .$
Thus $\| \Psi (a) \| \leq \| \Psi (u_E)\| \| a \|$
and we obtain $\| \Psi \| \leq \| \Psi (u_E)\| . $
Since $\| \Psi (u_E) \| \leq \| \Psi \|$ is obvious, the first part of the claim is proved.
If $\Psi $ is a channel, then $\| \Psi \| = \| \Psi (u_E) \| = \| u_F \| = 1.$
\end{proof}
Let $E$ be a Banach space and let $F$ be a Banach space with a Banach predual 
$F_\ast .$
Then the set $\mathcal{L} (E \to F)$
of bounded linear maps from $E$ to $F$ is endowed with a locally convex Hausdorff
topology called the BW-topology (\cite{paulsen_2003}, Chapter~7) in the following way.
The BW-topology is the weakest topology such that
\[
	\mathcal{L} (E \to F) \ni \Psi
	\mapsto 
	\braket{\psi , \Psi (a)} \in \realn 
\]
is continuous for any $a \in E$ and any $\psi \in F_\ast .$
A net $(\Psi)_{i\in I}$ in $\mathcal{L} (E \to F)$ is BW-convergent to 
$\Psi \in \mathcal{L} (E \to F)$ if and only if
$\braket{\psi , \Psi_i (a) } \to \braket{\psi , \Psi  (a) }$
for any $a \in E$ and $\psi \in F_\ast ,$
or equivalently $\Psi_i (a ) \xrightarrow{\text{weakly$\ast$}} \Psi (a)$
for any $a \in E .$
It follows from Tychonoff\rq{}s theorem that 
the closed unit ball $(\mathcal{L} (E \to F)  )_1$ is  
BW-compact.
\begin{prop} \label{prop:BWcompact}
Let $E$ and $F$ be order unit Banach spaces.
Suppose that $F$ has a Banach predual $F_\ast .$
Then $\Ch (E\to F)$ is a BW-compact convex subset of $\mathcal{L} (E \to F) .$
\end{prop}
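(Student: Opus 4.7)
The plan is to establish convexity directly and then reduce compactness to a standard closed-subspace-of-compact-set argument.

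First, convexity is immediate: if $\Psi_1, \Psi_2 \in \Ch(E\to F)$ and $\lambda \in [0,1]$, then $\lambda \Psi_1 + (1-\lambda)\Psi_2$ is a positive linear combination of positive maps and sends $u_E$ to $\lambda u_F + (1-\lambda) u_F = u_F$, hence is itself a channel.

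For BW-compactness, the key observation is that by Proposition~\ref{prop:pos} every channel has operator norm $1$, so $\Ch(E\to F) \subset (\mathcal{L}(E \to F))_1$. Since $(\mathcal{L}(E\to F))_1$ is BW-compact (as noted in the paragraph preceding the proposition), it suffices to prove that $\Ch(E \to F)$ is BW-closed in $\mathcal{L}(E \to F)$. So I would take a net $(\Psi_i)_{i \in I}$ in $\Ch(E \to F)$ BW-converging to some $\Psi \in \mathcal{L}(E\to F)$ and verify that $\Psi$ is both unital and positive.

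For unitality: for each $\psi \in F_\ast$, $\braket{\psi , \Psi_i(u_E)} = \braket{\psi , u_F}$ for all $i$, and BW-convergence gives $\braket{\psi , \Psi(u_E)} = \braket{\psi , u_F}$. Since $F = (F_\ast)^\ast$, $F_\ast$ separates points of $F$, forcing $\Psi(u_E) = u_F$. For positivity: fix $a \in E_+$ and $\psi \in F_{\ast +}$; then $\braket{\psi , \Psi_i(a)} \geq 0$ for every $i$, whence $\braket{\psi , \Psi(a)} \geq 0$. By the duality noted in Section~\ref{subsec:ouB} (that $F_+$ is weakly$\ast$ closed and coincides with the dual cone of $F_{\ast+}$ via the bipolar theorem), this yields $\Psi(a) \in F_+$, so $\Psi$ is positive.

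There is no real obstacle here; the argument is entirely formal once one has the two facts (i) that channels automatically have norm $1$ (Proposition~\ref{prop:pos}) and (ii) that $F_+$ is recoverable as the dual cone of $F_{\ast+}$, both of which were established in the preliminaries. The only point that might deserve a sentence of care is confirming that BW-convergence is defined precisely so that evaluating against elements of $F_\ast$ commutes with taking limits of nets, which is exactly what makes the two closure verifications go through.
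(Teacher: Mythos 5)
Your proof is correct and follows essentially the same route as the paper's: convexity is checked directly, compactness is reduced to BW-closedness inside the BW-compact unit ball $(\mathcal{L}(E\to F))_1$ via Proposition~\ref{prop:pos}, and closedness is verified using the weak$\ast$ closedness of $F_+$. You have simply written out in detail the unitality and positivity checks that the paper leaves implicit.
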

\begin{proof}
It is easy to show the convexity of $\Ch (E \to F) .$
Since $\Ch (E \to F)$ is a subset of the BW-compact set 
$(\mathcal{L} (E \to F)  )_1 $ by Proposition~\ref{prop:pos},
it suffices to show that $\Ch (E \to F)$ is BW-closed
and this follows from the weak$\ast$ closedness of the positive cone $F_+ .$
\end{proof}
Let $E$ and $F$ be order unit Banach spaces with preduals
$E_\ast $ and $F_\ast ,$ respectively.
Then a weakly$\ast$ continuous channel $\Psi \colon E \to F$
is briefly called a \wstar-channel.
The set of \wstar-channels from $E$ to $F$ is denoted by 
$\Chw (E\to F) .$
For a \wstar-channel $\Psi \colon E \to F ,$
there exists a unique bounded linear map
$\Psi_\ast \colon F_\ast \to E_\ast$
such that
\begin{equation}
\braket{\psi , \Psi (a)} = \braket{\Psi_\ast (\psi) , a}
\label{eq:chst}
\end{equation}
$(a \in E , \psi \in F_\ast ) .$
This map satisfies $\Psi_\ast (S_\ast (F)) \subset S_\ast (E) .$
Conversely for each affine map $\Psi_\ast \colon S_\ast (F) \to S_\ast (E)$
there exists a unique \wstar-channel $\Psi \colon E \to F$
satisfying \eqref{eq:chst} for any $a\in E$ and $\psi \in S_\ast (F) .$
The above map $\Psi_\ast$ is called the predual of $\Psi$ corresponds to the channel in the Schr\"{o}dinger picture.

We now introduce the post-processing relations for channels.
\begin{defi} \label{def:ppch}
Let $\Psi \in \Ch (F \to E)$ and $\Phi \in \Ch (G \to E)$ be channels 
with the same input space $E .$
\begin{enumerate}[1.]
\item
$\Psi$ is said to be a \textit{post-processing} of $\Phi ,$
written as $\Psi \pp \Phi ,$
if there exists $\Lambda \in \Ch (F \to G)$ such that
$\Psi = \Phi \circ \Lambda .$
\item
$\Psi$ is said to be \textit{post-processing equivalent} to $\Phi,$
written as $\Psi \ppeq \Phi ,$ if
$\Psi \pp \Phi$ and $\Phi \pp \Psi $ hold. \qed
\end{enumerate}
\end{defi}
By noting that any composition of channels is again a channel,
we can easily see that the relations $\pp$ and $\ppeq$ are respectively
binary preorder and equivalence relations defined on the class of channels with a fixed input space.

We next introduce the \wstar-extension of a channel.
For this we need the following characterization of the double dual Banach space.
As usual, we regard every normed linear space $E$ as a linear 
subspace of the double dual
Banach space $E^{\ast \ast} .$ 
\begin{prop} \label{lemm:double}
Let $E$ be a Banach space, let $F$ be a Banach space with a Banach predual 
$F_\ast ,$
and let $\Psi \colon E \to F$ be a bounded linear map.
Then $\Psi $ is uniquely extended to a weakly$\ast$ continuous
(i.e.\ $\sigma(E^{\ast \ast} , E^\ast) / \sigma (F, F_\ast )$-continuous)
linear map $\overline{\Psi} \colon E^{\ast \ast} \to F .$
The map $\overline{\Psi}$ is called the \wstar-extension of $\Psi .$
\end{prop}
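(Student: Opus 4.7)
The plan is to construct $\overline{\Psi}$ as the Banach adjoint of a suitable preadjoint map, and then to invoke Goldstine's theorem for uniqueness. More precisely, I would first form the Banach adjoint $\Psi^\ast \colon F^\ast \to E^\ast$ and restrict it to the norm-closed linear subspace $F_\ast \subset F^\ast$, obtaining a bounded linear map $T := \Psi^\ast|_{F_\ast} \colon F_\ast \to E^\ast$. Taking its Banach adjoint $T^\ast \colon E^{\ast \ast} \to (F_\ast)^\ast = F$ and defining $\overline{\Psi} := T^\ast$ gives a map that is automatically $\sigma(E^{\ast \ast}, E^\ast)$-to-$\sigma(F, F_\ast)$ continuous by the standard property of the Banach adjoint, hence weakly$\ast$ continuous in the required sense.

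That $\overline{\Psi}$ extends $\Psi$ on $E \subset E^{\ast \ast}$ follows from a one-line duality computation: for $x \in E$ and $\psi \in F_\ast$,
\[
\braket{\psi , \overline{\Psi}(x)} = \braket{T(\psi) , x} = \braket{\Psi^\ast(\psi) , x} = \braket{\psi , \Psi(x)} ,
\]
and since $F_\ast$ separates points of $F$ this forces $\overline{\Psi}(x) = \Psi(x)$.

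For uniqueness, Goldstine's theorem ensures that $E$ is $\sigma(E^{\ast \ast}, E^\ast)$-dense in $E^{\ast \ast}$; since the weak$\ast$ topology on $F = (F_\ast)^\ast$ is Hausdorff, any two weakly$\ast$ continuous extensions of $\Psi$ must coincide on all of $E^{\ast \ast}$. Since the whole argument is a direct application of Banach-space duality, there is no genuine obstacle; the only point requiring care is identifying $F$ with $(F_\ast)^\ast$ via the canonical pairing so that the weak$\ast$ topology on the codomain aligns with the one coming from the adjoint construction.
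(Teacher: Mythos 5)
Your construction is exactly the paper's: restrict the Banach adjoint $\Psi^\ast$ to $F_\ast$, take the adjoint of that restriction to get $\overline{\Psi}\colon E^{\ast\ast}\to (F_\ast)^\ast = F$, verify the extension property by the same duality computation, and deduce uniqueness from the weak$\ast$ density of $E$ in $E^{\ast\ast}$ (which you correctly attribute to Goldstine's theorem). The argument is correct and coincides with the paper's proof in every step.
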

\begin{proof}
Let $\Psi^\ast \colon F^\ast \to E^\ast$ be the dual map of $\Psi$
and let $\Phi \colon F_\ast \to E^\ast$ be the restriction of $\Psi^\ast$
to $F_\ast (\subset (F_\ast)^{\ast \ast} = F^\ast ) . $
We define $\ovl{\Psi }\colon E^{\ast \ast} \to F (= (F_\ast)^\ast)$
by the dual map of $\Phi .$
Then $\ovl{\Psi}$ is $\sigma(E^{\ast \ast} , E^\ast) / \sigma (F, F_\ast )$-continuous
by definition.
Furthermore for any $a \in E$ and $\psi \in F_\ast $
\[
	\braket{\psi ,\ovl{\Psi} (a) }
	=
	\braket{\Phi(\psi) , a}
	=
	\braket{\Psi^\ast (\psi) , a}
	=
	\braket{\psi , \Psi (a)} ,
\]
which implies $\ovl{\Psi} (a)= \Psi (a)$ $(a\in E).$
Therefore $\ovl{\Psi}$ satisfies the required conditions of the claim.
The uniqueness of $\ovl{\Psi}$
follows from the weak$\ast$ density of $E$ in $E^{\ast \ast} .$
\end{proof}
If $E$ is an order unit Banach space, the double dual space $E^{\ast \ast}$
with the order unit $u_{E^{\ast \ast}} = u_E$ and the double dual positive cone
$E_+^{\ast \ast}:= \set{ a^\pprime \in E^\aast |
\braket{\psi , a^\pprime} \geq 0 \, (\forall \psi \in E^\ast_+)
} $
is an order unit Banach space with the Banach predual $E^\ast .$
Then we have $E_+ = E^\aast_+ \cap E ,$ 
i.e.\ the orders on $E$ and $E^\aast$ are consistent.
Moreover by the bipolar theorem $E_+$ is a weakly$\ast$ dense subset of $E_+^\aast .$

\begin{prop} \label{prop:w*ext}
Let $\Psi \in \Ch (E \to F)$ be a channel.
Suppose that the order unit Banach space $F$ 
has a Banach predual $F_\ast .$
Then the \wstar-extension $\ovl{\Psi} \colon E^\aast \to F$ of $\Psi $
is a \wstar-channel.
\end{prop}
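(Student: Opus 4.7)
The plan is to verify the three defining properties of a weakly$\ast$ continuous channel in turn: the weak$\ast$-continuity, the unitality, and the positivity. The first is free from the construction, the second is a one-line consequence of extension, and the third is the only substantive point, handled by a density/closedness sandwich.

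First, by Proposition~\ref{lemm:double} applied to the bounded linear map $\Psi \in \Ch(E\to F) \subset \mathcal{L}(E \to F)$ (boundedness comes from Proposition~\ref{prop:pos}), the map $\overline{\Psi}\colon E^{\ast\ast}\to F$ exists, extends $\Psi$, and is $\sigma(E^{\ast\ast},E^\ast)/\sigma(F,F_\ast)$-continuous. This already gives the weak$\ast$-continuity requirement.

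For unitality I would simply note that $u_{E^{\ast\ast}} = u_E$ by the conventions set up just before the proposition, so $\overline{\Psi}(u_{E^{\ast\ast}}) = \overline{\Psi}(u_E) = \Psi(u_E) = u_F$.

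The positivity is the main point. Here the idea is to approximate elements of $E^{\ast\ast}_+$ from within $E_+$ and transport positivity through $\overline{\Psi}$ using weak$\ast$-continuity. Concretely, take any $a^{\prime\prime} \in E^{\ast\ast}_+$. By the bipolar-theorem fact stated just before the proposition, $E_+$ is $\sigma(E^{\ast\ast},E^\ast)$-dense in $E^{\ast\ast}_+$, so there is a net $(a_i)_{i\in I}$ in $E_+$ with $a_i \xrightarrow{\sigma(E^{\ast\ast},E^\ast)} a^{\prime\prime}$. Weak$\ast$-continuity of $\overline{\Psi}$ yields $\Psi(a_i) = \overline{\Psi}(a_i) \xrightarrow{\sigma(F,F_\ast)} \overline{\Psi}(a^{\prime\prime})$ in $F$. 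Each $\Psi(a_i)$ lies in $F_+$ by positivity of $\Psi$, and $F_+$ is weakly$\ast$ closed (the result of Ellis/Olubummo cited in Section~\ref{subsec:ouB}), so $\overline{\Psi}(a^{\prime\prime}) \in F_+$. Hence $\overline{\Psi}$ is positive, unital, weakly$\ast$-continuous, i.e.\ a \wstar-channel.

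No step looks genuinely hard; the only mild subtlety is remembering to invoke the right two facts (bipolar density of $E_+$ in $E^{\ast\ast}_+$ and weak$\ast$-closedness of $F_+$) rather than trying to prove positivity directly from the definition of $\overline{\Psi}$ as the dual of the restriction of $\Psi^\ast$ to $F_\ast$. One could alternatively argue via that dual picture, checking that $\Psi^\ast|_{F_\ast}$ maps $F_{\ast +}$ into $E^\ast_+$ and then dualizing, but the density argument above is cleaner.
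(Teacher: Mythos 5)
Your proof is correct and follows essentially the same route as the paper: unitality is immediate from the extension property, and positivity is obtained by approximating $a^{\prime\prime}\in E_+^{\ast\ast}$ with a net in $E_+$ (weak$\ast$ density via the bipolar theorem) and passing to the limit using the weak$\ast$ continuity of $\overline{\Psi}$ and the weak$\ast$ closedness of $F_+$. No gaps.
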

\begin{proof}
The unitality of $\ovl{\Psi}$ follows from $\ovl{\Psi} (u_E) = \Psi (u_E) = u_F .$
To show the positivity, 
take an element $a^\pprime \in E_+^\aast .$
Then there exists a net $(a_i)_{i \in I} $ in $E_+$ weakly$\ast$ converging to
$a^\pprime .$
Then since the positive cone $F_+$ is weakly$\ast$ closed, 
we have $\ovl{\Psi} (a^\pprime) = \lim_{i \in I} \Psi (a_i) \in F_+ ,$
where the limit is with respect to $\sigma (F,F_\ast) .$
Therefore $\Psi$ is a \wstar-channel.
\end{proof}
The following proposition implies that the \wstar-extension of a channel
is the least channel in the post-processing order that 
upper bounds the original channel (cf.\ \cite{kuramochi2018incomp}, Lemma~7).
\begin{prop} \label{prop:w*extch}
Let $\Psi \in \Ch (E\to F)$ and $\ovl{\Psi} \in \Chw (E^\aast \to F)$
be the same as in Proposition~\ref{prop:w*ext}.
Then for any \wstar-channel $\Phi \in \Chw (G \to F),$
where $G$ has a Banach predual $G_\ast ,$ 
$\Psi \pp \Phi $ if and only if
$\barPsi \pp \Phi .$
\end{prop}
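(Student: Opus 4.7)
The plan is to leverage the uniqueness part of Proposition~\ref{lemm:double} and the fact that the composition of two weakly$\ast$ continuous channels is again weakly$\ast$ continuous. Both directions reduce to a short manipulation once one identifies the correct candidate post-processing channel.

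For the \emph{if} direction, suppose $\barPsi \pp \Phi$, so there exists $\La^\pprime \in \Ch (E^\aast \to G)$ with $\barPsi = \Phi \circ \La^\pprime $. Restricting to $E \subset E^\aast$ gives the channel $\La := \La^\pprime |_E \in \Ch (E \to G)$ (positivity and unitality are preserved under restriction since $E_+ \subset E^\aast_+$ and $u_E = u_{E^\aast}$), and for $a \in E$ we have $\Psi (a) = \barPsi (a) = \Phi (\La^\pprime (a)) = \Phi (\La (a))$, hence $\Psi \pp \Phi$.

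For the \emph{only if} direction, suppose $\Psi \pp \Phi$, so $\Psi = \Phi \circ \La$ for some $\La \in \Ch (E \to G)$. Since $G$ has a Banach predual $G_\ast$, Proposition~\ref{prop:w*ext} furnishes the \wstar-extension $\barL \in \Chw (E^\aast \to G)$ of $\La$. Then $\Phi \circ \barL \colon E^\aast \to F$ is a composition of weakly$\ast$ continuous linear maps and hence weakly$\ast$ continuous, and moreover a channel (composition of channels). On the subspace $E$ it agrees with $\Phi \circ \La = \Psi = \barPsi|_E$. By the uniqueness clause of Proposition~\ref{lemm:double}, the weakly$\ast$ continuous extension of $\Psi$ to $E^\aast$ is unique, so $\barPsi = \Phi \circ \barL$, which gives $\barPsi \pp \Phi$ via the channel $\barL$.

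There is essentially no technical obstacle here; the only point that needs care is verifying that $\Phi \circ \barL$ is genuinely weakly$\ast$ continuous as a map $E^\aast \to F$, i.e.\ $\sigma(E^\aast, E^\ast)$-to-$\sigma(F, F_\ast)$ continuous. This follows because $\barL$ is $\sigma(E^\aast, E^\ast)$-to-$\sigma(G, G_\ast)$ continuous by construction, and $\Phi$ is $\sigma(G, G_\ast)$-to-$\sigma(F, F_\ast)$ continuous by hypothesis; composing these yields the required continuity, which is precisely what the uniqueness statement in Proposition~\ref{lemm:double} needs in order to identify $\Phi \circ \barL$ with $\barPsi$.
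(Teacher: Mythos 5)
Your proof is correct and follows essentially the same route as the paper: for the ``only if'' direction both arguments take the \wstar-extension $\barL$ of $\La$ and invoke the uniqueness clause of Proposition~\ref{lemm:double} to conclude $\barPsi = \Phi \circ \barL$, and for the ``if'' direction your restriction of $\La^\pprime$ to $E$ is the same observation as the paper's remark that $\Psi \pp \barPsi$ via the inclusion $E \hookrightarrow E^\aast$. Your explicit check that $\Phi \circ \barL$ is weakly$\ast$ continuous is a welcome bit of extra care that the paper leaves implicit.
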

\begin{proof}
Assume $\Psi \pp \Phi .$
Then there exists a channel $\Lambda \in \Ch (E \to G)$
such that $\Psi = \Phi \circ \Lambda .$
Let $\ovl{\Lambda} \in \Chw (E^\aast \to G)$ be the \wstar-extension of $\Lambda .$
Then $\Phi \circ \ovl{\Lambda} \in \Chw (E^\aast \to F) $ 
and for any $a\in E$ we have
$\Phi \circ \ovl{\Lambda} (a) = \Phi \circ \Lambda (a) = \Psi (a) .$
Therefore the uniqueness of the \wstar-extension implies 
$\barPsi = \Phi \circ \ovl{\Lambda} \pp \Phi .$
The converse implication follows from $\Psi \pp \barPsi ,$ 
which holds because $\Psi$
is the restriction of $\barPsi$ to $E .$
\end{proof}
Let $\Phi \in \Ch (F \to E)$ and $\Psi \in \Ch (G \to E)$ be channels.
For $\lambda \in [0,1]$ we define the direct convex combination channel 
$\lambda \Phi \oplus (1-\lambda) \Psi \in \Ch (F \oplus G \to E)$
by
\[
	[ \lambda \Phi \oplus (1-\lambda ) \Psi]
	(a \oplus b)
	:=
	\lambda \Phi (a) + (1-\lambda ) \Psi (b) 
	\quad
	(a\oplus b \in F \oplus G ) .
\]
The channel $\lambda \Phi \oplus (1-\lambda ) \Psi$ corresponds 
to performing $\Phi$ and $\Psi$ independently with probabilities $\lambda$
and $1-\lambda ,$ respectively.
If $\Phi$ and $\Psi$ are \wstar-channels, 
so is $\lambda \Phi \oplus (1-\lambda ) \Psi .$
As mentioned in Section~\ref{sec:intro}, this operation is not closed in a set, but 
in this case defined on the class of channels with a fixed input space.

The first claim of the next proposition indicates that the convex operation is consistent with the post-processing order.
\begin{prop} \label{prop:direct}
\begin{enumerate}[1.]
\item \label{i:direct1}
Let $\Phi_i \in \Ch (F_i \to E)$ and $\Psi_i \in \Ch (G_i \to E)$
$(i=1,2)$ be channels.
Then $\Phi_1\pp \Phi_2$ and $\Psi_1 \pp \Psi_2$ imply
$\lambda \Phi_1 \oplus (1-\lambda ) \Psi_1 
\pp \lambda \Phi_2 \oplus (1-\lambda ) \Psi_2$
for any $\lambda \in [0,1] .$
\item \label{i:direct2}
If $\Psi , \Phi \in \Ch (F\to E )$ are channels with the common input and 
outcome spaces, then 
$\lambda \Psi + (1-\lambda ) \Phi \pp \lambda \Psi \oplus (1-\lambda ) \Phi $
for any $\la \in [0,1].$
\end{enumerate}
\end{prop}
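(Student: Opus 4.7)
The plan is to prove both claims by exhibiting explicit post-processing channels that realize the required factorizations. In both parts the candidate channel is built directly from the direct-sum order unit Banach space structure recalled in Section~\ref{subsec:ouB}: the positive cone of $F \oplus G$ is $\{a \oplus b \mid a \in F_+, b \in G_+\}$ and the order unit is $u_F \oplus u_G$. Once the candidate is written down, the verification reduces to checking linearity, positivity, unitality, and a one-line composition identity.

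For claim~\ref{i:direct1}, I would invoke the hypotheses $\Phi_1 \pp \Phi_2$ and $\Psi_1 \pp \Psi_2$ to fix channels $\Lambda_1 \in \Ch(F_1 \to F_2)$ and $\Lambda_2 \in \Ch(G_1 \to G_2)$ with $\Phi_1 = \Phi_2 \circ \Lambda_1$ and $\Psi_1 = \Psi_2 \circ \Lambda_2$. The natural post-processor is the componentwise map $\Theta \colon F_1 \oplus G_1 \to F_2 \oplus G_2$ defined by $\Theta(a \oplus b) := \Lambda_1(a) \oplus \Lambda_2(b)$. Linearity is immediate; positivity and unitality inherit from those of $\Lambda_1$ and $\Lambda_2$ together with the componentwise description of $(F_2 \oplus G_2)_+$ and $u_{F_2 \oplus G_2}$. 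A direct computation then gives $[\lambda \Phi_2 \oplus (1-\lambda)\Psi_2] \circ \Theta(a \oplus b) = \lambda \Phi_2(\Lambda_1(a)) + (1-\lambda)\Psi_2(\Lambda_2(b)) = \lambda \Phi_1(a) + (1-\lambda)\Psi_1(b)$, which by definition of the direct convex combination equals $[\lambda \Phi_1 \oplus (1-\lambda)\Psi_1](a \oplus b)$, yielding the desired post-processing relation.

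For claim~\ref{i:direct2}, the natural witness is the diagonal (or copy) map $\Delta \colon F \to F \oplus F$ given by $\Delta(a) := a \oplus a$. This map is manifestly linear, positive, and unital (since $\Delta(u_F) = u_F \oplus u_F = u_{F \oplus F}$), hence a channel. The verification is then a one-line composition: $[\lambda \Psi \oplus (1-\lambda)\Phi] \circ \Delta(a) = \lambda \Psi(a) + (1-\lambda)\Phi(a) = [\lambda \Psi + (1-\lambda)\Phi](a)$, which shows $\lambda \Psi + (1-\lambda)\Phi \pp \lambda \Psi \oplus (1-\lambda)\Phi$.

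There is no substantive obstacle in either part; the argument is entirely structural, using only Definition~\ref{def:ppch} of post-processing and the definition of the direct convex combination stated immediately before the proposition. The only point that requires any care is the bookkeeping of the positive cone and order unit of the finite direct sum in part~\ref{i:direct1}, which is unambiguously fixed by the construction in Section~\ref{subsec:ouB}.
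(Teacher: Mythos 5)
Your proposal is correct and coincides with the paper's own argument: claim~1 is proved there with exactly the componentwise channel $\Omega(a\oplus b):=\Theta(a)\oplus\Xi(b)$ built from the two witnessing post-processors, and claim~2 with the same diagonal channel $\Delta(a):=a\oplus a$. The verifications you describe (componentwise positive cone and order unit of the direct sum, plus the one-line composition identities) are precisely what the paper checks.
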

\begin{proof}
\begin{enumerate}[1.]
\item
By assumption there exist channels 
$\Theta \in \Ch (F_1 \to F_2)$
and 
$\Xi \in \Ch (G_1 \to G_2)$
such that $\Phi_1 = \Phi_2 \circ \Theta $
and $\Psi_1 = \Psi_2 \circ \Xi .$
We define $\Omega \in \Ch (F_1 \oplus F_2 \to G_1 \oplus G_2)$
by 
$\Omega (a \oplus b ) := \Theta (a) \oplus \Xi (b) $
$(a \oplus b \in F_1 \oplus F_2) .$
Then it readily follows that
\[
\lambda \Phi_1 \oplus (1-\lambda ) \Psi_1  =
[\lambda \Phi_2 \oplus (1-\lambda ) \Psi_2 ] \circ \Omega 
\pp
\lambda \Phi_2 \oplus (1-\lambda ) \Psi_2 . 
\]
\item
Define a channel $\Delta \in \Ch (F \to F \oplus F)$
by
$\Delta (a) := a \oplus a$
$(a \in F) .$
Then for each $a \in F ,$
$[\lambda \Psi \oplus (1-\lambda ) \Phi] \circ \Delta (a)
= \lambda \Psi (a) + (1-\lambda ) \Phi (a) ,
$
which implies 
$\lambda \Psi + (1-\lambda ) \Phi =
[\lambda \Psi \oplus (1-\lambda ) \Phi] \circ \Delta
\pp
\lambda \Psi \oplus (1-\lambda ) \Phi . 
$ \qedhere
\end{enumerate}
\end{proof}

\subsection{Measurements} \label{subsec:meas}
In this paper we consider \wstar-measurement as abstract 
GPT-to-classical channels, generalizing the quantum-to-classical channels.
This kind of formulation, 
rather than the ordinary way of considering 
POVMs or effect-valued measures (EVMs) (e.g.\ \cite{PhysRevA.97.062102}), 
is useful for developing the general theory of measurements
as in the succeeding sections. 

In the rest of this paper, unless otherwise stated, 
\textit{we fix an input order unit Banach space $(E , u_E)$
and its Banach predual $E_\ast .$}

A channel $\Psi \in \Ch (F\to E)$ is said to be a \textit{measurement}
if the outcome space $F$ is classical.
When $F$ is a classical space with a Banach predual,
then a \wstar-channel $\Psi \in \Chw (F\to E)$ is called a
\textit{\wstar-measurement}.
If we say that $\Psi \in \Chw (F \to E)$ is a \wstar-measurement,
we understand that $F$ is a classical space with the Banach predual $F_\ast .$

\begin{prop} \label{prop:postw*}
Let $\Psi \in \Chw (F \to E)$ and $\Phi \in \Chw (G \to E)$ be \wstar-measurements.
Then $\Psi \pp \Phi$ if and only if there exists a \wstar-channel 
$\Gamma \in \Chw (F \to G)$ such that $\Psi = \Phi \circ \Gamma .$
\end{prop}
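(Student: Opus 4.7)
The ``if'' direction is immediate: since $\Chw(F\to G) \subset \Ch(F\to G)$, any $\Gamma \in \Chw(F\to G)$ with $\Psi = \Phi \circ \Gamma$ witnesses $\Psi \pp \Phi$ by Definition~\ref{def:ppch}.

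For the converse, I would fix $\Lambda \in \Ch(F\to G)$ with $\Psi = \Phi \circ \Lambda$ and upgrade it to a \wstar-continuous channel realizing the same factorization. The plan uses the normal--singular decomposition for abelian \Wstar-algebras: since $F$ (with its Banach predual) is the self-adjoint part of an abelian \Wstar-algebra by the discussion in Section~\ref{subsec:classical}, every $\chi \in F^\ast$ decomposes uniquely as $\chi = \chi_n + \chi_s$ with $\chi_n \in F_\ast$ and $\chi_s$ singular, and the assignment $\chi \mapsto \chi_n$ is linear, positivity-preserving, and norm-contractive~\cite{takesakivol1}. Letting $\Lambda^\ast \colon G^\ast \to F^\ast$ be the Banach adjoint, I would define $T \colon G_\ast \to F_\ast$ by $T(\psi) := (\Lambda^\ast \psi)_n$; this map is bounded, linear, and positive. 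Its Banach adjoint $\Gamma_0 \colon F \to G$ (under the identifications $F = (F_\ast)^\ast$ and $G = (G_\ast)^\ast$) is then \wstar-continuous and positive with $\Gamma_0(u_F) \leq u_G$, the inequality coming from the positivity of the discarded singular part. The factorization $\Phi \circ \Gamma_0 = \Psi$ would follow from the identity $\Lambda^\ast(\Phi_\ast \phi) = \Psi_\ast \phi$ (a direct pairing computation using $\Psi = \Phi \circ \Lambda$): since $\Psi_\ast \phi \in F_\ast$ is already normal, $T(\Phi_\ast \phi) = \Psi_\ast \phi$, and dualizing yields $\Phi \circ \Gamma_0 = \Psi$.

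The main obstacle is that $\Gamma_0$ is only sub-unital in general: for $\psi \in G_\ast$ outside $\Phi_\ast(E_\ast)$ the singular part of $\Lambda^\ast \psi$ may be nonzero, so $\Gamma_0(u_F) \lneq u_G$ is possible. To repair this I would fix any normal state $\rho \in F_\ast$ (which exists since $F_\ast$ is nontrivial) and set
\[
\Gamma(a) := \Gamma_0(a) + \braket{\rho, a}\bigl(u_G - \Gamma_0(u_F)\bigr) .
\]
Then $\Gamma$ is positive (using $u_G - \Gamma_0(u_F) \in G_+$), unital ($\Gamma(u_F) = u_G$ since $\braket{\rho, u_F} = 1$), and \wstar-continuous (as $\Gamma_0$ and $a \mapsto \braket{\rho, a}$ both are). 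The correction term is annihilated by $\Phi$ because $\Phi(u_G - \Gamma_0(u_F)) = u_E - \Psi(u_F) = 0$ by unitality of $\Phi$ and $\Psi$, so $\Phi \circ \Gamma = \Phi \circ \Gamma_0 = \Psi$. Hence $\Gamma \in \Chw(F\to G)$ is the required \wstar-channel.
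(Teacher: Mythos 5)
Your proof is correct, and it reaches the factorization by a genuinely different route from the paper's. The paper first passes to the bidual: it takes the \wstar-extension $\ovl{\Psi}\in\Chw(F^{\aast}\to E)$, extracts from the proof of Proposition~\ref{prop:w*extch} a \wstar-channel $\Lambda\in\Chw(F^{\aast}\to G)$ with $\ovl{\Psi}=\Phi\circ\Lambda$, and then composes with a \wstar-channel $\Xi\in\Chw(F\to F^{\aast})$ satisfying $\braket{\vph,\Xi(a)}=\braket{\vph,a}$ for $\vph\in F_\ast$, whose existence is cited from \cite{gutajencova2007}. You instead stay on the predual side: you push $\Lambda^\ast$ through the normal--singular decomposition of $F^\ast$ to obtain a weakly$\ast$ continuous, positive, sub-unital $\Gamma_0$ that already satisfies $\Phi\circ\Gamma_0=\Psi$, and then restore unitality by an explicit rank-one correction which $\Phi$ annihilates because $\Phi(\Gamma_0(u_F))=\Psi(u_F)=u_E$. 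The two arguments rest on the same structural fact --- the normal part of $F^\ast$ is the range of a positive contractive linear projection, equivalently the corner of $F^{\aast}$ cut out by the central cover of the normal part --- but you make explicit the unitality repair that the paper delegates to the cited construction of $\Xi$ (which is, in effect, the adjoint of the normal-part projection plus a correction term of exactly your form). What your version buys is self-containedness: it needs only the Takesaki decomposition together with the identification of $F$ with the self-adjoint part of an abelian \Wstar-algebra from Section~\ref{subsec:classical}, rather than recycling Proposition~\ref{prop:w*extch} and an external lemma. One small point worth recording if you write this up: the normal--singular decomposition is usually stated for the complex dual of a \Wstar-algebra, so you should note that the normal-part projection preserves self-adjointness (immediate from uniqueness of the decomposition) and hence restricts to the real dual of $F$.
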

\begin{proof}
\lq\lq{}If\rq\rq{} part of the claim is obvious.
Assume $\Psi \pp \Phi .$
Then by the proof of Proposition~\ref{prop:w*extch} there exists a \wstar-channel
$\Lambda \in \Chw (F^\aast \to G)$ such that
$\barPsi = \Phi \circ \Lambda ,$
where $\barPsi \in \Chw (F^\aast \to E)$ is the \wstar-extension of $\Psi . $
From \cite{gutajencova2007} (in the proof of Lemma~3.12),
there exists a \wstar-channel $\Xi \in \Chw (F \to F^\aast)$
such that
$\braket{\vph , \Xi (a)} = \braket{\vph , a}$
$(\vph \in F_\ast , a \in F) .$
Then for $a\in F$ and $\psi \in E_\ast $ 
\[
	\braket{\psi , \barPsi \circ \Xi (a)}
	=
	\braket{\Psi^\ast (\psi) , \Xi (a)}
	=
	\braket{\Psi^\ast (\psi) , a}
	=
	\braket{ \psi , \Psi (a)} ,
\]
where we used $\Psi^\ast (\psi) \in F_\ast $ in the second equality.
This implies $\Psi = \barPsi \circ \Xi = \Phi \circ \Lambda \circ \Xi .$
Since $\Lambda \circ \Xi $ is a \wstar-channel, this proves the \lq\lq{}only if\rq\rq{}
part of the claim.
\end{proof}
As we can see from the proof, 
Proposition~\ref{prop:postw*} still holds when the outcome spaces $F$ and $G$ 
are relaxed to the self-adjoint parts of 
arbitrary \Wstar-algebras.

The above definition of \wstar-measurement is related to the more common notion of 
normalized EVM.
A triple $(X , \Sigma , \oM)$ is said to be an EVM on $E$ if
$\Sigma$ is a $\sigma$-algebra on a set $X$ and 
$\oM \colon \Sigma \to E_+$ is a map such that
\begin{enumerate}[(i)]
\item
$\oM (X) = u_E ,$ $\oM(\varnothing) = 0 ,$
\item
for any disjoint and countable family $(A_k)_{k \in \natn}$
$(\natn := \set{1,2, \dots})$
in $\Sigma ,$
$\oM( \bigcup_{k \in \natn} A_k) = \sum_{k \in \natn} \oM(A_k) ,$
where the RHS converges weakly$\ast$.
\end{enumerate}
For $\psi \in E_\ast$
(respectively, $\psi \in S_\ast (E)$)
the function 
$\mu^\oM_\psi \colon \Sigma \ni A \mapsto \braket{\psi , \oM(A)} \in \realn $  
is a signed (respectively, probability) measure.
Conversely for any affine map
\[
	S_\ast (E) \ni \psi \mapsto \nu_{\psi}
\]
that maps each weakly$\ast$ continuous state to a probability measure on a measurable space $(X, \Sigma),$ there exists a unique EVM $(X, \Sigma , \oM)$ such that $\nu_\psi = \mu^\oM_\psi$ $( \psi \in S_\ast (E)) .$

For a measurable space $(X , \Sigma) ,$ we denote by 
$B(X, \Sigma)$ the set of real bounded $\Sigma$-measurable 
functions on $X .$ 
Then the order unit Banach space $(B(X, \Sigma) , 1_X)$ equipped with the positive cone 
\[
B(X,\Sigma)_+ = \set{f \in B(X,\Sigma) | f(x) \geq 0 \, (\forall x \in  X)}
\]
is a classical space.

Let $(X, \Sigma , \oM)$ be an EVM on $E .$ 
For each function $f \in B(X , \Sigma) ,$ the integral
$\int_X f(x) d\oM (x) \in E $ is well-defined by
\[
	\Braket{ \int_X f(x) d\oM (x) , \psi}
	:=
	\int_X f(x) d\mu^\oM_\psi (x) 
	\quad
	(\psi \in E_\ast) .
\]
Then the map
\[
	\gamma^\oM \colon 
	B(X, \Sigma)
	\ni f \mapsto
	\int_X f(x) d\oM (x) \in E
\]
is a measurement and 
called the measurement associated with the EVM $\oM .$
The \wstar-extension $\Gamma^\oM \in \Chw (B(X, \Sigma)^\aast \to E)$
of $\gamma^\oM$ is called the \wstar-measurement associated with $\oM .$
Thus for each EVM $\oM$ there corresponds a natural \wstar-measurement 
$\Gamma^\oM .$
If $E = \LH_\sa$ for a separable Hilbert space
(or more generally $E$ is the self-adjoint part of a $\sigma$-finite \Wstar-algebra),
we can show that for EVMs $(X , \Sigma_1 ,\oM) $ and $(Y , \Sigma_2 ,\oN)$ on $E ,$ 
$\Gamma^\oM \pp \Gamma^\oN$ holds if and only if
there exists a weak Markov kernel $p(\cdot | \cdot)$ such that
$\oM(A) = \int_X p(A | y ) d\oN(y)$
\cite{kuramochi2017minimal,kuramochi2018incomp}.

Conversely, the following proposition indicates that 
any \wstar-measurement can be regarded as the associated \wstar-measurement
of an EVM up to post-processing equivalence.
\begin{prop} \label{prop:EVM}
For any \wstar-measurement $\Gamma \in \Chw (F \to E)$ there 
exists an EVM $(X , \Sigma, \oM)$ on $E$ such that 
$\Gamma \ppeq \Gamma^\oM .$
\end{prop}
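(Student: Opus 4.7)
My plan is to construct the EVM $\oM$ from the Boolean $\sigma$-algebra of projections of $F$ via the Loomis--Sikorski theorem, and then to use the abelian \Wstar-algebra structure of $B(X,\Sigma)^{\aast}$ to pin down both directions of post-processing.

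Since $F$ is a classical space with a Banach predual, the set $\mathcal{P}$ of projections of $F$ is a $\sigma$-complete Boolean algebra, so by the Loomis--Sikorski theorem there exist a set $X$, a $\sigma$-algebra $\Sigma$ on $X$, and a surjective Boolean $\sigma$-homomorphism $q \colon \Sigma \to \mathcal{P}$. I define
\[
	\oM(A) := \Gamma(q(A)) \in E_+ \qquad (A \in \Sigma).
\]
The EVM axioms are routine: $\oM(X) = \Gamma(u_F) = u_E$; positivity is immediate; and for pairwise disjoint $(A_n)$ the projections $q(A_n)$ are pairwise orthogonal and sum weakly$\ast$ in $F$ to $q(\bigcup_n A_n)$, so the weak$\ast$ continuity of $\Gamma$ gives weak$\ast$ $\sigma$-additivity.

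To relate $\Gamma^\oM$ and $\Gamma$, I extend $q$ by linearity and norm-continuity to a positive unital $\ast$-homomorphism $\hat q \colon B(X,\Sigma) \to F$; its weak$\ast$ extension $\bar q \in \Chw(B(X,\Sigma)^{\aast} \to F)$ (Proposition~\ref{prop:w*ext}) is again a $\ast$-homomorphism by standard \Wstar-algebra theory. A direct check on simple functions gives $\gamma^{\oM} = \Gamma \circ \hat q$, and as both $\Gamma^\oM$ and $\Gamma \circ \bar q$ are weakly$\ast$ continuous extensions of this map to $B(X,\Sigma)^{\aast}$, the uniqueness clause of Proposition~\ref{lemm:double} forces $\Gamma^\oM = \Gamma \circ \bar q$. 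The \wstar-channel $\bar q$ itself then witnesses $\Gamma^\oM \pp \Gamma$.

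For the reverse $\Gamma \pp \Gamma^\oM$ it suffices to exhibit a \wstar-channel $\Lambda \colon F \to B(X,\Sigma)^{\aast}$ with $\bar q \circ \Lambda = \mathrm{id}_F$, and this is the main obstacle. The image of $\hat q$ contains all projections of $F$, which are norm-dense by Proposition~\ref{prop:proj}, so $\bar q$ is a surjective weakly$\ast$ continuous $\ast$-homomorphism of abelian \Wstar-algebras; hence $\ker \bar q = P \cdot B(X,\Sigma)^{\aast}$ for a unique projection $P$, and $\bar q$ restricts to a \Wstar-isomorphism $(1-P) B(X,\Sigma)^{\aast} \to F$ whose inverse I call $j$. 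The natural $\ast$-algebra section $j$ fails to be a channel because it maps $u_F$ to $1-P \neq 1$, so I fix any $\varphi \in S_\ast(F)$ and set
\[
	\Lambda(a) := j(a) + \varphi(a) P \qquad (a \in F),
\]
which is linear, positive, weakly$\ast$ continuous, and unital since $\Lambda(u_F) = (1-P) + P = 1$; because $\varphi(a) P \in \ker \bar q$, the identity $\bar q \circ \Lambda(a) = a + \varphi(a) \bar q(P) = a$ is preserved, giving the required channel and completing the proof.
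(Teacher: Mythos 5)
Your proof is correct, but it takes a genuinely different route from the paper's. The paper's argument (Appendix~\ref{app:1}) represents the classical outcome space as $C(X)$ for a compact Hausdorff $X$ and invokes the Riesz--Markov--Kakutani-type representation theorem (Proposition~\ref{prop:RMK}) to produce a regular Borel EVM with $\Gamma(f)=\int_X f\, d\oM$; the relation $\Gamma \pp \Gamma^{\oM}$ is then immediate because $\Gamma$ is the restriction of $\gamma^{\oM}$ to the subalgebra $C(X)$, and the converse is obtained by dualizing the integration map $\mathbf{M}(X) \to B(X,\mathcal{B}(X))^{\ast}$ and appealing to Proposition~\ref{prop:w*extch}. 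You instead work entirely on the algebraic side: Loomis--Sikorski realizes the projection lattice of $F$ as a $\sigma$-homomorphic image of a $\sigma$-field, $\oM := \Gamma \circ q$ is $\sigma$-additive because bounded increasing sequences of projections converge weakly$\ast$ to their suprema and $\Gamma$ is weakly$\ast$ continuous, and both directions of the equivalence are witnessed by explicit \wstar-channels --- $\bar q$ one way and the section $\Lambda = j + \varphi(\cdot)P$ the other. The price is a heavier reliance on \Wstar-algebraic structure theory (normality of the extended $\ast$-homomorphism, the form $P \cdot M$ of weak$\ast$-closed ideals of an abelian \Wstar-algebra, normality of the inverse isomorphism), all of which is standard and consistent with the paper's identification of classical spaces with preduals as self-adjoint parts of abelian \Wstar-algebras; what you gain is that no topological representation or regularity of measures is needed and that the equivalence is exhibited by concrete channels in both directions rather than routed through Proposition~\ref{prop:w*extch}. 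Two small points of hygiene: the density you need from Proposition~\ref{prop:proj} is that of finite \emph{linear combinations} of projections (the simple elements), not of the projections themselves; and you should record that $S_\ast(F) \nono$ (so that $\varphi$ exists), which holds because $S_\ast(F)$ is a base of the generating cone $F_{\ast +}$.
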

Proposition~\ref{prop:EVM} can be shown analogously as in 
\cite{kuramochi2017minimal} (Proposition~3).
In Appendix \ref{app:1} we give another proof using the Riesz-Markov-Kakutani-type 
representation theorem for EVMs.

\subsection{Finite-outcome measurements} \label{subsec:fout}
A special class of EVMs called finite-outcome EVMs 
plays a fundamental role in the later sections of this paper.

Let $F$ be an order unit Banach space.
A family (map) $\oM = (\oM (x))_{x \in X} \in F^X $
is called a subnormalized finite-outcome EVM, or just a subnormalized EVM,
on $F$ if $X$ is a finite set called the outcome set of $\oM,$
$\oM(x) \geq 0$ $(x \in X) ,$
and $\sum_{x \in X} \oM(x) \leq u_F .$
A subnormalized EVM $(\oM (x))_{x \in X}$ is called a normalized finite-outcome EVM,
or just an EVM, 
if $\sum_{x \in X} \oM (x) = u_F .$
We write the sets normalized and subnormalized EVMs on $F$
with the outcome set $X$ 
by
$\evm (X ; E) $
and $\evmsub (X;E) ,$ respectively.
For each EVM $(\oM (x))_{x \in X}$ on $E$ there corresponds the associated 
\wstar-measurement
$\GM \in \Ch (\linf (X) \to E) = \Chw (\linf (X) \to E )$
defined by 
\[
\GM (f) = \sum_{x \in X} f(x) \oM (x) \quad
(f \in \linf (X)) ,
\]
where $\linf (X)$ denotes the classical space of (bounded) real functions on $X$
equipped with the order unit $1_X$ and the positive cone
\[
\linf (X)_+ = \set{f \in \linf (X) | f(x) \geq 0 \, (\forall x \in X)} .
\]
The classical space $\linf (X) = \linf (X)^\aast$ is finite-dimensional 
and conversely any finite-dimensional classical space $F$ is isomorphic 
to $\linf (\mathcal{P}_\mathrm{atom} (F)) ,$
where $\mathcal{P}_\mathrm{atom} (F)$ denotes the set of atomic projections in $F .$
The sets $\evm (X ; E)$ and $\evmsub (X;E)$ are compact convex subsets of 
$E^X$ equipped with the product topology $\sigma (E^X , E^X_\ast)$
of the weak$\ast$ topology $\sigma (E , E_\ast) .$
With respect to this topology on $\evm (X;E) ,$ the map
\[
	\evm (X;E) \ni \oM \mapsto
	\GM \in \Ch (\linf (X) \to E)
\]
is a continuous affine isomorphism, where 
the topology of $\Ch (\linf (X) \to E)$ is the BW-topology.

A \wstar-measurement $\Gamma \in \Ch (\linf (X) \to E)$
for some finite set $X$ is called finite-outcome.

For finite-outcome EVMs, the post-processing relation is characterized as follows.

\begin{prop} \label{prop:fEVM}
\begin{enumerate}[1.]
\item
For any finite-outcome EVM $\oM \in \evm (X;E)$
and a channel $\Lambda \in \Ch (F \to E) ,$
$\GM \pp \Lambda $ if and only if there exists an EVM $\oN \in \evm (X;F)$
such that $\oM (x) = \Lambda (\oN (x))$
$(\forall x \in X) .$
\item
For any finite-outcome EVMs $\oA \in \evm (X;E)$
and $\oB \in \evm (Y;E) ,$ 
$\Ga^\oA \pp \Ga^\oB$
if and only if there exists a stochastic matrix
\[
	p (\cdot | \cdot )
	\in
	\mathrm{Stoch}
	(X,Y)
	:=
	\{ q(\cdot | \cdot ) \in \realn^{X \times Y}
	\, | \,
	q(x|y) \geq 0 ,\, 
	\sum_{x^\prime \in X} q(x^\prime | y) =1
	\, (x \in X , y \in Y)
	\}
\]
such that 
\begin{equation}
	\oA (x) = \sum_{y \in Y} p(x|y)\oB(y) 
	\quad (x \in X) . 
	\label{eq:stoch1}
\end{equation}
\end{enumerate}
\end{prop}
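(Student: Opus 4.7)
The plan is to exploit the continuous affine isomorphism $\evm(X;F) \ni \oN \mapsto \GN \in \Ch(\linf(X) \to F)$ mentioned in Section~\ref{subsec:fout}, which reduces the post-processing statement about EVMs to a statement about channels on $\linf(X)$, where the post-processing relation is already defined via Definition~\ref{def:ppch}. The characteristic functions $\chi_{\{x\}} \in \linf(X)$ (the atomic projections) serve as the standard basis that converts a channel into its defining EVM values.

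For part 1, the \textquotedblleft if\textquotedblright{} direction is direct: given $\oN \in \evm(X;F)$ with $\oM(x) = \Lambda(\oN(x))$, I compute
\[
\Lambda \circ \GN (f) = \Lambda \Bigl( \sum_{x \in X} f(x) \oN(x) \Bigr) = \sum_{x \in X} f(x) \oM(x) = \GM(f)
\]
for every $f \in \linf(X)$, so $\GM = \Lambda \circ \GN \pp \Lambda$. For the \textquotedblleft only if\textquotedblright{} direction, assume $\GM \pp \Lambda$, so $\GM = \Lambda \circ \Gamma$ for some $\Gamma \in \Ch(\linf(X) \to F)$. Define $\oN(x) := \Gamma(\chi_{\{x\}})$. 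Positivity of $\Gamma$ gives $\oN(x) \geq 0$, while unitality combined with $\sum_{x \in X} \chi_{\{x\}} = 1_X$ yields $\sum_{x \in X} \oN(x) = \Gamma(1_X) = u_F$, so $\oN \in \evm(X;F)$. Finally $\oM(x) = \GM(\chi_{\{x\}}) = \Lambda(\Gamma(\chi_{\{x\}})) = \Lambda(\oN(x))$.

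For part 2, I apply part 1 with $F = \linf(Y)$ and $\Lambda = \Ga^\oB$. The condition becomes the existence of $\oN \in \evm(X; \linf(Y))$ with $\oA(x) = \Ga^\oB(\oN(x))$. Writing $p(x|y) := \oN(x)(y)$, the positivity $\oN(x) \geq 0$ translates to $p(x|y) \geq 0$, and the normalization $\sum_{x \in X} \oN(x) = 1_Y$ translates pointwise in $y$ to $\sum_{x \in X} p(x|y) = 1$, so $p(\cdot|\cdot) \in \mathrm{Stoch}(X,Y)$. The identity $\Ga^\oB(\oN(x)) = \sum_{y \in Y} \oN(x)(y) \oB(y) = \sum_{y \in Y} p(x|y) \oB(y)$ then gives \eqref{eq:stoch1}, and the correspondence is clearly a bijection between such $\oN$ and such stochastic matrices.

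I do not anticipate a substantive obstacle: both parts are essentially bookkeeping that unpacks the definitions through the EVM--channel identification on $\linf(X)$. The only point worth caring about is ensuring the correspondence $\oN \leftrightarrow p(\cdot|\cdot)$ in part 2 preserves positivity and normalization in both directions, which is automatic since $\linf(Y)_+$ consists exactly of the pointwise nonnegative functions.
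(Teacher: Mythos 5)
Your proof is correct and follows essentially the same route as the paper: part 1 via the affine isomorphism $\evm(X;F)\cong \Ch(\linf(X)\to F)$ and evaluation at the indicators $\delta_x$, and part 2 by identifying the intermediate channel $\Psi\in\Ch(\linf(X)\to\linf(Y))$ (equivalently your EVM $\oN\in\evm(X;\linf(Y))$) with a stochastic matrix via $p(x|y)=\Psi(\delta_x)(y)$. The only organizational difference is that you derive part 2 formally from part 1 with $F=\linf(Y)$, whereas the paper writes out the same computation directly; this is not a substantive distinction.
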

\begin{proof}
The claim~1 is immediate from the isomorphism between $\evm (X;F)$
and $\Ch (\linf (X) \to F) .$
To show the claim~2, assume $\Gamma^\oA \pp \Gamma^\oB$
and take a channel $\Psi \in \Ch (\linf (X) \to \linf (Y))$ such that 
$\Gamma^\oA = \Gamma^\oM \circ \Psi .$
Define a stochastic matrix $p(\cdot | \cdot ) \in \mathrm{Stoch} (X,Y)$
by
\begin{equation}
	 p(x|y) := \Psi (\delta_x) (y)
	\quad (x \in X , y \in Y) ,
	\label{eq:stoch2}
\end{equation}
where $\delta_x \in \linf (X)$ is given by
\[
	\delta_x (x^\prime)
	:=
	\begin{cases}
		1 & \text{if } x=x^\prime;
		\\
		0 & \text{otherwise.}
	\end{cases}
\]
Then we can easily check that $p (\cdot | \cdot)$ satisfies 
\eqref{eq:stoch1}.
Conversely, if \eqref{eq:stoch1} holds for some stochastic matrix
$p (\cdot | \cdot) ,$ 
then the channel $\Psi$ defined by \eqref{eq:stoch2} satisfies 
$\Gamma^\oA = \Gamma^\oB \circ \Psi .$
\end{proof}

An EVM $\oM \in \evm (X;E)$ is called trivial if each element $\oM(x)$
$(\xin )$ is proportional to $u_E .$
The associated \wstar-measurement $\GM$ is then minimal with respect to 
the post-processing order, i.e.\
$\GM \pp \La$ for any measurement (indeed, any channel) $\La .$

\section{Compact convex structure of measurements} \label{sec:ccs}
In this section we define the measurement space and the weak topology on it,
and prove some general properties of them.
Among these results, the most important one is Theorem~\ref{thm:finapp}, which states that any measurement can be approximated by a net of finite-outcome ones and will be used in the later application parts to reduce the discussions to the finite-outcome cases.

The results in this section are generalizations of the known facts in the theory of statistical experiments~\cite{lecam1986asymptotic,torgersen1991comparison}.
See Appendix~\ref{app:se} for how statistical experiments can be regarded as a special class of measurements.

\subsection{Gain functional and the Blackwell-Sherman-Stein (BSS) theorem}
We begin with the notion of gain functional, or state-discrimination probability functional, 
which will play a central role in this paper.
\begin{defi}[Ensemble and gain functional] \label{def:gain}
\begin{enumerate}[1.]
\item
For a finite set $X \neq \varnothing  ,$
a family $\E = (\vph_x)_{x \in X} \in E_\ast^X$ is called a \wstar-family.
The set $X$ is then called the label set of $\E . $
A \wstar-family $\E = (\vph_x)_{x \in X} $ is called an \textit{ensemble} if 
$\vph_x \geq 0$ $(x \in X)$
and the normalization condition $\sum_{x \in X} \braket{\vph_x , u_E} = 1$ holds.
\item
For a \wstar-family $\E = (\vph_x)_{x \in X}$ and a measurement
$\Gamma \in \Ch (F\to E) ,$ we define the gain functional by
\begin{equation}
	\Pg (\E ; \Gamma) :=
	\sup_{ \oM \in \evm (X;F)}
	\sum_{x \in X} \braket{\vph_x , \Gamma (\oM (x)) }.
	\label{eq:Pgdef}
\end{equation}
If $\E$ is an ensemble, the gain functional $\Pg (\E ; \Gamma)$
is occasionally called the state discrimination probability.
\qed
\end{enumerate}
\end{defi}
In the operational language, an ensemble
$\E = (\vph_x)_{x \in X}$ corresponds to 
the situation where system\rq{}s state is prepared to be
$\braket{\vph_x , u_E}^{-1} \vph_x$
with the probability $ \braket{\vph_x , u_E} .$
The value $\Pg (\E ; \Gamma)$ is then the optimal probability 
that we can properly guess the state label $x\in X$ when
we have access to the outcome of the measurement $\Gamma .$
Here each EVM $\oM \in \evm (X;F)$ in \eqref{eq:Pgdef} corresponds to a randomized decision rule of $\xin $ when the measurement outcome of $\Ga$ is given 
(cf. \cite{torgersen1991comparison}, Section~4.5).

If $\Ga$ is a \wstar-measurement in Definition~\ref{def:gain},
for each \wstar-family $\E = \vphxin$
we can always take an EVM $\oM \in \evm (X;F)$ that attains
the optimal value for $\Pg (\E ; \Ga) ,$
i.e.\
\[
	\Pg (\E ; \Ga)
	=
	\sum_{x \in X} \braket{\vph_x , \Gamma (\oM (x)) } .
\]

We remark that we can construct the theory developed in this section based instead on 
the loss functional defined by
\[
	L(\E ;\Gamma) :=
	\inf_{ \oM \in \evm (X;F)}
	\sum_{x \in X} \braket{\vph_x , \Gamma (\oM (x)) }
	=
	- \Pg ((- \vph_x)_\xin ; \Ga  )
\]
(cf.\ \cite{Buscemi2012,kaniowski2013quantum,doi:10.1063/1.5074187}).

Now we prove some elementary properties of the gain functional.
\begin{prop}\label{prop:gaineasy}
Let $\E = \vphxin$ be a \wstar-family and let 
$\Gamma \in \Ch (F \to E)$
and 
$\Lambda \in \Ch (G \to E)$
be measurements.
\begin{enumerate}[1.]
\item \label{i:gaineasy1}
$\Pg (\E ; \lambda \Gamma \oplus (1-\lambda ) \Lambda)
=
\lambda \Pg (\E ;  \Gamma) 
+
(1-\lambda ) \Pg (\E ; \Lambda ) 
$
for any $\lambda \in [0,1] .$
\item \label{i:gaineasy2}
$\Pg (\alpha \E ; \Gamma) = \alpha \Pg (\E ; \Gamma )$
for any $\alpha \in [0,\infty) .$
\item \label{i:gaineasy3}
There exist a positive number $\alpha > 0 ,$ 
a linear functional $\psi \in E_\ast ,$
and an ensemble $\E^\prime = (\vphx^\prime)_{x \in X}$ such that
$\vphx = \alpha \vphx^\prime + \psi .$
Then it also holds that 
$\Pg (\E ; \Gamma ) = \alpha \Pg (\E^\prime ; \Gamma)  + \braket{\psi , u_E} .$
\end{enumerate}
\end{prop}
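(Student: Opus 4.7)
For part~\ref{i:gaineasy1}, the key observation is that an EVM $\oM \in \evm(X; F \oplus G)$ decomposes uniquely as $\oM(x) = \oA(x) \oplus \oB(x)$ with $\oA \in \evm(X; F)$ and $\oB \in \evm(X; G)$, because the positivity of $\oM(x)$ forces $\oA(x) \in F_+$ and $\oB(x) \in G_+$ componentwise, and the normalization $\sum_x \oM(x) = u_F \oplus u_G$ factors into the normalizations of $\oA$ and $\oB$. Under this identification, the expression inside the supremum in \eqref{eq:Pgdef} for $\lambda \Gamma \oplus (1-\lambda)\Lambda$ splits as
\[
	\sum_{x \in X} \braket{\vphx, \lambda \Gamma(\oA(x)) + (1-\lambda) \Lambda(\oB(x))}
	= \lambda \sum_{x} \braket{\vphx, \Gamma(\oA(x))} + (1-\lambda) \sum_{x} \braket{\vphx, \Lambda(\oB(x))},
\]
so that the supremum may be taken independently in $\oA$ and $\oB$, giving the claimed identity. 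Part~\ref{i:gaineasy2} is immediate from linearity of the pairing and the fact that $\alpha \geq 0$ can be pulled out of the supremum.

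For part~\ref{i:gaineasy3}, the plan is to construct the decomposition using the fact that $E_\ast$ is a base-normed Banach space with base $S_\ast(E)$, so the positive cone $E_{\ast +}$ is generating. I would Jordan-decompose $\vphx = \vphx^+ - \vphx^-$ with $\vphx^\pm \in E_{\ast +}$, fix any nonzero state $\mu \in S_\ast(E)$ (using $E \neq \{0\}$), and set
\[
	\psi := -\sum_{x^\prime \in X} \vph_{x^\prime}^- - \mu,
	\qquad
	\wt{\vphx} := \vphx - \psi = \vphx^+ + \sum_{x^\prime \neq x} \vph_{x^\prime}^- + \mu.
\]
Then $\wt{\vphx} \in E_{\ast +}$ for each $x$, and $\alpha := \sum_{x \in X} \braket{\wt{\vphx}, u_E} \geq |X|\braket{\mu, u_E} > 0$. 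Putting $\vphx^\prime := \alpha^{-1} \wt{\vphx}$ yields an ensemble with $\vphx = \alpha \vphx^\prime + \psi$. The inclusion of the auxiliary $\mu$ term is what ensures $\alpha > 0$ even in degenerate situations (for instance when all $\vphx$ coincide), which is the only subtle point in the construction.

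For the gain-functional identity, I would simply substitute the decomposition into \eqref{eq:Pgdef}: for every $\oM \in \evm(X; F)$,
\[
	\sum_{x} \braket{\vphx, \Gamma(\oM(x))}
	= \alpha \sum_{x} \braket{\vphx^\prime, \Gamma(\oM(x))} + \Braket{\psi, \Gamma\Bigl(\sum_{x} \oM(x)\Bigr)}
	= \alpha \sum_{x} \braket{\vphx^\prime, \Gamma(\oM(x))} + \braket{\psi, u_E},
\]
using unitality $\Gamma(u_F) = u_E$. Since $\alpha > 0$ and $\braket{\psi, u_E}$ does not depend on $\oM$, taking the supremum over $\oM$ on both sides delivers the identity $\Pg(\E; \Gamma) = \alpha \Pg(\E^\prime; \Gamma) + \braket{\psi, u_E}$. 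No serious obstacle is expected beyond the bookkeeping in part~\ref{i:gaineasy3}; parts~\ref{i:gaineasy1} and~\ref{i:gaineasy2} are essentially unwinding of definitions.
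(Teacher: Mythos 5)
Your proposal is correct and follows essentially the same route as the paper's proof: part~1 via the identification of $\evm(X;F\oplus G)$ with pairs in $\evm(X;F)\times\evm(X;G)$ so that the supremum splits, and part~3 via a Jordan decomposition of each $\vphx$ followed by a positive shift and the unitality identity $\sum_x \oM(x)=u_F$, $\Gamma(u_F)=u_E$. The only cosmetic difference is that you guarantee $\alpha>0$ by folding a fixed state $\mu$ into $\psi$, whereas the paper first perturbs the Jordan components $\vphx^\pm$ to be non-zero; both devices work equally well.
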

\begin{proof}
By noting 
\begin{align*}
	&\evm (X ; F \oplus G)
	\\ 
	&=
	\set{
	(\oM (x) \oplus \oN(x))_{x \in X} \in (F \oplus G)^X
	|
	\oM \in \evm (X; F) ,
	\, 
	\oN \in \evm (X ;G)
	}
\end{align*}
we obtain 
\begin{align*}
	&\Pg (\E ; \lambda \Gamma \oplus (1-\lambda ) \Lambda)
	\\
	&=
	\sup_{\oM \in \evm (X; F) ,
	\, 
	\oN \in \evm (X ;G)}
	\sum_{x \in X}
	\braket{\vphx , \lambda \Gamma (\oM (x) )  + (1- \lambda ) \Lambda (\oN (x))}
	\\
	&=
	\lambda \sup_{\oM \in \evm (X; F)} \sum_{x \in X} 
	\braket{\vphx , \Gamma  (\oM (x))} 
	+ 
	(1- \lambda ) 
	\sup_{\oN \in \evm (X ;G)}
	\sum_{x \in X}
	\braket{\vphx , \Lambda (\oN(x))}
	\\
	&=
	\lambda \Pg (\E ;  \Gamma) 
	+
	(1-\lambda ) \Pg (\E ; \Lambda ) ,
\end{align*}
which proves the claim~\ref{i:gaineasy1}.
The claim~\ref{i:gaineasy2} is evident from the definition.

We now show the claim~\ref{i:gaineasy3}.
Since $E_{\ast +}$ is generating, we have a decomposition
$\vphx = \vphx^+ - \vphx^-$
$( \vphx^\pm \in E_{\ast+})$
for each $x \in X.$
By adding a common non-zero functional $\vph \in E_{\ast +}$
to $\vphx^\pm$ if necessary, we may assume $\vphx^\pm \neq 0$
for all $x \in X .$
Define $\psi : = - \sum_{x \in X} \vphx^- .$
Then $\vph_x - \psi$ is positive and non-zero for all $x\in X . $
Therefore we may write $\vph_x = \alpha \vphx^\prime + \psi$
$(x\in X)$
and $\E^\prime = (\vphx^\prime)_{x \in X}$ is an ensemble,
where $\alpha := \sum_{x \in X} \braket{\vph_x - \psi , u_E} > 0$
and $\vphx^\prime := \alpha^{-1} (\vphx - \psi) .$ 
The rest of the claim follows from the definition.
\end{proof}
By Proposition~\ref{prop:gaineasy}.\ref{i:gaineasy3}, any gain functional coincides with a state discrimination probability functional up to a positive factor and a constant functional.

The following BSS theorem states that the family of the gain functionals 
completely characterizes the post-processing order relation for \wstar-measurements.
While we can prove the following theorem using the corresponding result for statistical experiments~\cite{lecam1986asymptotic,torgersen1991comparison} and Proposition~\ref{prop:msmap} in Appendix~\ref{app:se},
here we give a direct proof based on the line of \cite{doi:10.1063/1.5074187}.
The finite division of classical space used in the proof are also of great importance in the later development of the theory.

\begin{thm}[BSS theorem for \wstar-measurements] \label{thm:bss}
Let $\Gamma \in \Chw (F \to E)$ and 
$\Lambda \in \Chw (G \to E)$ be \wstar-measurements.
Then the following conditions are equivalent.
\begin{enumerate}[(i)]
\item \label{i:bss1}
$\Gamma \pp \Lambda .$
\item \label{i:bss2}
$\Pg (\E ; \Gamma) \leq \Pg (\E ; \Lambda)$
for any \wstar-family $\E .$
\item \label{i:bss3}
$\Pg (\E ; \Gamma) \leq \Pg (\E ; \Lambda)$
for any ensemble $\E .$
\item \label{i:bss4}
For each EVM $\oM \in \evm (X;F)$ there exists an EVM
$\oN \in \evm (X;G)$ such that 
$\Gamma (\oM(x)) = \Lambda (\oN(x))$
$(x \in X) .$
\end{enumerate}
\end{thm}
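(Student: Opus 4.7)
The plan is to prove (i) $\Rightarrow$ (iv) $\Rightarrow$ (ii) $\Leftrightarrow$ (iii) $\Rightarrow$ (iv) $\Rightarrow$ (i), the main step being (iv) $\Rightarrow$ (i). For the two easy directions starting from (i): (i) $\Rightarrow$ (iv) follows from Proposition~\ref{prop:postw*}, which provides a \wstar-channel $\Theta \in \Chw(F \to G)$ with $\Gamma = \Lambda \circ \Theta$, so that pushing any EVM $\oM \in \evm(X;F)$ forward by $\Theta$ produces the required EVM on $G$; and (iv) $\Rightarrow$ (ii) is a direct computation substituting the witness $\oN$ from (iv) into the definition of the gain functional and taking a supremum. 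The equivalence (ii) $\Leftrightarrow$ (iii) is an immediate consequence of Proposition~\ref{prop:gaineasy}.\ref{i:gaineasy3}.

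For (ii) $\Rightarrow$ (iv) the plan is a Hahn-Banach separation in $E^X$: for a fixed $\oM \in \evm(X;F)$, the set $K := \{ (\Lambda(\oN(x)))_\xin : \oN \in \evm(X;G)\}$ is \wstar-compact and convex in $\sigma(E^X, E_\ast^X)$ (because $\evm(X;G)$ is \wstar-compact and $\Lambda$ is \wstar-continuous), and if $(\Gamma(\oM(x)))_\xin \notin K$ then a separating \wstar-family $\E \in E_\ast^X$ would give $\Pg(\E;\Gamma) > \Pg(\E;\Lambda)$, violating (ii).

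The hard step (iv) $\Rightarrow$ (i) will exploit the classical structure of $F$. The plan is to introduce the directed set $\mathcal{D}$ of finite partitions $\mathcal{Q} = \{Q_1,\ldots,Q_n\}$ of $u_F$ into nonzero pairwise orthogonal projections, directed by refinement (common refinements $\{Q_i \cdot Q_j'\}$ exist thanks to the classical product on $F$). For each $\mathcal{Q} \in \mathcal{D}$, apply (iv) to the EVM $(Q_i)_{i=1}^n$ to extract $\oN_{\mathcal{Q}} \in \evm(\{1,\ldots,n\};G)$ with $\Lambda(\oN_{\mathcal{Q}}(i)) = \Gamma(Q_i)$, and select states $\omega_i \in S(F)$ concentrated on $Q_i$ (concretely, $\omega_i(a) := \nu_i(Q_i \cdot a)$ for any state $\nu_i$ of the classical subalgebra $Q_i \cdot F$ with unit $Q_i$, so that $\omega_i(Q_j) = \delta_{ij}$). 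The assembled map $\Theta_{\mathcal{Q}}(a) := \sum_{i=1}^n \braket{\omega_i, a}\, \oN_{\mathcal{Q}}(i)$ is a channel in $\Ch(F \to G)$ satisfying $\Lambda(\Theta_{\mathcal{Q}}(P)) = \Gamma(P)$ for every projection $P$ in the subalgebra generated by $\mathcal{Q}$, since such a $P$ is a sum of atoms $Q_i$. Then invoke BW-compactness of $\Ch(F \to G)$ (Proposition~\ref{prop:BWcompact}) to produce a cluster point $\Theta \in \Ch(F \to G)$; for each nontrivial projection $P$, the subset of $\mathcal{Q}$ refining $\{P, u_F - P\}$ is cofinal in $\mathcal{D}$ and $\Lambda(\Theta_{\mathcal{Q}}(P)) = \Gamma(P)$ is constant on it, so the cluster-point property combined with the BW-to-\wstar-continuity of $\Theta' \mapsto \Lambda(\Theta'(P))$ and the separation of $E$ by $E_\ast$ forces $\Gamma(P) = \Lambda(\Theta(P))$. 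Finally, Proposition~\ref{prop:proj}.1 (norm density of finite sums of projections in $F$) together with norm continuity extends the identity to $\Gamma = \Lambda \circ \Theta$, giving $\Gamma \pp \Lambda$. The principal obstacle is precisely this passage from the pointwise EVM witnesses of (iv) to a single global channel; it is overcome by using the abundance of finite projection partitions in the classical outcome space $F$ to build a net of finite-dimensional witnesses $\Theta_{\mathcal{Q}}$ and by extracting a limit using BW-compactness of the channel space.
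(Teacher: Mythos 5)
Your proposal is correct and follows essentially the same route as the paper: the equivalence of (ii) and (iii) via Proposition~\ref{prop:gaineasy}.\ref{i:gaineasy3}, the implication (ii)$\implies$(iv) by Hahn--Banach separation from the weakly$\ast$ compact convex set $\set{(\Lambda(\oN(x)))_{x\in X} \mid \oN \in \evm(X;G)}$, and the key step (iv)$\implies$(i) by applying (iv) to the EVMs arising from finite projection partitions of $u_F$ and extracting a limit channel by compactness. The only cosmetic difference is that you promote each finite-dimensional witness to a genuine channel on all of $F$ by composing with a conditional expectation $a \mapsto \sum_i \braket{\omega_i,a} Q_i$ and then use BW-compactness of $\Ch(F\to G)$, whereas the paper keeps the witnesses on the subalgebras $F_\Delta$, extends them by zero, and applies Tychonoff directly (Lemma~\ref{lemm:supm}); both limit arguments are the same in substance.
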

For the proof of Theorem~\ref{thm:bss} 
we introduce here the concept of finite division.
Let $F$ be a classical space with a Banach predual $F_\ast .$
A finite subset $\Delta \subset F$ is said to be a \textit{finite division} of $F$ if 
each element $Q \in \Delta $ is a non-zero projection 
and $\sum_{Q \in \Delta} Q = u_F .$
The set of finite divisions on $F$ is denoted by $\DF .$
For each $\Delta \in \DF$ we write as $F_\Delta : = \lin (\Delta) ,$
which is the finite-dimensional subalgebra of $F$ generated by $\Delta .$
For finite divisions $\Delta , \Delta^\prime \in \DF ,$
$\Delta^\prime$ is said to be finer than $\Delta ,$ written as 
$\Delta \leq \Delta^\prime ,$
if $Q = \sum_{R \in \Delta^\prime \colon R \leq Q} R$
for all $Q \in \Delta .$
Then $\leq$ is a directed partial order on $\DF .$
An element of the subalgebra $\bigcup_{\Delta \in \DF} F_\Delta \subset F$
is said to be a simple element.
Note that the set of simple elements is norm dense in $F $
by Proposition~\ref{prop:proj}. 
If $F$ is the real $L^\infty$-space of a some ($\sigma$-)finite measure,
then the set of simple elements is exactly the set of 
measurable simple functions. 

\begin{lemm}[cf.\ \cite{kuramochi2018directedv1}, Lemma~5]
\label{lemm:supm}
Let $\Gamma \in \Chw (F \to E)$ and 
$\Lambda \in \Chw (G \to E)$ be \wstar-measurements and 
let $\Gamma_\Delta \in \Chw (F_\Delta \to E)$ denote 
the restriction of $\Gamma$ to $F_\Delta .$
Then $\Gamma_\Delta \pp \Lambda$ 
$(\forall \Delta \in \DF)$ implies
$\Gamma \pp \Lambda .$
\end{lemm}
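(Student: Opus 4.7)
The strategy is to use compactness of $\Ch(F \to G)$ in the BW-topology to aggregate the witnesses $\Xi_\Delta$ provided for each finite division $\Delta$ into a single witness $\Xi$ that realizes $\Gamma = \Lambda \circ \Xi$ globally. The hypothesis forces a \emph{consistent behaviour} on each $F_\Delta$, but only in the limit; compactness furnishes the limit.

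First, for each $\Delta \in \DF$, I would invoke the hypothesis $\Gamma_\Delta \pp \Lambda$ to obtain a channel $\Xi_\Delta \in \Ch(F_\Delta \to G)$ with $\Gamma_\Delta = \Lambda \circ \Xi_\Delta$ (no w*-continuity issue since $F_\Delta$ is finite-dimensional). To lift $\Xi_\Delta$ to all of $F$, I would construct a conditional-expectation-type channel $\condi_\Delta \in \Ch(F \to F_\Delta)$ that is the identity on $F_\Delta$: for each $Q \in \Delta$, pick a state $\omega_Q \in S_\ast(F)$ supported on $Q$ (which exists by Proposition~\ref{prop:proj} applied to the classical corner associated with the nonzero projection $Q$), and set $\condi_\Delta(a) := \sum_{Q \in \Delta} \braket{\omega_Q, a}\, Q$. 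Positivity and unitality are immediate, and identity on $F_\Delta$ follows from $\braket{\omega_Q, Q^\prime} = \delta_{Q, Q^\prime}$ for $Q, Q^\prime \in \Delta$. Set $\wt{\Xi}_\Delta := \Xi_\Delta \circ \condi_\Delta \in \Ch(F \to G)$.

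Since $G$ has a Banach predual, Proposition~\ref{prop:BWcompact} yields that $\Ch(F \to G)$ is BW-compact. The common-refinement construction makes $\DF$ a directed set, so $(\wt{\Xi}_\Delta)_{\Delta \in \DF}$ is a net therein and admits a BW-convergent subnet $(\wt{\Xi}_{\Delta_i})_{i \in I}$ with limit $\Xi \in \Ch(F \to G)$. To verify $\Gamma = \Lambda \circ \Xi$, I would argue by density. For a simple element $a \in F_{\Delta_0}$, cofinality of the subnet gives $\Delta_i \geq \Delta_0$ eventually, whence $\condi_{\Delta_i}(a) = a$ and therefore
\[
\Lambda(\wt{\Xi}_{\Delta_i}(a)) = \Lambda(\Xi_{\Delta_i}(a)) = \Gamma_{\Delta_i}(a) = \Gamma(a).
\]
Using the w*-continuity of $\Lambda$ (so that $\braket{\psi, \Lambda(\cdot)} = \braket{\Lambda_\ast(\psi), \cdot}$ with $\Lambda_\ast(\psi) \in G_\ast$), BW-convergence of the subnet gives $\braket{\psi, \Lambda(\Xi(a))} = \braket{\psi, \Gamma(a)}$ for every $\psi \in E_\ast$, i.e., $\Lambda(\Xi(a)) = \Gamma(a)$ on every simple $a$. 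Simple elements are norm-dense in $F$ by Proposition~\ref{prop:proj}, so norm-continuity of both sides extends the identity to all of $F$, yielding $\Gamma \pp \Lambda$.

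The main obstacle is incompatibility across refinements: for $\Delta^\prime \geq \Delta$, the witnesses $\Xi_\Delta$ and $\Xi_{\Delta^\prime}$ need not satisfy $\Xi_{\Delta^\prime}|_{F_\Delta} = \Xi_\Delta$, so a direct-limit construction fails and the net $(\wt{\Xi}_\Delta)_\Delta$ itself need not converge. The conditional-expectation extension places all candidates in a single compact space where they behave correctly on any fixed $F_{\Delta_0}$ eventually, and BW-compactness extracts a cluster point that works uniformly. The only technicality requiring care is the existence of the states $\omega_Q$ supported on each $Q \in \Delta$, which is a standard consequence of the classical structure with predual.
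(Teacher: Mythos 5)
Your proof is correct, and its skeleton --- extract a limiting channel from the local witnesses by compactness, check agreement with $\Gamma$ on simple elements using the weak$\ast$ continuity of $\Lambda$, and conclude by norm density of the simple elements --- is the same as the paper's. The one genuine difference is how the local witnesses $\Xi_\Delta \in \Ch (F_\Delta \to G)$ are placed inside a single compact space. The paper extends each witness to $F_0 = \bigcup_{\Delta \in \DF} F_\Delta$ by declaring it zero off $F_\Delta$, applies Tychonoff's theorem to these (non-linear, only partially defined) maps, and then must verify that the pointwise weak$\ast$ limit is linear, unital, and positive on $F_0$ before extending it by density to a channel on all of $F$. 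You instead compose each $\Xi_\Delta$ with a conditional expectation $\condi_\Delta \in \Ch (F \to F_\Delta) ,$ so that every candidate is an honest element of the BW-compact set $\Ch (F \to G)$ (Proposition~\ref{prop:BWcompact}) and the cluster point is automatically a channel, with no separate verification or extension step. The price is the construction of $\condi_\Delta ,$ i.e.\ the existence of a weakly$\ast$ continuous state supported under each $Q \in \Delta$; this is precisely the device the paper itself deploys later in the proof of Theorem~\ref{thm:finapp} (it follows from the predual being separating rather than directly from Proposition~\ref{prop:proj}, but that is a matter of attribution, not of substance). Both routes are sound; yours trades the limit-verification bookkeeping for the conditional-expectation construction and is, if anything, slightly cleaner.
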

\begin{proof}
By assumption for each $\Delta \in \DF $
there exists $\Psi_\Delta \in \Ch (F_\Delta \to G)$
such that $\Gamma_\Delta = \Lambda \circ \Psi_\Delta .$
Let $F_0 := \bigcup_{\Delta \in \DF} F_\Delta $
and define a map $\wt{\Psi }_\Delta \colon F_0 \to G$
by
\[
	\wt{\Psi}_\Delta (a)
	:=
	\begin{cases}
		\Psi_\Delta (a) & \text{if $a\in F_\Delta;$} \\
		0 & \text{otherwise.}
	\end{cases}
\]
Then since $\| \wt{\Psi}_\Delta(a) \| \leq \| a \|$
$(\Delta \in \DF , a \in F_0) ,$
Tychonoff\rq{}s theorem implies that 
there exist a subnet $(\wt{\Psi}_{\Delta(i)})_{i \in I}$
and a map $\Psi_0 \colon F_0 \to G$ such that
$\wt{\Psi}_{\Delta(i)} (a) \wsto \Psi_0 (a) \in (G)_{\| a \| } $
for each $a \in F_0 .$
Then $\Psi_0$ is a unital bounded linear map that maps a positive element in $F_0$
to a positive one in $G. $
Therefore, since $F_0$ is norm dense in $F ,$
$\Psi_0$ is uniquely extended to a channel $\Psi \in \Ch (F \to G) .$
Then for every $a \in F_0 ,$
we have 
\[
	\Lambda \circ \Psi (a) 
	=
	\lim_{i \in I}
	\Lambda \circ \wt{\Psi}_{\Delta (i)} (a)
	=\Gamma (a),
\]
where we used the weak$\ast$ continuity of $\Lambda$ in the first equality.
By the norm density of $F_0$ in  $F ,$
this implies $ \Gamma  = \Lambda \circ \Psi \pp \La  ,$ which proves the claim.
\end{proof}
\noindent
\textit{Proof of Theorem~\ref{thm:bss}.}
\eqref{i:bss1}$\implies$\eqref{i:bss2}.
Assume \eqref{i:bss1} and take a channel $\Psi \in \Ch (F \to G)$
such that $\Gamma = \Lambda \circ \Psi . $
Let $\E = (\vphx)_{x\in X}$ be a \wstar-family.
Then 
\begin{align*}
	\Pg (\E ; \Gamma )
	&=
	\sup_{\oM \in \evm (X;F)}
	\sum_{x \in X} \braket{\vphx , \Gamma (\oM (x) )}
	\\
	&=
	\sup_{\oM \in \evm (X;F)}
	\sum_{x \in X} \braket{\vphx , \Lambda \circ \Psi (\oM (x) )}
	\\
	& \leq 
	\sup_{\oN \in \evm (X ; G)}
	\sum_{x \in X} \braket{\vphx , \Lambda (\oN (x) )}
	\\
	&=
	\Pg (\E ; \Lambda) ,
\end{align*}
where the inequality follows from 
$\set{(\Psi (\oM(x)))_{x \in X} | \oM \in \evm (X ; F)  }
\subset \evm (X;G) .$

\eqref{i:bss2}$\implies$\eqref{i:bss3} is obvious.

\eqref{i:bss3}$\implies$\eqref{i:bss2} follows from Proposition~\ref{prop:gaineasy}.\ref{i:gaineasy3}.

\eqref{i:bss2}$\implies$\eqref{i:bss4} can be shown similarly as in 
\cite{doi:10.1063/1.5074187} (Proposition~2)
by applying the Hahn-Banach separation theorem to 
$\set{(\Lambda(\oN (x)))_{x \in X} | \oN \in \evm (X; G)} .$

\eqref{i:bss4}$\implies$\eqref{i:bss1}.
Assume \eqref{i:bss4}.
Since $(Q )_{Q\in \Delta} \in \evm (\Delta ;F)$
for any $\Delta \in \DF ,$ the assumption \eqref{i:bss4} implies
that there exists an EVM $\oN_\Delta \in \evm (\Delta ; G)$
such that $\Gamma (Q )  = \Lambda (\oN_\Delta (Q))$
$(Q \in \Delta ) .$
Define a channel $\Phi_\Delta \in \Ch (F_\Delta \to G) $ by
$\Phi_\Delta (Q) := \oN_\Delta (Q)  $
$(Q \in \Delta ) .$
Then, for any $\Delta \in \DF ,$ we have 
$\Gamma_\Delta = \Lambda \circ \Phi_\Delta \pp \Lambda ,$
where $\Gamma_\Delta$ is the restriction of $\Gamma $ to $F_\Delta .$
Therefore Lemma~\ref{lemm:supm} implies $\Gamma \pp \Lambda .$
\qed

For a general pair of measurements which are not necessarily weakly$\ast$ continuous, a theorem corresponding to Theorem~\ref{thm:bss} will be

\begin{thm} \label{thm:prebss}
Let $\Gamma \in \Ch (F \to E)$ and $\Lambda \in \Ch (G \to E)$
be measurements and let 
$\barG \in \Chw (F^\aast \to  E)$
and 
$\barL \in \Chw (G^\aast \to  E)$
be the \wstar-extensions of $\Gamma $ and $\Lambda ,$
respectively.
Then the following conditions are equivalent.
\begin{enumerate}[(i)]
\item \label{i:pbss1}
$\Gamma \pp \barL .$
\item \label{i:pbss2}
$\barG \pp \barL .$
\item \label{i:pbss3}
$\Pg (\E ; \Gamma ) \leq \Pg (\E ; \Lambda )$
for any ensemble $\E .$
\end{enumerate}
\end{thm}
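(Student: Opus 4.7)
My plan is to reduce Theorem~\ref{thm:prebss} to the BSS theorem (Theorem~\ref{thm:bss}) applied to the pair of \wstar-measurements $(\barG, \barL)$, with the bridge to \eqref{i:pbss3} being a key identity $\Pg(\E;\Phi) = \Pg(\E;\barPhi)$ valid for any measurement $\Phi \in \Ch(F\to E)$ and any ensemble $\E$. First I would dispatch \eqref{i:pbss1}$\Leftrightarrow$\eqref{i:pbss2} immediately by Proposition~\ref{prop:w*extch}: taking $\Psi = \Gamma$ and $\Phi = \barL$, and noting that $\barL \in \Chw(G^\aast \to E)$ is a \wstar-channel by Proposition~\ref{prop:w*ext}, the proposition gives $\Gamma \pp \barL \Leftrightarrow \barG \pp \barL$. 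Applying Theorem~\ref{thm:bss} to the \wstar-pair $(\barG, \barL)$ then yields $\barG \pp \barL$ iff $\Pg(\E;\barG) \leq \Pg(\E;\barL)$ for every ensemble $\E$, so the theorem reduces to the key identity above (applied to both $\Gamma$ and $\Lambda$).

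For the key identity, the inequality $\Pg(\E;\Phi) \leq \Pg(\E;\barPhi)$ is immediate from $\Phi \pp \barPhi$ (since $\Phi$ is the restriction of $\barPhi$ to $F \subset F^\aast$) together with the implication \eqref{i:bss1}$\Rightarrow$\eqref{i:bss2} of Theorem~\ref{thm:bss}, whose proof does not rely on \wstar-continuity. For the reverse inequality I would pass to the adjoint picture: the construction in Proposition~\ref{lemm:double} shows that the predual $(\barPhi)_\ast$ coincides with the restriction $\Phi^\ast|_{E_\ast}$, so both $\Pg(\E;\Phi)$ and $\Pg(\E;\barPhi)$ can be rewritten as suprema of the same linear objective $\sum_x \braket{\Phi^\ast(\vphx), b_x}$ with identical coefficients $\Phi^\ast(\vphx) \in F^\ast$, one over $\evm(X;F)$ and the other over $\evm(X;F^\aast)$. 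Since $F$ is classical, the dual $F^\ast$ is a Banach lattice (indeed an abstract L-space), and the Riesz-Kantorovich formula identifies each optimum with $\braket{\bigvee_x \Phi^\ast(\vphx), u_F}$; the lattice join is intrinsic to $F^\ast$ and $u_F$ is the same element in $F$ and in $F^\aast$, so the two optima agree.

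The main obstacle is precisely this last step, which is essentially the density of $\evm(X;F)$ in $\evm(X;F^\aast)$ in the product $\sigma(F^\aast,F^\ast)$-topology, or equivalently the Riesz-Kantorovich identification of the two optima. The case $|X|=2$ follows directly from Kaplansky density applied to the order interval: approximate $c_1 \in F^\aast$ with $0 \leq c_1 \leq u_F$ in the weak$\ast$ topology by $b_1 \in F$ with $0 \leq b_1 \leq u_F$, and set $b_2 := u_F - b_1$. The case $|X| \geq 3$ is subtler because the summation constraint couples the coordinates, and one must either invoke the Banach-lattice formula directly or chain Kaplansky-type approximations within suitably reduced subalgebras of $F$. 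All the substantive analytic content of the theorem is concentrated in this density/lattice step, which relies crucially on the classical structure of the outcome space $F$.
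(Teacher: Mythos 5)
Your proof is correct and its skeleton coincides with the paper's: the equivalence (i)$\iff$(ii) is exactly Proposition~\ref{prop:w*extch}, and (ii)$\iff$(iii) is Theorem~\ref{thm:bss} applied to the \wstar-pair $(\barG,\barL)$ combined with the identity $\Pg(\E;\Phi)=\Pg(\E;\ovl{\Phi})$, which is the paper's Lemma~\ref{lemm:exgain}. Where you genuinely diverge is in how that identity is established. The paper proves a density lemma (Lemma~\ref{lemm:dense}): $\evm(X;E_1)$ is weakly$\ast$ dense in $\evm(X;E_1^\aast)$ for an \emph{arbitrary} order unit Banach space $E_1$, obtained by exhibiting both sets as polars of a single convex set and checking via the Krein-\v{S}mulian and Banach-Alaoglu theorems that this set is weakly$\ast$ closed; the identity then follows from the weak$\ast$ continuity of $\ovl{\Phi}$. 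You instead evaluate both suprema in closed form: writing the common objective as $\sum_x\braket{\Phi^\ast(\vphx),b_x}$, the Riesz--Kantorovich formula gives $\braket{\bigvee_x\Phi^\ast(\vphx),u_F}$ for each optimum, with the join taken in $F^\ast$ and in $F^{\ast\ast\ast}$ respectively, and the two agree because the canonical embedding of the Banach lattice $F^\ast$ into its bidual is a Riesz homomorphism --- this standard fact is the precise content of your assertion that the join is ``intrinsic to $F^\ast$'' and should be cited explicitly, since it is the only place where the $|X|\geq 3$ coupling is actually resolved. Your route buys a clean closed-form expression for the gain functional on classical outcome spaces and avoids the Krein-\v{S}mulian argument; the paper's route buys generality, since the density statement needs no lattice hypothesis on the outcome space. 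Accordingly, your closing remark that the step ``relies crucially on the classical structure of $F$'' applies to your method but not to the underlying fact.
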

For the proof of Theorem~\ref{thm:prebss} we need the following lemmas.

\begin{lemm} \label{lemm:dense}
Let $E_1$ be an order unit Banach space and let $X \neq \varnothing$
be a finite set.
Then $\evm (X; E_1)$ is dense in $\evm (X ; E_1^\aast)$
with respect to the weak$\ast$ topology $\sigma ((E_1^{\aast })^X , (E_1^{\ast })^X) .$
\end{lemm}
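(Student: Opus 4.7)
The plan is to proceed by contradiction, combining Hahn--Banach separation with linear-programming strong duality. Suppose some $\oM \in \evm(X; E_1^\aast)$ lies outside the weak$\ast$ closure of $\evm(X; E_1)$, taken in $(E_1^\aast)^X$ with the topology $\sigma((E_1^\aast)^X, (E_1^\ast)^X)$. The continuous linear functionals on this locally convex Hausdorff space are exactly the families $(\psi_x)_{x\in X} \in (E_1^\ast)^X$ paired via $\oN \mapsto \sum_x \braket{\psi_x, \oN(x)}$, so separating $\oM$ from the convex set $\evm(X; E_1)$ yields $\psi = (\psi_x)_{x\in X} \in (E_1^\ast)^X$ and $c \in \realn$ with
\begin{equation*}
	\sum_{x \in X} \braket{\psi_x, \oM(x)} < c \leq N(\psi) := \inf_{\oN \in \evm(X; E_1)} \sum_{x \in X} \braket{\psi_x, \oN(x)} .
\end{equation*}

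Granting the strong-duality identity
\begin{equation*}
	N(\psi) = \sup \set{ \braket{\phi, u_{E_1}} | \phi \in E_1^\ast,\ \psi_x - \phi \in E_{1+}^\ast \ (\forall x \in X) } ,
\end{equation*}
I pick for each $\epsilon > 0$ some $\phi_\epsilon \in E_1^\ast$ with $\psi_x - \phi_\epsilon \in E_{1+}^\ast$ for every $x$ and $\braket{\phi_\epsilon, u_{E_1}} > c - \epsilon$. Since $E_{1+}^\aast$ is by definition the dual cone of $E_{1+}^\ast$ (equivalently, the $\sigma(E_1^\aast, E_1^\ast)$-closure of $E_{1+}$ by the bipolar theorem), any element of $E_{1+}^\ast$ is automatically nonnegative on $E_{1+}^\aast$. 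Using $\sum_x \oM(x) = u_{E_1}$,
\begin{equation*}
	\sum_x \braket{\psi_x, \oM(x)} = \sum_x \braket{\psi_x - \phi_\epsilon, \oM(x)} + \braket{\phi_\epsilon, u_{E_1}} \geq \braket{\phi_\epsilon, u_{E_1}} > c - \epsilon ,
\end{equation*}
which, as $\epsilon \to 0$, contradicts $\sum_x \braket{\psi_x, \oM(x)} < c$.

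The substantive work is the strong-duality identity; weak duality (${\geq}$) follows from the same computation. For the reverse, I apply Hahn--Banach in the Banach space $E_1 \times \realn$ to the convex cone
\begin{equation*}
	B := \set{ \left( \sum_x \oN(x),\ \sum_x \braket{\psi_x, \oN(x)} + s \right) | \oN \in E_{1+}^X,\ s \geq 0 }
\end{equation*}
and the point $(u_{E_1}, c)$ for an arbitrary $c < N(\psi)$. The crux---and the main obstacle---is verifying $(u_{E_1}, c) \notin \overline{B}$, which encodes Slater's condition: since $u_{E_1}$ lies in the norm interior of $E_{1+}$ (the open ball of radius $1$ around $u_{E_1}$ sits in $E_{1+}$ by the order-unit-norm definition), the uniform family $\oN^0(x) := u_{E_1}/|X|$ is a strictly feasible EVM with each component a positive multiple of an interior point. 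Given any sequence $(\oN_i, s_i) \in E_{1+}^X \times [0, \infty)$ with $\sum_x \oN_i(x) \to u_{E_1}$ in norm and $\sum_x \braket{\psi_x, \oN_i(x)} + s_i \to c$, I mix $\oN_i$ with a small multiple $\delta_i \oN^0$ (with $\delta_i \to 0$ slowly enough relative to the norm of the residual) to place every component above a positive multiple of $u_{E_1}$, then absorb the norm-vanishing residual $u_{E_1} - \sum_x \oN_i'(x)$ equally across the $|X|$ components; the order-unit norm guarantees the corrected family lies in $\evm(X; E_1)$ and its objective still converges to some $c' \leq c$, forcing $N(\psi) \leq c$, a contradiction. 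The separating functional $(\tau, \phi) \in \realn \times E_1^\ast$ then satisfies $\tau \leq 0$ (let $s \to \infty$) and cannot vanish (else $\phi \leq 0$ on $E_{1+}$ would contradict $\braket{\phi, u_{E_1}} > 0$), so normalizing $\tau = -1$ delivers the required dual-feasible $\phi$ with $\braket{\phi, u_{E_1}} > c$.
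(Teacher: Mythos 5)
Your proposal is correct, but it takes a genuinely different route from the paper. You argue by contradiction: separate a hypothetical $\oM \in \evm(X;E_1^\aast)$ outside the weak$\ast$ closure by a functional $(\psi_x)_{x\in X} \in (E_1^\ast)^X$, and then close the contradiction with an explicit linear-programming strong duality for $\inf_{\oN \in \evm(X;E_1)} \sum_x \braket{\psi_x , \oN(x)}$, itself proved by a second Hahn--Banach separation in $E_1 \times \realn$ together with a Slater-type argument (the uniform trivial EVM $(u_{E_1}/|X|)_{x\in X}$ is strictly feasible because $u_{E_1}$ is a norm-interior point of $E_{1+}$ for the order-unit norm; this is the one delicate step, and your correction of an almost-normalized positive family into a genuine EVM does go through). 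The paper instead first reduces normalized EVMs to subnormalized ones, exhibits both $\evmsub(X;E_1)$ and $\evmsub(X;E_1^\aast)$ as the polars of a single convex set $\mathcal{K} = \set{(\phi_x - \psi)_{x\in X} \mid \phi_x \in E_{1+}^\ast , \psi \in S(E_1)}$ in the two dual pairs, proves $\mathcal{K}$ is weakly$\ast$ closed via the Krein--\v{S}mulian and Banach--Alaoglu theorems, and concludes by the bipolar theorem. The paper's route is shorter because the closedness of $\mathcal{K}$ replaces any duality-gap analysis and no interior-point condition is needed; your route stays with normalized EVMs throughout and produces the dual certificate $\phi$ explicitly, which is of independent interest (it is essentially the dual description underlying the gain-functional computations elsewhere in the paper), at the cost of having to verify the Slater condition by hand.
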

\begin{proof}
The claim is trivial when $\abs {X} =1 ,$
where $\abs{\cdot}$ denotes the cardinality.
If $\abs{X} >1 ,$
fix an element $x_0 \in X$ and define $X^\prime := X \setminus \{ x_0\} .$
Then we have a one-to-one affine correspondence 
\begin{equation}
	\evm (X;E_1) \ni 
	(\oM(x))_\xin
	\mapsto 
	(\oM^\prime (x))_{x \in X^\prime}
	\in \evmsub (X^\prime ; E_1) .
	\notag 
\end{equation}
We can similarly identify $\evm (X;E_1^\aast)$ with 
$\evmsub (X^\prime ; E_1^\aast) .$
Therefore the claim will follow if we can show the weak$\ast$ density of 
$\evmsub (X; E_1)$ in $\evmsub(X;E_1^\aast)$
for any finite set $X .$
Define 
\[
	\cK :=
	\set{
	(\phi_x - \psi)_{x \in X} \in  ( E_1^{\ast})^X | 
	(\phi_x)_{x \in X} \in (E_{1+}^{\ast })^X , 
	\, \psi \in S (E_1) 
	} .
\]
Then $\cK$ is a convex subset of $( E_1^{\ast})^X$ containing $0.$
Moreover, for $\oM \in E_1^X $ we have
\begin{align*}
	& \oM \in \evmsub (X ;E_1)
	\\
	\iff & 
	\sum_{x \in X} \braket{\phi_x , \oM (x)}
	+ 
	\braket{\psi , u_E - \sum_{x \in X} \oM(x)} \geq 0
	\quad (\forall (\phi_x)_{x \in X} \in (E_{1+}^{\ast })^X , \forall \psi \in S(E_1))
	\\
	\iff & 
	\sum_{x \in X} \braket{\phi_x - \psi  , \oM (x)}
	\geq -1
	\quad (\forall (\phi_x)_{x \in X} \in (E_{1+}^{\ast })^X , \forall \psi \in S(E_1)),
\end{align*}
which implies that $\evmsub (X ; E_1)$ is the polar of $\cK$ 
in the pair $(E_1^X , (E_1^{\ast })^X) .$
A similar reasoning yields that $\evmsub (X; E_1^\aast ) $
is the polar of $\cK$ in the pair 
$((E_1^{\aast})^X , (E_1^{\ast })^X) .$
Therefore
if we can show that $\cK$ is $\sigma( (E_1^{\ast })^X , E_1^X)$-closed,
the claim follows from the bipolar theorem.
By the Krein-\v{S}mulian theorem,
it is sufficient to prove the $\sigma( (E_1^{\ast })^X , E_1^X)$-closedness 
of $(\cK)_r$ for any $r \in (0 , \infty) .$
Let $(\phi_x^i - \psi^i)_{x \in X}$
$(i \in I)$ be a net in $(\cK )_r $ 
weakly$\ast$ converging to $(\xi_x)_{x \in X} \in (E_1^{\ast })^X , $
where $\phi_x^i \in E_{1+}^\ast$ and 
$\psi^i \in S(E_1) $
$(x \in X , i \in I) .$
Then $\| \psi^i \| =1$ and hence 
\[
	\| \phi_x^i \|
	\leq \| \psi^i \|  + \| \phi_x^i - \psi^i \| 
	\leq
	1 
	+ \sum_{x^\prime \in X}
	\| \phi_{x^\prime}^i - \psi^i \| 
	\leq 1+r .
\]
Therefore by the Banach-Alaoglu theorem there exist 
a subnet $((\phi_x^{i(j)})_\xin , \psi^{i(j)}) $
$(j \in J) $
and $((\phi_x)_{x \in X} , \psi) \in (E_{1+}^{\ast})^X \times S(E_1)$ such that
$\phi_x^{i(j)} \xrightarrow{\mathrm{weakly}\ast} \phi_x$
and 
$\psi^{i(j)} \xrightarrow{\mathrm{weakly}\ast} \psi .$
Then $(\xi_x)_{x \in X} = (\phi_x - \psi )_{x \in X} \in \cK$
and hence $\cK $ is $\sigma( (E_1^{\ast })^X , E_1^X)$-closed.
\end{proof}

The following lemma states that a measurement is equivalent 
to its \wstar-extension if we concern only 
the state discrimination probabilities 
of ensembles.
\begin{lemm} \label{lemm:exgain}
Let $\Gamma \in \Ch (F \to E)$ be a measurement and let 
$\barG \in \Chw (F^\aast \to E)$ be the \wstar-extension of $\Gamma .$
Then $\Pg (\E ; \Gamma ) = \Pg (\E ; \barG)$ for any \wstar-family
$\E .$
\end{lemm}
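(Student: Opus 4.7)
The plan is to prove the two inequalities separately, where one direction is essentially trivial and the other uses Lemma~\ref{lemm:dense} together with the weak$\ast$ continuity of $\barG$.

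First I would dispatch the easy direction $\Pg(\E;\Gamma) \leq \Pg(\E;\barG)$: since $F \subset F^{\ast\ast}$ (via the canonical embedding) and $\barG|_F = \Gamma$, every EVM $\oM \in \evm(X;F)$ can be viewed as an EVM in $\evm(X;F^{\ast\ast})$ with the same values $\Gamma(\oM(x)) = \barG(\oM(x))$, so taking the supremum on the larger set $\evm(X;F^{\ast\ast})$ can only be at least as large.

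For the reverse inequality, fix an ensemble $\E = (\vph_x)_{x\in X}$ with finite label set $X$ and let $\oM \in \evm(X;F^{\ast\ast})$ be arbitrary. By Lemma~\ref{lemm:dense}, $\evm(X;F)$ is $\sigma((F^{\ast\ast})^X, (F^\ast)^X)$-dense in $\evm(X;F^{\ast\ast})$, so we obtain a net $(\oM_i)_{i\in I}$ in $\evm(X;F)$ with $\oM_i(x) \xrightarrow{\mathrm{weakly}\ast} \oM(x)$ in $F^{\ast\ast}$ for each $x\in X$. Since $\barG$ is weakly$\ast$ continuous (i.e.\ $\sigma(F^{\ast\ast},F^\ast)/\sigma(E,E_\ast)$-continuous), we get $\barG(\oM_i(x)) \xrightarrow{\mathrm{weakly}\ast} \barG(\oM(x))$ in $E$, and pairing with $\vph_x \in E_\ast$ yields
\[
\sum_{x\in X} \langle \vph_x, \barG(\oM(x)) \rangle = \lim_{i\in I}\sum_{x\in X} \langle \vph_x, \Gamma(\oM_i(x)) \rangle \leq \Pg(\E;\Gamma),
\]
where we used $\barG(\oM_i(x)) = \Gamma(\oM_i(x))$ since $\oM_i(x) \in F$. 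Taking the supremum over $\oM \in \evm(X;F^{\ast\ast})$ gives $\Pg(\E;\barG) \leq \Pg(\E;\Gamma)$.

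The proof extends from ensembles to general w$\ast$-families via Proposition~\ref{prop:gaineasy}.\ref{i:gaineasy3}, since any gain functional is a positive multiple of an ensemble's state-discrimination probability shifted by a constant, and both $\Pg(\cdot;\Gamma)$ and $\Pg(\cdot;\barG)$ transform identically under this reduction. There is no real obstacle here; the only point requiring care is to make sure the density lemma is invoked in the correct topology so that the weak$\ast$ continuity of $\barG$ can be applied term-by-term in the finite sum.
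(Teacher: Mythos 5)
Your proof is correct and follows essentially the same route as the paper: the reverse inequality via Lemma~\ref{lemm:dense} and the weak$\ast$ continuity of $\barG$ is exactly the paper's argument, and your easy direction (viewing $\evm(X;F)\subset\evm(X;F^{\aast})$ directly) is a harmless simplification of the paper's appeal to $\Gamma\pp\barG$. The reduction of general \wstar-families to ensembles via Proposition~\ref{prop:gaineasy} is also fine, though unnecessary since your limit argument never uses positivity of the $\vph_x$.
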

\begin{proof}
Let $\E = \vphxin$ be an arbitrary \wstar-family.
From $\Gamma \pp \barG ,$ 
we can show 
$\Pg (\E ; \Gamma ) \leq \Pg (\E ; \barG)$
similarly as in Theorem~\ref{thm:bss}.
Take an arbitrary $\oM^\pprime \in \evm (X ; F^\aast) .$
Then by Lemma~\ref{lemm:dense}
there exists a net $(\oM_i)_{i \in I}$ in $\evm (X; F)$
weakly$\ast$ converging to $\oM^\pprime .$
Then  by the weak$\ast$ continuity of $\barG ,$
\[
	\sum_{x \in X}
	\braket{\vphx , \barG (\oM^\pprime (x))}
	=
	\lim_{i \in I}
	\sum_{x \in X}
	\braket{\vphx , \barG (\oM_i (x))}
	=
	\lim_{i \in I}
	\sum_{x \in X}
	\braket{\vphx , \Gamma (\oM_i (x))} 
	\leq \Pg (\E ; \Gamma ) .
\]
By taking the supremum of $\oM^\pprime , $
we obtain $\Pg (\E ; \barG) \leq \Pg (\E ; \Gamma) .$
\end{proof}
\noindent
\textit{Proof of Theorem~\ref{thm:prebss}.}
The equivalence \eqref{i:pbss1}$\iff$\eqref{i:pbss2} is immediate
from Proposition~\ref{prop:w*extch}.
The equivalence \eqref{i:pbss2}$\iff$\eqref{i:pbss3}
follows from Theorem~\ref{thm:bss} and Lemma~\ref{lemm:exgain}.
\qed

\subsection{Measurement space} \label{subsec:msp}

Let us denote the class of \wstar-measurements on $E$ by $\Meas ,$
which is a proper class since the class of classical spaces with preduals is proper.
Here a proper class is a class that is not a set.
Based on the BSS theorem, we can construct the \textit{set} of post-processing
equivalence classes of \wstar-measurements as follows.

\begin{prop} \label{prop:small}
There exist a set $\ME$ and a class-to-set surjection
\[
	\Meas \ni \Gamma \mapsto [\Gamma ] \in \ME
\]
such that 
$\Gamma \ppeq \Lambda$ if and only if $[\Gamma ] = [\Lambda]$
for any $\Gamma , \Lambda \in \Meas .$
\end{prop}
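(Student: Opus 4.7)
The plan is to use the BSS theorem (Theorem~\ref{thm:bss}) to encode each post-processing equivalence class by a single real-valued function on a small enough domain of test inputs, thereby embedding the would-be quotient class into a genuine set. Concretely, Theorem~\ref{thm:bss} gives $\Gamma \ppeq \Lambda$ iff $\Pg(\E;\Gamma) = \Pg(\E;\Lambda)$ for every ensemble $\E$ on $E$. So each equivalence class is determined by the associated gain function $\E \mapsto \Pg(\E;\Gamma)$, and the problem reduces to showing that the relevant domain of ensembles can be taken to be a set.

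First I would observe that from the definition \eqref{eq:Pgdef} the value $\Pg(\E;\Gamma)$ is invariant under relabeling of the index set $X$: if $\sigma \colon X \to X'$ is a bijection and $\E' = (\varphi_{\sigma^{-1}(x')})_{x' \in X'}$, then $\Pg(\E';\Gamma) = \Pg(\E;\Gamma)$, because the supremum over $\evm(X';F)$ is pulled back bijectively to the supremum over $\evm(X;F)$ via $\sigma$. Consequently every equivalence class of \wstar-measurements is already determined by the values of $\Pg(\cdot;\Gamma)$ on the set
\[
	\mathrm{Ens}_0(E) := \bigcup_{n \in \natn} \set{\E = (\varphi_k)_{k=1}^n \in E_\ast^n \,\bigm|\, \varphi_k \geq 0, \, \sum_{k=1}^n \braket{\varphi_k, u_E} = 1},
\]
which is a genuine set since each $E_\ast^n$ is.

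Next I would define the assignment
\[
	J \colon \Meas \longrightarrow \realn^{\mathrm{Ens}_0(E)}, \qquad
	J(\Gamma)(\E) := \Pg(\E;\Gamma).
\]
By Theorem~\ref{thm:bss} (the equivalence of \eqref{i:bss1} and \eqref{i:bss3}) combined with the relabeling invariance above, two \wstar-measurements $\Gamma,\Lambda \in \Meas$ satisfy $J(\Gamma) = J(\Lambda)$ iff $\Gamma \ppeq \Lambda$. Hence setting $\ME := J(\Meas) \subset \realn^{\mathrm{Ens}_0(E)}$ and $[\Gamma] := J(\Gamma)$ furnishes a class-to-set surjection with exactly the required property; the image is a set as a subclass of the set $\realn^{\mathrm{Ens}_0(E)}$.

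The only delicate point is the formal assertion that $\Meas$ is a proper class yet admits such a class-to-set map; this is routine once one has a separating family of real invariants defined on a set, which the gain functionals supply via BSS. The restriction to finite ``standardized'' label sets in $\mathrm{Ens}_0(E)$ is what makes the domain of test inputs small, and this is legitimate precisely because $\Pg(\E;\Gamma)$ depends on $\E$ only up to relabeling. No further ingredients are needed beyond Theorem~\ref{thm:bss}.
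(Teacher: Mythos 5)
Your proposal is correct and follows essentially the same route as the paper: the paper likewise fixes the set of ensembles with standardized label sets $\natn_k$, maps each $\Gamma$ to the tuple $(\Pg(\E;\Gamma))_{\E}$ in $\realn^{\Ens(E)}$, and takes $\ME$ to be the image, with the BSS theorem guaranteeing the injectivity on equivalence classes. Your explicit remark on relabeling invariance is a small point the paper leaves implicit, but it changes nothing of substance.
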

\begin{proof}
Let $\Ens_k (E) $ be the set of ensembles with the label set
$\natn_k := \{ 1,2, \dots , k \}$
$(k \in \natn)$
and let $\Ens (E) := \bigcup_{k \in \natn} \Ens_k (E) .$
We define a map
\begin{gather}
	\Meas \ni \Gamma \mapsto [\Gamma ] 
	:=
	(\Pg (\E ; \Gamma))_{\E \in \Ens (E)} 
	\in \realn^{\Ens (E)}
	\label{eq:ecl}
\end{gather}	
and a set $\ME \subset \realn^{\Ens (E)}$ by the image of the map
\eqref{eq:ecl}.
Then by Theorem~\ref{thm:bss}, $\ME$ and $[\cdot]$ satisfy the required condition of the statement.
\end{proof}
In what follows in this paper, we fix a set $\ME$ and a map
$[\cdot ]$
satisfying the conditions of Proposition~\ref{prop:small}.
The set $\ME$ is called the \textit{measurement space}
of $E .$
Each element of $\ME$ is also called a measurement,
or an equivalence class of measurements if the distinction is necessary.
For each $\omega \in \ME  ,$ a \wstar-measurement
$\Gamma \in \Meas$ with $[\Gamma] = \omega$
is called a \textit{representative} of $\omega .$

We define the post-processing partial order $\pp$ on $\ME$ by
$[\Gamma] \pp [\La] $ $:\defarrow$ 
$\Ga \pp \La $ $([\Ga] , [\La] \in \ME) .$
For any trivial EVM $\oM_0 ,$ the measurement $[\Ga^{\oM_0}]$ 
is the minimum element of $\ME$ in $\pp .$
We symbolically write as $[u_E]:= [\Ga^{\oM_0}]$
and call it the trivial measurement. 

By Theorem~\ref{thm:bss},
for each \wstar-family $\E $
the gain functional $\Pg (\E ; \cdot)$ on $\ME$ is well-defined by 
\begin{equation}
	\Pg (\E ; [\Ga])
	:=
	\Pg (\E ; \Ga)
	\quad
	([\Ga] \in \ME) .
	\label{eq:Pgfore}
\end{equation}
We define the convex combination (probabilistic mixture) map
\[
	\braket{\cdot ; \cdot , \cdot }
	\colon 
	[0,1] \times \ME \times \ME
	\to \ME
\]
by 
\begin{equation}
	\braket{\lambda ; [\Ga] , [\La ] }
	:=
	[ \lambda \Ga \oplus (1-\lambda ) \La]
	\quad
	(\lambda \in [0,1] ; [\Ga] , [\La ] \in \ME) ,
	\label{eq:convec}
\end{equation}
which is well-defined by Proposition~\ref{prop:direct}.
The gain functional $\Pg (\E ; \cdot )$
for a \wstar-family $\E$ is an affine functional on the 
convex prestructure $(\ME , \braket{\cdot;\cdot , \cdot })$
by Proposition~\ref{prop:gaineasy}.

For each measurement $\Ga ,$ we also denote by $[\Ga]$
the equivalence class of the \wstar-extension of $\Ga .$
Note that for any measurements 
$\Ga $ and $\La ,$
\eqref{eq:Pgfore} is also well-defined by Lemma~\ref{lemm:exgain},
as well as \eqref{eq:convec} is well-defined by Proposition~\ref{prop:direct}.

\subsection{Weak topology on the measurement space} \label{subsec:wtop}
Now we are in a position to define the weak topology on $\ME .$

\begin{defi}[Weak topology] \label{def:wtop}
We define the \textit{weak topology} on $\ME$ as the weakest topology on $\ME$
such that the gain functional $\Pg (\E ; \cdot)$ on $\ME$ is continuous 
for any \wstar-family $\E .$ \qed
\end{defi}
In terms of net, 
the weak topology is characterized as follows:
a net $(\omega_i)_{i \in I}$ in $\ME$ weakly converges to $\omega \in \ME$
if and only if 
$\Pg (\E ; \omega_i) \to \Pg (\E ; \omega) $
for any \wstar-family, or ensemble, $\E .$

The following theorem is a basic result for the weak topology.

\begin{thm} \label{thm:compact}
The weak topology on $\ME$ is a compact Hausdorff topology.
\end{thm}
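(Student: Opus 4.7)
The claim splits into Hausdorff separation, which is immediate from Theorem~\ref{thm:bss}, and compactness, which I would obtain by embedding $\ME$ into a Tychonoff cube and verifying closedness of the image by a direct construction of a limit \wstar-measurement.

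\textit{Hausdorff.} If $\omega_1 \neq \omega_2$ in $\ME$, then at least one of $\omega_1 \pp \omega_2$ or $\omega_2 \pp \omega_1$ fails, so Theorem~\ref{thm:bss} produces an ensemble $\E$ with $\Pg (\E ; \omega_1) \neq \Pg (\E ; \omega_2)$; continuity of $\Pg (\E ; \cdot )$ in the weak topology then supplies disjoint separating neighborhoods. For compactness, define
\[
    \iota \colon \ME \to \prod_{\E \in \Ens (E)} [0,1], \qquad \iota (\omega) := (\Pg (\E ; \omega))_{\E \in \Ens (E)} ,
\]
using $\Pg (\E ; \omega) \leq \sum_{\xin} \braket{\vphx , u_E} = 1$ for any ensemble $\E = \vphxin$. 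By Theorem~\ref{thm:bss} the map $\iota$ is injective, and by the definition of the weak topology as the initial topology for the gain functionals, it is a homeomorphism onto its image. Since the codomain is compact Hausdorff by Tychonoff, it remains to show $\iota (\ME)$ is closed.

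\textit{Closedness via a limit measurement.} Let $(p_\E)_\E$ lie in the closure of $\iota (\ME)$, realized as the pointwise limit $\Pg (\E ; \Gamma_i) \to p_\E$ for a net of representatives $\Gamma_i \in \Chw (F_i \to E)$; by Proposition~\ref{prop:gaineasy}.\ref{i:gaineasy3} the convergence extends to all weak-$*$ families. For each finite set $X$ the reachable set $R_X^{\Gamma_i} := \{ (\Gamma_i (\oM(x)))_{\xin} : \oM \in \evm (X ; F_i) \}$ is a weakly-$*$ compact convex subset of $\evm (X;E)$, and $\E \mapsto \Pg (\E ; \Gamma_i)$ is its support function in the pairing $(E^X, E_\ast^X)$. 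These support functions are uniformly Lipschitz (since each $R_X^{\Gamma_i}$ lives in the bounded set $\evm (X;E)$), so the pointwise limit $h_X (\E) := p_\E$ is continuous and sublinear, and hence is the support function of a unique weakly-$*$ closed convex $R_X \subset \evm (X;E)$. I would then build a \wstar-measurement $\Gamma \colon F \to E$ with $R_X^\Gamma = R_X$ for every finite $X$; a concrete candidate is the weighted direct-sum channel on $F := \bigoplus_{(X,\oA)} \linf (X)$ indexed by pairs with $\oA \in R_X$, sending each summand via $\Gamma^\oA$ with weights $\lambda_{(X,\oA)}$ chosen so that $\Gamma (u_F) = u_E$. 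The inclusion $R_X \subset R_X^\Gamma$ is immediate, while the reverse inclusion $R_X^\Gamma \subset R_X$ follows from closure of the family $(R_X)_X$ under the stochastic-matrix Markov operations of Proposition~\ref{prop:fEVM}: this closure is inherited from each $R_X^{\Gamma_i}$ (the Markov post-processing of $\Gamma_i \circ \oM$ is $\Gamma_i$ applied to a Markov transform of $\oM$) and passes to the pointwise limit of gain functionals. Theorem~\ref{thm:bss} then gives $\Pg (\E ; \Gamma) = h_X (\E) = p_\E$, so $(p_\E)_\E \in \iota (\ME)$.

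\textbf{Main obstacle.} The delicate step is realizing the limit measurement $\Gamma$ concretely: one must manage the (generically large) cardinality of the indexing set $\{(X, \oA) : \oA \in R_X\}$ so that the weighted direct-sum space $F$ is a classical space with a Banach predual (possibly restricting to a suitable countable or extremal subfamily and invoking Proposition~\ref{prop:EVM}), and one must rigorously verify $R_X^\Gamma \subset R_X$ by tracking stochastic-matrix post-processings through the direct-sum decomposition. The Hausdorff argument, the Tychonoff embedding, and the extraction of $R_X$ from the support-function limit $h_X$ are essentially formal.
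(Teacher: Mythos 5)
Your Hausdorff argument and the reduction of compactness to closedness of the image of
\(\iota \colon \ME \to \prod_{\E}[0,1]\) are both sound: the weak topology is by definition the initial topology for the coordinates \(\Pg(\E;\cdot)\), injectivity is Theorem~\ref{thm:bss}, and the recovery of a weakly\(\ast\) compact convex set \(R_X\subset\evm(X;E)\) from the (uniformly Lipschitz, hence continuous and sublinear) limit of the support functions is legitimate. The cross-outcome consistency you invoke also survives the limit: since \(R_X^{\Gamma_i}\supset p(R_Y^{\Gamma_i})\) for every stochastic matrix \(p\), the inequality \(h_X(\E)\ge h_Y(p^{T}\E)\) passes to the pointwise limit and gives \(p(R_Y)\subset R_X\); together with convexity and closedness of \(R_X\) this does yield \(R_X^{\Gamma}\subset R_X\) for your candidate \(\Gamma\). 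So the reverse inclusion is not where the proof fails.

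The genuine gap is the inclusion you call ``immediate,'' namely \(R_X\subset R_X^{\Gamma}\). For the weighted direct sum \(\Gamma((f_k)_k)=\sum_k\lambda_k\,\Ga^{\oA_k}(f_k)\) with \(\sum_k\lambda_k=1\), an EVM \(\oN\in\evm(X;F)\) must satisfy \(\sum_{x}\oN_k(x)=u_{F_k}\) \emph{in every summand}, so \(\Gamma(\oN(x))=\sum_k\lambda_k\oB_k(x)\) where each \(\oB_k\) is a post-processing of \(\oA_k\) appearing with its fixed weight \(\lambda_k\). Hence the summand indexed by \((X,\oA)\) can only produce \(\lambda_{(X,\oA)}\oA+(1-\lambda_{(X,\oA)})(\cdots)\), a heavily noised copy of \(\oA\) (the weights are summable over an infinite index set, so \(\lambda_{(X,\oA)}\) is generically minuscule), and \(\oA\) itself is reachable only if it happens to be a convex combination of post-processings of the \emph{other} \(\oA_k\). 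Consequently \(\Pg(\E;\Gamma)<h_X(\E)\) in general and the point \((p_\E)_\E\) is not realized. Repairing this requires a measurement of which every \(\oA\in\bigcup_X R_X\) is a post-processing \emph{with full weight}, i.e.\ a joint measurement of the entire limiting family --- but proving that such a common refinement exists is essentially the compactness statement itself, and it does not follow from stochastic-relabeling closure of the \(R_X\) alone. The paper circumvents this entirely: it forms the unweighted direct sum \(\wt F=\bigoplus_i F_i\) of the outcome spaces, extends each \(\Ga_i\) to \(\tG_i\in\Ch(\wt F\to E)\) by composing with the coordinate projection (so \(\tG_i\ppeq\Ga_i\) exactly, with no weight loss), extracts a BW-convergent subnet \(\tG_{i(j)}\to\tG_0\) using Proposition~\ref{prop:BWcompact}, and verifies \(\Pg(\E;\tG_{i(j)})\to\Pg(\E;\tG_0)\) by a liminf/limsup sandwich with near-optimal EVMs. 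I would recommend replacing your reconstruction-from-reachable-sets step with that channel-level compactness argument, or at minimum supplying an independent proof that the consistent family \((R_X)_X\) is jointly realizable.
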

\begin{proof}
The Hausdorff property of the weak topology follows from that 
the gain functionals separate points of $\ME$
by Theorem~\ref{thm:bss}.

To show the compactness, take an arbitrary net $([\Ga_i])_{i \in I}$ 
in $\ME$ and let $F_i$ be the classical outcome space of the representative
$\Ga_i .$
Let $(\wt{F} , u_{\wt{F}})$ be the direct sum of $(F_i , u_{F_i})_\iin$
and for each $\iin $ define $\tG_i \in \Ch  (\wt{F} \to E)$ by 
\[
	\tG_i ((a_{i^\prime})_{i^\prime \in I})
	:=
	\Ga_i (a_i)
	\quad
	((a_{i^\prime})_{i^\prime \in I} \in \wt{F}) .
\]
By the BW-compactness of $\Ch (\wt{F} \to E)$ (Proposition~\ref{prop:BWcompact}),
there exists a subnet $(\tG_{i(j)})_{j \in J}$
BW-convergent to a channel $\tG_0 \in \Ch (\wt{F} \to E) .$
We show $[\Ga_{i(j)}] \xrightarrow{\mathrm{weakly}} [\tG_0] ,$
from which the compactness of $\ME$ follows.
Then it suffices to prove
$\Pg (\E ;\tG_{i(j)}) \to \Pg (\E ; \tG_0) $ for any 
\wstar-family $\E =\vphxin .$
For each $\iin$ we take an EVM $\oM_i \in \evm (X; F_i)$ such that
\[
	\sum_\xin \braket{\vphx , \Ga_i (\oM_i (x))}
	= \Pg (\E ; \Ga_i) .
\]
We define $\wt{\oM} \in \evm (X ; \wt{F})$ by 
$\wt{\oM} (x) := (\oM_i (x))_\iin $ $(\xin ).$
Take an arbitrary EVM $\wt{\oN} \in \evm (X ;\wt{F})$
with $\wt{\oN} (x) = (\oN_i (x))_{i \in I}$
$(x \in X) .$
Then we have
\begin{align*}
	\sum_\xin \braket{ \vphx , \tG_0 (\wt{\oN} (x))}
	&=
	\lim_{j \in J}
	\sum_\xin
	\braket{\vphx , \Gamma_{i(j)} (\oN_{i(j)} (x))}
	\\
	&\leq
	\liminf_{j \in J}
	\Pg (\E ; \Gamma_{i(j)})
	\\
	&\leq
	\limsup_{j \in J}
	\Pg (\E ; \Gamma_{i(j)})
	\\
	&=
	\limsup_{j \in J} 
	\sum_\xin \braket{\vphx , \Ga_{i(j)}  (\oM_{i(j)}  (x))}
	\\
	&=
	\limsup_{j \in J} 
	\sum_\xin \braket{\vphx , \wt{\Ga}_{i(j)}  (\wt{\oM}  (x))}
	\\
	&=
	\sum_{x \in X}
	\braket{\vphx , \tG_0 (\wt{\oM} (x))}
	\\
	&\leq 
	\Pg (\E ; \tG_0) .
\end{align*}
By taking the supremum of $\wt{\oN } $ in the above (in)equalities,
we obtain 
\[
	\Pg (\E ; \tG_0) 
	=
	\sum_{x \in X}
	\braket{\vphx , \tG_0 (\wt{\oM} (x))}
	=
	\lim_{j \in J} \Pg (\E ; \Ga_{i(j)}) ,
\]
which proves $[\Ga_{i(j)}] \xrightarrow{\mathrm{weakly}} [\tG_0]  .$
\end{proof}

We next show that the post-processing order $\pp$ on $\ME$
is compatible with the weak topology in the following sense.

\begin{defi}[\cite{gierz2003continuous}, Chapter~VI] \label{def:posp}
Let $X$ be a topological space.
A preorder $\leq$ on $X$ is said to be \textit{closed} if the graph
$
	\set{(x,y) \in X \times X | x \leq y}
$
is closed in the product topology on $X \times X .$
If $\leq$ is a closed partial order, then 
the poset $(X , \leq)$ is called a \textit{pospace}. \qed
\end{defi}
In terms of net, the above condition says that the order and the limit commute in the following sense:
for any nets 
$(x_i)_{i \in I} , (y_i)_\iin $ in $X ,$
if
$x_i \leq y_i $ $(\forall \iin) ,$
$x_i \to x \in X ,$
and 
$y_i \to y \in X$
hold, then $x\leq y .$

\begin{prop} \label{prop:posp}
The poset $(\ME , \pp)$ equipped with the weak topology 
is a pospace.
\end{prop}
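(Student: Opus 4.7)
The plan is to exploit the BSS theorem (Theorem~\ref{thm:bss}), which characterizes the post-processing order entirely in terms of the gain functionals, together with the very definition of the weak topology, which makes these gain functionals continuous. Since the weak topology is Hausdorff by Theorem~\ref{thm:compact} and $\pp$ is already established as a partial order on $\ME$, it remains only to verify that the graph of $\pp$ is closed in the product topology.

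First I would rewrite the graph as an intersection of closed sets. By Theorem~\ref{thm:bss} and the well-definedness of $\Pg(\E;\cdot)$ on $\ME$, for any $\omega_1,\omega_2\in \ME$ one has
\[
	\omega_1 \pp \omega_2
	\iff
	\Pg(\E;\omega_1)\leq \Pg(\E;\omega_2)
	\quad (\forall \E\in\Ens(E)).
\]
Hence
\[
	\set{(\omega_1,\omega_2)\in\ME\times\ME | \omega_1\pp\omega_2}
	=
	\bigcap_{\E\in\Ens(E)}
	\set{(\omega_1,\omega_2)\in\ME\times\ME | \Pg(\E;\omega_1)\leq \Pg(\E;\omega_2)}.
\]

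Next I would observe that each set in this intersection is closed. By Definition~\ref{def:wtop}, the gain functional $\Pg(\E;\cdot)\colon \ME\to\realn$ is continuous for each \wstar-family $\E$, so the map
\[
	\ME\times\ME \ni (\omega_1,\omega_2) \mapsto \Pg(\E;\omega_2)-\Pg(\E;\omega_1)\in\realn
\]
is continuous in the product topology, and the set in question is its preimage of the closed half-line $[0,\infty)$. The intersection of closed sets is closed, which yields the required closedness of the graph and completes the proof.

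There is no real obstacle here: the key ingredients — the BSS characterization of $\pp$ via gain functionals and the continuity of those functionals — are built into the preceding theorems and into the very definition of the weak topology, so the argument is essentially a bookkeeping step. The only minor point worth being explicit about is that one should use \wstar-families (or equivalently ensembles, via Proposition~\ref{prop:gaineasy}.\ref{i:gaineasy3}) exactly as in Theorem~\ref{thm:bss}, so that the index set in the intersection matches the family defining the weak topology.
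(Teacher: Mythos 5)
Your proof is correct and is essentially the paper's argument: both rest on the BSS theorem's characterization of $\pp$ via the gain functionals together with their weak continuity, the only difference being that you phrase the closedness of the graph as an intersection of preimages of closed sets while the paper runs the same computation with convergent nets.
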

\begin{proof}
Let $(\omega_i)_\iin$ and $(\nu_i)_\iin$ be nets in $\ME$
satisfying $\omega_i \pp \nu_i$
$(\forall \iin) ,$
$\omega_i \wto \omega \in \ME ,$
and
$\nu_i \wto \nu \in \ME .$
Then for any \wstar-family $\E$ we have
$\Pg (\E ; \omega_i) \leq \Pg (\E ; \nu_i)$
$(\forall \iin) $
by Theorem~\ref{thm:bss}.
By taking the limit we obtain 
$\Pg (\E ; \omega) \leq \Pg (\E ; \nu) $
Since $\E$ is arbitrary, this implies $\omega \pp \nu$ by Theorem~\ref{thm:bss}.
\end{proof}

A net $(x_i)_\iin$ in a poset $(X , \leq)$ is called increasing
if $i \leq j $ implies $x_i \leq x_j $
$(i,j \in I) .$
For a net $(x_i)_\iin$ in a poset $(X , \leq) ,$
its supremum $\sup_\iin x_i$ is
the supremum (the least upper bound)
of the image $\set{x_i | \iin}$ in $X ,$ if it exists.

\begin{lemm}[\cite{gierz2003continuous}, Proposition~VI-1.3] \label{lemm:cposet}
Let $(X , \leq)$ be a compact pospace.
Then any increasing net $(x_i)_\iin$ in $X$ has a supremum
$\sup_\iin x_i \in X$ to which $(x_i)_\iin$ converges topologically.
\end{lemm}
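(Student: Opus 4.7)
The plan is to use compactness to extract a convergent subnet of $(x_i)_{i \in I}$, identify its limit with the supremum using the closed order, and then conclude full-net convergence from a uniqueness-of-cluster-point argument.

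First, by the compactness of $X$, pass to a subnet $(x_{i(j)})_{j \in J}$ converging topologically to some point $x \in X$. I claim $x$ is an upper bound of $\set{x_i | i \in I}$: fix $i_0 \in I$, and use that the subnet is cofinal to find $j_0 \in J$ such that $i(j) \geq i_0$ for all $j \geq j_0$; by monotonicity $x_{i_0} \pp x_{i(j)}$ for all such $j$. The constant net with value $x_{i_0}$ converges to $x_{i_0}$, the net $(x_{i(j)})_{j \geq j_0}$ converges to $x$, and the graph of $\leq$ is closed in $X \times X$, so $x_{i_0} \leq x$. Since $i_0$ was arbitrary, $x$ is indeed an upper bound.

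Next I would show $x$ is the least upper bound. Suppose $y \in X$ satisfies $x_i \leq y$ for all $i \in I$; then the constant net $y$ and the subnet $(x_{i(j)})$ again satisfy $x_{i(j)} \leq y$, so closedness of the order gives $x \leq y$. Thus $x = \sup_{i \in I} x_i$.

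Finally, I would upgrade subnet-convergence to full-net convergence. Suppose, for contradiction, that $(x_i)_{i \in I}$ does not converge to $x$; then there exists a neighbourhood $U$ of $x$ such that $x_i \notin U$ is cofinal, giving a subnet $(x_{i(k)})_{k \in K}$ with $x_{i(k)} \notin U$ for all $k$. By compactness, extract a further subnet converging to some $x' \neq x$. Repeating the argument of the first two steps (monotonicity plus closed order) shows that $x'$ is also an upper bound of $\set{x_i | i \in I}$, hence $x \leq x'$ by the supremum property. On the other hand $x_{i(k)} \leq x$ for all $k$, so closedness of the order yields $x' \leq x$. Antisymmetry gives $x' = x$, a contradiction. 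Hence $(x_i)_{i \in I}$ converges to $x$.

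The main obstacle here is purely bookkeeping: one must be careful that the closed-order property is genuinely a statement about \emph{nets} in $X \times X$ (so that the pairs $(x_{i_0}, x_{i(j)})$ and $(x_{i(k)}, x)$ used above really do converge in the product topology), and that the subnet of a subnet used in the last step is taken so as to remain cofinal in the original index set $I$; no deeper analytic content is required beyond what compactness and closed order already supply.
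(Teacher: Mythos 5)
Your proof is correct, and since the paper simply cites \cite{gierz2003continuous} (Proposition~VI-1.3) for this lemma rather than proving it, your argument fills that in with exactly the standard reasoning: compactness yields a cluster point, the closed-order (pospace) property identifies every cluster point with the supremum, and uniqueness of the cluster point forces convergence of the whole net. The only implicit step worth noting is that the limit $x'$ of the subnet lying in the closed set $X \setminus U$ must itself lie in $X \setminus U$, which is what guarantees $x' \neq x$; this is immediate, so the proof stands as written.
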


A measurement $\omega \in \ME$ is said to be finite-outcome 
if there exists a finite-outcome EVM $\oM$ such that $\omega = [\GM] ,$
or equivalently if $\omega$ has a representative with a finite-dimensional 
outcome space.
We denote by $\MfinE$ the set of finite-outcome measurements in $\ME .$
The following theorem
states that any measurement can be approximated by an increasing net of finite-outcome measurements
and will be used in Sections~\ref{sec:sim} and \ref{sec:incomp} to reduce the discussions to the finite-outcome cases.

\begin{thm}[Approximation by finite-outcome measurements] \label{thm:finapp}
Let $\Gamma \in \Chw (F \to E)$ be a \wstar-measurement
and let $\Ga_\Delta$ be the restriction of $\Ga$ to $F_\Delta$
for each finite division $\Delta \in \DF .$
Then the net $([\Ga_\Delta])_{\Delta \in \DF}$
in $\MfinE$ is an increasing net and weakly converges to 
$[\Gamma] = \sup_{\Delta \in \DF} [\Ga_\Delta] .$
Furthermore, there exists a net $(\La_\Delta)_{\Delta \in \DF}$
in $\Ch (F \to E)$ such that $\La_\Delta \ppeq \Ga_\Delta$
$(\forall \Delta \in \DF)$
and 
$\| \La_\Delta (a) - \Ga (a) \| \to 0$
$(\forall a \in F) .$
\end{thm}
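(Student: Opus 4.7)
The plan is to establish the three assertions in order: monotonicity of the net, identification of the weak limit with $[\Gamma]$, and the existence of the norm approximation. The first is purely structural: for any $\Delta \leq \Delta'$ in $\DF$, each $Q \in \Delta$ decomposes as $Q = \sum_{R \in \Delta',\, R \leq Q} R \in F_{\Delta'}$, so the inclusion $\iota_\Delta^{\Delta'} \colon F_\Delta \hookrightarrow F_{\Delta'}$ is unital and positive, i.e.\ a channel. Since $\Ga_\Delta = \Ga_{\Delta'} \circ \iota_\Delta^{\Delta'}$, one has $\Ga_\Delta \pp \Ga_{\Delta'}$; the same argument applied to $\iota_\Delta \colon F_\Delta \hookrightarrow F$ gives $\Ga_\Delta \pp \Ga$ for every $\Delta$.

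For the second assertion, combine Proposition~\ref{prop:posp} (that $(\ME , \pp)$ is a compact pospace in the weak topology) with Lemma~\ref{lemm:cposet}: the increasing net $([\Ga_\Delta])_{\Delta \in \DF}$ has a supremum $\omega_0 := \sup_\Delta [\Ga_\Delta] \in \ME$ and weakly converges to it. Since $[\Ga_\Delta] \pp [\Ga]$ for all $\Delta$ and $\pp$ is closed, $\omega_0 \pp [\Ga]$. Conversely, pick any representative $\Lambda \in \Chw (G \to E)$ of $\omega_0$. Then $\Ga_\Delta \pp \Lambda$ for every $\Delta$, so Lemma~\ref{lemm:supm} yields $\Ga \pp \Lambda$, i.e.\ $[\Ga] \pp \omega_0$. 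Hence $\omega_0 = [\Ga]$.

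The third assertion requires constructing a channel $\E_\Delta \colon F \to F_\Delta$ that leaves $F_\Delta$ pointwise fixed and approximates the identity in norm on simple elements, and then setting $\La_\Delta := \Ga_\Delta \circ \E_\Delta$. For each non-zero projection $Q \in F$, I first produce a state $\phi_Q \in S_\ast (F)$ with $\phi_Q (Q) = 1$: since $F_\ast$ separates points of $F$, there exists $\psi \in F_{\ast+}$ with $\psi (Q) > 0$, and $\phi_Q (a) := \psi (Q \cdot a) / \psi (Q)$ defines such a state, using that multiplication by $Q$ in the classical algebra $F$ is positive and $\mathrm{w}\ast$-continuous. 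Then $\E_\Delta (a) := \sum_{Q \in \Delta} \phi_Q (a) \, Q$ is a unital positive (hence channel) map $F \to F_\Delta$, and the relation $\phi_Q (R) = \delta_{Q,R}$ for $Q, R \in \Delta$ (which follows from $0 \leq R \leq u_F - Q$ whenever $R \neq Q$) gives $\E_\Delta |_{F_\Delta} = \mathrm{id}_{F_\Delta}$. Consequently $\La_\Delta = \Ga_\Delta \circ \E_\Delta \pp \Ga_\Delta$ and $\Ga_\Delta = \La_\Delta \circ \iota_\Delta \pp \La_\Delta ,$ so $\La_\Delta \ppeq \Ga_\Delta$. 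Finally, for norm convergence, the estimate $\| \La_\Delta (a) - \Ga (a) \| = \| \Ga (\E_\Delta (a) - a) \| \leq \| \E_\Delta (a) - a \|$ reduces the problem to $\| \E_\Delta (a) - a \| \to 0$; by Proposition~\ref{prop:proj} simple elements are norm dense in $F$, and if $b \in F_{\Delta_0}$ approximates $a$ within $\varepsilon$, then for $\Delta \geq \Delta_0$ one has $\E_\Delta (b) = b$, giving $\| \E_\Delta (a) - a \| \leq \| \E_\Delta \| \| a - b \| + \| b - a \| \leq 2 \varepsilon$.

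The main obstacle is the construction of the conditional-expectation channels $\E_\Delta$: one must exploit the classical (commutative) structure of $F$ and the existence of a predual to exhibit, for each cell $Q$, a normal state concentrated on $Q$, and then check both the pointwise-fixing property on $F_\Delta$ and norm convergence on all of $F$. Everything else reduces to combining Lemma~\ref{lemm:supm}, Proposition~\ref{prop:posp}, and Lemma~\ref{lemm:cposet}.
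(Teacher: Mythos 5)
Your proof is correct and follows essentially the same route as the paper: restriction to subalgebras gives the increasing net, the compact-pospace machinery (Proposition~\ref{prop:posp}, Lemma~\ref{lemm:cposet}) together with Lemma~\ref{lemm:supm} identifies the supremum with $[\Gamma]$, and the conditional expectations $\condi_\Delta (a) = \sum_{Q \in \Delta} \vph_Q (a) Q$ built from normal states concentrated on the cells yield the norm-approximating net. The only difference is cosmetic: you exhibit the state $\phi_Q$ explicitly as $\psi (Q \cdot a)/\psi(Q)$, whereas the paper simply invokes the existence of a weakly$\ast$ continuous state with support projection $\leq Q$.
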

\begin{proof}
If $\Delta \leq  \Delta^\prime $
$(\Delta , \Delta^\prime \in \DF),$
the \wstar-measurement $\Ga_\Delta$ is the restriction of $\Ga_{\Delta^\prime}$
to the subalgebra $F_\Delta$ of $F_{\Delta^\prime} ,$
and so $\Ga_\Delta \pp \Ga_{\Delta^\prime} .$
Thus the net $([\Ga_\Delta ])_{\Delta \in \DF}$
in $\MfinE$ is increasing.
Therefore by Theorem~\ref{thm:compact}, 
Proposition~\ref{prop:posp}, and Lemma~\ref{lemm:cposet},
there exists a supremum $\sup_{\Delta \in \DF} [\Ga_\Delta] \in \ME$
to which $([\Ga_\Delta])_{\Delta \in \DF}$ weakly converges.
Furthermore Lemma~\ref{lemm:supm} implies  
$[\Ga] = \sup_{\Delta \in \DF} [\Ga_\Delta] ,$
which proves the first part of the claim.

To show the latter part, for each non-zero projection 
$P \in F$ take a
weakly$\ast$ continuous state $\vph_P \in S_\ast  (F)$
such that $\s (\vph_P) \leq P  .$
For each finite division $\Delta \in \DF$ we define a linear map
$\condi_\Delta \colon F \to F_\Delta$ by
\[
	\condi_\Delta (a)
	:=
	\sum_{Q \in \Delta}
	\vph_Q (a) Q .
\]
By noting $\braket{\vph_Q , Q^\prime }= 0$ for $Q, Q^\prime \in \Delta$
with $Q \neq Q^\prime ,$
we can see that $\condi_\Delta$ is a conditional expectation onto $F_\Delta ,$
i.e.\ $\condi_\Delta $ satisfies 
\[
	\condi_\Delta (a) = a \quad (a \in F_\Delta) ,
	\quad
	\| \condi_\Delta (b) \| \leq \| b \| \quad (b \in F),
\]
from which it follows that $\condi_\Delta \in \Ch (F \to F_\Delta) .$
Now let $\La_\Delta := \Ga \circ \condi_\Delta$
$(\Delta \in \DF) ,$ which is a channel in $\Ch (F \to E) .$
Then since $\La_\Delta = \Ga_\Delta \circ \condi_\Delta $
and $\Ga_\Delta$ coincides with the restriction of $\La_\Delta $
to the subalgebra $F_\Delta$ of $F ,$ we have
$\Ga_\Delta \ppeq \La_\Delta .$
Take an element $a \in F .$
Then since $a$ is approximated in norm by a sequence of simple elements
in $F ,$
for every $\epsilon >0$ there exists a simple element $a_\epsilon \in F$ 
such that $\| a - a_\epsilon \| < \epsilon /2 .$
Let $\Delta_\epsilon \in \DF$ be a finite division satisfying
$a_\epsilon \in F_{\Delta_\epsilon} .$
Then for any $\Delta \in \DF$ with $\Delta_\epsilon \leq \Delta ,$
we have $\condi_\Delta (a_\epsilon) = a_\epsilon$
and hence
\[
	\| a - \condi_\Delta (a) \| 
	\leq 
	\| a - a_\epsilon \|
	+ \| \condi_\Delta (a_\epsilon  -a ) \| 
	\leq 
	2\| a - a_\epsilon \|
	< \epsilon .
\]
Therefore for any $\Delta \geq \Delta_\epsilon ,$
\[
	\| \Ga (a) - \La_\Delta (a) \|
	=
	\| \Ga ( a - \condi_\Delta (a) ) \| 
	\leq 
	\| a - \condi_\Delta (a) \| 
	< \epsilon ,
\]
which proves the latter part of the claim.
\end{proof}

We now establish the compatibility of the weak topology and the convex structure 
on $\ME . $

\begin{thm}\label{thm:cconvME}
The convex prestructure $(\ME , \braket{\cdot ; \cdot , \cdot })$
equipped with the weak topology is a compact convex structure.
Furthermore, $\ME$ is regularly embedded into the locally convex Hausdorff
space $\Ac (\ME  )^\ast ,$
which is unique up to continuous linear isomorphism. 
\end{thm}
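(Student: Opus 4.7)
The plan is to verify directly the two requirements of Definition~\ref{def:convex}.4 for $(\ME,\braket{\cdot;\cdot,\cdot})$ to be a compact convex structure, and then to invoke Proposition~\ref{prop:emb} for the regular embedding. Since Theorem~\ref{thm:compact} already supplies the compact Hausdorff property, only the separation condition $\Ac(\ME)$ separates points of $\ME$ remains to be checked.

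For this, I would argue that every gain functional $\Pg(\E;\cdot)\colon\ME\to\realn$ attached to a \wstar-family $\E$ is an element of $\Ac(\ME)$. Continuity is built into the definition of the weak topology (Definition~\ref{def:wtop}), and affinity with respect to the probabilistic-mixture combination \eqref{eq:convec} is Proposition~\ref{prop:gaineasy}.\ref{i:gaineasy1}, transferred from representatives to equivalence classes using the well-definedness established in Proposition~\ref{prop:direct} (and Lemma~\ref{lemm:exgain} for passing between a measurement and its \wstar-extension). Separation of points is then immediate from the BSS theorem (Theorem~\ref{thm:bss}): if $\Pg(\E;\omega_1)=\Pg(\E;\omega_2)$ holds for every ensemble $\E$, then $\omega_1\pp\omega_2$ and $\omega_2\pp\omega_1$, so $\omega_1=\omega_2$. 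Hence the subfamily $\{\Pg(\E;\cdot):\E\text{ is a \wstar-family}\}\subset\Ac(\ME)$ already separates points, and a fortiori $\Ac(\ME)$ does.

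Once $(\ME,\braket{\cdot;\cdot,\cdot})$ is known to be a compact convex structure, the second half of the theorem is a direct appeal to Proposition~\ref{prop:emb}: the map $\Psi\colon\ME\to\Ac(\ME)^\ast$ with $\braket{\Psi(\omega),f}=f(\omega)$ is a regular embedding into $\Ac(\ME)^\ast$ endowed with its weak$\ast$ topology, and any other regular embedding of $\ME$ into a locally convex Hausdorff space is related to $\Psi$ by a continuous linear isomorphism. There is no substantive obstacle in this argument; the heavy lifting was already carried out in Theorems~\ref{thm:compact} and \ref{thm:bss} together with Propositions~\ref{prop:gaineasy}, \ref{prop:direct}, and \ref{prop:emb}, and the present theorem simply assembles these ingredients. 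The only point that one must not gloss over is the distinction between affinity at the level of concrete measurements and affinity on the quotient $\ME$, which is why Proposition~\ref{prop:direct} is indispensable.
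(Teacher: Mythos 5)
Your argument is correct and follows exactly the same route as the paper: compactness and the Hausdorff property come from Theorem~\ref{thm:compact}, the gain functionals are weakly continuous affine functionals whose separation of points is guaranteed by Theorem~\ref{thm:bss}, and the regular embedding is Proposition~\ref{prop:emb}. No issues.
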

\begin{proof}
By Proposition~\ref{prop:emb} the latter part of the claim follows from the first one.
In Theorem~\ref{thm:compact} we have established that the  weak topology is a compact Hausdorff topology.
Moreover, since any gain functional $\Pg (\E ; \cdot )$ is a weakly continuous affine functional 
on $\ME ,$
by Theorem~\ref{thm:bss} $\Ac (\ME) $ separates points of $\ME .$
Therefore $(\ME , \braket{\cdot; \cdot , \cdot})$ is a compact convex structure.
\end{proof}
From now on we identify $\ME$ with the weakly$\ast$ compact set
$S(\Ac (\ME))$ on $\Ac (\ME)^\ast .$
In this identification the convex combination 
$\braket{\lambda ; [\Ga] , [\La]}
= [\lambda \Ga \oplus (1-\lambda) \La]
$
becomes the ordinary convex combination
$\lambda [\Ga] + (1-\lambda ) [\La]$
$(\lambda \in [0,1] ; [\Gamma ] , [\La] \in \ME) .$

\subsection{Infinite-dimensionality of $\ME$} \label{subsec:inf}
We now prove that the measurement space is infinite-dimensional except 
in the trivial case $\dim E =1 .$

\begin{thm}[Infinite-dimensionality of $\ME$] \label{thm:infinite}
If $\dim E > 1 ,$ then the measurement space $\ME$
is an infinite-dimensional convex set, i.e.\
for any convex set $K$ in a finite-dimensional Euclidean space there exists no
affine bijection $\Psi \colon \ME \to K .$
\end{thm}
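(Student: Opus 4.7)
The plan is to reduce to $E = \realn^2$ and then exhibit an infinite linearly independent family of gain functionals on $\mathfrak{M}(\realn^2)$.

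For the reduction, since $\dim E > 1$ I choose an effect $e \in E$ with $0 \leq e \leq u_E$ and $e \notin \realn u_E$; otherwise every element of $E$ would be a scalar multiple of $u_E$, making $E$ one-dimensional. Then $e$ and $u_E - e$ are linearly independent, so the $2$-outcome EVM $\oM_0 := (e, u_E - e)$ on $E$ gives a weakly$\ast$ continuous channel $\Gamma^{\oM_0} \colon \linf(\{0,1\}) \to E$ which, under the identification $\linf(\{0,1\}) = \realn^2$, is injective as a linear map. Post-composition with $\Gamma^{\oM_0}$ yields a well-defined affine map
\[
  \Psi \colon \mathfrak{M}(\realn^2) \to \mathfrak{M}(E),
  \qquad
  [\Gamma] \mapsto [\Gamma^{\oM_0} \circ \Gamma] .
\]
I claim $\Psi$ is injective: if $\Gamma^{\oM_0} \circ \Gamma_1 \pp \Gamma^{\oM_0} \circ \Gamma_2$, Proposition~\ref{prop:postw*} produces a weakly$\ast$ continuous channel $\Lambda$ with $\Gamma^{\oM_0} \circ \Gamma_1 = \Gamma^{\oM_0} \circ \Gamma_2 \circ \Lambda$, and the injectivity of $\Gamma^{\oM_0}$ then forces $\Gamma_1 = \Gamma_2 \circ \Lambda$, i.e.\ $\Gamma_1 \pp \Gamma_2$. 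Consequently any affine bijection from $\mathfrak{M}(E)$ onto a convex set in a finite-dimensional Euclidean space would restrict, via $\Psi$, to an affine injection of $\mathfrak{M}(\realn^2)$ into the same target; it therefore suffices to treat $E = \realn^2$.

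In the case $E = \realn^2$, I consider the one-parameter family of $2$-outcome EVMs
\[
  \oA_\alpha(0) = \left(\tfrac{1}{2} + \alpha, \tfrac{1}{2} - \alpha\right),
  \quad
  \oA_\alpha(1) = \left(\tfrac{1}{2} - \alpha, \tfrac{1}{2} + \alpha\right),
  \quad
  \alpha \in [0, \tfrac{1}{2}],
\]
and, for each $s \in (1/2, 1)$, the two-label ensemble $\E_s = \bigl(s \, (1,0), \, (1-s)(0,1)\bigr)$. A routine computation, optimizing over the four extreme points of $\evm(\{0,1\}; \linf(\{0,1\}))$, gives
\[
  \Pg(\E_s; [\Gamma^{\oA_\alpha}]) = \max\!\left(s, \, \tfrac{1}{2} + \alpha\right) .
\]
Viewed as a function of $\alpha$ on $[0, 1/2]$, this is piecewise linear with a unique break point at $\alpha = s - 1/2 \in (0, 1/2)$. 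For any distinct $s_1, \ldots, s_n \in (1/2, 1)$ the break points $s_k - 1/2$ are mutually distinct, and since the weak derivatives $\alpha \mapsto \mathbf{1}[\alpha > s_k - 1/2]$ are linearly independent step functions, any vanishing combination $\sum_k c_k \max\!(s_k, \tfrac{1}{2}+\alpha) \equiv 0$ on $[0, 1/2]$ forces $c_k = 0$ for all $k$. Hence the gain functionals $\Pg(\E_{s_k}; \cdot ) \in \Ac(\mathfrak{M}(\realn^2))$ are linearly independent, which makes $\Ac(\mathfrak{M}(\realn^2))$ infinite-dimensional. By the regular embedding of Theorem~\ref{thm:cconvME}, the affine hull of $\mathfrak{M}(\realn^2)$, and therefore of $\mathfrak{M}(E)$ via $\Psi$, is infinite-dimensional, ruling out any affine bijection onto a convex set in a finite-dimensional Euclidean space.

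The main obstacle is establishing the explicit formula $\Pg(\E_s; [\Gamma^{\oA_\alpha}]) = \max(s, \tfrac{1}{2}+\alpha)$ and verifying that the resulting one-parameter family of piecewise-linear functions has infinitely many linearly independent members; the reduction from general $E$ to $\realn^2$ is essentially bookkeeping once $e$ has been chosen outside $\realn u_E$ to make $\Gamma^{\oM_0}$ injective.
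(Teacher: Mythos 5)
Your proposal is correct and follows essentially the same strategy as the paper's proof: both exhibit a one-parameter family of binary EVMs built from a nontrivial effect together with a one-parameter family of two-label ensembles, so that the gain-functional values form an infinite linearly independent family of piecewise-linear functions of the measurement parameter with moving break points (your $\max(s,\tfrac12+\alpha)$ versus the paper's $\max(0,1-(q+1)p)$). The only difference is organizational: you first embed $\mathfrak{M}(\realn^2)$ into $\ME$ via the injective channel $\Gamma^{(e,\,u_E-e)}$ and compute inside $\realn^2$, whereas the paper stays in $E$ and uses the Hahn--Banach theorem to produce the dual functionals $\psi_1,\psi_2$ directly.
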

Theorem~\ref{thm:infinite} indicates that the measurement space $\ME$ has 
sufficiently many \wstar-measurements and also that considerations on a proper topology,
as we have done in this section,
is indeed necessary.

We first prove the following lemma.
\begin{lemm} \label{lemm:finpg}
Let $\oM \in \evm (X;E)$ be a finite-outcome EVM and let
$\E = (\vph_y)_{y\in Y}$ be a \wstar-family. Then
\[
	\Pg (\E ; \GM)
	=\sum_{x \in X}
	\max_{y \in Y} \braket{\vph_y , \oM(x)} 
\]
holds.
\end{lemm}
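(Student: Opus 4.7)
The plan is to unfold the definition of $\Pg$ using the fact that the outcome space of $\GM$ is the finite-dimensional classical space $\linf(X)$, and then to observe that optimizing over EVMs $\oN \in \evm(Y;\linf(X))$ amounts to choosing a stochastic matrix whose contribution to the gain decouples across $x \in X$.

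Concretely, I would first recall that $\GM \in \Ch(\linf(X) \to E)$ acts by $\GM(f) = \sum_{x \in X} f(x)\oM(x)$, so by Definition~\ref{def:gain} we have
\[
\Pg(\E;\GM) = \sup_{\oN \in \evm(Y;\linf(X))} \sum_{y \in Y} \Braket{\vph_y, \GM(\oN(y))}.
\]
Next I would identify each $\oN \in \evm(Y;\linf(X))$ with the stochastic matrix $q(y|x) := \oN(y)(x)$, noting that the conditions $\oN(y) \geq 0$ and $\sum_{y \in Y} \oN(y) = 1_X$ translate into $q(y|x) \geq 0$ and $\sum_{y \in Y} q(y|x) = 1$ for every $x \in X$. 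Substituting $\GM(\oN(y)) = \sum_{x \in X} q(y|x)\,\oM(x)$ and swapping the order of summation yields
\[
\sum_{y \in Y} \Braket{\vph_y, \GM(\oN(y))} = \sum_{x \in X} \sum_{y \in Y} q(y|x)\Braket{\vph_y, \oM(x)}.
\]

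The final step is to take the supremum over stochastic matrices. Because the constraints on $q(\cdot|x)$ for distinct $x$ are independent, the supremum passes inside the sum over $x$, and for each fixed $x$ the inner expression is the expected value of $y \mapsto \braket{\vph_y, \oM(x)}$ under the probability distribution $q(\cdot|x)$ on the finite set $Y$; its supremum over probability distributions equals $\max_{y \in Y}\braket{\vph_y, \oM(x)}$, attained by any Dirac mass at an optimal label. This gives
\[
\Pg(\E;\GM) = \sum_{x \in X} \max_{y \in Y} \Braket{\vph_y, \oM(x)},
\]
as claimed.

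The argument is essentially bookkeeping — no analytic obstacle arises because both $X$ and $Y$ are finite, so the supremum is actually attained and no weak$\ast$ approximation is needed. The only mild point to check carefully is the bijective correspondence between $\evm(Y;\linf(X))$ and stochastic matrices $Y \times X \to [0,1]$; this is straightforward since $\linf(X)$ is the algebra of real functions on the finite set $X$ and the EVM conditions on $\oN$ are exactly the stochasticity conditions on $q$.
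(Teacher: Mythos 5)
Your proof is correct and follows essentially the same route as the paper: both reduce the supremum over $\oN \in \evm(Y;\linf(X))$ to a supremum over stochastic matrices $q(y|x)$, exchange the order of summation, and observe that the optimization decouples over $x \in X$ with the optimum attained by a deterministic (Dirac) choice of label. The only cosmetic difference is that you spell out the EVM-to-stochastic-matrix identification explicitly, whereas the paper invokes it implicitly and phrases the final step as an upper bound attained by $p(y|x)=\delta_{y,\wt{y}(x)}$.
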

\begin{proof}
The gain functional $\Pg (\E ; \GM)$ is evaluated to be
\begin{align*}
	\Pg (\E ; \GM)
	&=
	\sup_{p(\cdot | \cdot) \in \mathrm{Stoch}(Y,X)}
	\sum_\xin \sum_\yin 
	p(y|x) \braket{\vph_y , \oM(x)}
	\\
	&\leq
	\sup_{p(\cdot | \cdot) \in \mathrm{Stoch}(Y,X)}
	\sum_\xin \sum_\yin p(y|x) 
	\max_{y^\prime \in Y} \braket{\vph_{y^\prime} , \oM(x)}
	\\ &
	=
	\sum_{x \in X}
	\max_{y \in Y} \braket{\vph_y , \oM(x)} .
\end{align*}
The equality of the above inequality is attained by putting 
$p(y|x) = \delta_{y, \wt{y}(x) } ,$
where for each $\xin,$ we take $\wt{y}(x) \in Y$ such that
$\braket{\vph_{\wt{y}(x)} , \oM(x)} = \max_{y \in Y} \braket{\vph_y , \oM(x) } .$
\end{proof}

\noindent
\textit{Proof of Theorem~\ref{thm:infinite}.}
By the assumption $\dim E >1 ,$
there exists an element $a \in E$ such that $0 \leq a \leq u_E$
and $(a , u_E)$ is linearly independent.
Therefore if we put $a^\prime := u_E -a ,$
$(a,a^\prime) $ is also linearly independent.
Thus by the Hahn-Banach separation theorem, there exist
linear functionals $\psi_1 , \psi_2 \in E_\ast$ such that
\[
	\braket{\psi_1 , a} = \braket{\psi_2 , a^\prime}
	=1 
	> 0
	=\braket{\psi_1 , a^\prime} = \braket{\psi_2, a} .
\]
For each $p \in [0,1] $ and each $q \in (0,1) ,$
define a \wstar-family $\E_p$ and an EVM $\oM_q \in \evm (\natn_2 ;E)$
by
\begin{gather*}
	\E_p := (0, (1-p)\psi_2 - p\psi_1) ,
	\\
	\oM_q(1) := (1-q)a ,\quad \oM_q(2) := qa + a^\prime .
\end{gather*}
Then by Lemma~\ref{lemm:finpg} we have
\begin{align}
	\Pg (\E_p ; [\Ga^{\oM_q}] )
	&= \max (0, \braket{ (1-p)\psi_2 - p\psi_1 , \oM_q(1)})
	+ \max(0,  \braket{ (1-p)\psi_2 - p\psi_1 , \oM_q(2)})
	\notag
	\\
	&=\max (0 , -p(1-q)) + 
	\max(0 , -pq+1-p)
	\notag
	\\
	&=
	\max(0, 1-(q+1)p) := f_q (p) .
	\label{eq:pgep}
\end{align}
Now suppose that $\ME$ is finite-dimensional.
Then since the map
\begin{equation}
	\ME \ni \omega \mapsto (\Pg (\E_p ; \omega))_{p\in [0,1]} \in \realn^{[0,1]}
	\label{eq:epimg}
\end{equation}
is affine, the image $A$ of the map \eqref{eq:epimg} contains 
finite number of linearly independent elements in $\realn^{[0,1]} .$
On the other hand, by \eqref{eq:pgep}, $A$ contains the functions
$\{  f_q | q \in (0,1) \}$
and it is easy to see that any finite subset of 
$\{  f_q | q \in (0,1) \}$ is linearly independent,
which is a contradiction.
Therefore $\ME$ is infinite-dimensional.
\qed

\section{Order characterized by a set of continuous affine functionals} \label{sec:mono}
In Section~\ref{sec:ccs} we have seen that the post-processing order
on the measurement space is uniquely characterized by the set of gain functionals.
In this section, as a generalization of the post-processing order,
we consider a preorder on a compact convex structure that is characterized by a set of continuous affine functionals.

Throughout this section, if we call a set $S$ a compact convex structure, 
we understand that $S$ is identified with the state space $S (\Ac (S))$ and 
the convex combination on $S$ is the ordinary one $\la \omega + \ola \nu$
on the linear space $\Ac (S)^\ast .$

The main subject of this section is the order of the following kind.

\begin{defi} \label{def:ordA}
Let $S$ be a compact convex structure
and let $A \subset \Ac (S)$ be a set of continuous affine functionals.
We define a preorder $\preceq_A$ on $S$ by
\begin{equation}
	\omega \preceq_A \nu 
	\, : \defarrow \,
	[f (\omega) \leq f(\nu) 
	\quad
	(\forall f \in A)] 
	\notag
\end{equation}
for any $\omega , \nu \in S . $ \qed 
\end{defi}
For example, when $S = \ME,$ the post-processing order $\pp$ can be written as
$\ordA$ where $A$ is the set of gain functionals.
Later in Theorem~\ref{thm:vnm} we will give an axiomatic characterization of this kind of 
order.

We first consider monotonically increasing affine functionals for this kind of order.
We remind the reader that for a set $X$ equipped with a preorder $\leq, $ 
a function $f \colon X \to \realn$ is \textit{monotonically increasing (in $\leq$)} if 
\[
	x \leq y
	\implies 
	f(x ) \leq f(y)
	\quad
	(x,y \in X).
\]
The following theorem characterizes the monotonically increasing affine functional
in $\ordA .$

\begin{thm} \label{thm:monoGen}
Let $S$ be a compact convex structure,
let $f \colon S \to \realn$ be an affine functional,
let $A \subset \Ac (S)$ be a set of continuous affine functionals,
and let $\mathcal{U}_A$ denote the set of affine functionals on $S$ that can be written as
\begin{equation}
	\alpha 1_S + 
	\sum_{j=1}^n \beta_j g_j 
	\quad (n \in \natn ; \alpha \in \realn;  \beta_1 , \dots , \beta_n \in \realn_+ ;
	g_1 ,\dots , g_n \in A
	) ,
	\notag
\end{equation}
i.e.\ $\mathcal{U}_A := \mathrm{cone}(A \cup \{ \pm 1_{S} \})$
where $\cone (\cdot)$ denotes the conic hull.
Then the following assertions hold.
\begin{enumerate}[1.]
\item \label{i:mg1}
$f$ is monotonically increasing in $\preceq_A$ and continuous 
if and only if $f$ is a uniform limit of a sequence in $\mathcal{U}_A .$
\item \label{i:mg2}
$f$ is monotonically increasing in $\preceq_A$ and bounded 
(i.e.\ $\sup_{\omega \in S}  |f(\omega)| < \infty$)
if and only if $f$ is a pointwise limit of a uniformly bounded net in $\mathcal{U}_A.$
\item \label{i:mg3}
$f$ is monotonically increasing in $\preceq_A$ if and only if
$f$ is a pointwise limit of a net in $\mathcal{U}_A.$
\end{enumerate}
\end{thm}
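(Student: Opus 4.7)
The plan is as follows. In all three cases the ``if'' direction is a routine verification: any element of $\mathcal{U}_A$ is affine and monotone in $\ordA$, so a pointwise (or uniform) limit inherits both properties; a uniform limit is continuous, and a uniformly bounded pointwise limit is bounded. The content of the theorem lies in the three ``only if'' directions, which I treat uniformly through Hahn-Banach separation in successively larger function spaces.

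For claim~\ref{i:mg1}, I work in the Banach space $\Ac(S)$ with the supremum norm, in which $\mathcal{U}_A$ is a convex cone. If $f$ is continuous, monotone in $\ordA$, but not in the norm closure of $\mathcal{U}_A$, Hahn-Banach yields $\Phi \in \Ac(S)^\ast$ with $\Phi(f) < 0$ and $\Phi \geq 0$ on $\mathcal{U}_A$. Since $\Ac(S)^\ast$ is base-normed with base $S$, I decompose $\Phi = \alpha(\omega_+ - \omega_-)$ with $\alpha \geq 0$ and $\omega_\pm \in S$, the balance $\Phi(1_S) = 0$ being forced by $\pm 1_S \in \mathcal{U}_A$. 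The condition $\Phi(h) \geq 0$ for $h \in A$ then translates to $\omega_- \ordA \omega_+$, and monotonicity of $f$ gives $\Phi(f) = \alpha(f(\omega_+) - f(\omega_-)) \geq 0$, contradicting $\Phi(f) < 0$. Because $\Ac(S)$ is a Banach space, norm closure coincides with sequential uniform closure.

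For claim~\ref{i:mg3}, the same Hahn-Banach argument is carried out in the vector space of all affine functionals on $S$ equipped with the topology of pointwise convergence. Its continuous dual consists of finite linear combinations $\sum_i c_i e_{\omega_i}$ of evaluation maps, each of which identifies canonically with an element $\sum_i c_i \omega_i \in \Ac(S)^\ast$. The same decomposition $\Phi = \alpha(\omega_+ - \omega_-)$ with $\omega_- \ordA \omega_+$ and the same contradiction through monotonicity of $f$ apply verbatim.

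Claim~\ref{i:mg2} refines claim~\ref{i:mg3} by demanding a uniform norm bound on the approximating net. I fix $M > \|f\|_\infty$ and aim to show that $f$ lies in the pointwise closure of the convex set $\mathcal{V}_M$ consisting of those $g \in \mathcal{U}_A$ with $\|g\|_\infty \leq M$; since the set of affine functions with supremum norm $\leq M$ is pointwise compact by Tychonoff, any net extracted from $\mathcal{V}_M$ is automatically uniformly bounded. Applying Hahn-Banach: if $f$ were outside this closure, we would obtain a separator $\Phi$ with $\Phi(f) < \gamma \leq \Phi(g)$ for $g \in \mathcal{V}_M$. Testing against the constants $\pm M \cdot 1_S \in \mathcal{V}_M$ controls $|\Phi(1_S)|$ by $|\gamma|/M$, and testing against scaled generators $\lambda h$ with $\lambda \leq M/\|h\|_\infty$ yields, after decomposing $\Phi = \alpha^+ \omega^+ - \alpha^- \omega^-$, approximate comparability estimates of the form $h(\omega^+) - h(\omega^-) \leq \mathrm{const} \cdot \|f\|_\infty \cdot \|h\|_\infty / M$. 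The slack $M > \|f\|_\infty$ together with the bound $|\Phi(f)| \leq (\alpha^+ + \alpha^-)\|f\|_\infty$ forces the imbalance $|\alpha^+ - \alpha^-|$ to be strictly smaller than $\alpha^+ + \alpha^-$ in a quantitative way, and a limiting/compactness argument in $S$ reduces the situation to the clean separation of claim~\ref{i:mg3}, producing the same contradiction.

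The main obstacle is exactly this bounded separation argument: the truncated set $\mathcal{V}_M$ fails to be a cone, so the scaling argument available in claims~\ref{i:mg1} and~\ref{i:mg3} does not apply directly, and the strict slack $M > \|f\|_\infty$ must be exploited carefully to force the coefficients $\alpha^+, \alpha^-$ of the separator to become effectively equal. This is the delicate technical point of the proof; the rest of the argument is structurally the same across all three claims.
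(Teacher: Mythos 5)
Your arguments for claims~1 and~3 are correct and are in substance the same as the paper's: the paper computes the dual cone $\mathcal{U}_A^\ast=\set{r(\nu-\omega) \mid r>0,\ \omega\preceq_A\nu}$ and applies the bipolar theorem, which is exactly your Hahn--Banach separation combined with the decomposition $\Phi=\alpha(\omega_+-\omega_-)$ forced by $\Phi(1_S)=0 .$

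Claim~2, however, has a genuine gap, and it sits exactly at the point you flag as delicate. Because $\mathcal{V}_M=(\mathcal{U}_A)_M$ is not a cone, the separation only yields approximate inequalities of the form $\alpha^+h(\omega^+)-\alpha^-h(\omega^-)\geq \gamma\|h\|/M$ for $h\in A ,$ i.e.\ an \emph{approximate} version of $\omega^-\preceq_A\omega^+$ with an error proportional to $\|h\| ,$ which is not uniformly small over $A .$ Monotonicity of $f$ is assumed only for exactly comparable pairs, so these estimates give no control whatsoever on $f(\omega^+)-f(\omega^-) .$ The proposed rescue---``a limiting/compactness argument in $S$''---cannot close this: passing to a limit $\omega^\pm_i\to\omega^\pm$ along a net in the compact set $S$ would require $f(\omega^\pm_i)\to f(\omega^\pm) ,$ i.e.\ continuity of $f ,$ which is precisely the hypothesis you do not have in claim~2 (it is the hypothesis of claim~1). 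So the announced reduction to the clean separation of claim~3 does not go through, and no contradiction is actually produced.

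What is missing is a decomposition of the separating functional itself: one needs $\psi=\psi_0+\phi$ with $\psi_0\in\mathcal{U}_A^\ast$ and $\|\phi\|\leq\abs{\gamma}/M ;$ then $\braket{\psi_0,f}\geq 0$ by monotonicity applied to the exactly comparable pair contained in $\psi_0 ,$ while $\abs{\braket{\phi,f}}\leq\abs{\gamma}\,\|f\|/M<\abs{\gamma}$ gives the contradiction. Justifying such a decomposition amounts to showing that $\mathcal{K}:=\mathcal{U}_A^\ast+(\Ac(S)^\ast)_1$ is weakly$\ast$ closed (a weak$\ast$ closed cone plus a weak$\ast$ compact convex set) and computing its polars in $\Ab(S)$ and in $\Ac(S) ;$ this is exactly the paper's route, which concludes via the bipolar theorem that the bounded monotone functionals of norm at most $1$ form the pointwise closure of the continuous monotone ones of norm at most $1 ,$ and then invokes claim~1. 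Your proof of claim~2 needs to be replaced by, or supplemented with, an argument of this kind.
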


If we apply Theorem~\ref{thm:monoGen} to the measurement space $\ME ,$
we readily obtain
\begin{coro} \label{coro:monoME}
Let $f \colon \ME \to \realn$ be an affine functional
and let $\mathcal{U}$ denote the set of affine functionals on $\ME$ 
that can be written as
\begin{equation}
	\alpha 1_{\ME} + \sum_{i=1}^n \beta_i \Pg(\E_i ; \cdot)
	\quad
	(n \in \natn ; \alpha \in \realn ;  \beta_1, \dots , \beta_n \in \realn_+ ;
	\E_1 , \dots , \E_n \in \Ens (E)) .
	\label{eq:ccombi}
\end{equation}
Then the following assertions hold.
\begin{enumerate}[1.]
\item
$f$ is monotonically increasing in $\pp$ and weakly continuous
if and only if $f$ is a uniform (i.e.\ norm) limit of a sequence in $\mathcal{U} .$
\item
$f$ is monotonically increasing in $\pp$ and bounded 
if and only if $f$ is a pointwise limit of a uniformly bounded net in $\mathcal{U}.$
\item
$f$ is monotonically increasing in $\pp$ if and only if
$f$ is a pointwise limit of a net in $\mathcal{U}.$
\end{enumerate}
\end{coro}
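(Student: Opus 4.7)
The plan is to derive Corollary~\ref{coro:monoME} as a direct specialization of Theorem~\ref{thm:monoGen} to the case $S = \ME$ and $A = \{\Pg(\E;\cdot) \mid \E \in \Ens(E)\}$. Almost all the work has already been done in the preceding sections; the task is to verify that the hypotheses of Theorem~\ref{thm:monoGen} are met with this particular choice of $(S,A)$ and that the resulting $\mathcal{U}_A$ coincides with the set $\mathcal{U}$ of affine functionals of the form \eqref{eq:ccombi}.

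First I would invoke Theorem~\ref{thm:cconvME}, which exhibits $(\ME, \braket{\cdot;\cdot,\cdot})$ endowed with the weak topology as a compact convex structure in the sense of Definition~\ref{def:convex}. Next, each $\Pg(\E;\cdot)$ with $\E \in \Ens(E)$ belongs to $\Ac(\ME)$: continuity is built into the definition of the weak topology (Definition~\ref{def:wtop}), and affinity on $(\ME,\braket{\cdot;\cdot,\cdot})$ is Proposition~\ref{prop:gaineasy}.\ref{i:gaineasy1}. Hence $A \subset \Ac(\ME)$, so Theorem~\ref{thm:monoGen} applies to this $A$.

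The crucial identification is $\preceq_A\,=\,\pp$ on $\ME$, which is precisely the equivalence \eqref{i:bss1}$\iff$\eqref{i:bss3} of the BSS Theorem~\ref{thm:bss}: $[\Gamma] \pp [\Lambda]$ holds if and only if $\Pg(\E;[\Gamma]) \leq \Pg(\E;[\Lambda])$ for every ensemble $\E \in \Ens(E)$. With this, ``monotonically increasing in $\preceq_A$'' in Theorem~\ref{thm:monoGen} becomes ``monotonically increasing in $\pp$'' as required. It remains to check that $\mathcal{U}_A = \cone(A \cup \{\pm 1_{\ME}\})$ coincides with the set $\mathcal{U}$ defined in the corollary: an element of $\mathcal{U}_A$ has the form $\alpha_+ 1_{\ME} + \alpha_- (-1_{\ME}) + \sum_{i=1}^n \beta_i \Pg(\E_i;\cdot)$ with $\alpha_\pm,\beta_i \in \realn_+$, which equals $(\alpha_+-\alpha_-) 1_{\ME} + \sum_{i=1}^n \beta_i \Pg(\E_i;\cdot)$ and thus ranges exactly over the expressions \eqref{eq:ccombi}.

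With these three identifications in place, parts 1, 2, and 3 of the corollary follow by direct transcription from parts~\ref{i:mg1}, \ref{i:mg2}, and \ref{i:mg3} of Theorem~\ref{thm:monoGen}, respectively. Since this is a routine specialization there is no substantial obstacle; the only point requiring attention is the restriction to ensembles (rather than general \wstar-families) in the definition of $\mathcal{U}$, but this is legitimate because the ensemble version \eqref{i:bss3} already suffices to characterize $\pp$ in Theorem~\ref{thm:bss}, and because gain functionals associated with general \wstar-families are, by Proposition~\ref{prop:gaineasy}.\ref{i:gaineasy3}, nonnegative scalar multiples of ensemble gain functionals shifted by a constant, so they already lie in $\cone(A \cup \{\pm 1_{\ME}\})$ and generate no new elements of $\mathcal{U}_A$.
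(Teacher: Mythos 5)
Your proposal is correct and follows exactly the route the paper intends: the paper presents Corollary~\ref{coro:monoME} as an immediate application of Theorem~\ref{thm:monoGen} to $S=\ME$ with $A$ the set of ensemble gain functionals, and your verification of the hypotheses (Theorem~\ref{thm:cconvME} for the compact convex structure, Proposition~\ref{prop:gaineasy} and the definition of the weak topology for $A\subset\Ac(\ME)$, and the equivalence \eqref{i:bss1}$\iff$\eqref{i:bss3} of Theorem~\ref{thm:bss} for $\preceq_A\,=\,\pp$) together with the identification $\mathcal{U}_A=\mathcal{U}$ is precisely the content being left implicit. No gaps.
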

We remark the affine functional \eqref{eq:ccombi} can be written as 
\[
	\alpha^\prime 1_{\ME}  + \beta^\prime \Pg (\vecE ; \cdot)
\]
for some $\alpha^\prime \in \realn ,$ $\beta^\prime \in \realn_+,$ and a partitioned ensemble $\vecE ,$ 
which will be defined later in Definition~\ref{def:pens}.
This indicates that up to constant factors elements of $\mathcal{U}$ can be regarded as the state discrimination probability with some pre-measurement information.

Now we prove Theorem~\ref{thm:monoGen}.

%




\noindent \textit{Proof of Theorem~\ref{thm:monoGen}.}
\begin{enumerate}[1.]
\item
Let $\mathcal{U}_{\preceq_A}$ denote the set of continuous affine functionals
that are monotonically increasing in $\preceq_A .$
Then to show the claim, we have only to prove $\overline{\mathcal{U}_A} = \mathcal{U}_{\preceq_A},$
where the closure is with respect to the norm topology.
Let 
$\mathcal{U}_A^\ast := \set{\psi \in \Ac(S)^\ast  | \braket{\psi , g} \geq 0 
\, (\forall g \in \mathcal{U}_A)}$ 
be the dual cone of $\mathcal{U}_A .$
We show 
\begin{equation}
	\mathcal{U}_A^\ast
	=
	\set{r (\nu -\omega) | r \in (0,\infty) ; \omega , \nu \in S ; \omega \preceq_A \nu } .
	\label{eq:inclusion}
\end{equation}
The inclusion $(\mathrm{LHS}) \supset (\mathrm{RHS})$ is immediate from 
the definition.
To prove the converse inclusion, take arbitrary $\psi \in \mathcal{U}_A^\ast .$
If $\psi = 0 ,$ then $\psi = \omega - \omega \in (\mathrm{RHS})$
for any $\omega \in S .$
Assume $\psi \neq 0 .$
Since $S$ generates $\Ac (S)^\ast ,$
we can write as $\psi = r_1 \nu - r_2 \omega$ for some $r_1 ,r_2 \in \realn_+$
and some $\omega , \nu \in S .$
Since $\pm 1_S \in \mathcal{U}_A ,$
we have
$
	0 =
	\braket{\psi , 1_S}
	=
	r_1 -r_2 .
$
Hence $\psi = r_1 (\nu - \omega) $
and $r_1 \neq 0$ from $\psi \neq 0 .$
Then from $\psi \in \mathcal{U}_A$ we have
\begin{equation*}
	g(\nu) - g(\omega ) = r_1^{-1} \braket{\psi ,g } \geq 0
	\quad
	(\forall g \in A)  ,
\end{equation*}
which implies $\omega \preceq_A \nu .$
Therefore $\psi$ is in the RHS of \eqref{eq:inclusion} and
we have proved \eqref{eq:inclusion}.
Now let $\mathcal{U}_A^\aast$ be the double dual cone of $\mathcal{U}_A  $
in the pair $(\Ac (S)  , \Ac (S)^\ast) .$
Then from \eqref{eq:inclusion}
\[
	\mathcal{U}_A^\aast
	=
	\set{f \in \Ac (S) | f(\omega) \leq f (\nu) \, 
	\text{for any $\omega, \nu \in S$ with $\omega \preceq_A \nu$}
	 } 
	 = \mathcal{U}_{\preceq_A} .
\]
On the other hand, by the bipolar theorem
$\mathcal{U}_A^\aast $ is the weak closure 
(i.e.\ $\sigma (\Ac (S) , \Ac (S)^\ast)$-closure)
of $\mathcal{U}_A .$
Since the weak and the norm closures coincide for a convex set on a 
Banach space (e.g.\ \cite{schaefer1999topological}, Section~9.2),
we have $\mathcal{U}_A^\aast =\ovl{\mathcal{U}_A} .$
Thus we obtain $\overline{\mathcal{U}_A} =\mathcal{U}_{\preceq_A} .$
\item
We first note that the Banach dual space $\Ac (S)^\aast$ can be 
identified with $\Ab (S) $ and the weak$\ast$ topology
$\sigma(\Ac(S)^\aast , \Ac (S)^\ast )$ on $\Ac(S)^\aast$ 
is, by this identification, the pointwise convergence topology on $\Ab (S) .$
Let us define 
\[
	\mathcal{K}
	:= \mathcal{U}_A^\ast + (\Ac(S)^\ast )_1
	= \set{\psi + \phi \in \Ac(S)^\ast | \psi \in \mathcal{U}_A^\ast ;
	 \phi \in  (\Ac(S)^\ast )_1 } ,
\]
which is a convex set containing $0.$
Since $\mathcal{U}_A^\ast$ is weakly$\ast$ closed and 
$(\Ac(S)^\ast )_1$ is weakly$\ast$ compact, $\mathcal{K}$ is weakly$\ast$ closed.
Then for any $g \in \Ab (S)$ we have
\begin{align*}
	& \braket{g , r (\nu - \omega) + \phi} \geq -1
	\quad
	(r \in \realn_+ ; \omega , \nu \in S ; \omega \preceq_A \nu ; 
	\phi \in (\Ac(S)^\ast)_1)
	\\
	\iff &
	\braket{g , \nu-\omega} \geq 0  
	\quad (\omega , \nu \in S ; \omega \preceq_A \nu )
	\text{ and }
	\braket{g,\phi} \geq -1 \quad (\phi \in (\Ac (S)^\ast)_1)
	\\
	\iff &
	\text{$g$ is monotonically increasing in $\ordA$ and $\| g \| \leq 1 ,$}
\end{align*}
which implies that the polar $\mathcal{K}^\circ$ of $\mathcal{K}$ in the pair 
$(\Ac (S)^\ast , \Ab(S))$ is the set of bounded affine functionals in the unit ball 
$(\Ab(S))_1$ 
that are monotonically increasing in $\preceq_A .$
Similarly, the polar $\mathcal{K}_\circ$ of $\mathcal{K}$ in the pair
$(\Ac (S)^\ast , \Ac(S))$ is the set of continuous affine functionals in the unit ball 
$(\Ac(S))_1$ 
that are monotonically increasing in $\preceq_A ,$ 
i.e.\ $\mathcal{K}_\circ = (\mathcal{U}_{\preceq_A})_1 .$
Since $\mathcal{K}$ is a weakly$\ast$ closed convex set containing $0,$ 
the bipolar theorem implies 
that $\mathcal{K}$ is the polar of $\mathcal{K}_\circ$ in the pair 
$(\Ac (S)^\ast , \Ac(S)) .$
Thus again by the bipolar theorem, $\mathcal{K}^\circ$ is the closure of 
$\mathcal{K}_\circ$ in the pointwise convergence topology on $\Ab (S) .$

Now assume that $f$ is monotonically increasing and bounded.
Then $f \in \| f \| \mathcal{K}^\circ$ and hence by the above result there exists a net $(f_i)_\iin$ in 
$ (\mathcal{U}_{\preceq_A})_{\| f \|} $ converging pointwise to $f.$
Thus, from the claim~\ref{i:mg1}, for each $\iin $ there exists a sequence
$(f_{i,n})_{n \in \natn}$ in $\mathcal{U}_A$ uniformly converging to $f_i .$ 
We may assume that $\| f_{i,n} \| \leq \| f \| $ for all $\iin$ and all $n \in \natn .$
Then $(f_{i,n})_{\iin , n \in \natn}$ is a uniformly bounded net in $\mathcal{U}_A$
and converges to $f$ for each point in $S ,$
which proves the \lq\lq{}only if\rq\rq{} part of the claim.
The \lq\lq{}if\rq\rq{} part of the claim is obvious.

\item
Let $\Aalg (S)$ denote the set of affine functionals on $S .$
In a similar manner as the case of $\Ab (S)$ and $\Ac (S)^\aast ,$
we can prove that $\Aalg (S)$ can be identified with the algebraic dual
$(\Ac(S)^\ast)^\prime$ of $\Ac(S)^\ast$
(i.e.\ $(\Ac(S)^\ast)^\prime$ is the set of real linear functionals on $\Ac (S)^\ast$).
Moreover the bilinear form 
$\braket{\cdot , \cdot}$ on $\Aalg (S) \times \Ac(S)^\ast$ such that
\[
	\braket{g , \omega} = g(\omega )
	\quad
	(g \in \Aalg (S) ; \omega \in S)
\]
is well-defined and separating.
By this correspondence, the topology $\sigma (\Aalg (S) , \Ac (S)^\ast)$ 
is the pointwise convergence topology
on $\Aalg (S) .$
Then as in the proof of the claim~\ref{i:mg1}, we can show that
the set of affine functionals in $\Aalg (S)$ that are monotonically increasing
in $\preceq_A$ is the closure of $\mathcal{U}_A$ in 
the pointwise convergence topology on $\Aalg (S) ,$
from which the claim immediately follows. \qed
\end{enumerate}

We next give conditions of a set $A$ of continuous affine functionals 
when the order $\preceq_A$ is a partial or a total order.
\begin{prop} \label{prop:orcon}
Let $S$ be a compact convex structure
and let $A \subset \Ac (S)$ be a set of continuous affine functionals.
Then the following assertions hold.
\begin{enumerate}[1.]
\item
$\ordA$ is a partial order, i.e.\ $\omega \preceq_A \nu$ and $\nu \ordA \omega $
imply $\omega = \nu$ for any $\omega , \nu \in S ,$
if and only if the linear span $\lin (A \cup \{ 1_S \})$ is norm dense in $\Ac (S) .$
\item
$\ordA$ is a total order, i.e.\ either $\omega \ordA \nu$ or $\nu \ordA \omega$
holds for any $\omega , \nu \in S ,$
if and only if there exists an element $f_0 \in \Ac (S)$ such that 
$A \subset \cone (\{ f_0 , \pm 1_S \}) .$
\end{enumerate}
\end{prop}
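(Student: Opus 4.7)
For \textbf{part 1}, my plan is to translate antisymmetry of $\ordA$ into point separation and then into norm density via Hahn--Banach duality. Specifically, $\omega\ordA\nu$ together with $\nu\ordA\omega$ is precisely the statement that $f(\omega)=f(\nu)$ for every $f\in A$, so $\ordA$ is a partial order iff $A$ separates points of $S$. To convert this into the density condition, I will use that $\lin(A\cup\{1_S\})$ is norm dense in $\Ac(S)$ iff no nonzero $\psi\in\Ac(S)^\ast$ vanishes on $A\cup\{1_S\}$, and combine this with the base-norm representation $\psi=r_1\nu-r_2\omega$ with $r_i\in\realn_+$ and $\omega,\nu\in S$ (this uses Proposition~\ref{prop:convst} and was already employed in the proof of Theorem~\ref{thm:monoGen}). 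Imposing $\psi(1_S)=0$ forces $r_1=r_2=:r$ and collapses $\psi$ to $r(\nu-\omega)$, and then $\psi(f)=0$ for all $f\in A$ becomes exactly non-separation of $\omega,\nu$ by $A$, closing the loop.

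For \textbf{part 2}, the direction ``$\Leftarrow$'' is immediate from the description of $\cone(\{f_0,\pm 1_S\})$: every $f\in A$ has the form $\alpha f_0+\beta 1_S$ with $\alpha\ge 0$, so $f(\omega)\le f(\nu)$ reduces to $\alpha(f_0(\nu)-f_0(\omega))\ge 0$. Hence $\ordA$ either reduces to $\leq$ on $\realn$ via $f_0$ or is the trivial total preorder (when every $f\in A$ has $\alpha=0$), and in both cases it is total.

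For the converse direction, assuming totality I first dispense with the trivial case $A\subset\realn\cdot 1_S$ by taking $f_0=1_S$; otherwise I fix $f_0\in A$ with $f_0\notin\realn\cdot 1_S$. For arbitrary $g\in A$ I will prove $g=\alpha f_0+\beta 1_S$ with $\alpha\ge 0$. The crux is a planar geometry argument: were $\{f_0,g,1_S\}$ linearly independent in $\Ac(S)$, the convex planar set $T=\{(f_0(\omega),g(\omega)):\omega\in S\}\subset\realn^2$ could not be contained in any affine line $\gamma_0+\gamma_1 x+\gamma_2 y=0$ since that would yield a nontrivial identity $\gamma_0 1_S+\gamma_1 f_0+\gamma_2 g=0$ in $\Ac(S)$ (continuous affine functionals on $S$ are determined by their values on $S$). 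Hence $T$ has non-empty planar interior, and choosing a point $p_0\in\interi(T)$ together with $p_0\pm\epsilon(1,-1)$ for small $\epsilon>0$ produces $\omega,\nu\in S$ with $f_0(\omega)<f_0(\nu)$ and simultaneously $g(\omega)>g(\nu)$, violating both $\omega\ordA\nu$ and $\nu\ordA\omega$ and contradicting totality. Thus $\{f_0,g,1_S\}$ is linearly dependent, giving $g=\alpha f_0+\beta 1_S$ for unique $\alpha,\beta\in\realn$. A short tail argument rules out $\alpha<0$: that sign forces $f_0(\omega)=f_0(\nu)$ whenever either of $\omega\ordA\nu$ or $\nu\ordA\omega$ holds, so totality would make $f_0$ constant on $S$, contradicting $f_0\notin\realn\cdot 1_S$.

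The main obstacle I anticipate is the planar geometry step in the converse of part~2, namely converting linear independence of $\{f_0,g,1_S\}$ in $\Ac(S)$ into planar non-degeneracy of $T$ and extracting an explicit pair $(\omega,\nu)$ that refutes totality. The duality argument for part~1 and the $\alpha<0$ exclusion in part~2 are short once the framework is in place, so I expect these to be minor compared with the geometric step.
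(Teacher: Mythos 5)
Your proposal is correct. Part 1 and the ``if'' direction of part 2 coincide with the paper's argument (Hahn--Banach duality for the span of $A\cup\{1_S\}$, the decomposition $\psi=r(\nu-\omega)$ of trace-zero functionals forced by $\braket{\psi,1_S}=0$, and the reduction of $\ordA$ to the total preorder $\preceq_{\{f_0\}}$). The genuine divergence is in the converse of part 2. The paper argues by contraposition in the dual space: it selects $f_1,f_2\in A$ that do not generate each other, separates each from the (closed, because finitely generated) cone $\cone(\{f_j,\pm 1_S\})$ by Hahn--Banach, realizes the two separating functionals as multiples of differences of states to get two pairs $(\omega_1,\nu_1)$ and $(\omega_2,\nu_2)$, and then needs a final midpoint trick to produce an incomparable pair when the separations are only weak. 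You instead work directly in $S$: fixing a non-constant $f_0\in A$ and, for each $g\in A$, examining the convex planar range $T$ of $(f_0,g)$, linear independence of $\{f_0,g,1_S\}$ forces $\interi(T)\nono$, and an interior point immediately yields a single pair incomparable with strict inequalities in both coordinates, so no case analysis is required; the sign constraint $\alpha\geq 0$ then follows from your short totality argument. Your route is more elementary (no separation theorem in $\Ac(S)^\ast$, no appeal to closedness of finitely generated cones) at the cost of the standard fact that a planar convex set with full affine hull has nonempty interior; the only point you gloss over is that the linear dependence actually solves for $g$, i.e.\ the coefficient of $g$ is nonzero, which is immediate from $f_0\notin\realn 1_S$.
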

\begin{proof}
\begin{enumerate}[1.]
\item
Assume that $\lin (A \cup \{ 1_S \})$ is norm dense in $\Ac (S) .$
Take arbitrary $\omega , \nu \in S$ such that 
$\omega \ordA \nu $ and $\nu \ordA \omega .$
Then from the definition of $\ordA $ we have
\begin{align*}
	& \braket{\nu - \omega , f} = 0 \quad (\forall f \in A)
	\\
	\implies &
	\braket{\nu -\omega , f} =0  \quad (\forall f \in \lin (A \cup \{ 1_S  \})) .
\end{align*}
Since $\lin (A \cup \{ 1_S \})$ is norm dense in $\Ac (S) ,$ 
this implies $\nu - \omega =0 .$
Thus $\ordA$ is a partial order.

Conversely, assume that $\lin (A \cup \{ 1_S \})$ is not norm dense in $\Ac (S) .$
Then by the Hahn-Banach theorem, there exists a non-zero linear functional
$\psi \in \Ac (S)^\ast$ such that 
\begin{equation}
	\braket{\psi , f} = \braket{\psi , 1_S} = 0
	\quad
	(\forall f \in A) .
	\label{eq:vanishA}
\end{equation}
Since $S$ generates $\Ac (S)^\ast ,$ by noting $\braket{\psi , 1_S} =0$ 
we may write as $\psi = r (\nu - \omega )$ for some $r \in (0,\infty) $
and some $\omega , \nu \in S .$
Then from \eqref{eq:vanishA} we have 
\[
	f(\omega) = f (\nu) 
	\quad (\forall f \in A)
\]
and hence $\omega \ordA \nu$ and $\nu \ordA \omega $ hold.
Since $\omega \neq \nu$ by $\psi \neq 0 ,$
this proves that $\ordA$ is not a partial order.

\item
Assume that $A \subset \cone (\{ f_0 , \pm 1_S \})$ for some $f_0 \in \Ac (S) .$
Then we have 
\[
	\omega \preceq_{\{ f_0\}} \nu \implies 
	\omega \preceq_A \nu 
	\quad
	(\omega , \nu \in S) .
\]
Moreover we can easily see that $\preceq_{\{ f_0\}}$ is total from the definition.
Thus from the above implication it follows that $\preceq_A$ is also total,
which proves the \lq\lq{}only if\rq\rq{} part of the claim.

Conversely assume that $A$ is not included in $\cone (\{ f_0 , \pm 1_S \})$
for any $f_0 \in \Ac (S) .$
Then we can take $f_1 \in A $ which is not a constant functional.
Moreover since $A$ is not included in $\cone (\{ f_1 , \pm 1_S \}),$
we can take an element $f_2 \in A \setminus \cone (\{ f_1 , \pm 1_S \}) ,$
which implies
\[
	f_1 \notin \cone (\{ f_2 , \pm 1_S \})
	\quad \text{and} \quad
	f_2 \notin \cone (\{ f_1 , \pm 1_S \}) .
\]
Since $ \cone (\{ f_2 , \pm 1_S \})$ is a finitely generated cone and hence
is closed in the norm topology 
(\cite{bonnans2000perturbation}, Proposition~2.41),
the Hahn-Banach separation theorem implies that there exists a linear functional
$\psi_1 \in \Ac (S)^\ast$ such that 
\begin{equation}
	\braket{\psi_1 , f_1} > \braket{\psi_1 , 1_S} = 0 \geq \braket{\psi_1 , f_2 } .
	\label{eq:psi1}
\end{equation}
Then from $\braket{\psi_1 , 1_S} = 0$ and $\psi_1 \neq 0$ we may write as
$\psi_1 = r_1 (\nu_1 - \omega_1 )$ for some
$r_1 \in (0,\infty)$ and some $\omega_1 , \nu_1 \in S .$
Then from \eqref{eq:psi1} we obtain
\begin{equation}
	f_1(\omega_1) < f_1 (\nu_1),
	\quad
	f_2 (\omega_1) \geq f_2 (\nu_1) .
	\label{eq:on1}
\end{equation}
By interchanging $f_1$ and $f_2$ in the above discussion, 
we can take elements $\omega_2 , \nu_2 \in S$ such that
\begin{equation}
	f_1(\omega_2) \geq f_1 (\nu_2),
	\quad
	f_2 (\omega_2) < f_2 (\nu_2) .
	\label{eq:on2}
\end{equation}
Now if $f_2 (\omega_1) > f_2 (\nu_1) ,$ 
\eqref{eq:on1} implies that $\omega_1$ and $\nu_1$ are incomparable in $\ordA .$
Similarly if $f_1(\omega_2) > f_1 (\nu_2) ,$
\eqref{eq:on2} implies that 
$\omega_2$ and $\nu_2$ are incomparable in $\ordA .$
Now assume $f_2 (\omega_1) = f_2 (\nu_1)$ and $f_1(\omega_2) = f_1 (\nu_2) .$
Then from \eqref{eq:on1} and \eqref{eq:on2} we have
\[
	f_1\left( \frac{\omega_1 + \nu_2}{2} \right) 
	< 
	f_1 \left( \frac{\omega_2 + \nu_1}{2} \right) ,
	\quad
	f_2\left( \frac{\omega_1 + \nu_2}{2} \right) 
	>
	f_2 \left( \frac{\omega_2 + \nu_1}{2} \right) ,
\]
which implies the incomparability of $\frac{\omega_1 + \nu_2}{2} $
and $\frac{\omega_2 + \nu_1}{2} $ in $\ordA .$
Therefore $\ordA$ is not total.
\qedhere
\end{enumerate}
\end{proof}

By using this result we prove the non-totality of the measurement space 
$\ME$ except when $\dim E =1 .$
\begin{coro} \label{coro:nontotal}
Assume $\dim E > 1 .$
Then the post-processing order $\pp$ on $\ME$ is not total.
\end{coro}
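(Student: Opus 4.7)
The plan is to derive the claim as an immediate consequence of Proposition~\ref{prop:orcon}.2 combined with the infinite-dimensionality of the measurement space (Theorem~\ref{thm:infinite}). The key observation is that by Theorem~\ref{thm:bss}, the post-processing order $\pp$ on $\ME$ coincides with the order $\preceq_A$ induced by the set $A = \set{\Pg(\E;\cdot) | \E \text{ is an ensemble}} \subset \Ac(\ME)$ of state discrimination functionals (equivalently, one may take all \wstar-families by Proposition~\ref{prop:gaineasy}.\ref{i:gaineasy3}). Moreover, again by Theorem~\ref{thm:bss}, this family $A$ separates points of $\ME$.

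Arguing by contradiction, I would assume that $\pp$ is total. Then Proposition~\ref{prop:orcon}.2 applied to $A$ yields an element $f_0 \in \Ac(\ME)$ such that $A \subset \cone(\{f_0 , \pm 1_{\ME}\})$, i.e.\ every gain functional has the form $\alpha 1_{\ME} + \beta f_0$ for some $\alpha \in \realn$ and $\beta \in [0,\infty)$. In particular, $A$ is contained in the two-dimensional real linear space $\lin \{1_{\ME} , f_0\}$.

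Since $A$ separates points of $\ME$, it follows that $f_0$ alone separates points of $\ME$: indeed, if $f_0(\omega) = f_0(\nu)$ then $f(\omega) = f(\nu)$ for every $f \in \lin \{1_{\ME} , f_0\} \supset A$, forcing $\omega = \nu$. Hence the map
\[
	\Psi \colon \ME \ni \omega \mapsto f_0 (\omega ) \in \realn
\]
is a continuous affine injection, and its corestriction $\ME \to \Psi(\ME) = f_0(\ME)$ is an affine bijection onto a convex subset of the one-dimensional Euclidean space $\realn$. This contradicts Theorem~\ref{thm:infinite}, which is applicable because $\dim E > 1$. Therefore $\pp$ cannot be a total order.

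There is no substantive obstacle here: once the post-processing order is identified as $\preceq_A$ for the separating family of gain functionals, the two structural theorems (Proposition~\ref{prop:orcon}.2 giving the algebraic consequence of totality, and Theorem~\ref{thm:infinite} ruling out finite-dimensionality) plug together directly. The only minor care needed is to verify that passing from ``$A$ lies in a two-dimensional subspace and separates points'' to ``$f_0$ alone separates points'' is correct, but this follows at once because constants do not help separate points.
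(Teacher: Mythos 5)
Your proof is correct and follows essentially the same route as the paper: assume totality, apply Proposition~\ref{prop:orcon}.2 to place the gain functionals inside $\cone(\{f_0,\pm 1_{\ME}\})$, and derive a contradiction with Theorem~\ref{thm:infinite}. The only (harmless) difference is in the last step: the paper invokes Proposition~\ref{prop:orcon}.1 to conclude that $\lin(A\cup\{1_{\ME}\})$ is norm dense, hence (being finite-dimensional and closed) equal to $\Ac(\ME)$, forcing $\Ac(\ME)^\ast \supset \ME$ to be finite-dimensional, whereas you use the point-separation property of the gain functionals directly to build an affine bijection of $\ME$ onto a convex subset of $\realn$; both correctly contradict Theorem~\ref{thm:infinite}.
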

\begin{proof}
Assume that $\pp$ is a total order
and let $A$ denote the set of gain functionals on $\ME .$
Since $\pp$ coincides with $\ordA,$ 
Proposition~\ref{prop:orcon} implies that 
$A$ is included in $\cone (\{ f_0 , \pm 1_{\ME}\} )$
for some $f_0 \in \Ac (\ME)$
and hence $\lin (A \cup \{ 1_{\ME} \})$ is finite-dimensional.
Furthermore, since $\pp$ is a partial order on $\ME , $ 
again by Proposition~\ref{prop:orcon} 
the linear span $\lin (A \cup \{ 1_{\ME} \})  ,$
which is norm closed by the finite dimensionality,
coincides with $\Ac (\ME).$
Hence $\Ac (\ME) $ and  $\Ac (\ME)^\ast$ are
finite-dimensional,
which contradicts the infinite-dimensionality of $\ME (\subset \Ac(\ME)^\ast)$
established in Theorem~\ref{thm:infinite}.
\end{proof}

Throughout this section, we have considered the class of orders of the form $\ordA ,$
which contains the post-processing order as a special case,
and proved general statements under this general setup.
As finishing this section we give an axiomatization of this kind of order
analogous to that of the preference relation characterized by 
the utility~\cite{vnm1953,DUBRA2004118}
or of the adiabatic accessibility relation characterized by the thermodynamic 
entropy~\cite{giles1964mathematical,LIEB19991}.
\begin{thm}[von Neumann-Morgenstern utility theorem without the totality 
(completeness) axiom]
\label{thm:vnm}
Let $S$ be a compact convex structure and 
let $\preceq$ be a preorder on $S .$ Then the following conditions are equivalent.
\begin{enumerate}[(i)]
\item \label{i:ordA}
There exists a subset $A \subset \Ac (S)$ such that
$\preceq$ conincides with $\ordA .$
\item \label{i:vnm}
The order $\preceq$ satisfies both of the following conditions.
\begin{enumerate}[(a)]
\item \label{i:ind} (Independence axiom).
For any $\omega , \nu , \mu \in S$ and any $\la \in (0,1) ,$
\[
	\omega \preceq \nu \implies
	\la \omega + \ola \mu \preceq \la \nu + \ola \mu .
\]
\item \label{i:conti} (Continuity axiom).
The preorder $\preceq$ is closed in the sense of Definition~\ref{def:posp}.
\end{enumerate}
\end{enumerate}
Moreover, for any non-empty subsets $A , B \subset \Ac (S) ,$
the orders $\preceq_A$ and $\preceq_B$ coincide
if and only if 
$\ovl{\cone} (A \cup \{ \pm 1_S\}) = \ovl{\cone} (B \cup \{ \pm 1_S\}) ,$
where $\ovl{\cone}(\cdot )$ denotes the closed conic hull with respect to
the norm topology.
\end{thm}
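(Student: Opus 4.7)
My plan is to handle the three assertions of the theorem in turn. The implication (i) $\Rightarrow$ (ii) is a direct verification from the definition of $\ordA$: the affinity of each $f \in A$ yields independence via $f(\la\omega + \ola\mu) = \la f(\omega) + \ola f(\mu) \leq \la f(\nu) + \ola f(\mu) = f(\la\nu + \ola\mu)$ whenever $\omega \ordA \nu$, and continuity holds because the graph of $\ordA$ in $S\times S$ is the intersection $\bigcap_{f \in A}\set{(\omega,\nu) \in S \times S | f(\omega) \leq f(\nu)}$ of closed sets.

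For (ii) $\Rightarrow$ (i), I will take
\[
    A := \set{f \in \Ac(S) | f \text{ is monotone in } \preceq}
\]
and show $\preceq = \ordA$. The inclusion $\preceq \subseteq \ordA$ is automatic. For the reverse I will argue the contrapositive by means of the convex cone
\[
    C := \set{r(\tilde\nu - \tilde\omega) \in \Ac(S)^\ast | r \geq 0 ,\ \tilde\omega, \tilde\nu \in S ,\ \tilde\omega \preceq \tilde\nu} \subset \Ac(S)^\ast ,
\]
which is convex because a sum $\alpha_1(\nu_1 - \omega_1) + \alpha_2(\nu_2 - \omega_2)$ rewrites as $(\alpha_1 + \alpha_2)(\nu - \omega)$ with $\omega := (\alpha_1\omega_1 + \alpha_2\omega_2)/(\alpha_1 + \alpha_2)$ and $\nu := (\alpha_1\nu_1 + \alpha_2\nu_2)/(\alpha_1 + \alpha_2)$ satisfying $\omega \preceq \nu$ by chaining two applications of the independence axiom via transitivity. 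A direct computation identifies the dual cone of $C$ in the pair $(\Ac(S)^\ast , \Ac(S))$ with $A$; the bipolar theorem then yields that the weak$\ast$ closure $\ovl{C}$ coincides with the dual cone of $A$, and hence $\omega \ordA \nu$ is equivalent to $\nu - \omega \in \ovl{C}$. Granting the key lemma below, the argument is completed by Hahn--Banach: if $\omega \not\preceq \nu$ then $\nu - \omega \notin \ovl{C}$, and separation of $\{\nu - \omega\}$ from the weak$\ast$-closed convex cone $\ovl{C}$ (which contains $0$, forcing the separating functional to be nonnegative on $\ovl{C}$) produces $f \in \Ac(S)$ monotone in $\preceq$ (hence in $A$) with $f(\omega) > f(\nu)$, contradicting $\omega \ordA \nu$.

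The main obstacle is the key lemma $\nu - \omega \in \ovl{C} \Rightarrow \omega \preceq \nu$. Given a net $r_i(\tilde\nu_i - \tilde\omega_i) \to \nu - \omega$ in the weak$\ast$ topology with $\tilde\omega_i \preceq \tilde\nu_i$, I will exploit compactness of $S$ to extract (along subnets) limits $\tilde\omega^\ast, \tilde\nu^\ast \in S$ with $\tilde\omega^\ast \preceq \tilde\nu^\ast$ by the closed-graph continuity axiom. A case analysis on $(r_i)_i$ then drives the conclusion: in the bounded case, say $r_i \to r^\ast$, the identity $\omega + r^\ast \tilde\nu^\ast = \nu + r^\ast \tilde\omega^\ast$ together with the independence axiom yields
\[
\tfrac{r^\ast}{1+r^\ast}\tilde\omega^\ast + \tfrac{1}{1+r^\ast}\omega \preceq \tfrac{r^\ast}{1+r^\ast}\tilde\omega^\ast + \tfrac{1}{1+r^\ast}\nu ,
\]
from which a cancellation argument (obtained by iterating independence and using continuity to pass to the limit) extracts $\omega \preceq \nu$; the unbounded case is handled by a rescaling argument exploiting $\tilde\nu_i - \tilde\omega_i \to 0$ together with continuity. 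This cancellation-from-continuity step is the most delicate part of the proof.

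For the uniqueness clause, Theorem~\ref{thm:monoGen}(\ref{i:mg1}) identifies the set of continuous monotone affine functionals for $\ordA$ with $\ovl{\cone}(A \cup \{\pm 1_S\})$, and the equivalence established above shows that $\ordA$ is completely determined by this set of monotone functionals. Hence $\ordA = \ordB$ if and only if $\ovl{\cone}(A \cup \{\pm 1_S\}) = \ovl{\cone}(B \cup \{\pm 1_S\})$.
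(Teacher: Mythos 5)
Your overall route is essentially the paper's: both arguments reduce (ii)$\Rightarrow$(i) to showing that the convex cone $C$ of nonnegative multiples of differences $\nu - \omega$ with $\omega \preceq \nu$ is weakly$\ast$ closed in $\Ac(S)^\ast$ (equivalently, that $\nu - \omega \in \ovl{C}$ forces $\omega \preceq \nu$), after which the bipolar theorem identifies $\preceq$ with $\preceq_{\mathcal{U}}$ for $\mathcal{U}$ the dual cone of $C$. Your verification of (i)$\Rightarrow$(ii), the convexity of $C$ via independence and transitivity, the bounded-case cancellation argument, and the uniqueness clause via Theorem~\ref{thm:monoGen} all match the paper and are correct.

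The gap is in your key lemma, specifically the ``unbounded case.'' A weakly$\ast$ convergent net $\psi_i = r_i(\tilde\nu_i - \tilde\omega_i) \to \nu - \omega$ need not have bounded coefficients $r_i$ (weakly$\ast$ convergent \emph{nets}, unlike sequences, are not subject to uniform boundedness, and even a norm-bounded $\psi_i$ admits unbounded $r_i$ when $\|\tilde\nu_i - \tilde\omega_i\| \to 0$). When $r_i \to \infty$ along a subnet, one gets $\tilde\nu_i - \tilde\omega_i \to 0$, so the limits extracted by compactness satisfy $\tilde\omega^\ast = \tilde\nu^\ast$ and the relation $\tilde\omega^\ast \preceq \tilde\nu^\ast$ supplied by the continuity axiom is vacuous; no rescaling of this degenerate limit recovers $\omega \preceq \nu$, and your proposal does not say what the ``rescaling argument'' actually is. The paper closes exactly this hole with two ingredients your sketch omits: first, by the Krein--\v{S}mulian theorem the weak$\ast$ closedness of the convex set $C$ need only be checked on its norm-bounded parts $(C)_r$; second, each $\psi_i \in (C)_r$ is re-represented canonically as $\psi_i = \tfrac{\|\psi_i\|}{2}(\nu_i' - \omega_i')$ with $\omega_i' , \nu_i' \in S$ (Proposition~II.1.14 of \cite{alfsen1971compact}, using $\braket{\psi_i , 1_S} = 0$), so that the new coefficients are bounded by $r/2$, while the relation $\omega_i' \preceq \nu_i'$ for the new representatives follows from the characterization $\omega \preceq \nu \iff \nu - \omega \in C$ (Lemma~\ref{lemm:vnm2}, itself a consequence of the cancellation law) applied to an arbitrary representation. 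With these two steps the unbounded case never arises and your bounded-case argument finishes the proof; without them the key lemma is unproved.
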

Theorem~\ref{thm:vnm} was proved in \cite{DUBRA2004118}
when $S$ is a Bauer simplex $S (C(X))$ for a 
compact metric space $X .$
The proof in \cite{DUBRA2004118} can be straightforwardly generalized 
to the more general case of Theorem~\ref{thm:vnm} with slight modifications.
See Appendix~\ref{app:vnm} for the proof.

\section{Simulability and robustness of unsimulability} \label{sec:sim}
In this section, we consider simulability
\cite{doi:10.1063/1.4994303,PhysRevLett.119.190501,PhysRevA.97.062102} 
of a  measurement 
relative to a given set of measurements.
The main result in this section is Theorem~\ref{thm:RoU} that characterizes the operational meaning of the robustness measure of unsimulability.

\subsection{Simulability: definition and basic properties}

\begin{defi}[Simulability]
\label{def:sim}
Let $\onon \fL  \subset \ME $ be a set of measurements. 
A measurement $\omega \in \ME$
is said to be \textit{simulable} (respectively, \textit{strongly simulable}) by
$\fL$ if there exists 
$\nu \in \cconv (\fL)$
(respectively, $\nu \in \conv (\fL)$)
such that
$\omega \pp \nu ,$
where $\cconv (\cdot )$ denotes the closed convex hull
in the weak topology.
We also say that a measurement $\Gamma$ is 
(strongly) simulable by $\fL$
if the equivalence class $[\Ga]$ is (strongly)
simulable by $\fL .$
The sets of measurements in $\ME$
(strongly) simulable by $\fL$ is written as 
$\simu (\fL)$ ($\ssimu (\fL)$).
If $\fL$ is finite, $\simL$ and $\ssimuL$ coincide since $\conv (\fL) = \cconv (\fL).$
\qed
\end{defi}
The operational meaning of the strong simulability is as follows.
Suppose that an experimenter is able to perform only restricted
measurements belonging to $\fL \subset \ME .$
Then a measurement strongly simulable by $\fL$ is also realized by the experimenter
by classical pre- and post-processing a finite measurements belonging to $\fL .$
Here each element of $\conv (\fL)$ corresponds to take a classical pre-processing.

The following order theoretic terminology 
is useful in representing the set of simulable measurements. 
\begin{defi}\label{def:lower}
Let $(X , \leq)$ be a poset and let $Y \subset X .$
We define the lower closure of $Y$ by
\[
	\downarrow Y 
	:=
	\set{x \in X| \exists y \in Y ,\, x \leq y}.
\] 
If $Y ={ \downarrow Y}  ,$ $Y$ is said to be a lower set.
The lower closure $\downarrow Y$ is the smallest lower set containing 
$Y.$
\qed
\end{defi}
By using this notation,
the sets $\ssimu (\fL)$ and $\simu (\fL)$ 
in Definition~\ref{def:sim}
can be written as
$\ssimu (\fL) = {\downarrow \conv (\fL)}$
and 
$\simu (\fL) = {\downarrow \cconv (\fL)} ,$
respectively.

As for the operational meaning of the simulability, 
the following proposition suggests that 
the simulable measurements are exactly the measurements 
that are arbitrary approximated by strongly simulable ones in the weak topology.

\begin{prop}\label{prop:wsim}
Let $\onon \fL \subset \ME .$
Then $\simu (\fL) = \ovl{ \ssimuL  } .$
\end{prop}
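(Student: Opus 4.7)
The plan is to prove both inclusions separately. For $\ovl{\ssimuL} \subset \simu(\fL)$, I will show that $\simu(\fL) = {\downarrow} \cconv(\fL)$ is weakly closed, after which the inclusion follows from $\ssimuL \subset \simu(\fL)$. Since $\cconv(\fL)$ is a weakly closed subset of the compact space $\ME$ (Theorem~\ref{thm:compact}), it is weakly compact; for a net $\omega_i \to \omega$ in ${\downarrow}\cconv(\fL)$ I pick witnesses $\nu_i \in \cconv(\fL)$ with $\omega_i \pp \nu_i$, extract a subnet along which $\nu_i \to \nu \in \cconv(\fL)$, and invoke the pospace property (Proposition~\ref{prop:posp}) to conclude $\omega \pp \nu$, hence $\omega \in \simu(\fL)$.

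For the nontrivial inclusion $\simu(\fL) \subset \ovl{\ssimuL}$, I take $\omega \in \simu(\fL)$ with $\omega \pp \nu \in \cconv(\fL)$ and choose a net $\nu_i \to \nu$ in $\conv(\fL)$ with \wstar-channel representatives $\Lambda_i \colon G_i \to E$. Using Theorem~\ref{thm:finapp}, $\omega$ is the weak limit of finite-outcome measurements $[\Gamma_\Delta]$ satisfying $[\Gamma_\Delta] \pp \omega \pp \nu$, so by the closedness of $\ovl{\ssimuL}$ it suffices to prove each $[\Gamma_\Delta] \in \ovl{\ssimuL}$. Writing $\oM \in \evm(\Delta ; E)$ for the EVM of $[\Gamma_\Delta]$, my target is to construct EVMs $\oM_j \in \evm(\Delta ; E)$ with $[\Gamma^{\oM_j}] \pp \nu_{i(j)}$ (so $[\Gamma^{\oM_j}] \in \ssimuL$) and $\oM_j(Q) \to \oM(Q)$ weakly$\ast$ in $E$ for each $Q \in \Delta$, since Lemma~\ref{lemm:finpg} will then force $[\Gamma^{\oM_j}] \to [\Gamma_\Delta]$ in the weak topology on $\ME$.

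The main obstacle is that the outcome spaces $G_i$ of the $\Lambda_i$ differ from one another and from the outcome space of any representative of $\nu$, so a fixed post-processing of $\nu$ cannot be transported directly to the $\nu_i$. I will circumvent this with the direct-sum trick from the proof of Theorem~\ref{thm:compact}: form $\wt{F} := \bigoplus_i G_i$, pad each $\Lambda_i$ to a channel $\tL_i \in \Ch(\wt{F} \to E)$ via $\tL_i((a_k)_k) := \Lambda_i(a_i)$ (so $[\tL_i] = \nu_i$), and apply BW-compactness (Proposition~\ref{prop:BWcompact}) to extract a BW-convergent subnet $\tL_{i(j)} \to \tL_\infty$, which represents $\nu$ by the argument in the proof of Theorem~\ref{thm:compact}. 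Since $[\Gamma_\Delta] \pp [\tL_\infty]$, Proposition~\ref{prop:fEVM} yields an EVM $\oR \in \evm(\Delta ; \wt{F})$ with $\oM(Q) = \tL_\infty(\oR(Q))$; decomposing $\oR(Q) = (R_i(Q))_i$ gives EVMs $\oR_i \in \evm(\Delta ; G_i)$ for each $i$, and the sequence $\oM_j(Q) := \Lambda_{i(j)}(R_{i(j)}(Q)) = \tL_{i(j)}(\oR(Q))$ converges weakly$\ast$ to $\tL_\infty(\oR(Q)) = \oM(Q)$ by BW-convergence, which completes the construction.
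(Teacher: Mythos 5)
Your overall architecture is the paper's: the first inclusion via compactness of $\simL$ and the pospace property, the reduction to finite-outcome $\omega$ via Theorem~\ref{thm:finapp}, and the direct-sum/BW-compactness trick to realize $\nu$ as $[\tL_\infty]$. Those parts are fine. There is, however, a genuine gap at the single step where you invoke Proposition~\ref{prop:fEVM} to produce an EVM $\oR \in \evm(\Delta;\wt{F})$ with the \emph{exact} identity $\oM(Q) = \tL_\infty(\oR(Q))$.

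The problem is that $\tL_\infty$ is only a BW-limit of \wstar-channels; it is a channel in $\Ch(\wt{F}\to E)$ but need not be weakly$\ast$ continuous, and by the paper's conventions $[\tL_\infty]$ denotes the equivalence class of its \wstar-\emph{extension} $\ovl{\tL_\infty} \in \Chw(\wt{F}^{\aast}\to E)$. Hence $[\Gamma_\Delta]\pp[\tL_\infty]$ only gives $\Gamma_\Delta \pp \ovl{\tL_\infty}$, and Proposition~\ref{prop:fEVM} (equivalently Theorem~\ref{thm:bss}(iv)) then yields a witness EVM valued in $\wt{F}^{\aast}$, not in $\wt{F}$. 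One cannot upgrade this to an exact witness in $\wt{F}$: the set $\set{(\tL_\infty(\oN(Q)))_{Q\in\Delta} \mid \oN\in\evm(\Delta;\wt{F})}$ need not be weakly$\ast$ closed precisely because $\tL_\infty$ fails to be weakly$\ast$ continuous, and this closedness is what the separation argument behind Proposition~\ref{prop:fEVM} requires. Since your final convergence step $\tL_{i(j)}(\oR(Q)) \to \tL_\infty(\oR(Q))$ uses BW-convergence, which only controls the values on elements of $\wt{F}$, the argument collapses if $\oR(Q)$ lives in $\wt{F}^{\aast}$.

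The repair is exactly the extra layer in the paper's proof: take the witness $\oN^{\pprime}\in\evm(\Delta;\wt{F}^{\aast})$ for $\ovl{\tL_\infty}$, approximate it weakly$\ast$ by a net $(\oN_k)$ in $\evm(\Delta;\wt{F})$ using Lemma~\ref{lemm:dense}, and run your construction for each fixed $k$ to conclude $[\Ga^{\oM_k}]\in\ovl{\ssimuL}$ with $\oM_k(Q):=\tL_\infty(\oN_k(Q))$; then let $k$ vary and use the weak$\ast$ continuity of $\ovl{\tL_\infty}$ together with the continuity of $\oM\mapsto[\GM]$ (Lemma~\ref{lemm:gainconti}) and the closedness of $\ovl{\ssimuL}$ to pass to $[\Gamma_\Delta]$. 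Without this double limit the proof is incomplete.
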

For the proof we need some lemmas.

\begin{lemm} \label{lemm:clower}
Let $(X , \leq)$ be a compact pospace and let 
$Y $ be a compact subset of $X .$
Then the lower closure $\loc{Y}$ is also compact. 
\end{lemm}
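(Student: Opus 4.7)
The plan is to show that $\downarrow Y$ is the continuous image of a compact set, whence compact. The key observation is that the graph
\[
    G := \set{(x,y) \in X \times X | x \leq y}
\]
is closed in $X \times X$ by the definition of a pospace (Definition~\ref{def:posp}), and that $Y$, being a compact subset of the Hausdorff space $X$, is closed.

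First, I would form the product space $X \times Y$, which is compact by Tychonoff's theorem. Inside it, consider
\[
    R := G \cap (X \times Y) = \set{(x,y) \in X \times Y | x \leq y}.
\]
Since $Y$ is closed in $X$, the set $X \times Y$ is closed in $X \times X$, and therefore $R$, being the intersection of two closed sets, is closed in $X \times X$; equivalently $R$ is closed in $X \times Y$. Being a closed subset of the compact space $X \times Y$, the set $R$ is compact.

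Finally, let $\pi_1 \colon X \times Y \to X$ denote the projection onto the first coordinate. Then by the definition of $\downarrow Y$ we have $\pi_1 (R) = \downarrow Y$, and since $\pi_1$ is continuous, $\downarrow Y$ is the continuous image of the compact set $R$, hence compact. There is no serious obstacle here; the only point requiring care is justifying that $Y$ is closed (so that $X\times Y$ is closed in $X \times X$), which follows from compactness of $Y$ together with the Hausdorff property of $X$ implicit in the pospace structure.
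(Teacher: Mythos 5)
Your proof is correct, but it takes a different route from the paper. The paper argues directly with nets: given a net $(x_i)$ in $\loc{Y}$, it chooses $y_i \in Y$ with $x_i \leq y_i$, passes to simultaneously convergent subnets using compactness of $X$ and $Y$, and invokes closedness of the graph to conclude the limit lies in $\loc{Y}$. You instead realize $\loc{Y}$ as the image under the first-coordinate projection of $R = G \cap (X\times Y)$, a closed subset of the compact space $X\times Y$, and use that continuous images of compact sets are compact. Both arguments are sound; yours is the more structural one and packages the subnet extraction into standard facts about products and projections, while the paper's is self-contained and matches the net-based style it uses elsewhere (e.g.\ in Proposition~\ref{prop:posp} and Theorem~\ref{thm:compact}). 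Two small remarks on your version: (i) the detour through closedness of $Y$ in $X$ is unnecessary, since $G \cap (X\times Y)$ is automatically closed in the subspace $X \times Y$ once $G$ is closed in $X\times X$, so you need not invoke the Hausdorff property at all; (ii) if you do want to use it, the Hausdorff property is indeed implicit in the pospace axioms (the diagonal is $G$ intersected with its image under the coordinate swap, hence closed by antisymmetry), but it deserves the one-line justification rather than being asserted as folklore.
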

\begin{proof}
Let $(x_i)_\iin$ be a net in $\loc{Y} .$
Then for each $\iin$ we take $y_i \in Y$ satisfying $x_i \leq y_i .$
By the compactness of $X$ and $Y ,$
there exist subnets $(x_{i(j)})_{j\in J}$ and $(y_{i(j)})_{j\in J}$
satisfying 
$x_{i(j)} \to x \in X$ and $y_{i(j)} \to y \in Y.$
Then by the pospace condition of $X$ we have $x \leq y,$
which implies $x \in \loc{Y} .$
Therefore $\loc{Y}$ is compact
\end{proof}

\begin{lemm}\label{lemm:wsimc}
Let $\onon \fL \subset \ME .$
Then $\simL$ is a compact convex subset of $\ME .$
\end{lemm}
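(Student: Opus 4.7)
The plan is to establish the two assertions separately, invoking machinery already developed in the paper for both the convex and the topological parts.

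For convexity, I would take $\omega_1,\omega_2\in\simL$ and $\la\in[0,1]$, and pick $\nu_i\in\cconv(\fL)$ with $\omega_i\pp\nu_i$ for $i=1,2$. Choosing \wstar-representatives $\Ga_i,\La_i$ with $\Ga_i\pp\La_i$, Proposition~\ref{prop:direct}.\ref{i:direct1} yields
\[
\la\Ga_1\oplus\ola\Ga_2\;\pp\;\la\La_1\oplus\ola\La_2,
\]
so that under the identification of Theorem~\ref{thm:cconvME} (in which the convex combination in $\ME$ corresponds to the direct-sum channel) we obtain $\la\omega_1+\ola\omega_2\pp\la\nu_1+\ola\nu_2$. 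Since $\cconv(\fL)$ is convex, $\la\nu_1+\ola\nu_2\in\cconv(\fL)$, and thus $\la\omega_1+\ola\omega_2\in\simL$.

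For compactness, I would apply Lemma~\ref{lemm:clower} to $X=\ME$ and $Y=\cconv(\fL)$. The hypotheses are already in place: Theorem~\ref{thm:compact} gives the compactness of $\ME$ in the weak topology, Proposition~\ref{prop:posp} gives that $(\ME,\pp)$ is a pospace with respect to this topology, and $\cconv(\fL)$ is a weakly closed subset of the compact $\ME$, hence compact. Therefore $\simL=\loc{\cconv(\fL)}$ is compact by Lemma~\ref{lemm:clower}.

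There is no serious obstacle; all required inputs have been prepared in earlier sections. The only point that demands a little care is the translation between the two descriptions of the convex operation on $\ME$: a formal combination $\la\omega_1+\ola\omega_2$ in the ambient locally convex space $\Ac(\ME)^\ast$ equals the equivalence class $[\la\Ga_1\oplus\ola\Ga_2]$ of the direct-sum measurement, and it is precisely in this latter form that Proposition~\ref{prop:direct}.\ref{i:direct1} applies to preserve $\pp$. Once this identification is invoked, both convexity and compactness follow immediately.
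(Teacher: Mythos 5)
Your proof is correct and follows essentially the same route as the paper: convexity via Proposition~\ref{prop:direct} applied to representatives together with the convexity of $\cconv(\fL)$, and compactness via Lemma~\ref{lemm:clower} applied to the compact set $\cconv(\fL)$ inside the compact pospace $(\ME,\pp)$. The extra care you take in translating between $\la\omega_1+\ola\omega_2$ and $[\la\Ga_1\oplus\ola\Ga_2]$ is exactly the identification the paper relies on implicitly.
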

\begin{proof}
Since $\cconv (\fL)$ is weakly compact, the compactness of 
$\simL = \loc{ \cconv(\fL)  }$ follows from Lemma~\ref{lemm:clower}.
To prove the convexity, take measurements $\omega_1 , \omega_2 \in \simL$
and $\nu_1 , \nu_2 \in \cconv (\fL)$
satisfying $\omega_j \pp \nu_j $
$(j=1,2) .$
Then by Proposition~\ref{prop:direct}, for each $\la \in [0,1]$ we have
\[
	\la \omega_1 + \ola \omega_2 
	\pp
	\la \nu_1 + \ola \nu_2
	\in \cconv (\fL) , 
\]
which implies $\la \omega_1 + \ola \omega_2  \in \simL .$
Therefore $\simL$ is convex.
\end{proof}

For a subset $\onon \fL \subset \ME$
and a finite set $X$ we define
\[
	\evmsimL (X;E) := \set{\oM \in \evm (X;E) | [\GM] \in \simL},
\]
which is the set of EVMs simulable by $\fL$ with the outcome set $X.$

\begin{lemm} \label{lemm:gainconti}
Let $X$ be a finite set.
\begin{enumerate}[1.]
\item \label{i:gainconti1}
The map 
\begin{equation}
	\evm (X;E) \ni \oM
	\mapsto [\GM] \in \ME
	\label{eq:EVMM}
\end{equation}
is continuous with respect to the weak$\ast$ topology on $\evm (X;E)$
and the weak topology on $\ME .$
\item \label{i:gainconti2}
For any subset $\onon \fL \subset \ME ,$ 
$\evmsimL (X;E)$ is a weakly$\ast$ compact convex subset of 
$\evm (X;E) .$
\end{enumerate}
\end{lemm}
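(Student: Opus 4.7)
The plan is to address the two claims essentially independently, with claim~\ref{i:gainconti1} feeding into the closedness part of claim~\ref{i:gainconti2}.

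For claim~\ref{i:gainconti1}, I would show that for every \wstar-family $\E$ the composite $\oM \mapsto \Pg(\E;\GM)$ is continuous in $\sigma(E^X,E_\ast^X)$; continuity of $\oM \mapsto [\GM]$ in the weak topology then follows from Definition~\ref{def:wtop}. The key input is Lemma~\ref{lemm:finpg}, which gives the closed-form expression $\Pg(\E;\GM) = \sum_{x\in X} \max_{y\in Y}\braket{\vph_y,\oM(x)}$ for a finite-outcome EVM with label set $X$ and \wstar-family $\E = (\vph_y)_{y\in Y}$ with finite $Y$. Each functional $\oM \mapsto \braket{\vph_y,\oM(x)}$ is, by definition of the product weak$\ast$ topology, continuous; a finite maximum of continuous functions is continuous, and a finite sum of continuous functions is continuous. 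So the whole map is continuous, which proves the claim.

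For the convexity part of claim~\ref{i:gainconti2}, I would take $\oM_1,\oM_2 \in \evmsimL(X;E)$ and $\la \in [0,1]$, and use the identity $\Ga^{\la\oM_1+\ola\oM_2} = \la \Ga^{\oM_1} + \ola \Ga^{\oM_2}$ of linear maps on $\linf(X)$. Proposition~\ref{prop:direct}.\ref{i:direct2} gives $\la \Ga^{\oM_1} + \ola \Ga^{\oM_2} \pp \la \Ga^{\oM_1} \oplus \ola \Ga^{\oM_2}$, so on the measurement-space side
\[
[\Ga^{\la\oM_1+\ola\oM_2}] \pp \la[\Ga^{\oM_1}] + \ola[\Ga^{\oM_2}].
\]
Pick $\nu_j \in \cconv(\fL)$ with $[\Ga^{\oM_j}] \pp \nu_j$; then by Proposition~\ref{prop:direct}.\ref{i:direct1} (applied inside $\ME$ under the identification $\la[\Ga]+\ola[\La]=[\la\Ga\oplus\ola\La]$) we get $\la[\Ga^{\oM_1}]+\ola[\Ga^{\oM_2}] \pp \la\nu_1+\ola\nu_2 \in \cconv(\fL)$. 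Transitivity of $\pp$ shows $[\Ga^{\la\oM_1+\ola\oM_2}] \in \simL$.

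For the compactness part of claim~\ref{i:gainconti2}, I would combine claim~\ref{i:gainconti1} with Lemma~\ref{lemm:wsimc}. The set $\evm(X;E)$ is weakly$\ast$ compact (noted just before Proposition~\ref{prop:fEVM}), and $\simL$ is weakly compact (hence closed) by Lemma~\ref{lemm:wsimc}. By claim~\ref{i:gainconti1} the map $\Phi \colon \evm(X;E) \to \ME$, $\oM \mapsto [\GM]$ is continuous, and $\evmsimL(X;E) = \Phi^{-1}(\simL)$ is therefore closed in the weakly$\ast$ compact set $\evm(X;E)$, hence weakly$\ast$ compact. No step here appears particularly delicate; the only conceptual care needed is to keep straight the two distinct convex operations on channels (the direct-sum mixture $\oplus$ versus the pointwise sum $+$ on $\oM$'s or on equal-domain channels) and to invoke the two parts of Proposition~\ref{prop:direct} accordingly when reducing the former to the latter.
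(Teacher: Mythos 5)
Your proposal is correct and follows essentially the same route as the paper: claim~1 via the closed-form expression of Lemma~\ref{lemm:finpg} together with the definition of the weak topology, and claim~2 by pulling back the closed set $\simu(\fL)$ under the continuous map from claim~1 for compactness, and by Proposition~\ref{prop:direct} plus the lower-set property for convexity. The only cosmetic difference is that you re-derive the convexity of $\simu(\fL)$ through $\cconv(\fL)$ instead of citing it directly from Lemma~\ref{lemm:wsimc}, which is the same argument inlined.
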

\begin{proof}
\begin{enumerate}[1.]
\item
If a net $(\oM_i)_\iin$ in $\evm (X;E)$ weakly$\ast$ converges to 
$\oM \in \evm (X;E) ,$
then by Lemma~\ref{lemm:finpg}
for any \wstar-family $\E = (\vph_y)_\yin$ we have
\[
	\Pg(\E ; \Ga^{\oM_i})
	=
	\sum_\xin \max_\yin \braket{\vph_y , \oM_i (x)}
	\to 
	\sum_\xin \max_\yin \braket{\vph_y , \oM (x)}
	= 
	\Pg (\E ; \GM),
\]
which proves the continuity of \eqref{eq:EVMM}.
\item
Since $\simL$ is weakly closed by Lemma~\ref{lemm:wsimc},
the compactness of $\evmsimL (X;E)$ follows from the claim~\ref{i:gainconti1}.
To show the convexity, take EVMs 
$\oM_1 , \oM_2 \in \evmsimL (X;E) $
and $\la \in [0,1].$
Then by Proposition~\ref{prop:direct}.\ref{i:direct2} and
Lemma~\ref{lemm:wsimc} we have
\[
	[\Ga^{\la \oM_1 + \ola \oM_2}]
	=
	[\la \Ga^{\oM_1} +\ola \Ga^{\oM_2}]
	\pp
	\la [\Ga^{\oM_1} ]+\ola [ \Ga^{\oM_2}]
	\in \simL .
\]
Since $\simL$ is a lower set, this implies 
$\la \oM_1 + \ola \oM_2 \in \evmsimL (X;E) .$
\qedhere
\end{enumerate}
\end{proof}
\noindent 
\textit{Proof of Proposition~\ref{prop:wsim}.}
The inclusion $\ssimuL \subset \simL $ is obvious.
Since $\simL$ is weakly compact by Lemma~\ref{lemm:wsimc},
this implies
$\overline{\ssimuL} \subset \simL .$
To show the converse inclusion, we take $\omega \in \simL$ and prove
$\omega \in  \overline{\ssimuL}.$

We first assume that $\omega $ is finite-outcome.
Then $\omega = [\GM]$ for some finite-outcome EVM
$\oM \in \evm (X;E) .$
By the definition of $\simL ,$
there exist a measurement $\nu \in \cconv (\fL)$
and a net $(\nu_i )_\iin$ in $\conv (\fL)$ such that
$\omega \pp \nu$
and 
$\nu_i \wto \nu .$
Let $\Ga_i \in \Chw (F_i \to E)$
$(i\in I)$ be a representative of $\nu_i$
and let $\wt{F} = \bigoplus_\iin F_i ,$
$\tG_i \in \Chw (\wt{F} \to E) ,$
$(\tG_\ikj)_\jin ,$
and
$\tG_0 \in \Ch (\wt{F} \to E)$
be the same as in the proof of Theorem~\ref{thm:compact}.
Then from the proof of Theorem~\ref{thm:compact} we have
\[
	[\tG_0] 
	=
	\lim_\jin [\Ga_\ikj] 
	=
	\lim_\jin \nu_\ikj
	= \nu .
\]
Let $\tG \in \Chw (\wt{F}^\aast \to E)$ be the \wstar-extension of $\tG_0 .$
Since $[\GM] \pp \nu = [\tG_0] = [\tG],$
there exists an EVM $\oN^\pprime \in \evm (X; \wt{F}^\aast)$ such that
$\oM(x) = \tG (\oN^\pprime(x))$ 
$(\xin) .$
By Lemma~\ref{lemm:dense} there exists a net $(\oN_k)_{k \in K}$
in $\evm (X ; \wt{F})$
such that $\oN_k \wsto \oN^\pprime .$
Let $\oN_k (x) = (\oN_{k,i}(x))_\iin$
($k \in K ,$ $ \xin$) and define
$\oM_{k,j}, \oM_k  \in \evm (X;E)$ by
\begin{gather}
	\oM_{k,j} (x) := 
	\tG_\ikj (\oN_k (x) )
	=
	\Ga_\ikj (\oN_{k, i(j)} (x)),
	\label{eq:Mkj} \\
	\oM_k (x)
	:=
	\tG (\oN_k (x))
	=
	\tG_0 (\oN_k (x))
	=
	\lim_\jin 
	\oM_{k,j} (x) .
	\notag
\end{gather}
Then by the weak$\ast$ continuity of $\tG ,$
we have
$\oM_k \wsto \oM .$
Since $[\Ga^{\oM_{k,j}} ] \in \ssimuL$ by \eqref{eq:Mkj},
Lemma~\ref{lemm:gainconti}.\ref{i:gainconti2} implies $[\Ga^{\oM_k}] \in \overline{\ssimuL}$
and hence again by Lemma~\ref{lemm:gainconti}.\ref{i:gainconti2} we have 
$ \omega = [\Ga^{\oM}] = \lim_{k \in K} [\Ga^{\oM_k}] \in \overline{\ssimuL} .$

For general $\omega \in \simL ,$ 
Theorem~\ref{thm:finapp} implies that there exists a post-processing
increasing net $(\omega_\alpha)_{\alpha \in A}$ in $\MfinE$
weakly converging to $\omega = \sup_{\alpha \in A} \omega_\alpha .$
Since $\simL$ is a lower set, we have $\omega_\alpha \in \simL$
$(\alpha \in A)$
and hence $\omega_\alpha \in \ovl{\ssimuL}$
from what we have shown above.
Therefore $\omega = \lim_{\alpha \in A} \omega_\alpha$
is also in $\ovl{\ssimuL} ,$
which completes the proof.
\qed

One might expect from Lemma~\ref{lemm:gainconti}.\ref{i:gainconti1}
that for an infinite-dimensional classical space $F$ the map
\begin{equation}
	\Ch(F \to E) 
	\ni \Ga \to [\Ga] \in \ME
	\label{eq:uho}
\end{equation}
is also continuous with respect to the BW-topology on $\Ch(F \to E) $
and the weak topology on $\ME .$
This is in fact not true.
We have still a result analogous to Lemma~\ref{lemm:gainconti}.\ref{i:gainconti2}.
Let us show a slightly more general result.

For a subset $\fL \subset \ME$ and a classical space $F$ we define
\[
	\ChL (F \to E)
	:=
	\set{\Gamma \in \Ch (F\to E) | [\Ga] \in \fL},
\]
which is the inverse image of $\fL$ under the map \eqref{eq:uho}.

\begin{prop} \label{prop:cclower}
Let $\fL \subset \ME$ be a lower subset with respect to the post-processing order.
\begin{enumerate}[1.]
\item
$\fL$ is weakly compact if and only if $\ChL (F\to E)$ 
is BW-compact for any classical space $F.$
\item
$\fL$ is convex if and only if 
$\ChL (F \to E)$ is convex for any classical space $F .$
\end{enumerate}
\end{prop}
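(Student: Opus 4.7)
I would start with the convexity statement (part~2) since both directions are essentially bookkeeping once Proposition~\ref{prop:direct} is available. For the ``only if'' direction, given $\Phi,\Psi\in\ChL(F\to E)$ and $\la\in[0,1]$, Proposition~\ref{prop:direct}.\ref{i:direct2} gives $\la\Phi+\ola\Psi\pp\la\Phi\oplus\ola\Psi$, so $[\la\Phi+\ola\Psi]\pp\la[\Phi]+\ola[\Psi]\in\fL$ by convexity of $\fL$, and the lower-set property forces $[\la\Phi+\ola\Psi]\in\fL$. For the ``if'' direction I would take $\omega_1,\omega_2\in\fL$ with \wstar-representatives $\Ga_j\in\Chw(F_j\to E)$, form $F:=F_1\oplus F_2$, and define channels $\Theta_j\in\Ch(F\to F_j)$ by $\Theta_j(a_1\oplus a_2):=a_j$ (unital and positive by construction). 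A right inverse $\Lambda_j\in\Ch(F_j\to F)$ of $\Theta_j$ given by $\Lambda_j(a):=0\oplus\cdots\oplus a\oplus\cdots\oplus\vph(a)u_{F_{3-j}}$ for some fixed state $\vph$ on $F_j$ shows $\Ga_j\circ\Theta_j\ppeq\Ga_j$, so $\Ga_j\circ\Theta_j\in\ChL(F\to E)$; convexity of $\ChL(F\to E)$ then places $\la(\Ga_1\circ\Theta_1)+\ola(\Ga_2\circ\Theta_2)$ in $\ChL(F\to E)$, and a direct calculation identifies this channel with $\la\Ga_1\oplus\ola\Ga_2$, whose equivalence class is exactly $\la\omega_1+\ola\omega_2$.

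For part~1, the ``if'' direction would reuse the direct-sum embedding from the proof of Theorem~\ref{thm:compact}. Given a net $(\omega_i)_\iin$ in $\fL$ with representatives $\Ga_i\in\Chw(F_i\to E)$, I would set $\wt F:=\bigoplus_\iin F_i$ and define $\tG_i((a_{i'})_{i'}):=\Ga_i(a_i)$, which satisfies $\tG_i\ppeq\Ga_i$ by the same kind of channel argument as above, so $\tG_i\in\ChL(\wt F\to E)$. By hypothesis this set is BW-compact, yielding a BW-convergent subnet $\tG_{i(j)}\to\tG_0\in\ChL(\wt F\to E)$; the very calculation carried out inside the proof of Theorem~\ref{thm:compact} then shows $\omega_{i(j)}=[\Ga_{i(j)}]=[\tG_{i(j)}]\wto[\tG_0]\in\fL$, so $\fL$ is weakly compact.

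The main obstacle is the ``only if'' direction of part~1, because the map $\Ch(F\to E)\ni\Ga\mapsto[\Ga]\in\ME$ is BW-to-weak continuous only in the finite-outcome case (Lemma~\ref{lemm:gainconti}.\ref{i:gainconti1}). My plan is to replace continuity by BW-lower-semicontinuity of each gain functional: for any ensemble $\E=(\vph_x)_\xin$ and any EVM $\oN\in\evm(X;F)$, BW-convergence $\Ga_i\to\Ga$ gives
\[
\sum_\xin\braket{\vph_x,\Ga(\oN(x))}=\lim_i\sum_\xin\braket{\vph_x,\Ga_i(\oN(x))}\leq\liminf_i\Pg(\E;\Ga_i),
\]
and taking the supremum over $\oN$ yields $\Pg(\E;\Ga)\leq\liminf_i\Pg(\E;\Ga_i)$. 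Now let $(\Ga_i)_\iin$ be a BW-convergent net in $\ChL(F\to E)$ with limit $\Ga\in\Ch(F\to E)$. Weak compactness of $\fL$ gives a subnet with $[\Ga_{i(j)}]\wto\nu\in\fL$; combining this with BW-lower-semicontinuity, $\Pg(\E;\Ga)\leq\lim_j\Pg(\E;\Ga_{i(j)})=\Pg(\E;\nu)$ for every ensemble $\E$, whence $[\Ga]\pp\nu$ by the BSS theorem (Theorems~\ref{thm:bss} and \ref{thm:prebss}). The lower-set property of $\fL$ then gives $[\Ga]\in\fL$, i.e.\ $\Ga\in\ChL(F\to E)$, proving that $\ChL(F\to E)$ is BW-closed in the BW-compact set $\Ch(F\to E)$ and hence BW-compact.
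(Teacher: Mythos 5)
Your proposal is correct and follows essentially the same route as the paper's proof in all four directions: the direct-sum embedding from Theorem~\ref{thm:compact} for the ``if'' part of compactness, the gain-functional inequality $\sum_\xin\braket{\vph_x,\Ga(\oN(x))}\leq\liminf_i\Pg(\E;\Ga_i)$ combined with Theorem~\ref{thm:prebss} and the lower-set property for the ``only if'' part, and Proposition~\ref{prop:direct} together with the $F_1\oplus F_2$ construction (your $\Ga_j\circ\Theta_j$ is exactly the paper's $\tL_j$) for convexity. The only differences are presentational, e.g.\ phrasing BW-compactness as closedness inside the BW-compact set $\Ch(F\to E)$ and writing out the right inverse $\Lambda_j$ that the paper leaves as ``easy to show.''
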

\begin{proof}
\begin{enumerate}[1.]
\item
Suppose that $\fL$ is weakly compact.
Let $F$ be a classical space and take a net $(\Ga_i)_\iin$
in $\ChL (F\to E).$
By the compactness of $\fL$ and $\Ch (F \to E) ,$
there exist a subnet $([\Ga_\ikj])_\jin ,$
a measurement $\omega \in \fL ,$
and a measurement $\Ga \in \Ch (F \to E)$
such that 
$[\Ga_\ikj] \wto \omega$
and 
$\Ga_\ikj \bwto \Ga .$
Then for any ensemble $\E = \vphxin$ we have 
$\Pg (\E ; \Ga_\ikj) \to \Pg (\E ; \omega) .$
Thus for any $\oM \in \evm (X;F),$ 
\begin{align*}
	\sum_\xin \braket{\vphx , \Ga (\oM(x))}
	&=
	\lim_\jin \sum_\xin \braket{\vphx , \Ga_\ikj (\oM(x))}
	\\
	&\leq
	\lim_\jin \Pg (\E; \Ga_\ikj)
	\\
	&=
	\Pg (\E ; \omega),
\end{align*}
which implies $\Pg (\E ; \Ga) \leq \Pg (\E ; \omega ) .$
Therefore by Theorem~\ref{thm:prebss} we obtain $[\Ga] \pp \omega .$
Since $\fL$ is a lower set, this implies $[\Ga] \in \fL$
and hence $\Ga \in \ChL (F \to E) ,$ which proves the compactness of
$\ChL (F\to E) .$

Conversely suppose that $\ChL (F\to E)$ is BW-compact for any classical space 
$F .$
Let $([\Ga_i])_\iin$ be a net in $\fL$ with the representatives
$\Ga_i \in \Chw (F_i \to E)$
$(\iin ) .$
We take $\wt{F} = \bigoplus_\iin F_i ,$
$\tG_i \in \Ch (\wt{F} \to E) ,$
$(\tG_\ikj )_\jin ,$
and
$\tG_0 \in \Ch  (\wt{F} \to E) $
in the same way as in Theorem~\ref{thm:compact}.
Then $[\Ga_\ikj] \wto [\tG_0] .$ 
We can also easily see that $\Ga_i \ppeq \tG_i .$
Thus by assumption the BW-limit $\tG_0$ of $(\tG_\ikj)_\jin$
is in $\ChL (\wt{F} \to E) ,$
which implies $[\tG_0] \in \fL .$
Therefore $\fL $ is compact.
\item
Suppose that $\fL$ is convex and take a classical space $F,$
measurements $\Ga_1 , \Ga_2 \in \ChL (F \to E) ,$
and $\la \in [0,1] .$
Then by Proposition~\ref{prop:direct} and the convexity of $\fL,$
\[
	[\la \Ga_1 + \ola \Ga_2]
	\pp
	[\la \Ga_1 \oplus \ola \Ga_2 ]
	= 
	\la [\Ga_1] + \ola [\Ga_2]
	\in \fL .
\]
Since $\fL$ is a lower set, this implies $\la \Ga_1 + \ola \Ga_2 \in \ChL (F\to E),$
which proves the convexity of $\ChL (F\to E) .$

Conversely assume that $\ChL (F\to E)$ is convex for any classical space 
$F .$
Let $\omega_1 , \omega_2 \in \fL$
and let $\La_j \in \Chw (F_j \to E) $ 
be a representative of $\omega_j$ $(j=1,2) .$ 
Define \wstar-measurements $\tL_j \in \Chw (F_1 \oplus F_2 \to E)$
$(j=1,2)$ by
\[
	\tL_1 (a\oplus b) := \La_1 (a) , \quad
	\tL_2 (a\oplus b) := \La_2 (b) 
	\quad
	(a \in F_1 , b \in F_2) .
\]
Then it is easy to show $\La_j \ppeq \tL_j$ $(j=1,2).$
Hence $\tL_j \in \ChL (F_1 \oplus F_2 \to E)$
and the assumption implies 
\[
	\la \La_1 \oplus \ola \La_2
	=
	\la \tL_1 +\ola \tL_2
	\in 
	\ChL (F_1 \oplus F_2 \to E).
\]
Thus $\la \omega_1 + \ola \omega_2 = [\la \La_1 \oplus \ola \La_2] \in \fL ,$
which proves the convexity of $\fL .$
\qedhere
\end{enumerate}
\end{proof}

\begin{coro} \label{coro:chlcomp}
Let $\fL \subset \ME .$
Then for any classical space $F,$
$\ChwsimL (F\to E)$ is a BW-compact convex subset of 
$\Ch (F \to E) .$
\end{coro}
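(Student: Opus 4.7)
The plan is to obtain the corollary as an immediate application of Proposition~\ref{prop:cclower} with the role of $\fL$ there played by $\simL$. So the entire proof amounts to verifying the three hypotheses that Proposition~\ref{prop:cclower} requires of its argument, namely that $\simL$ is a lower set, is weakly compact, and is convex.

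Two of these three facts are already in place. Lemma~\ref{lemm:wsimc} asserts that $\simL$ is a weakly compact convex subset of $\ME$, covering compactness and convexity at once. The only thing left to check is the lower-set property, and this is essentially built into the definition: if $\omega \in \simL$ and $\omega' \pp \omega$, then by Definition~\ref{def:sim} there exists $\nu \in \cconv(\fL)$ with $\omega \pp \nu$, whence transitivity of $\pp$ yields $\omega' \pp \nu$, so $\omega' \in \simL$. In the shorthand of Definition~\ref{def:lower}, this is just the identity $\simL = \loc{\cconv(\fL)}$, which is manifestly a lower set.

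With these three properties, Proposition~\ref{prop:cclower}.1 (applied to $\simL$) gives the BW-compactness of $\mathbf{Ch}^{\simL}(F\to E) = \ChwsimL(F\to E)$ in $\Ch(F\to E)$ for every classical space $F$, while Proposition~\ref{prop:cclower}.2 gives its convexity. There is no real obstacle: everything has been set up so that this corollary is simply the translation of Lemma~\ref{lemm:wsimc} from the measurement-space side to the channel-space side via Proposition~\ref{prop:cclower}. The only minor point to handle is the trivial edge case $\fL = \varnothing$, where $\simL = \varnothing$ and the statement is vacuous, so we may assume $\fL \neq \varnothing$ in the application of Lemma~\ref{lemm:wsimc}.
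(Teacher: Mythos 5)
Your proof is correct and is exactly the argument the paper intends: the corollary is stated immediately after Proposition~\ref{prop:cclower} precisely so that it follows by applying that proposition to the weakly compact, convex, lower set $\simL$ (Lemma~\ref{lemm:wsimc} plus the evident lower-set property of $\simL = {\downarrow \cconv(\fL)}$). Your handling of the empty case is a reasonable extra precaution and does not change the substance.
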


\subsection{Simulability and outperformance in the state discrimination task} 
\label{subsec:sd}
We introduce the gain functional relative to a set of measurements
based on the following proposition.
\begin{prop} \label{prop:Pgdef}
Let $\E = \vphxin$ be a \wstar-family and let $\onon \fL \subset \ME .$
Then the following equalities hold:
\begin{align}
	\sup_{\omega \in \simL}
	\Pg (\E ; \omega )
	&=
	\sup_{\omega \in \cconv (\fL)} \Pg (\E ;\omega)
	\notag \\ &=
	\sup_{\omega \in \fL}
	\Pg (\E ;\omega)
	\notag \\ &=
	\sup_{\oM \in \evmsimL (X;E)} 
	\sum_\xin 
	\braket{\vphx, \oM (x)} .
	\label{eq:PgL}
\end{align}
\end{prop}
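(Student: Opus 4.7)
The plan is to label the four quantities in the asserted chain of equalities as
\[
A := \sup_{\omega \in \simL} \Pg(\E;\omega), \quad
B := \sup_{\omega \in \cconv(\fL)} \Pg(\E;\omega), \quad
C := \sup_{\omega \in \fL} \Pg(\E;\omega),
\]
and
\[
D := \sup_{\oM \in \evmsimL(X;E)} \sum_\xin \braket{\vphx,\oM(x)},
\]
and then establish the ring $C \leq B \leq A \leq D \leq A \leq B \leq C$ in essentially three steps.

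First I would deal with the equality $B = C$. Since $\Pg(\E;\cdot)$ is, by Definition~\ref{def:wtop}, a weakly continuous functional on $\ME$, and since it is affine by Proposition~\ref{prop:gaineasy}.\ref{i:gaineasy1}, its supremum over $\conv(\fL)$ equals its supremum over $\fL$ by affinity, and its supremum over $\cconv(\fL)$ equals its supremum over $\conv(\fL)$ by continuity and the density of $\conv(\fL)$ in its weak closure. Next, the inequality $A \geq B$ is immediate from $\cconv(\fL) \subset \simL$, and the reverse inequality $A \leq B$ follows because any $\omega \in \simL = {\loc \cconv(\fL)}$ admits $\nu \in \cconv(\fL)$ with $\omega \pp \nu$, and then Theorem~\ref{thm:bss} gives $\Pg(\E;\omega) \leq \Pg(\E;\nu) \leq B$. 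This settles $A = B = C$.

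It remains to show $A = D$. For $D \leq A$, given $\oM \in \evmsimL(X;E)$, I would use the obvious decision rule $\oN(x) := \delta_x \in \linf(X)$, which is an EVM in $\evm(X;\linf(X))$ realising the identity map on $\linf(X)$; then
\[
\sum_\xin \braket{\vphx,\oM(x)} = \sum_\xin \braket{\vphx, \Gamma^\oM(\delta_x)} \leq \Pg(\E;[\Gamma^\oM]) \leq A,
\]
where the last inequality uses $[\Gamma^\oM] \in \simL$. For the reverse $A \leq D$, fix $\omega \in \simL$ with a \wstar-representative $\Gamma \in \Chw(F \to E)$ and $\epsilon > 0$. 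Choose $\oN \in \evm(X;F)$ with $\sum_\xin \braket{\vphx, \Gamma(\oN(x))} > \Pg(\E;\omega) - \epsilon$, and define $\oM \in \evm(X;E)$ by $\oM(x) := \Gamma(\oN(x))$. By Proposition~\ref{prop:fEVM}.1 this gives $\Gamma^\oM \pp \Gamma$, hence $[\Gamma^\oM] \pp \omega \in \simL$; since $\simL$ is a lower set by construction, $[\Gamma^\oM] \in \simL$ and therefore $\oM \in \evmsimL(X;E)$. Thus
\[
\Pg(\E;\omega) - \epsilon < \sum_\xin \braket{\vphx, \oM(x)} \leq D,
\]
and taking $\epsilon \downarrow 0$ followed by $\sup$ over $\omega$ yields $A \leq D$.

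The only real subtlety is the $A \leq D$ direction, where one has to translate the abstract data of a measurement in $\simL$ (whose representative may live on a huge classical space) into a concrete EVM on $E$ with outcome set $X$ that itself is still simulable by $\fL$; the lower-set property of $\simL$ inherited from $\loc\cconv(\fL)$ and Proposition~\ref{prop:fEVM}.1 together make this passage clean.
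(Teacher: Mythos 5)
Your proof is correct and follows essentially the same route as the paper: the first two equalities via monotonicity, affinity, and weak continuity of $\Pg(\E;\cdot)$, and the last via the correspondence $\oM(x)=\Gamma(\oN(x))$ together with the lower-set property of $\simL$ (the paper phrases this as $(\La_0(\oN_0(x)))_\xin\in\evmsimL(X;E)$). The only cosmetic difference is that the paper invokes compactness of $\evmsimL(X;E)$ and $\simL$ to attain the suprema, whereas you replace this with an $\epsilon$-argument, which works equally well.
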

\begin{proof}
The first two equalities follow from the monotonicity in $\pp ,$ the affinity, and the weak continuity of $\Pg (\E ; \cdot) .$
By the compactness of $\evmsimL (X;E) $ (Lemma~\ref{lemm:gainconti}), the maximal value of the RHS of \eqref{eq:PgL} is attained by som $\oM_0 \in \evmsimL (X;E).$
Then from $[\Ga^{\oM_0}] \in \simL$ we have
\[
	(\text{RHS of \eqref{eq:PgL}})
	= \sum_{\xin} \braket{\vphx, \oM_0 (x)}
	\leq
	\Pg (\E ; [\Ga^{\oM_0}])
	\leq
	\sup_{\omega \in \simL}
	\Pg (\E ; \omega ) .
\]
On the other hand, by the compactness of $\simL ,$ we can take $\omega_0 \in \simL$ such that
$\sup_{\omega \in \simL}
	\Pg (\E ; \omega ) = \Pg (\E ; \omega_0) .$
Let $\La_0 \in \Chw (F \to E)$ be a representative of $\omega_0 .$
Then we can take $\oN_0 \in \evm (X;F)$ such that 
\[
	\Pg (\E ; \omega_0) = \sum_\xin \braket{\vphx , \La_0 (\oN_0 (x))} .
\]
Since $(\La_0 (\oN_0 (x)))_\xin \in \evmsimL (X;E) ,$ this implies
\[
	\sup_{\omega \in \simL}
	\Pg (\E ; \omega ) 
	\leq  (\text{RHS of \eqref{eq:PgL}}) ,
\]
which completes the proof.
\end{proof}
We write the quantity \eqref{eq:PgL} as $\Pg(\E ; \fL) .$
If $\E$ is an ensemble, $\Pg (\E ; \fL)$ is the optimal probability 
that we correctly guess the original state 
when we can perform measurements in $\simL$ (or $\fL$).

Now, as a generalization of the finite-dimensional result
\cite{PhysRevLett.122.130403} (Eq.~(14)),
we prove that the outperformance in the state discrimination task characterizes 
the simulability.

\begin{thm} \label{thm:simdisc}
Let $\onon \fL \subset \ME$
and let $\omega \in \ME  $ be a measurement.
Then $\omega$ is simulable by $\fL$ if and only if
\begin{equation}
	\Pg (\E ; \omega)
	\leq 
	\Pg (\E ; \fL)
	\label{eq:simdisc}
\end{equation}
holds for any ensemble $\E .$
\end{thm}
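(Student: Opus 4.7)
My plan for the ``only if'' direction is routine: given $\omega \pp \nu$ for some $\nu \in \cconv(\fL)$, the Blackwell--Sherman--Stein theorem (Theorem~\ref{thm:bss}) gives $\Pg(\E; \omega) \leq \Pg(\E; \nu)$ for every ensemble $\E$, and the identity $\Pg(\E; \fL) = \sup_{\nu' \in \cconv(\fL)} \Pg(\E; \nu')$ from Proposition~\ref{prop:Pgdef} then yields $\Pg(\E; \omega) \leq \Pg(\E; \fL)$.

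For the converse, the strategy is a Hahn--Banach separation argument preceded by a reduction to finite-outcome measurements. Assuming $\Pg(\E; \omega) \leq \Pg(\E; \fL)$ for every ensemble $\E$, I would first fix any representative $\Gamma \in \Chw(F \to E)$ of $\omega$ and apply Theorem~\ref{thm:finapp} to obtain the post-processing increasing net $\omega_\Delta := [\Gamma_\Delta]$ $(\Delta \in \DF)$ of finite-outcome measurements that weakly converges to $\omega$ and satisfies $\omega_\Delta \pp \omega$. Then $\Pg(\E; \omega_\Delta) \leq \Pg(\E; \fL)$ for every ensemble, and since $\simL$ is weakly closed by Lemma~\ref{lemm:wsimc}, it suffices to show each $\omega_\Delta \in \simL$. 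This reduces the task to the case $\omega = [\Ga^\oM]$ with $\oM \in \evm(X; E)$, where $\omega \in \simL$ is equivalent to $\oM \in \evmsimL(X; E)$.

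For this finite-outcome case, I would assume for contradiction that $\oM \notin \evmsimL(X; E)$. By Lemma~\ref{lemm:gainconti}.\ref{i:gainconti2}, $\evmsimL(X; E)$ is a weakly$\ast$ compact convex subset of $E^X$ under the product weak$\ast$ topology $\sigma(E^X, E_\ast^X)$, whose continuous dual is $E_\ast^X$ (since $X$ is finite). Hahn--Banach separation then produces a w$\ast$-family $\E_0 = (\vphx)_\xin \in E_\ast^X$ with
\begin{equation*}
\sum_\xin \braket{\vphx, \oM(x)} > \sup_{\oN \in \evmsimL(X; E)} \sum_\xin \braket{\vphx, \oN(x)} = \Pg(\E_0; \fL),
\end{equation*}
the last equality being Proposition~\ref{prop:Pgdef}. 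Choosing the feasible EVM $\oN_0(x) := \delta_x \in \evm(X; \linf(X))$ in the supremum defining $\Pg(\E_0; [\Ga^\oM])$ yields $\Pg(\E_0; \omega) \geq \sum_\xin \braket{\vphx, \oM(x)}$, so $\Pg(\E_0; \omega) > \Pg(\E_0; \fL)$. Since $\E_0$ is only a w$\ast$-family and not yet an ensemble, I would finish by invoking Proposition~\ref{prop:gaineasy}.\ref{i:gaineasy3} to write $\vphx = \alpha \vphx' + \psi$ with $\alpha > 0$, $\psi \in E_\ast$, and an ensemble $\E_0' = (\vphx')_\xin$, and then use the identity $\Pg(\E_0; \mu) = \alpha \Pg(\E_0'; \mu) + \braket{\psi, u_E}$ to divide through by $\alpha$ and obtain $\Pg(\E_0'; \omega) > \Pg(\E_0'; \fL)$, contradicting the hypothesis.

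The main obstacle is identifying the correct duality for the separation step: one must observe that the finite product $E^X$ carries the product weak$\ast$ topology whose continuous dual is $E_\ast^X$, so that Hahn--Banach yields a genuine w$\ast$-family rather than an abstract bounded linear functional. The finite-outcome reduction via Theorem~\ref{thm:finapp} is essential precisely because it places the compact convex set $\evmsimL(X; E)$ inside a concrete locally convex space where this separation applies cleanly; the passage from a w$\ast$-family to an ensemble via Proposition~\ref{prop:gaineasy}.\ref{i:gaineasy3} is a minor technicality that is easy to overlook but necessary, since the hypothesis is phrased only in terms of ensembles.
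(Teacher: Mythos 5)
Your proof is correct and follows essentially the same route as the paper's: the same Hahn--Banach separation of $\oM$ from the weakly$\ast$ compact convex set $\evmsimL (X;E)$, the same conversion of the separating \wstar-family into an ensemble via Proposition~\ref{prop:gaineasy}.\ref{i:gaineasy3}, and the same reduction to finite outcomes via Theorem~\ref{thm:finapp}. The only (immaterial) difference is that you run the reduction in the positive direction --- showing each $\omega_\Delta \in \simL$ and invoking weak closedness of $\simL$ --- whereas the paper argues contrapositively, using openness of $\ME \setminus \simL$ to locate a finite-outcome $\omega_i \notin \simL$ and then transferring the violating ensemble to $\omega$ by post-processing monotonicity.
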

\begin{proof}
Assume that $\omega $ is simulable by $\fL .$
Then by the definition of simulability and Proposition~\ref{prop:Pgdef},
we can readily see that \eqref{eq:simdisc} holds.
To show the converse implication, 
we assume $\omega \notin \simL $ and find an ensemble $\E$
that does not satisfy \eqref{eq:simdisc}.

We first consider the case when $\omega$ is finite-outcome.
Take an EVM $\oM \in \evm (X;E)$ such that $\omega = [\GM] .$
Then $\oM \notin \evmsimL (X ; E) .$
Since $\evmsimL (X ; E)$ is a weakly$\ast$ compact convex set
by Lemma~\ref{lemm:gainconti},
the Hahn-Banach separation theorem implies that
there exists a non-zero \wstar-family 
$\E = \vphxin \in E_\ast^X$ such that
\begin{equation}
	\sum_\xin \braket{\vphx , \oM(x)}
	> 
	\sup_{\oN \in \evmsimL (X;E)}
	\sum_\xin \braket{\vphx , \oN (x)} .
	\label{eq:sd1}
\end{equation}
By Proposition~\ref{prop:gaineasy}.\ref{i:gaineasy3} we can take $\E$ as an ensemble.
Then \eqref{eq:sd1} implies 
\[
	\Pg(\E ; \omega) \geq  \sum_\xin \braket{\vphx , \oM(x)} 
	>
	\Pg (\E ; \fL) .
\]
Therefore $\E$ violates \eqref{eq:simdisc}.

For general $\omega ,$ by Theorem~\ref{thm:finapp} there exists an increasing 
net $(\omega_i)_\iin$ of finite-outcome measurements weakly converging to
$\omega = \sup_\iin \omega_i .$
Since $\ME \setminus \simL$ is weakly open by Lemma~\ref{lemm:wsimc},
there exists some $i \in I$ satisfying $\omega_i \notin \simL .$
Then from what we have shown in the last paragraph, 
there exists an ensemble $\E$ satisfying 
$\Pg (\E ; \omega_i) > \Pg (\E; \fL) .$
Therefore by the monotonicity of $\Pg(\E; \cdot) $
and $\omega_i \pp \omega,$
we obtain $\Pg (\E ; \omega) > \Pg (\E;\fL) ,$
which completes the proof.
\end{proof}

From Theorem~\ref{thm:simdisc} and Lemma~\ref{lemm:exgain}
we immediately obtain
\begin{coro}\label{coro:simdisc}
Let $\onon \fL \subset \ME$
and let $\Gamma$ be a measurement.
Then $\Gamma$ is simulable by $\fL$ if and only if
\[	\Pg (\E ;\Ga)
	\leq 
	\Pg (\E ; \fL)
\]
holds for any ensemble $\E .$
\end{coro}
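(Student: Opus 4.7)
The plan is to reduce the statement to Theorem~\ref{thm:simdisc} by passing to the \wstar-extension of $\Gamma$. Recall from Section~\ref{subsec:msp} that for a general (not necessarily weakly$\ast$ continuous) measurement $\Gamma \in \Ch(F \to E)$, the symbol $[\Gamma] \in \ME$ denotes the post-processing equivalence class of its \wstar-extension $\barG \in \Chw(F^\aast \to E)$, and the simulability of $\Gamma$ by $\fL$ is by definition the simulability of $[\Gamma] = [\barG]$. Thus the corollary is really a statement about $[\barG]$, and Theorem~\ref{thm:simdisc} already characterizes its simulability by the inequality
\[
\Pg(\E ; [\barG]) \leq \Pg(\E ; \fL) \quad (\forall \, \E \text{ ensemble}).
\]

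The only remaining task is to identify the left-hand side with $\Pg(\E ; \Ga)$. By the way the gain functional on $\ME$ is defined in \eqref{eq:Pgfore}, we have $\Pg(\E ; [\barG]) = \Pg(\E ; \barG)$, and Lemma~\ref{lemm:exgain} gives $\Pg(\E ; \barG) = \Pg(\E ; \Ga)$ for every \wstar-family $\E$, in particular for every ensemble. Chaining these equalities yields the desired equivalence
\[
\Ga \text{ simulable by } \fL \iff \Pg(\E ; \Ga) \leq \Pg(\E ; \fL) \quad (\forall \, \E \text{ ensemble}).
\]

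Since both ingredients (Theorem~\ref{thm:simdisc} and Lemma~\ref{lemm:exgain}) have already been established, no obstacle arises; the proof is a one-line bookkeeping argument. The only subtlety worth flagging is the convention that simulability for non-\wstar-continuous measurements is defined through the \wstar-extension, so that the reduction to $\ME$ is unambiguous.
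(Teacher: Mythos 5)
Your proof is correct and matches the paper's argument exactly: the paper also derives this corollary immediately from Theorem~\ref{thm:simdisc} applied to the equivalence class $[\Gamma]=[\barG]$ together with Lemma~\ref{lemm:exgain}, which identifies $\Pg(\E;\barG)$ with $\Pg(\E;\Ga)$. Your remark about the convention that simulability of a non-weakly$\ast$-continuous measurement is defined through its \wstar-extension is the right point to flag, and nothing further is needed.
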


\subsection{Maximal success probability of simulation}
Now we introduce the first robustness measure of simulability, 
the success probability of simulation.

We introduce the standard the standard trivial measurement 
\begin{equation}
	\Gtriv \colon \cmplx \ni \alpha \mapsto \alpha u_E \in E ,
\notag
\end{equation}
whose equivalence class $[\Gtriv]$ coincides with $[u_E].$

\begin{defi}[maximal success probability of simulation] \label{def:qsucc}
Let $\Ga 
$ be a measurement and let $\onon \fL \subset \ME .$
The \textit{maximal success probability of simulation} of $\Ga$ by $\fL$ is defined by
\begin{equation}
\begin{aligned}
\qsucc (\Ga ; \fL) := \sup_{q} \quad &  q
\\
\textrm{subject to} \quad 
& q \in [0,1] \\ 
& \text{$q \Ga \oplus (1-q) \Gtriv$ is simulable by $\fL$.}
\end{aligned}
\label{eq:qsuccdef}
\end{equation}
Note that $q=0$ is in the feasible region of \eqref{eq:qsuccdef} and hence $\qsucc (\Ga ; \fL)$ always takes on a finite value in $[0,1].$
It can be readily seen that $\qsucc ([\Ga^\prime] ; \fL) := \qsucc (\Ga^\prime ;\fL) $ is well-defined for any equivalence class $[\Ga^\prime]\in \ME .$
The operational meaning of \eqref{eq:qsuccdef} is the maximal success probability of simulation of $\Ga$ when we can perform the measurements in $\fL ,$ where the event corresponding to the term $(1-q)\Gtriv$ is the failure event of the simulation.
(See \cite{PhysRevLett.119.190501,PhysRevA.100.012351} for this kind of probabilistic simulation of measurements by projection-valued measurements.)
The maximal success probability $\qsucc (\Ga ; \fL)$ quantifies the degree of simulability of $\Ga $ by $\fL$;
if $\qsucc (\Ga ; \fL) =1,$ $\Ga$ is simulable by $\fL$ and if $\qsucc (\Ga ; \fL) =0,$ $\Ga$ is not simulable with any finite success probability. 
\qed
\end{defi}

Now we show that the maximal probability of simulation is related to the state discrimination probabilities as follows:
\begin{thm} \label{thm:qsucc}
In the setting of Definition~\ref{def:qsucc}, the equality 
\begin{equation}
	\qsucc (\Ga ; \fL)
	=
	\left( \inf_\E
	\frac{\Pg (\E ; \fL) - \Pg (\E ; [u_E])}{\Pg (\E ; \Ga) - \Pg (\E ; [u_E])}
	\right) \wedge 1 
	\label{eq:thmqsucc}
\end{equation}
holds,
where the infimum of $\E$ is taken over the ensembles such that $\Pg (\E ; \Ga) > \Pg (\E ; [u_E]) ,$ $\inf \varnothing := \infty ,$ and $a \wedge b := \min (a,b) .$
\end{thm}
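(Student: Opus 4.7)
The plan is to invoke Corollary~\ref{coro:simdisc} to translate the simulability constraint in the definition of $\qsucc(\Ga;\fL)$ into an inequality on gain functionals, then use the affinity of $\Pg(\E;\cdot)$ on $\ME$ to solve the resulting inequality for $q$ explicitly.

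First I would note that $[\Gtriv] = [u_E]$ and, by the convex combination rule \eqref{eq:convec} together with the identification of $\ME$ with a convex subset of $\Ac(\ME)^\ast$ from Theorem~\ref{thm:cconvME}, one has $[q\Ga \oplus (1-q)\Gtriv] = q[\Ga] + (1-q)[u_E]$. By Corollary~\ref{coro:simdisc} this measurement is simulable by $\fL$ if and only if $\Pg(\E;\,q[\Ga]+(1-q)[u_E]) \leq \Pg(\E;\fL)$ for every ensemble $\E$, and the affinity of $\Pg(\E;\cdot)$ (Proposition~\ref{prop:gaineasy}.\ref{i:gaineasy1}) turns this into
\begin{equation*}
q\bigl(\Pg(\E;\Ga) - \Pg(\E;[u_E])\bigr) \leq \Pg(\E;\fL) - \Pg(\E;[u_E]) \qquad (\forall\,\E).
\end{equation*}

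Next I would use that $[u_E]$ is the least element of $(\ME,\pp)$; hence $\Pg(\E;[u_E]) \leq \Pg(\E;\Ga)$ for every $\E$, and since $\fL \neq \varnothing$ implies $[u_E]\in\simL$, also $\Pg(\E;[u_E]) \leq \Pg(\E;\fL)$. Therefore both the coefficient of $q$ on the left and the right-hand side of the displayed inequality are non-negative. If $\Pg(\E;\Ga) = \Pg(\E;[u_E])$, the inequality holds trivially for every $q\in[0,1]$; otherwise it is equivalent to $q \leq r(\E) := \bigl(\Pg(\E;\fL) - \Pg(\E;[u_E])\bigr)/\bigl(\Pg(\E;\Ga) - \Pg(\E;[u_E])\bigr)$. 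Collecting over all ensembles and restricting to those $\E$ with $\Pg(\E;\Ga)>\Pg(\E;[u_E])$ (using the convention $\inf\varnothing := +\infty$), the feasible set of $q$ in \eqref{eq:qsuccdef} becomes $[0,\,1\wedge\inf_\E r(\E)]$, and taking the supremum yields \eqref{eq:thmqsucc}.

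The main obstacle is essentially bookkeeping rather than a deep difficulty: one must handle the degenerate ensembles with $\Pg(\E;\Ga) = \Pg(\E;[u_E])$, the convention for the empty infimum, and the verification that $[u_E]\in\simL$ so that the inequality $\Pg(\E;[u_E])\leq\Pg(\E;\fL)$ is valid without sign issues.
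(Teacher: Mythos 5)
Your proposal is correct and follows essentially the same route as the paper's proof: translate the feasibility condition via Corollary~\ref{coro:simdisc}, use affinity of the gain functional to rewrite it as $q\Pg(\E;\Ga)+(1-q)\Pg(\E;[u_E])\leq\Pg(\E;\fL)$, and solve for $q$ ensemble by ensemble. Your extra care with the degenerate ensembles (using $[u_E]\in\simL$ to ensure the right-hand side is non-negative when $\Pg(\E;\Ga)=\Pg(\E;[u_E])$) is a detail the paper leaves implicit, and it is handled correctly.
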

\begin{proof}
Let us denote by $\Ens (E)$ the set of ensembles in Proposition~\ref{prop:small}.
From Corollary~\ref{coro:simdisc}, $q \in [0,1]$ is in the feasible region of \eqref{eq:qsuccdef} if and only if 
\begin{align*}
	&
	\Pg (\E ; q \Ga \oplus (1-q) \Gtriv)
	\leq \Pg (\E ;\fL)
	\quad (\forall \E \in \Ens (E))
	\\ \iff &
	q \Pg (\E ; \Ga) + (1-q) \Pg (\E ; [u_E]) 
	\leq \Pg (\E ;\fL)
	\quad (\forall \E \in \Ens (E))
	\\
	\iff & 
	q \leq \frac{\Pg (\E ; \fL) - \Pg (\E ; [u_E])}{\Pg (\E ; \Ga) - \Pg (\E ; [u_E])}
	\quad (\forall \E \in \Ens (E) \text{ with }\Pg (\E ; \Ga) > \Pg (\E ; [u_E])).
\end{align*}
(Note that $\Pg (\E ; \Ga) \geq \Pg (\E ; [u_E])$ holds for any measurement $\Ga$ and ensemble $\E.$)
From this equivalence, the claim \eqref{eq:thmqsucc} immediately follows
\end{proof}

\subsection{Robustness of unsimulability} \label{subsec:RoU}
Now we introduce the second robustness measure,
the robustness of unsimulability 
relative to a set of measurements.

\begin{defi}\label{def:RoU}
Let $\onon \fL \subset \ME $ and let $\Ga \in \Ch (F \to E) $
be a measurement.
We define the \textit{robustness of unsimulability of
$\Ga$ relative to $\fL$}
by 
\begin{equation}
\begin{aligned}
\Runs (\Ga ; \fL) := \inf_{r , \La} \quad &  r
\\
\textrm{subject to} \quad 
& r \in [0,\infty) \\ 
&  \Lambda \in \Ch (F \to E)   \\
  &\frac{\Ga + r \La}{1+r} \in \ChwsimL  (F \to E) ,\\
\end{aligned}
\label{eq:RoUdef}
\end{equation}
where $\Runs (\Ga ; \fL) : = \infty$ when the feasible region of \eqref{eq:RoUdef}
is empty. 
The optimization problem \eqref{eq:RoUdef} can be written as 
\begin{equation}
\begin{aligned}
\Runs (\Ga ; \fL) = \inf_{r , \Psi} \quad &  r
\\
\textrm{subject to} \quad 
& r \in [0,\infty) \\ 
&  \Psi \in \Ch^{\simL} (F \to E)   \\
  &\Ga \leq (1+ r)\Psi ,\\
\end{aligned}
\notag
\end{equation}
where the order $\leq$ on the set of linear operators between the ordered linear spaces   $G , H$ is defined by
\[
	\Phi \leq \Xi \, : \defarrow \, [\Phi (a) \leq \Xi (a) \quad (\forall a \in G_+)]
\]
for linear maps $\Phi  , \Xi \colon G \to H .$
\qed
\end{defi}
The meaning of $\Runs (\Ga ; \fL)$ is the minimal amount of noise
that should be added to make the measurement $\Ga$ simulable by $\fL .$
In the resource theoretic perspective,
measurements in $\simL$ are considered to be free and 
the ability to perform an unsimulable measurement is considered to be 
resourceful.
In this viewpoint $\Runs (\Ga ; \fL)$ quantifies how resourceful 
$\Ga$ is relative to the free measurements in $\fL $ or $\simL .$

Motivated by recent results on robustness measures, we prove the following theorem,
the main result of this section.

\begin{thm} \label{thm:RoU}
Let $\onon \fL \subset \ME $ and let $\Ga \in \Ch (F \to E)$
be a measurement.
Then the equality
\begin{equation}
	1+ \Runs (\Ga ; \fL)
	= 
	\sup_{\E \colon \mathrm{ensemble}}  \frac{\Pg (\E ; \Ga)}{\Pg (\E ; \fL)}
	\label{eq:RoU}
\end{equation}
holds, where the supremum is taken over all the ensembles.
\end{thm}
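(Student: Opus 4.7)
The plan is to prove the two inequalities separately: the easy $\geq$ by direct computation, and the harder $\leq$ via Hahn-Banach separation applied to the restrictions $\Ga_\Delta$ of $\Ga$ to finite-dimensional subalgebras $F_\Delta,$ combined with a limit argument based on Theorem~\ref{thm:finapp}.

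For the easy inequality, I pick $r > \Runs(\Ga;\fL)$ and $\Psi \in \ChwsimL(F\to E)$ with $\Ga \leq (1+r)\Psi.$ For any ensemble $\E = (\vph_x)_{x \in X}$ and any EVM $\oM \in \evm(X;F),$ positivity of each $\vph_x$ yields
\begin{equation*}
\sum_x \braket{\vph_x, \Ga(\oM(x))} \leq (1+r)\sum_x \braket{\vph_x, \Psi(\oM(x))} \leq (1+r)\Pg(\E;\fL)
\end{equation*}
by Proposition~\ref{prop:Pgdef}. Taking the supremum over $\oM$ and the infimum over $r$ gives the bound.

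For the hard inequality, set $R := \sup_\E \Pg(\E;\Ga)/\Pg(\E;\fL),$ assumed finite (otherwise the easy direction already gives $\Runs(\Ga;\fL) = \infty$). For each $\Delta \in \DF$ the restriction $\Ga_\Delta \in \Chw(F_\Delta \to E)$ satisfies $\Ga_\Delta \pp \Ga,$ so $R_\Delta := \sup_\E \Pg(\E;\Ga_\Delta)/\Pg(\E;\fL) \leq R.$ I claim $\Runs(\Ga_\Delta;\fL) \leq R_\Delta - 1.$ Suppose otherwise, so that for some $r > R_\Delta - 1$ no $\Psi \in \ChwsimL(F_\Delta \to E)$ satisfies $(1+r)\Psi \geq \Ga_\Delta.$ Then the BW-compact convex set $A := \{(1+r)\Psi : \Psi \in \ChwsimL(F_\Delta \to E)\}$ (Corollary~\ref{coro:chlcomp}) is disjoint from the BW-closed convex set $B := \Ga_\Delta + \{\Phi \in \mathcal{L}(F_\Delta\to E) : \Phi \text{ positive}\},$ so strict Hahn-Banach separation produces a BW-continuous functional $\ell(\Phi) = \sum_{x \in X} \braket{\phi_x, \Phi(\delta_x)}$ with $X$ indexing the atoms of $\Delta$ and $\phi_x \in E_\ast.$ Translating $B$ by arbitrary positive maps forces $\phi_x \in E_{\ast +}$ (so $\E := (\phi_x)_{x\in X}$ is a nonzero \wstar-family), and via the isomorphism $\ChwsimL(F_\Delta \to E) \cong \evmsimL(X;E)$ Proposition~\ref{prop:Pgdef} identifies $\sup_A \ell = (1+r)\Pg(\E;\fL).$ Evaluating on $B$ using the EVM $\oM(x) := \delta_x$ yields $\Pg(\E;\Ga_\Delta) \geq \sum_x \braket{\phi_x, \Ga_\Delta(\delta_x)} > (1+r)\Pg(\E;\fL);$ since $\Pg(\E;\fL) \geq \Pg(\E;[u_E]) = \max_x \|\phi_x\| > 0$ ($[u_E] \in \simL$ as the minimum of $\pp$), rescaling $\E$ to an ensemble via Proposition~\ref{prop:gaineasy} produces the contradiction $R_\Delta > 1+r.$

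The final step lifts the bound to general $\Ga.$ By the second half of Theorem~\ref{thm:finapp} there is a net $(\La_\Delta)_{\Delta \in \DF}$ in $\Ch(F \to E)$ with $\La_\Delta \ppeq \Ga_\Delta$ and $\|\La_\Delta(a) - \Ga(a)\| \to 0$ for each $a \in F.$ Since $\Runs(\cdot;\fL)$ depends only on the post-processing equivalence class---verified by pre-composing a dominating $\Psi$ with the implementing channel and noting that $\ChwsimL$ is stable under right-composition by channels because $\simL$ is a lower set---one has $\Runs(\La_\Delta;\fL) = \Runs(\Ga_\Delta;\fL) \leq R - 1.$ BW-compactness of $\ChwsimL(F \to E)$ (Corollary~\ref{coro:chlcomp}) then provides $\Psi_\Delta \in \ChwsimL(F \to E)$ with $\La_\Delta \leq R\Psi_\Delta$ (extracted as a BW-limit of near-minimizers), and a BW-convergent subnet $\Psi_{\Delta(j)} \bwto \Psi_\infty.$ Passing the positivity $R\Psi_{\Delta(j)}(a) - \La_{\Delta(j)}(a) \in E_+$ to the weak$\ast$ limit, using the norm convergence of $\La_{\Delta(j)}(a)$ to $\Ga(a)$ and the weak$\ast$ closedness of $E_+,$ yields $\Ga \leq R\Psi_\infty$ and hence $\Runs(\Ga;\fL) \leq R - 1.$ The main obstacle is the Hahn-Banach step: correctly identifying the separating BW-continuous functional with a \wstar-family on $E,$ extracting positivity $\phi_x \geq 0$ from separation against the positive cone embedded in $B,$ and rescaling to a normalized ensemble while preserving the strict inequality $\Pg(\E;\Ga_\Delta)/\Pg(\E;\fL) > 1 + r.$
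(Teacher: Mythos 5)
Your overall strategy is sound and, in its key technical step, genuinely different from the paper's. The paper proves the finite-outcome case (Lemma~\ref{lemm:RoUfin}) by casting $\Runs(\Ga^{\oM};\fL)$ as a conic program, invoking the no-duality-gap theorem, writing out the dual explicitly, and then running a separate bipolar/Krein--\v{S}mulian argument to replace dual variables in $E^\ast$ by weakly$\ast$ continuous ones. You instead apply Hahn--Banach separation directly in the BW-topology to the compact convex set $(1+r)\ChwsimL(F_\Delta\to E)$ and the closed convex set $\Ga_\Delta+\{\text{positive maps}\}$; since every BW-continuous functional on $\mathcal{L}(F_\Delta\to E)$ is automatically given by a tuple in $E_\ast^X$, the separating functional is a \wstar-family from the outset and the paper's entire second duality step (the passage from its dual problem over $E^\ast$ to one over $E_\ast$) is bypassed. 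The details check out: disjointness of the two sets is exactly infeasibility at level $r$; the translated positive cone is BW-closed because $E_+$ is weakly$\ast$ closed; boundedness of $\ell$ below on that cone forces $\phi_x\in E_{\ast+}$; $\sup_A\ell=(1+r)\Pg(\E;\fL)$ is Proposition~\ref{prop:Pgdef}; and $\Pg(\E;\fL)\geq\Pg(\E;[u_E])=\max_x\|\phi_x\|>0$ justifies the normalization. Your final limit passage via the second half of Theorem~\ref{thm:finapp} together with BW-compactness of $\ChwsimL(F\to E)$ is also correct and in effect re-proves the paper's lower-semicontinuity lemma inline.

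The one gap is at the very start of the reduction: the theorem is stated for an arbitrary measurement $\Ga\in\Ch(F\to E)$, where the classical outcome space $F$ need not have a Banach predual. Your argument requires the finite divisions $\DF$ to generate a norm-dense subalgebra of $F$ (this is what Theorem~\ref{thm:finapp} and the conditional expectations $\condi_\Delta$ rest on), which is guaranteed when $F$ has a predual but fails for, e.g., $F=C([0,1])$, whose only projections are $0$ and $u_F$. One must first replace $\Ga$ by its \wstar-extension $\barG\in\Chw(F^\aast\to E)$ and verify that neither side of \eqref{eq:RoU} changes: the right-hand side is invariant by Lemma~\ref{lemm:exgain}, and the left-hand side by the paper's Lemma~\ref{lemm:RoUe3}, whose proof uses the weak$\ast$ density of $F_+$ in $F_+^\aast$. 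This is a short step, but it is genuinely necessary and your write-up omits it.
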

We remark that if we put $\fL =\{ [u_E]\} ,$
the singleton consisting of the trivial measurement,
then the robustness measure $\Runs (\Ga ; \fL)$
is the one called the \lq\lq{}robustness of measurement\rq\rq{} in 
\cite{PhysRevLett.122.140403}
and Theorem~\ref{thm:RoU} in this case is the 
infinite-dimensional version of Eq.~(13) in \cite{PhysRevLett.122.140403}.

For the first step of the proof, we show some elementary properties of 
$\Runs (\cdot ; \cdot) .$

\begin{lemm} \label{lemm:RoUe1}
Let $\onon \fL \subset \ME  .$ 
Then for each measurement $\Ga \in \Ch (F \to E)$
with $r : = \Runs (\Ga ; \fL) < \infty ,$
there exists a measurement $\Psi \in \Ch^{\simL} (F \to E)$ such that
$\Ga \leq (1+r) \Psi .$
\end{lemm}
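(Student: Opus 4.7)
The plan is to show that the infimum in the definition~\eqref{eq:RoUdef} is attained by a standard compactness-and-closedness argument based on the results already established.

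First, I would choose a minimizing net. Take a sequence (or net) $(r_n, \Psi_n)_{n \in \natn}$ in the feasible region of \eqref{eq:RoUdef} with $r_n \searrow r = \Runs(\Ga; \fL)$, so that for each $n$, $\Psi_n \in \Ch^{\simL}(F \to E)$ and $\Ga \leq (1+r_n)\Psi_n$. By Corollary~\ref{coro:chlcomp}, the set $\Ch^{\simL}(F \to E)$ is BW-compact, hence there exist a subnet $(\Psi_{n(j)})_{j \in J}$ and some $\Psi \in \Ch^{\simL}(F \to E)$ such that $\Psi_{n(j)} \bwto \Psi$. The candidate minimizer is this $\Psi$.

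Next, I would verify the inequality $\Ga \leq (1+r)\Psi$ by passing to the BW-limit. For each $a \in F_+$, the element $(1+r_{n(j)}) \Psi_{n(j)}(a) - \Ga(a)$ lies in the positive cone $E_+$ by assumption. On the other hand, BW-convergence together with $r_{n(j)} \to r$ yields
\[
	(1+r_{n(j)}) \Psi_{n(j)}(a) - \Ga(a) \wsto (1+r) \Psi(a) - \Ga(a).
\]
Since the positive cone $E_+$ is weakly$\ast$ closed (as recalled in Section~\ref{subsec:ouB}), the limit belongs to $E_+$, which gives $\Ga(a) \leq (1+r) \Psi(a)$ for every $a \in F_+$, i.e.\ $\Ga \leq (1+r) \Psi$. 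Combined with $\Psi \in \Ch^{\simL}(F \to E)$ this shows that $(r, \Psi)$ is a feasible point attaining the infimum, proving the claim.

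There is no serious obstacle: the whole argument rests on two facts that are already in place, namely the BW-compactness of $\Ch^{\simL}(F \to E)$ (Corollary~\ref{coro:chlcomp}) and the weak$\ast$ closedness of $E_+$. The only mild subtlety is making sure the scalars $(1+r_{n(j)})$ can be pulled through the limit, which is immediate since they form a convergent net of reals and the evaluations $\langle \psi, \Psi_{n(j)}(a) \rangle$ are bounded by $\| \psi \| \| a \|$ uniformly in $j$.
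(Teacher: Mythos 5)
Your proof is correct and follows essentially the same route as the paper's: extract a minimizing sequence $(r_n,\Psi_n)$ with $r_n\downarrow r$, use the BW-compactness of $\Ch^{\simL}(F\to E)$ (Corollary~\ref{coro:chlcomp}) to pass to a BW-convergent subnet with limit $\Psi$ in that set, and conclude $\Ga\leq(1+r)\Psi$ from the weak$\ast$ closedness of $E_+$. The remark on pulling the convergent scalars $(1+r_{n(j)})$ through the BW-limit is a detail the paper leaves implicit, but it is handled correctly here.
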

\begin{proof}
By the definition of $\Runs (\Ga ; \fL) ,$
there exists a sequence
$(r_n , \Psi_n)$ $(n \in \natn)$ in $[r, \infty ) \times \Ch^{\simL}(F \to E)$
such that
\[
	\Ga \leq (1+r_n) \Psi_n ,
	\quad
	r_n \downarrow r .
\]
Then by the BW-compactness of $\Ch^{\simL} (F\to E)$ there exists a subnet
$(\Psi_{n(i)})_\iin$ BW-convergent to a simulable measurement
$\Psi \in \Ch (F\to E) .$
By the weak$\ast$ closedness of $E_+$ this implies
$\Ga \leq (1+r) \Psi .$
\end{proof}

\begin{lemm} \label{lemm:RoUe2}
Let $\onon \fL \subset \ME $
and let $\Ga_j \in \Ch (F_j \to E)$
$(j=1,2)$ be measurements. Then 
$\Ga_1 \pp \Ga_2$ implies 
$\Runs (\Ga_1 ; \fL) \leq \Runs (\Ga_2 ; \fL) ,$
i.e.\ the robustness of unsimulability is monotonically increasing in the post-processing order.
\end{lemm}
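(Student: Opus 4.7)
The plan is to reduce the monotonicity statement directly to the definition of $\Runs$ by composing a simulable dominating channel for $\Ga_2$ with the post-processing channel that witnesses $\Ga_1 \pp \Ga_2$. Without loss of generality I will assume $r := \Runs (\Ga_2 ; \fL)$ is finite, since otherwise the inequality is vacuous.

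First, since $\Ga_1 \pp \Ga_2$, Definition~\ref{def:ppch} supplies a channel $\La \in \Ch (F_1 \to F_2)$ with $\Ga_1 = \Ga_2 \circ \La$. Second, the preceding Lemma~\ref{lemm:RoUe1} guarantees that the infimum in \eqref{eq:RoUdef} is attained for $\Ga_2$, so there is a simulable channel $\Psi_2 \in \Ch^{\simL} (F_2 \to E)$ with $\Ga_2 \leq (1+r) \Psi_2$.

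The next step is to set $\Psi_1 := \Psi_2 \circ \La \in \Ch (F_1 \to E)$ and verify two things. For simulability, $[\Psi_1] = [\Psi_2 \circ \La] \pp [\Psi_2] \in \simL$; since $\simL = {\loc{\cconv (\fL)}}$ is a lower set in $(\ME ,\pp)$ by construction, we conclude $[\Psi_1] \in \simL$, i.e.\ $\Psi_1 \in \Ch^{\simL} (F_1 \to E)$. For the domination inequality, positivity of $\La$ makes $\La (b) \in (F_2)_+$ whenever $b \in (F_1)_+$, and applying $\Ga_2 \leq (1+r) \Psi_2$ at $\La (b)$ yields $\Ga_1 (b) = \Ga_2 (\La (b)) \leq (1+r) \Psi_2 (\La (b)) = (1+r) \Psi_1 (b)$.

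Together these show that the pair $(r, \Psi_1)$ lies in the feasible region of the optimisation problem defining $\Runs (\Ga_1 ; \fL)$, hence $\Runs (\Ga_1 ; \fL) \leq r = \Runs (\Ga_2 ; \fL)$. There is no real obstacle here: the argument is a routine composition, and the only non-obvious ingredient is the invocation of Lemma~\ref{lemm:RoUe1} to obtain a genuine attaining $\Psi_2$, which sidesteps any need to pass to limits of near-optimal dominating channels.
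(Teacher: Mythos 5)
Your argument is correct and follows essentially the same route as the paper's proof: take an attaining simulable channel $\Psi_2$ for $\Ga_2$ via Lemma~\ref{lemm:RoUe1}, compose it with the post-processing channel witnessing $\Ga_1 \pp \Ga_2$, and check that the composite remains simulable and dominates $\Ga_1$. Your extra remarks spelling out why the composite is simulable (lower-set property of $\simL$) and why the domination inequality transfers (positivity of the post-processing channel) are exactly the details the paper leaves implicit.
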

\begin{proof}
We may assume $r_2 := \Runs (\Ga_2 ; \fL) < \infty .$
Then by Lemma~\ref{lemm:RoUe1} there exists $\Psi_2
\in \Ch^{\simL} (F_2 \to E)$ satisfying 
$\Ga_2 \leq (1+r_2)\Psi_2 .$
By assumption there exists $\Psi \in \Ch (F_1 \to F_2)$ such that
$\Ga_1 = \Ga_2 \circ \Psi .$
Then we have
$\Ga_1 \leq (1+r_2) \Psi_2 \circ \Psi .$
Since $\Psi_2 \circ \Psi$ is simulable by $\fL , $ this implies $\Runs(\Ga_1 ; \fL) \leq r_2 = \Runs (\Ga_2; \fL) .$
\end{proof}

\begin{lemm} \label{lemm:RoUe3}
Let $\onon \fL \subset \ME ,$ 
let $\Ga \in \Ch (F \to E)$ be a measurement, 
and let $\barG \in \Chw (F^\aast \to E)$ be the \wstar-extension of $\Ga .$
Then $\Runs (\Ga ; \fL) = \Runs (\barG ; \fL) .$
\end{lemm}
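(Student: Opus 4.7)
The inequality $\Runs(\Ga ; \fL) \leq \Runs(\barG ; \fL)$ is immediate from the monotonicity of the robustness in the post-processing order (Lemma~\ref{lemm:RoUe2}): the canonical inclusion $\iota \colon F \to F^\aast$ is a channel (being unital, positive, and linear), and $\Ga = \barG \circ \iota$, so $\Ga \pp \barG$.

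For the reverse inequality, I may assume $r_0 := \Runs(\Ga ; \fL) < \infty$. I will approximate from above: fix an arbitrary $r > r_0$ and, by the definition \eqref{eq:RoUdef}, choose a simulable channel $\Psi \in \Ch^{\simL}(F \to E)$ with $\Ga \leq (1+r)\Psi$. Let $\overline{\Psi} \in \Chw(F^\aast \to E)$ be the \wstar-extension given by Proposition~\ref{prop:w*ext}. By the convention on equivalence classes established in Section~\ref{subsec:msp}, $[\Psi] = [\overline{\Psi}]$, so $\overline{\Psi}$ is again simulable by $\fL$, i.e.\ $\overline{\Psi} \in \Chw^{\simL}(F^\aast \to E) \subset \Ch^{\simL}(F^\aast \to E)$.

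The key step is to promote the inequality $\Ga \leq (1+r)\Psi$ on $F_+$ to the inequality $\barG \leq (1+r)\overline{\Psi}$ on the whole cone $F_+^\aast$. For any $a^\pprime \in F_+^\aast$, the bipolar theorem provides a net $(a_i)_{i\in I}$ in $F_+$ weakly$\ast$ converging to $a^\pprime$ (as recalled in Section~\ref{subsec:ouB}). Then, using the weak$\ast$ continuity of $\barG$ and $\overline{\Psi}$,
\begin{equation}
(1+r)\overline{\Psi}(a^\pprime) - \barG(a^\pprime) = \lim_{i \in I} \bigl[(1+r)\Psi(a_i) - \Ga(a_i)\bigr],
\notag
\end{equation}
where the limit is taken in $\sigma(E, E_\ast)$. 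Each element of the net on the right lies in $E_+$ by hypothesis, and since $E_+$ is weakly$\ast$ closed, the limit also lies in $E_+$. Thus $\barG \leq (1+r)\overline{\Psi}$, which shows that the pair $(r, \overline{\Psi})$ is feasible for the robustness problem for $\barG$; hence $\Runs(\barG ; \fL) \leq r$. Letting $r \downarrow r_0$ completes the proof.

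The only nonroutine step is the extension of the dominance relation from $F_+$ to $F_+^\aast$, and this reduces cleanly to the combination of two facts already available in the paper: weak$\ast$ density of $F_+$ in $F_+^\aast$ (bipolar theorem) and weak$\ast$ closedness of $E_+$. The preservation of simulability under \wstar-extension is essentially a matter of bookkeeping since $[\Psi]$ is by definition the equivalence class of $\overline{\Psi}$.
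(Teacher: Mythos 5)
Your proof is correct and follows essentially the same route as the paper: monotonicity (Lemma~\ref{lemm:RoUe2}) for one direction, and for the other, \wstar-extending a near-optimal simulable $\Psi$ and promoting $\Ga \leq (1+r)\Psi$ to $\barG \leq (1+r)\overline{\Psi}$ via the weak$\ast$ density of $F_+$ in $F_+^\aast$ and the weak$\ast$ closedness of $E_+$. The only cosmetic difference is that you approximate the infimum by $r > r_0$ and let $r \downarrow r_0$, whereas the paper invokes Lemma~\ref{lemm:RoUe1} to attain it exactly; both are fine.
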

\begin{proof}
From $\Ga \pp \barG ,$ we have $\Runs (\Ga ; \fL) \leq \Runs (\barG ; \fL)$
by Lemma~\ref{lemm:RoUe2}.
Thus without loss of generality we may assume $r:= \Runs (\Ga ; \fL) < \infty .$
Then by Lemma~\ref{lemm:RoUe1} there exists a measurement
$\Psi \in \Ch^{\simL} (F\to E)$ such that 
$\Ga \leq (1+r) \Psi .$
Let $\barPsi \in \Chw (F^\aast \to E)$ be the \wstar-extension of $\Psi .$ 
Since $[\barPsi] = [\Psi] ,$ $\barPsi$ is simulable by $\fL .$
Moreover, by the weak$\ast$ density of $F_+$ in $F_+^\aast ,$ we have 
$\barG \leq (1+r) \barPsi .$
This implies $\Runs (\barG ; \fL) \leq r ,$
which proves the claim.
\end{proof}
We now prove
\begin{lemm}\label{lemm:RoUineq}
In the setting of Theorem~\ref{thm:RoU}, the inequality
\begin{equation}
	1+ \Runs (\Ga ; \fL)
	\geq
	\sup_{\E \colon \mathrm{ensemble}}  \frac{\Pg (\E ; \Ga)}{\Pg (\E ; \fL)}
	\label{eq:RoUineq}
\end{equation}
holds.
\end{lemm}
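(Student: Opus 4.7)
The plan is to reduce to the case $r := \Runs(\Ga;\fL) < \infty$ (otherwise the right-hand side of \eqref{eq:RoUineq} is bounded above by $\infty$ trivially) and then exhibit an explicit witness. By Lemma~\ref{lemm:RoUe1}, I can pick a measurement $\Psi \in \Ch^{\simL}(F \to E)$ satisfying $\Ga \leq (1+r)\Psi$, meaning $(1+r)\Psi - \Ga$ is a positive linear map from $F$ to $E$.

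Given any ensemble $\E = \vphxin$ and any EVM $\oM \in \evm(X;F)$, each $\oM(x) \in F_+$, so positivity gives $(1+r)\Psi(\oM(x)) - \Ga(\oM(x)) \in E_+$; pairing with the positive functional $\vphx \in E_{\ast+}$ yields
\[
	\sum_{x \in X} \braket{\vphx , \Ga(\oM(x))}
	\leq (1+r) \sum_{x \in X} \braket{\vphx , \Psi(\oM(x))}
	\leq (1+r) \Pg(\E ; \Psi).
\]
Taking the supremum over $\oM$ on the left gives $\Pg(\E;\Ga) \leq (1+r)\Pg(\E;\Psi)$, and since $[\Psi] \in \simL$, Proposition~\ref{prop:Pgdef} yields $\Pg(\E;\Psi) \leq \Pg(\E;\fL)$. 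Hence $\Pg(\E;\Ga) \leq (1+r)\Pg(\E;\fL)$ for every ensemble $\E$.

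To finish, I need to divide by $\Pg(\E;\fL)$, so I must verify $\Pg(\E;\fL) > 0$ for every ensemble. This is immediate: the trivial measurement $[u_E]$ is the minimum of $(\ME,\pp)$, hence $[u_E] \pp \omega$ for every $\omega \in \fL$, so $[u_E] \in \simL$. A direct computation (as in Lemma~\ref{lemm:finpg} applied to the one-element EVM $\oM(1)=1$ on $\linf(\{1\})=\cmplx$ composed with the choice $p(y|1)=\delta_{y,y^\ast}$ for a maximizing $y^\ast$) gives $\Pg(\E;[u_E]) = \max_{x\in X} \braket{\vphx,u_E} > 0$, where positivity uses the normalization $\sum_x \braket{\vphx,u_E}=1$. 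Therefore $\Pg(\E;\fL) \geq \Pg(\E;[u_E]) > 0$, division is legitimate, and taking the supremum over ensembles $\E$ yields \eqref{eq:RoUineq}.

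The argument is essentially a direct manipulation once Lemma~\ref{lemm:RoUe1} is in hand; the only minor subtlety is ensuring strict positivity of $\Pg(\E;\fL)$ so that the ratio is well-defined, which is handled by the observation that the trivial measurement is always simulable. The harder direction of Theorem~\ref{thm:RoU}, namely the reverse inequality, will presumably require a separation/duality argument (Hahn--Banach applied to $\ChwsimL(F \to E)$ together with Theorem~\ref{thm:finapp}-style reduction to finite-outcome cases), but that lies beyond the present lemma.
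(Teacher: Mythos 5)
Your proof is correct and follows essentially the same route as the paper: extract $\Psi\in\Ch^{\simL}(F\to E)$ with $\Ga\leq(1+r)\Psi$ via Lemma~\ref{lemm:RoUe1}, chain the inequalities through $\Pg(\E;\Psi)\leq\Pg(\E;\fL)$, and take suprema. Your additional check that $\Pg(\E;\fL)\geq\Pg(\E;[u_E])=\max_{x}\braket{\vphx,u_E}>0$ (so the ratio is well-defined) is a small point the paper leaves implicit, but it is correct and does not change the argument.
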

\begin{proof}
We may assume $r:= \Runs (\Ga ; \fL) < \infty .$
Then by Lemma~\ref{lemm:RoUe1} there exists 
$\Psi \in \Ch^{\simL} (F \to E)$ such that $\Ga \leq (1+r)\Psi .$
Then for any ensemble $\E = \vphxin$ and $\oM \in \evm (X;F)$
we have
\begin{align*}
	\sum_\xin \braket{\vphx , \Ga (\oM(x))}
	&\leq
	(1+r) \sum_\xin \braket{\vphx , \Psi (\oM(x))} \\
	&\leq 
	(1+r) \Pg (\E ; \Psi) \\
	&\leq (1+r ) \Pg (\E ; \fL ) .
\end{align*}
By taking the supremum of $\oM ,$ we obtain 
$\Pg (\E; \Ga) \leq (1+r) \Pg (\E ; \fL) ,$ which implies \eqref{eq:RoUineq}.
\end{proof}
To establish the converse inequality, we first consider the case when 
$\Gamma$ is finite-outcome.

\begin{lemm} \label{lemm:RoUfin}
The statement of Theorem~\ref{thm:RoU} is true when 
$\Ga = \GM$ for some finite-outcome EVM
$\oM \in \evm (X;E) .$
\end{lemm}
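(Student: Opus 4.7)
\textbf{Proof plan for Lemma~\ref{lemm:RoUfin}.} The direction $\geq$ in~\eqref{eq:RoU} specialised to $\Ga = \GM$ is already contained in Lemma~\ref{lemm:RoUineq}, so the task is the converse inequality. My plan is, for any real number $s$ with $0 \leq s < 1 + \Runs(\GM ; \fL),$ to construct an ensemble $\tilde{\E}$ satisfying $\Pg(\tilde{\E} ; \GM) > s\, \Pg(\tilde{\E} ; \fL),$ via Hahn--Banach separation applied to two convex subsets of $E^X$ equipped with the product weak-$\ast$ topology $\sigma(E^X, E_\ast^X).$ Letting $s \uparrow 1 + \Runs(\GM ; \fL)$ will then yield the desired supremum bound.

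First I would introduce
\[
A := s \cdot \evmsimL(X ; E), \qquad B := \oM + E_+^X .
\]
By Lemma~\ref{lemm:gainconti}.\ref{i:gainconti2}, $A$ is weakly-$\ast$ compact and convex, and $B$ is weakly-$\ast$ closed and convex since $E_+$ is weakly-$\ast$ closed. The key preliminary step is to check $A \cap B = \varnothing$: any element of the intersection would yield $\oN \in \evmsimL(X;E)$ and $R \in E_+^X$ with $s \oN = \oM + R,$ and summing over $x$ forces $\sum_{x \in X} R(x) = (s-1) u_E.$ If $s < 1$ this is impossible; if $s \geq 1,$ then $r := s-1$ and $\oN_1 := R/r$ (or any finite-outcome EVM when $r = 0$) would give $\oM + r \oN_1 = (1+r) \oN \in (1+r)\evmsimL(X;E),$ contradicting $r < \Runs(\GM;\fL).$

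Next I would invoke Hahn--Banach strict separation for a compact convex set disjoint from a closed convex set in a locally convex Hausdorff space: this provides $\vphxin \in E_\ast^X$ and $c \in \realn$ with
\[
\sum_{x \in X} \braket{\vphx, s \oN(x)} < c < \sum_{x \in X} \braket{\vphx, \oM(x) + R(x)} \qquad (\oN \in \evmsimL(X;E),\ R \in E_+^X) .
\]
Boundedness from below of the right-hand side as $R$ varies over $E_+^X$ forces $\vphx \in E_{\ast+}$ for every $x \in X$ (otherwise one could drive the right-hand side to $-\infty$ along a suitable ray in $E_+$), and then setting $R = 0$ yields
\[
s\, \Pg(\E ; \fL) < c < \sum_{x \in X} \braket{\vphx, \oM(x)} \leq \Pg(\E ; \GM),
\]
where the last step is Lemma~\ref{lemm:finpg} applied with the specialisation $y = x$ in the maximum. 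Because the separating family is nonzero and positive, $\alpha := \sum_{x \in X} \braket{\vphx, u_E} > 0,$ so dividing through by $\alpha$ produces an ensemble $\tilde{\E}$ for which the same strict inequality $s\, \Pg(\tilde{\E};\fL) < \Pg(\tilde{\E};\GM)$ persists, delivering the ratio $> s$ (with the convention that the ratio is $+\infty$ in the edge case $\Pg(\tilde{\E};\fL) = 0$).

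The principal technical point will be engineering the two convex sets so that the separating \wstar-family lies automatically in $E_{\ast+}^X$; this is secured by the unbounded positive-cone direction embedded in $B = \oM + E_+^X.$ Verifying the disjointness of $A$ and $B$ is then just an unpacking of the finite-outcome formulation of $\Runs(\GM;\fL)$ as the linear-programming quantity $\inf\{t \geq 0 : s\oN \geq \oM \text{ for some } \oN \in \evmsimL(X;E)\}$ with $s = 1+t,$ and Hahn--Banach dualises this LP to precisely a state-discrimination ratio.
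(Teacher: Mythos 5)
Your argument is correct, and it takes a genuinely different route from the paper. The paper proves Lemma~\ref{lemm:RoUfin} by casting $1+\Runs(\GM;\fL)$ as a conic program in standard form, verifying the interior-point condition $-b\in\interi(A(C)-K)$ to rule out a duality gap, computing the dual cones explicitly, and then running a separate bipolar/Krein--\v{S}mulian density argument to replace dual variables in $E^\ast$ by weakly$\ast$ continuous ones before normalizing the near-optimal dual solutions into ensembles $\E_k .$ You instead separate, for each $s<1+\Runs(\GM;\fL),$ the weakly$\ast$ compact convex set $s\cdot\evmsimL(X;E)$ from the weakly$\ast$ closed convex set $\oM+E_+^X$ directly in the dual pair $(E^X , E_\ast^X) .$ This buys you two simplifications: the separating functional lands in $E_\ast^X$ automatically, so the paper's second density step disappears; and you never need the weak$\ast$ closedness of the cone $\cK_\fL(X;E)$ generated by $\evmsimL(X;E)$ --- compactness and convexity of $\evmsimL(X;E)$ itself (Lemma~\ref{lemm:gainconti}) suffice. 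Your disjointness check is the correct unpacking of the definition (note the feasible region of \eqref{eq:RoUdef} is an up-set, by mixing a witness with a trivial EVM, so $s-1<\Runs(\GM;\fL)$ really does force infeasibility), the positivity of the $\vphx$ follows as you say from the unbounded cone direction in $B,$ the identification $\sup_{\oN\in\evmsimL(X;E)}\sum_\xin\braket{\vphx,\oN(x)}=\Pg(\E;\fL)$ is Proposition~\ref{prop:Pgdef}, and $\alpha=\sum_\xin\braket{\vphx,u_E}>0$ because a positive functional with $\braket{\vphx,u_E}=0$ vanishes. The edge case $\Pg(\tilde\E;\fL)=0$ you guard against cannot actually occur, since the trivial measurement lies in $\simL$ and gives $\Pg(\tilde\E;\fL)\geq\max_\xin\braket{\tilde\vph_x,u_E}>0 .$ What the paper's heavier route buys in exchange is reusability: the same conic-programming template is deployed nearly verbatim for the robustness of incompatibility in Lemma~\ref{lemm:RoIfin}, whereas your separation argument would have to be re-engineered there.
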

\begin{proof}
By the identification between $\Ch (\linf (X) \to E)$ and $\evm (X;E)$
(cf.\ Section~\ref{subsec:fout}),
the robustness of unsimulability can be written as
\begin{equation*}
\begin{aligned}
\Runs (\GM ; \fL) 
= \inf_{r , \oK} \quad &  r
\\
\textrm{subject to} \quad 
& r \in [0,\infty) \\ 
& \oK \in \evmsimL (X;E)  \\
& (1+r)\oK (x) \geq \oM (x) \, (x \in X)  .
\end{aligned}
\notag
\end{equation*}
Thus if we define the convex cone 
\[
\cK_\fL (X;E)
:=
\set{\la \oM \in E^X | \la \in [0,\infty) , \, \oM\in \evmsimL (X;E)}
\]
generated by $\evmsimL (X;E)$ we have
\begin{equation*}
\begin{aligned}
1+ \Runs (\GM ; \fL) 
= \inf_{s , \oK} \quad &  s
\\
\textrm{subject to} \quad 
& s \in \realn, \quad \oK \in \cK_\fL (X;E)  \\
& \sum_\xin \oK(x)  = s u_E \\
& \oK (x) \geq \oM (x) \, (x \in X) ,
\end{aligned}
\notag
\end{equation*}
which is equal to
\begin{equation}
\begin{aligned}
1+ \Runs (\GM ; \fL) 
= \inf_{s , \oK} \quad &  s
\\
\textrm{subject to} \quad 
& s \in \realn, \quad \oK \in \cK_\fL (X;E)  \\
& \sum_\xin \oK(x)  \leq s u_E \\
& \oK (x) \geq \oM (x) \, (x \in X) .
\end{aligned}
\label{eq:RoUpr}
\end{equation}
The optimization problem \eqref{eq:RoUpr} can be written in the standard
form of the conic programming~\cite{Shapiro2001,bonnans2000perturbation}
\begin{equation}
	\inf_v \braket{c^\ast , v}
	\quad
	\textrm{subject to} \quad 
	v\in C , \, A(v) + b \in K .
	\label{eq:coneprgrm}
\end{equation}
where $C$ and $K$ are respectively 
closed convex cones on Banach spaces $V$ and $U ,$ 
$A\colon V \to U$ is a bounded linear map,
$c^\ast \in V^\ast ,$ and $b \in U .$ 
Indeed \eqref{eq:RoUpr} coincides with \eqref{eq:coneprgrm}
if we put
\begin{gather*}
	V:= E^X \times \realn ,
	\quad
	C:= \cK_\fL (X;E) \times \realn
	 \\
	 U := E^X \times E ,
	 \quad
	 K:= E^X_+ \times E_+ ,
	 \\
	 \braket{c^\ast , (w,s)} := s \quad ((w,s) \in V) ,
	 \quad
	 b := ((- \oM(x))_\xin , 0) ,\\
	 A((w,s)) := \left(w, s u_E - \sum_\xin w(x)  \right)
	 \quad
	 ((w,s)\in V) ,
\end{gather*}
provided that $\mathcal{K}_\fL (X;E)$ is closed.
We prove a stronger fact that $\mathcal{K}_\fL (X;E)$ is weakly$\ast$ closed.
Take $r \in (0,\infty)$ and a net $(\oK_i)_\iin$ in $(\mathcal{K}_\fL (X;E))_r$ weakly$\ast$ converging to some $\oK \in E^X .$
By the definition of $\mathcal{K}_\fL (X;E)$ we may write as $\oK_i = \la_i \oN_i$
for some $\la_i \in [0,\infty )$ and $\oN_i \in \evmsimL (X;E).$
Then from $\| \oK_i (x) \| \leq r$ we have
\[
	\la_i 
	= \| \la_i u_E \| 
	= \left\| \sum_{\xin} \oK_i (x) \right\|
	\leq 
	\sum_\xin \| \oK_i (x) \| \leq \abs{X} r .
\]
Hence we can take subnets $(\la_{i(j)})_\jin$ and $(\oN_{\ikj})_\jin$ converging respectively to some $\la \in [0,\abs{X}r]$ and  $\oN \in \evmsimL (X;E).$
Then we have $\oK = \la \oN \in \mathcal{K}_\fL(X;E) .$
Thus by the Krein-\v{S}mulian theorem $\mathcal{K}_\fL (X;E)$ is weakly$\ast$ closed.

Now let $v_0 := ((u_E)_\xin , \abs{X} +1 ) \in V .$ 
Since the trivial EVM $(\abs{X}^{-1} u_E)_\xin$ is in $\evmsimL (X;E) ,$
we have $v_0 \in C $ and hence 
\begin{align*}
	U & \supset
	A(C) - K
	\\
	& \supset \realn_+ A(v_0)  -K 
	\\
	&=
	\set{(\la u_E - w (x) )_\xin , \la u_E -v) | 
	\la \in \realn_+ , \, w \in E_+^X , \, v \in E_+}
	\\
	&=
	U.
\end{align*}
This implies $-b \in  \interi (A(C)-K) (=U) ,$
where $\interi (\cdot )$ denotes the interior.
Therefore the optimization problem~\eqref{eq:RoUpr}
has no duality gap 
(\cite{Shapiro2001}, Proposition~2.9;
\cite{bonnans2000perturbation}, Theorem~2.187)
and hence \eqref{eq:coneprgrm} coincides with
\begin{equation}
	\sup_{u^\ast } \braket{u^\ast , b}
	\quad
	\textrm{subject to}
	\quad
	u^\ast \in -K^\ast , \, 
	A^\ast (u^\ast) +c^\ast \in C^\ast ,
	\label{eq:RoUd1}
\end{equation}
where 
\begin{gather*}
	K^\ast := \set{u^\ast \in U^\ast | \braket{u^\ast , u} \geq 0 \,(\forall u \in K)}
	\\
	C^\ast := \set{v^\ast \in V^\ast | \braket{v^\ast , v} \geq 0 \,(\forall v \in C)}
\end{gather*}
are dual cones.
Since we have
\begin{gather*}
	K^\ast = (E^\ast_+)^X \times E^\ast_+ ,
	\quad
	C^\ast = \cK_\fL (X;E)^\ast \times \{ 0\} ,
	\\
	\cK_\fL (X;E)^\ast :=
	\set{(\psi_x)_\xin \in (E^\ast)^X | 
	\sum_\xin \braket{\psi_x , \oK (x)} \geq 0
	\,
	(\forall \oK \in \cK_\fL (X;E))
	},
	\\
	A^\ast ((\psi_x)_\xin , \chi)
	=
	((\psi_x - \chi)_\xin , \braket{\chi , u_E})
	\quad
	( ((\psi_x)_\xin , \chi) \in U^\ast = (E^\ast)^X \times E^\ast) ,
\end{gather*}
the dual problem \eqref{eq:RoUd1} is explicitly written as
\[
\begin{aligned}
\sup_{\chi ,(\psi_x)_\xin } \quad & 
\sum_\xin \braket{\psi_x , \oM(x)}
\\
\textrm{subject to} \quad 
&\chi \in E^\ast_+ ,\quad  (\psi_x)_\xin \in (E^\ast_+)^X \\ 
& (\chi - \psi_x)_\xin \in \cK_\fL (X;E)^\ast ,
\quad
\braket{\chi , u_E} =1
\end{aligned}
\]
and hence
\begin{equation}
\begin{aligned}
1 + \Runs (\GM ; \fL) =
\sup_{\chi ,(\psi_x)_\xin } \quad & 
\sum_\xin \braket{\psi_x , \oM(x)}
\\
\textrm{subject to} \quad 
&\chi \in E^\ast_+ ,\quad  (\psi_x)_\xin \in (E^\ast_+)^X \\ 
& (\chi - \psi_x)_\xin \in \cK_\fL (X;E)^\ast ,
\quad
\braket{\chi , u_E} \leq 1 .
\end{aligned}
\label{eq:RoUd2}
\end{equation}

We now show
\begin{equation}
\begin{aligned}
1 + \Runs (\GM ; \fL) =
\sup_{\chi ,(\psi_x)_\xin } \quad & 
\sum_\xin \braket{\psi_x , \oM(x)}
\\
\textrm{subject to} \quad 
&\chi \in E_{\ast +}  ,\quad  (\psi_x)_\xin \in (E_{\ast +})^X \\ 
& (\chi - \psi_x)_\xin \in \cK_\fL (X;E)^\ast ,
\quad
\braket{\chi , u_E} \leq 1 .
\end{aligned}
\label{eq:RoUd3}
\end{equation}
Since the common objective function of \eqref{eq:RoUd2}
and \eqref{eq:RoUd3} is weakly$\ast$ 
(i.e.\ in $\sigma ((E^\ast)^X \times E^\ast , E^X \times E )$) 
continuous,
we have only to prove that the feasible region of \eqref{eq:RoUd3} is weakly$\ast$
dense in that of \eqref{eq:RoUd2}.
Define
\[
	\mathcal{C} 
	:=
	\set{
	((a_x - b_x)_\xin , a+  \sum_\xin b_x - u_E) 
	|
	a \in E_+ , \,
	(a_x)_\xin \in E_+^X ,\,
	 (b_x )_\xin \in \cK_\fL (X;E)
	} ,
\]
which is a convex set in $E^X \times E $ containing $0.$
Then for $((\psi_x)_\xin , \chi) \in (E^\ast)^X \times E^\ast$
we have 
\begin{align*}
	& \sum_\xin \braket{\psi_x , c_x } + \braket{\chi , c_0}
	\geq -1
	\quad
	(\forall ((c_x)_\xin , c_0) \in \mathcal{C})
	\\
	\iff &
	\braket{\chi , a}+ 
	\sum_\xin \braket{\psi_x , a_x} +
	\sum_\xin \braket{\chi - \psi_x , b_x } - \braket{\chi , u_E}
	\geq -1
	\\
	& 
	(\forall a \in E_+ , \, \forall(a_x)_\xin \in E_+^X , \, \forall
	 (b_x )_\xin \in \cK_\fL (X;E) )
	 \\
	 \iff &
	 \chi \in E_+^\ast , \,
	 (\psi_x)_\xin \in (E_+^\ast)^X ,\,
	 (\chi - \psi_x)_\xin \in \cK_\fL (X;E )^\ast , \,
	 \braket{\chi , u_E} \leq 1 .
\end{align*}
Therefore the polar of $\mathcal{C}$ in the pair 
$(E^X \times E , (E^\ast)^X \times E^\ast)$ coincides with the feasible region of
\eqref{eq:RoUd2}.
Similarly the polar of $\mathcal{C}$ in the pair
$(E^X \times E ,  (E_\ast)^X \times E_\ast )$ 
is the feasible region of \eqref{eq:RoUd3}.
Thus, by the bipolar theorem, we have only to prove that 
$\mathcal{C}$ is closed in the weak$\ast$ topology
$\sigma (E^X \times E , (E_\ast)^X \times E_\ast) .$
By the Krein-\v{S}mulian theorem, this reduces to show that
$(\mathcal{C})_r$ is weakly$\ast$ closed for any $r \in (0,\infty).$
Now suppose that
\[
	((a_x - b_x)_\xin , a+  \sum_\xin b_x - u_E) 
	\in (\mathcal{C})_r
\]
with $a \in E_+ , $ $
	(a_x)_\xin \in E_+^X , $ and $
	 (b_x )_\xin \in \cK_\fL (X;E) .$
Then
\begin{gather*}
	\| a\| , \| b_{x } \|
	\leq \left\| 
	a + \sum_{x^\prime \in X} b_{x^\prime}
	\right\|
	\leq
	\left\| 
	a + \sum_{x^\prime \in X} b_{x^\prime }
	-u_E
	\right\|
	+1
	\leq 
	r+1
	\quad (x \in X) ,
	\\
	\| a_x \|
	\leq
	\| a_x - b_x \| + \| b_x \|
	\leq 2r+1 
	\quad (x \in X) .
\end{gather*}
Therefore, by noting the weak$\ast$ closedness of $\mathcal{K}_\fL (X;E) ,$
the weak$\ast$ closedness of $(\mathcal{C})_r$ follows from 
the Banach-Alaoglu theorem as in the proof of Lemma~\ref{lemm:dense}.
Thus we have proved \eqref{eq:RoUd3}.

Now by \eqref{eq:RoUd3} there exists a sequence 
$((\psi_x^k)_\xin , \chi^k)$ $(k\in \natn)$ in the feasible region of \eqref{eq:RoUd3}
satisfying
\[
\sum_\xin \braket{\psi_x^k , \oM(x)}
> 1 + \Runs (\GM ; \fL) - \frac{1}{k} .
\]
Then 
\[
N_k := \sum_\xin \braket{\psi_x^k ,u_E} 
\geq \sum_\xin \braket{\psi_x^k , \oM(x)} >0
\]
for all $k \in \natn .$
Let $\vph_x^k := N_k^{-1} \psi_x^k$
and define
$\E_k := (\vphx^k)_\xin ,$ 
which is an ensemble.
Then we have
\begin{align}
	\Pg (\E_k ; \GM)
	&\geq \sum_\xin \braket{\vphx^k , \oM(x)}
	\notag \\ &=
	\frac{1}{N_k}
	\sum_\xin \braket{\psi_x^k , \oM(x)}
	\notag \\ &>
	\frac{1}{N_k} \left(
	1 + \Runs (\GM ; \fL) - \frac{1}{k} 
	\right) .
	\label{eq:RoU+}
\end{align}
From $(\chi^k - \psi_x^k)_\xin \in \cK_\fL (X;E)^\ast ,$
we have
\[
	\sum_\xin \braket{\chi^k - \psi_x^k , \oN(x) }
	\geq 0
	\quad (\forall \oN \in \evmsimL (X;E))
\]
and hence for any $\oN \in \evmsimL (X;E)$
\[
	1 \geq 
	\braket{\chi^k , u_E}
	=
	\sum_\xin \braket{\chi^k , \oN(x) }
	\geq 
	\sum_\xin \braket{\psi_x^k , \oN(x)}
	= 
	N_k \sum_\xin \braket{\vph_x^k , \oN (x)} .
\]
Therefore we have
\[
	\Pg (\E_k ; \fL)
	=
	\sup_{\oN \in \evmsimL (X;E)}
	\sum_\xin \braket{\vphx^k , \oN(x)}
	\leq 
	\frac{1}{N_k}
\]
By combining this with \eqref{eq:RoU+} we obtain
\[
	\Pg (\E_k ; \GM ) 
	> \Pg (\E_k ; \fL) \left(
	1 + \Runs (\GM ; \fL) - \frac{1}{k} 
	\right) ,
\]
which implies
\[
	1 + \Runs (\GM ; \fL)
	\leq 
	\sup_{k \in \natn}
	\frac{\Pg(\E_k ; \GM)}{\Pg (\E_k ; \fL)}
	\leq
	\sup_{\E \colon \mathrm{ensemble}}
	\frac{\Pg(\E ; \GM)}{\Pg (\E  ; \fL)} .
\]
Thus by Lemma~\ref{lemm:RoUineq}, the statement of Theorem~\ref{thm:RoU} is true in this case.
\end{proof}

To reduce the proof to the finite-outcome case,
we need the following lemma.

\begin{lemm} \label{lemm:RoUlsc}
Let $F$ be a classical space and let $\onon \fL \subset \ME .$
Then the extended real-valued function
\begin{equation}
	\Ch (F\to E) \ni \Ga \mapsto \Runs (\Ga ; \fL) \in [0,\infty]
	\label{eq:RoUmap}
\end{equation}
is lower semicontinuous with respect to the BW-topology, 
i.e.\
for any net $(\Ga_i)_{i\in I}$ in $\Ch (F\to E)$
BW-convergent to $\Ga \in \Ch (F\to E)$ and any $r < \Runs(\Ga ; \fL) ,$
$r < \Runs (\Ga_i ; \fL)$ holds eventually.
\end{lemm}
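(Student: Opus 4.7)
The plan is to prove lower semicontinuity by a standard compactness-and-limit argument: assume the conclusion fails, use Lemma~\ref{lemm:RoUe1} to extract simulating channels witnessing the small robustness values, pass to a BW-convergent subnet, and take the limit to produce a witness that contradicts $r < \Runs(\Ga;\fL).$

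Since $\Runs$ is nonnegative the case $r<0$ is trivial, so I may assume $r\in[0,\infty).$ Suppose for contradiction that the conclusion fails; then there exists a subnet $(\Ga_{i(j)})_{j\in J}$ with $r_j:=\Runs(\Ga_{i(j)};\fL)\leq r$ for all $j.$ In particular each $r_j$ is finite, so Lemma~\ref{lemm:RoUe1} yields a channel $\Psi_j\in\Ch^{\simL}(F\to E)$ satisfying $\Ga_{i(j)}\leq(1+r_j)\Psi_j.$ By Corollary~\ref{coro:chlcomp} the set $\Ch^{\simL}(F\to E)$ is BW-compact and $[0,r]$ is compact, so passing to a further subnet (which I still index by $j$) I may assume $\Psi_j\bwto\Psi_\infty\in\Ch^{\simL}(F\to E)$ and $r_j\to r_\infty\in[0,r].$

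Now I would take limits pointwise on $F_+.$ For each $a\in F_+,$ the element $(1+r_j)\Psi_j(a)-\Ga_{i(j)}(a)$ lies in $E_+.$ For any $\psi\in E_\ast,$ the BW-convergences give $\braket{\psi,\Psi_j(a)}\to\braket{\psi,\Psi_\infty(a)}$ and $\braket{\psi,\Ga_{i(j)}(a)}\to\braket{\psi,\Ga(a)},$ and combined with $r_j\to r_\infty\in[0,r]$ (which is bounded, so multiplication preserves weak$\ast$-convergence) this yields $(1+r_j)\Psi_j(a)-\Ga_{i(j)}(a)\wsto(1+r_\infty)\Psi_\infty(a)-\Ga(a).$ The weak$\ast$-closedness of $E_+$ then forces $\Ga\leq(1+r_\infty)\Psi_\infty,$ whence $\Runs(\Ga;\fL)\leq r_\infty\leq r,$ contradicting $r<\Runs(\Ga;\fL).$ The same contradiction disposes of the case $\Runs(\Ga;\fL)=\infty$ uniformly, since it produces $\Runs(\Ga;\fL)\leq r<\infty$ for any real $r.$

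The argument is largely formal and the only point requiring minor care is the simultaneous scalar-and-vector limit $(1+r_j)\Psi_j(a)\wsto(1+r_\infty)\Psi_\infty(a);$ this is routine since $r_j$ is bounded in $[0,r]$ and $\Psi_j(a)$ is weak$\ast$-convergent. No hard step is anticipated; the substance of the lemma is entirely carried by the BW-compactness provided by Corollary~\ref{coro:chlcomp} and the attainment statement in Lemma~\ref{lemm:RoUe1}.
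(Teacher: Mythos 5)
Your proof is correct and follows essentially the same route as the paper: negate the conclusion, extract simulating channels via Lemma~\ref{lemm:RoUe1}, use the BW-compactness of $\Ch^{\simL}(F\to E)$ from Corollary~\ref{coro:chlcomp} to pass to a convergent subnet, and conclude by the weak$\ast$ closedness of $E_+.$ The only cosmetic difference is that you track the values $r_j$ and pass them to a limit $r_\infty\leq r,$ whereas the paper simply uses the uniform bound $(1+r)\Psi_j\geq\Ga_{i(j)}$ from the outset; both are fine.
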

\begin{proof}
Suppose that there exist 
a net $(\Ga_i)_{i\in I}$ in $\Ch (F\to E)$
BW-convergent to $\Ga \in \Ch (F\to E) $
and
$r < \Runs(\Ga ; \fL) $
such that
$\Runs (\Ga_i ; \fL) \leq r $ frequently.
By taking a subnet we may assume 
$\Runs (\Ga_i ; \fL) \leq r $ for all $\iin .$
Then for every $\iin$ there exists a simulable measurement
$\Psi_i \in \ChwsimL (F\to E)$ such that
$(1+r) \Psi_i \geq \Gamma_i .$
Since $\ChwsimL (F\to E)$ is BW-compact by Corollary~\ref{coro:chlcomp},
there exists a subnet $(\Psi_\ikj)_\jin $
BW-converging to $\Psi \in \ChwsimL (F\to E) .$
Then by the weak$\ast$ closedness of $E_+$
we have $ (1+r)\Psi \geq \Ga $
and hence
$\Runs (\Ga ;\fL) \leq r ,$
which contradicts the assumption.
Therefore \eqref{eq:RoUmap} is lower semicontinuous.
\end{proof}

\noindent
\textit{Proof of Theorem~\ref{thm:RoU}.}
By Lemmas~\ref{lemm:exgain} and \ref{lemm:RoUe3}
we have only to prove \eqref{eq:RoU} when 
$F$ has the Banach predual $F_\ast$ and $\Ga$ is a \wstar-measurement.
Then by Theorem~\ref{thm:finapp} there exists a net
$(\La_\Delta)_{\Delta \in \DF}$ in $\Ch (F \to E)$
such that 
$([\La_\Delta])_{\Delta \in \DF}$ is an increasing net in $\MfinE ,$
$[\La_\Delta] \wto \sup_{\Delta \in \DF} [\La_\Delta] = [\Ga] ,$
and 
$\La_\Delta \bwto \Gamma .$
Thus from Lemmas~\ref{lemm:RoUe2} and \ref{lemm:RoUlsc} we have
\begin{equation}
	\Runs (\La_\Delta ; \fL)  \uparrow \Runs (\Ga ; \fL ) .
	\label{eq:toch1}
\end{equation}
Since $\La_\Delta$ is post-processing equivalent to a finite-outcome measurement,
Lemma~\ref{lemm:RoUfin} implies
\begin{equation}
	1+ \Runs (\La_\Delta ; \fL) = 
	\sup_{\E \colon \mathrm{ensemble}}
	\frac{\Pg(\E ; \La_\Delta)}{\Pg (\E ; \fL )} .
	\label{eq:toch2}
\end{equation}
From \eqref{eq:toch1} and \eqref{eq:toch2} we obtain
\begin{align*}
	1 + \Runs (\Ga ;\fL)
	&= \sup_{\Delta \in \DF}
	(1+ \Runs (\Lambda_\Delta ; \fL))
	\\
	&=
	\sup_{\Delta \in \DF} \sup_{\E \colon \mathrm{ensemble}} 
	\frac{\Pg (\E ; \La_\Delta)}{\Pg (\E ; \fL)}
	\\
	&=
	\sup_{\E \colon \mathrm{ensemble}} 
	\sup_{\Delta \in \DF}
	\frac{\Pg (\E ; \La_\Delta)}{\Pg (\E ; \fL)}
	\\
	&=
	\sup_{\E \colon \mathrm{ensemble}} 
	\frac{\Pg (\E ; \Ga)}{\Pg (\E ; \fL)},
\end{align*}
where the last equality follows from the post-processing monotonicity and 
the weak continuity of the gain functional $\Pg (\E ; \cdot) .$
\qed

As finishing this section, we slightly generalize Theorem~\ref{thm:RoU}
to the partitioned ensembles,
which we will consider again in Section~\ref{sec:incomp}.

\begin{defi}[Partitioned ensemble] \label{def:pens}
\begin{enumerate}[1.]
\item
For a finite set $X \nono ,$
a family $\vecE = (\E_x)_\xin$
of \wstar-families $\E_x = (\vph_{x,y})_{y \in Y_x}$
$(x \in X)$ is called a \textit{partitioned ensemble}
if $\vph_{x,y} \geq 0$
$(x \in X , y \in Y_x)$
and $\vecE$ satisfies the normalization condition
\[
	\sum_{x \in X } \sum_{y \in Y_x} 
	\braket{\vph_{x,y} , u_E} 
	=1 .
\]
Each component $\E_x$ is called a subensemble of $\vecE. $
\item
Let $\vecE $ be a partitioned ensemble.
For each measurement
$\Ga \in \Ch (F\to E) ,$
each measurement $\omega \in \ME ,$
and
each subset $\onon \fL \subset \ME$ we define
\begin{gather*}
	\Pg(\vecE ; \Ga) := \sum_{x\in X} \Pg (\E_x ; \Ga)
	, \\
	\Pg(\vecE ; \omega) := \sum_{x\in X} \Pg (\E_x ; \omega)
	, \\
	\Pg (\vecE ; \fL) := \sum_\xin \Pg (\E_x ; \fL) ,
\end{gather*}
where $\vecE = (\E_x)_\xin .$ \qed
\end{enumerate}
\end{defi}
The operational meaning of the quantity 
$\Pg (\vecE ; \fL)$ in Definition~\ref{def:pens} is as follows.
Consider the situation where Alice prepares the system\rq{}s state according to the 
ensemble $(\vph_{x,y})_{x\in X ; y \in Y_x} $
and Bob can perform only the measurements belonging to $\fL .$
We also assume that before Bob perform a measurement, 
Alice announces the value of $x \in X$ to Bob
so that to Bob the system\rq{}s state corresponds to (up to the normalization factor) 
the subensemble $\E_x .$
Then Bob chooses an appropriate measurement from $\fL ,$ 
perform it, and guesses the original label $y \in Y_x $ based on 
the measurement outcome.
The quantity $\Pg (\vecE ; \fL)$ is then the optimal average probability 
that Bob can correctly guess the label $y \in Y_x .$
A similar interpretation also applies to the quantity $\Pg (\vecE ; \Ga) .$

Now Theorem~\ref{thm:RoU} is generalized to
\begin{coro}\label{coro:RoU}
Let $\fL$ and $\Ga$ be the same as in Theorem~\ref{thm:RoU}.
Then 
\begin{equation}
	1+\Runs (\Ga ;\fL)
	= \sup_{\vecE \colon \mathrm{partitioned \, ensemble}}
	\frac{\Pg(\vecE ; \Ga)}{\Pg(\vecE ; \fL)} .
	\label{eq:RoUp}
\end{equation}
\end{coro}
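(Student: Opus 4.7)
The plan is to derive Corollary~\ref{coro:RoU} directly from Theorem~\ref{thm:RoU} by treating each subensemble of $\vecE$ separately and exploiting the additivity of $\Pg(\vecE ; \cdot)$ over the partition together with the homogeneity of $\Pg(\cdot ; \cdot)$ in the ensemble slot.

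For the inequality $\leq$ in \eqref{eq:RoUp}, I would observe that any ordinary ensemble $\E$ can be viewed as a partitioned ensemble $\vecE$ with singleton index set $X = \{*\}$ and single subensemble $\E_* := \E$; then $\Pg(\vecE;\Ga) = \Pg(\E;\Ga)$ and $\Pg(\vecE;\fL) = \Pg(\E;\fL)$, so the supremum over partitioned ensembles on the right of \eqref{eq:RoUp} dominates the supremum over ordinary ensembles in Theorem~\ref{thm:RoU}, which already equals $1 + \Runs(\Ga;\fL)$.

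For the reverse inequality, let $\vecE = (\E_x)_{x \in X}$ with $\E_x = (\vph_{x,y})_{y \in Y_x}$ be an arbitrary partitioned ensemble, and set $\alpha_x := \sum_{y \in Y_x}\braket{\vph_{x,y}, u_E}$. For every $x$ with $\alpha_x > 0$ the rescaled family $\E_x^\prime := \alpha_x^{-1}\E_x$ is an ordinary ensemble, and Proposition~\ref{prop:gaineasy}.\ref{i:gaineasy2} gives $\Pg(\E_x; \cdot) = \alpha_x \Pg(\E_x^\prime ; \cdot)$ for both $\cdot = \Ga$ and $\cdot = \fL$. Applying Theorem~\ref{thm:RoU} to $\E_x^\prime$ yields
\[
\Pg(\E_x;\Ga) \leq (1 + \Runs(\Ga;\fL))\, \Pg(\E_x;\fL),
\]
and this inequality holds trivially (with both sides zero) when $\alpha_x = 0$, by the argument below. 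Summing over $x \in X$ and recalling the definition of $\Pg(\vecE;\cdot)$ from Definition~\ref{def:pens} gives $\Pg(\vecE;\Ga) \leq (1 + \Runs(\Ga;\fL))\, \Pg(\vecE;\fL)$, whence the desired bound after dividing by $\Pg(\vecE;\fL)$.

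The only points needing a brief sanity check — and the nearest thing to an obstacle — concern the degenerate cases. If $\alpha_x = 0$, then positivity of $\vph_{x,y}$ combined with $\braket{\vph_{x,y}, u_E}=0$ forces $\vph_{x,y}=0$ for all $y \in Y_x$, so $\E_x$ contributes $0$ to both $\Pg(\vecE;\Ga)$ and $\Pg(\vecE;\fL)$ and can be dropped. Since the trivial EVM lies in $\evmsimL(Y_x;E)$ (as $[u_E]$ is the minimum of $\pp$ and hence in $\simL$), one has $\Pg(\E_x;\fL) \geq \alpha_x/|Y_x|$ whenever $\alpha_x > 0$; together with the global normalization $\sum_{x,y}\braket{\vph_{x,y},u_E}=1$ this forces $\Pg(\vecE;\fL)>0$, so the ratio in \eqref{eq:RoUp} is well-defined throughout the argument.
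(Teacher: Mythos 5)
Your proof is correct and follows essentially the same route as the paper: decompose $\Pg(\vecE;\cdot)$ additively over the subensembles, apply Theorem~\ref{thm:RoU} to each (rescaled) subensemble, and obtain the reverse bound by restricting to singleton partitions. The only difference is that you spell out the rescaling by $\alpha_x$ and the degenerate cases ($\alpha_x=0$, positivity of $\Pg(\vecE;\fL)$), which the paper leaves implicit; these checks are all valid.
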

\begin{proof}
Let $\vecE = (\E_x)_{x\in X}$ be a partitioned ensemble.
Then by Theorem~\ref{thm:RoU}, we have 
\[
 	\Pg (\E_x ; \Ga) \leq (1+ \Runs(\Ga ; \fL)) \Pg (\E_x ; \fL) 
 	\quad
 	(x \in X).
\]
Hence
\begin{align*}
	\Pg (\vecE ; \Ga )
	&= \sum_\xin \Pg (\E_x ; \Ga)
	\\
	&\leq
	 (1+ \Runs(\Ga ; \fL)) \sum_\xin  \Pg (\E_x ; \fL)
	 \\
	 &=
	  (1+ \Runs(\Ga ; \fL))  \Pg (\vecE ; \fL) .
\end{align*}
Therefore by Theorem~\ref{thm:RoU} we obtain
\begin{align*}
	1+ \Runs (\Ga ;\fL)
	&\geq
	\sup_{\vecE \colon \mathrm{partitioned \, ensemble}}
	\frac{\Pg(\vecE ; \Ga)}{\Pg (\vecE ; \fL)}
	\\
	&\geq 
	\sup_{\E \colon \mathrm{ensemble}}
	\frac{\Pg(\E ; \Ga)}{\Pg (\E ; \fL)}
	\\
	&= 
	1+ \Runs (\Ga ; \fL) ,
\end{align*}
where the second inequality follows by restricting $\vecE$
to single-element families.
\end{proof}

\section{Extremal, maximal, and simulation irreducible measurements}
\label{sec:irr}
In this section, we show some basic properties of 
extremal, maximal, and simulation irreducible 
measurements and prove that any measurement is simulable by the 
set of simulation irreducible measurements (Theorem~\ref{thm:irrsim}).

\subsection{Extremal measurement}
\label{subsec:extremal}
As we have seen in Theorem~\ref{thm:cconvME}, 
the measurement space $\ME$ can be regarded as compact convex set 
and hence has sufficiently many extremal points
by the Krein-Milman theorem.
Here one should not confuse the extremality in $\ME$ and the 
\lq\lq{}extremal POVM,\rq\rq{} (e.g.\ \cite{busch2016quantum}, Section~9.3),
which in our terminology corresponds to the extremal point of 
$\Chw (F \to E)$ for some fixed classical space $F .$

In this subsection 
we prove the following theorem that characterizes the set $\de \ME$ of extremal points of $\ME .$
This is a generalization of the corresponding result for classical statistical experiments
(\cite{torgersen1991comparison}, Theorem~7.3.15, (i)$\iff$(vi)).
See also \cite{gutajencova2007} (Corollary~3.8) 
for the related result on the extremality of quantum statistical experiments.

\begin{thm}[Characterization of extremal measurement] \label{thm:extremal}
A measurement $\omega \in \ME$ is an extremal point of $\ME$ if and only if
$\omega$ has a representative $\Ga \in \Chw (F \to E)$
that is an injection.
\end{thm}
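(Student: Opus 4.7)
My plan is as follows.

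For the \textbf{sufficiency} direction (``injective representative $\Rightarrow$ extremal''), suppose $\omega=[\Gamma]$ admits a weakly-$\ast$ continuous injective representative $\Gamma\in\Chw(F\to E)$, and consider any convex decomposition $\omega=\lambda\omega_1+(1-\lambda)\omega_2$ with $\lambda\in(0,1)$ and representatives $\Gamma_j\in\Chw(F_j\to E)$. By the definition of the convex combination on $\ME$ this means $\Gamma\ppeq\lambda\Gamma_1\oplus(1-\lambda)\Gamma_2$, so Proposition~\ref{prop:postw*} yields a weakly-$\ast$ continuous channel $\Theta:F_1\oplus F_2\to F$ with $\lambda\Gamma_1\oplus(1-\lambda)\Gamma_2=\Gamma\circ\Theta$. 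Writing $\Theta(b_1,b_2)=T_1(b_1)+T_2(b_2)$ for positive linear $T_j:F_j\to F$ and setting one component to zero gives $\lambda_j\Gamma_j=\Gamma\circ T_j$ (with $\lambda_1=\lambda$, $\lambda_2=1-\lambda$). Evaluating at $b_j=u_{F_j}$ yields $\Gamma(T_j(u_{F_j}))=\lambda_j u_E=\Gamma(\lambda_j u_F)$, and the injectivity of $\Gamma$ forces $T_j(u_{F_j})=\lambda_j u_F$. Hence $T_j':=\lambda_j^{-1}T_j$ is itself a weakly-$\ast$ continuous channel with $\Gamma_j=\Gamma\circ T_j'$, proving $\omega_j\pp\omega$. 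Combining the affinity of $\Pg(\cdot;\cdot)$ in the second argument, the monotonicity $\Pg(\E;\omega_j)\leq\Pg(\E;\omega)$, and $\lambda\in(0,1)$, one concludes $\Pg(\E;\omega_1)=\Pg(\E;\omega_2)=\Pg(\E;\omega)$ for every ensemble $\E$, so $\omega_1=\omega_2=\omega$ by Theorem~\ref{thm:bss}.

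For the \textbf{necessity} direction I argue the contrapositive: if every weakly-$\ast$ continuous representative fails to be injective, then $\omega$ is not extremal. Take some representative $\Gamma\in\Chw(F\to E)$ with $\ker\Gamma\neq\{0\}$. First, I reduce to the case in which $\ker\Gamma$ contains no non-zero projection: if $p\in\ker\Gamma$ is a non-zero projection, then $0\leq pb\leq\|b\|p$ for $b\in F_+$ and positivity of $\Gamma$ gives $\Gamma(pb)=0$ for all $b$; the restriction $\Gamma|_{(u_F-p)F}$ is then a unital positive weakly-$\ast$ continuous channel post-processing equivalent to $\Gamma$, and iterating (using Proposition~\ref{prop:proj} and a Zorn-type argument over chains of such projections) yields a representative whose kernel has no non-zero projection. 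Now pick a self-adjoint $a\in\ker\Gamma$ with $\|a\|<1$; decomposing $a=a_+-a_-$ into orthogonally supported positive parts, both $a_\pm$ are non-zero and $\Gamma(a_+)=\Gamma(a_-)\neq 0$ (otherwise the spectral support of $a_\pm$ would be a non-zero projection in the kernel). Define $\Gamma_\pm:F\to E$ by $\Gamma_\pm(b):=\Gamma(b(u_F\pm\delta a))$ for some $\delta\in(0,1)$; these are weakly-$\ast$ continuous channels (unital because $\Gamma(a)=0$, positive because $u_F\pm\delta a\geq 0$ in the classical algebra $F$). The channel $\Phi:F\oplus F\to F$, $\Phi(b_1,b_2):=\tfrac{1}{2}(b_1(u_F+\delta a)+b_2(u_F-\delta a))$, satisfies $\tfrac{1}{2}\Gamma_+\oplus\tfrac{1}{2}\Gamma_-=\Gamma\circ\Phi$, and together with Proposition~\ref{prop:direct} (the second claim) this gives $\omega=\tfrac{1}{2}[\Gamma_+]+\tfrac{1}{2}[\Gamma_-]$ in $\ME$.

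To finish the necessity direction I must show this decomposition is non-trivial, i.e.\ $[\Gamma_+]\neq[\Gamma_-]$ for suitable $a$ and $\delta$; this is the \textbf{main obstacle}. The plan is to distinguish the two measurements by their gain functionals. Using a finite division of $F$ that refines the orthogonal supports of $a_+$ and $a_-$, Lemma~\ref{lemm:finpg} provides an explicit formula for $\Pg(\E;\Gamma_\pm)$ on any finite-outcome ensemble $\E=(\vph_x)_{x\in X}$ obtained from $E_\ast$. The effect $e:=\Gamma(a_+)=\Gamma(a_-)$ is non-zero, so one may choose $\vph\in E_\ast$ with $\langle\vph,e\rangle>0$; an ensemble built from such $\vph$ together with a suitably oriented perturbation distinguishes the two pushforward behaviours because $\Gamma_+$ enlarges the ``$a_+$-component'' of the effects while $\Gamma_-$ enlarges the ``$a_-$-component.'' A careful quantitative comparison, linear in $\delta$ to leading order, then shows $\Pg(\E;\Gamma_+)\neq\Pg(\E;\Gamma_-)$ for all sufficiently small $\delta>0$, contradicting the extremality of $\omega$. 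The heart of this step is that the identification $\Gamma(a_+)=\Gamma(a_-)$ enforced by the non-trivial kernel is precisely what the perturbations $\Gamma_\pm$ resolve in opposite directions, so no post-processing channel between $\Gamma_+$ and $\Gamma_-$ can simultaneously respect both directions.
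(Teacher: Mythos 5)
Your sufficiency direction (injective representative $\Rightarrow$ extremal) is correct, and its ending is in fact a little cleaner than the paper's: after using injectivity to normalize the components of $\Theta$ and obtain $\omega_j \pp \omega$, you conclude via the affinity and post-processing monotonicity of the gain functionals together with Theorem~\ref{thm:bss}, whereas the paper instead composes with the channel going the other way and invokes the extremality of $\id_F$ (Lemma~\ref{lemm:idex}). Either route works.

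The necessity direction, however, has a genuine gap, and it sits exactly where you flagged the ``main obstacle.'' Your reduction to a representative whose kernel contains no non-zero projection is too weak to make the construction non-trivial. Concretely, take the trivial EVM $\oM = (\tfrac13 u_E , \tfrac13 u_E , \tfrac13 u_E) \in \evm (\{1,2,3\};E)$: the kernel of $\Ga^{\oM}$ is $\{ f \in \linf(\{1,2,3\}) \mid f(1)+f(2)+f(3) = 0 \}$, which is non-zero and contains no non-zero projection, yet $[\Ga^{\oM}] = [u_E]$ is extremal (it has the injective representative $c \mapsto c\, u_E$). Applying your construction to $a=(1,-1,0)$ produces $\Ga_\pm$ that are both trivial, so $[\Ga_+]=[\Ga_-]$ and no ensemble separates them; your claimed leading-order-in-$\delta$ separation of the gain functionals is therefore false at this level of generality. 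The correct reduction is to the \emph{minimally sufficient} representative (Proposition~\ref{prop:msmeas}, Appendix~\ref{app:ms}), which is a much stronger normalization than ``projection-free kernel'' and requires the mean-ergodic-theorem conditional expectation, not just Proposition~\ref{prop:proj} and a Zorn argument. Even then, the paper does not prove $[\Ga_+]\neq[\Ga_-]$ directly: it assumes extremality, deduces $\Ga \ppeq \Ga_\pm$, obtains $\Phi_\pm$ with $\Ga_\pm = \Ga\circ\Phi_\pm$, uses minimal sufficiency to force $\tfrac12\Phi_+ + \tfrac12\Phi_- = \id_F$ and Lemma~\ref{lemm:idex} to get $\Phi_\pm = \id_F$, and finally uses faithfulness of the minimally sufficient representative to conclude $a=0$. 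You would need to adopt this logical structure (or supply a genuinely new argument for separating $\Ga_+$ from $\Ga_-$ under minimal sufficiency); as written, the necessity half does not go through.
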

For the proof we need the following lemma.
\begin{lemm}\label{lemm:idex}
Let $F$ be an order unit Banach space.
Then the identity channel $\id_{F}$ is an extremal point of 
$\Ch (F \to F) .$
\end{lemm}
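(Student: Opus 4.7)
\medskip
\noindent\textbf{Proof proposal.} The plan is to pass to the predual/dual side and exploit that pure states are extreme. Suppose $\id_F = \lambda \Phi + (1-\lambda)\Psi$ for some $\lambda \in (0,1)$ and $\Phi, \Psi \in \Ch(F \to F)$; I want to conclude $\Phi = \Psi = \id_F$.

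First I would consider the Banach adjoints $\Phi^\ast, \Psi^\ast \colon F^\ast \to F^\ast$. Since each channel is a bounded positive unital map, its dual is weakly$\ast$ continuous and satisfies $\Phi^\ast(S(F)) \subset S(F)$ and $\Psi^\ast(S(F)) \subset S(F)$ (positivity from $\Phi(F_+) \subset F_+$, state-preservation from $\Phi^\ast(\phi)(u_F) = \phi(\Phi(u_F)) = \phi(u_F) = 1$). Dualising the convex decomposition gives
\[
	\phi = \lambda \Phi^\ast(\phi) + (1-\lambda)\Psi^\ast(\phi)
	\quad (\forall \phi \in S(F)).
\]

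Next, for any pure state $\phi \in \de S(F)$, the two states $\Phi^\ast(\phi)$ and $\Psi^\ast(\phi)$ lie in $S(F)$ and exhibit $\phi$ as a proper convex combination; extremality forces
\[
	\Phi^\ast(\phi) = \Psi^\ast(\phi) = \phi
	\quad (\forall \phi \in \de S(F)).
\]
By affinity the same identity extends to all of $\conv(\de S(F))$. Now $S(F)$ is a non-empty weakly$\ast$ compact convex subset of $F^\ast$, so by the Krein-Milman theorem $S(F) = \overline{\conv}^{w\ast}(\de S(F))$. For any $\phi \in S(F)$, pick a net $(\phi_i)_\iin$ in $\conv(\de S(F))$ with $\phi_i \wsto \phi$; the weak$\ast$ continuity of $\Phi^\ast$ yields
\[
	\Phi^\ast(\phi) = \lim_\iin \Phi^\ast(\phi_i) = \lim_\iin \phi_i = \phi,
\]
and likewise for $\Psi^\ast$. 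Hence $\Phi^\ast = \Psi^\ast = \id_{F^\ast}$ on $S(F)$.

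Finally, because $F^\ast$ is base-normed with base $S(F)$ (Section~\ref{subsec:ouB}), we have $F^\ast = \lin(S(F))$, so $\Phi^\ast$ and $\Psi^\ast$ coincide with $\id_{F^\ast}$ on all of $F^\ast$. Taking adjoints once more (equivalently, evaluating $\braket{\phi, \Phi(a)} = \braket{\Phi^\ast(\phi), a} = \braket{\phi, a}$ for every $\phi \in F^\ast$ and $a \in F$) yields $\Phi = \Psi = \id_F$, proving extremality. The only place where care is needed is the weak$\ast$ density step via Krein-Milman, and this is unconditional because $S(F)$ is always weakly$\ast$ compact; I do not expect a genuine obstacle here.
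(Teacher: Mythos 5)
Your proof is correct and follows essentially the same route as the paper's: dualise the convex decomposition, use extremality of pure states to get $\Phi^\ast = \Psi^\ast = \id$ on $\de S(F)$, then extend to $S(F)$ via Krein--Milman and weak$\ast$ continuity of the adjoints, and finally to $F^\ast = \lin(S(F))$. The extra detail you supply at the Krein--Milman step is fine but not a different argument.
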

\begin{proof}
Take channels $\Psi_1 , \Psi_2 \in \Ch (F \to F)$
and $\lambda \in (0,1)$
such that $\id_F = \lambda \Psi_1 + (1-\lambda ) \Psi_2 . $
Then for any pure state $\psi \in \de S(F)$ we have
\[
	\psi 
	= \id^\ast_F (\psi)
	=
	\lambda \Psi_1^\ast (\psi)
	+
	(1-\lambda ) \Psi_2^\ast (\psi) ,
\]
where the star denotes the Banach dual map.
Since $\Psi_j^\ast (\psi) \in S(F)$ $(j=1,2),$
the extremality of $\psi$ implies $\Psi_1^\ast (\psi) = \Psi_2^\ast (\psi ) = \psi . $
Since $\lin (S(F)) = F^\ast ,$ the Krein-Milman theorem and the weak$\ast$ continuity of the Banach dual maps imply
$\Psi_1^\ast = \Psi_2^\ast = \id_{F^\ast }.$
Therefore $\Psi_1 = \Psi_2 = \id_F ,$
which proves the claim.
\end{proof}
We remark that a proof similar to the above one applies to the more general result that the identity map on an arbitrary Banach space $X$ is an extremal point of the unit ball of the set bounded operators on $X $ \cite{1961234}.

\noindent
\textit{Proof of Theorem~\ref{thm:extremal}.}
Assume $\omega \in \de \ME$ and take a minimally sufficient 
(see Appendix~\ref{app:ms}) representative
$\Ga \in \Chw (F\to E)$ of $\omega .$
We show the injectivity of $\Ga .$ 
Take an element $a \in F$ such that $\Ga (a) = 0 .$
Without loss of generality we may assume $\| a \| \leq 1 .$
Let $e_\pm : = \frac{1}{2}(u_F \pm a) $
and define $\Ga_\pm \colon F \to E$ by 
$\Ga_\pm (b ) := 2 \Ga (b \cdot e_\pm)$
$(b \in F) .$
Then $0 \leq e_\pm \leq u_F$ and 
$\Ga_\pm \in \Chw (F \to E) .$
Define channels  
$\Psi_1 \in \Ch (F \to F\oplus F)$
and
$\Psi_2 \in \Ch (F\oplus F \to F)$
by
\[
	\Psi_1 (b) := b\oplus b ,
	\quad
	\Psi_2 (b\oplus c)
	:=
	b\cdot e_+ + c\cdot e_-
	\quad
	(b,c \in F) . 
\]
Then we have
\begin{gather*}
	 \left( \frac{1}{2} \Ga_+\oplus \frac{1}{2}  \Ga_- \right)
	\circ \Psi_1 (b)
	=
	\Ga (b\cdot e_+) + \Ga (b\cdot e_-)
	=
	\Ga (b) 
	\quad
	(b \in F) ,
	\\
	\Ga \circ \Psi_2 (b \oplus c)
	=
	\Ga (b\cdot e_+) + \Ga (c\cdot e_-)
	=
	\left( \frac{1}{2} \Ga_+\oplus \frac{1}{2}  \Ga_- \right)
	(b \oplus c)
	\quad 
	(b,c \in F),
\end{gather*}
which implies $\Ga \ppeq \frac{1}{2} \Ga_+\oplus \frac{1}{2}  \Ga_-  .$
Thus by the extremality of $\omega = [\Ga] $
it follows that $\Ga \ppeq \Ga_\pm .$
Therefore by Proposition~\ref{prop:postw*} there exist 
channels $\Phi_\pm \in \Chw (F \to F)$ such that
$\Ga_\pm = \Ga \circ \Phi_\pm .$
Then we have
\[
	\Ga = 
	\frac{1}{2} \Ga_+ 
	+ \frac{1}{2} \Ga_-
	=
	\Ga \circ \left( \frac{1}{2} \Phi_+ + \frac{1}{2} \Phi_- \right).
\]
By the minimal sufficiency of $\Ga , $ this implies 
$\frac{1}{2} \Phi_+ + \frac{1}{2} \Phi_- = \id_F$
and hence by Lemma~\ref{lemm:idex} we obtain $\Phi_\pm = \id_F .$
Therefore we have
\[
	\Ga (b) 
	=\Ga \circ \Phi_+ (b)
	=
	\Ga_+ (b)
	=
	2\Ga(b \cdot e_+)
	\quad (b \in F) .
\]
Now suppose that $\| 2e_+ \| >1 .$
Then there exists a non-zero projection $Q \in F$ and $\delta >0$
such that $2 Q \cdot e_+ \geq (1+\delta ) Q .$
Thus 
\[
	\Ga (Q) = \Gamma (2 Q \cdot e_+ )
	\geq 
	(1+\delta) \Ga (Q) ,
\]
which implies $\Ga (Q) =0 .$
Hence by the faithfulness (cf.\ Appendix~\ref{app:ms}) of $\Ga$ we obtain $Q =0,$
which is a contradiction.
Therefore $\| 2e_\pm \| \leq 1 .$
From $\Gamma (u_F - 2e_+) = 0 $
and $2e_+ \leq u_F ,$
the faithfulness of $\Ga$ again yields $2e_+  = u_F .$
Therefore we obtain $a = 0 ,$ proving the injectivity of $\Ga .$

Conversely suppose that $\Ga \in \Chw (F\to E)$ is an injective representative of
$\omega .$
To show the extremality of $\omega ,$
take \wstar-measurements 
$\La_j \in \Chw (G_j \to F)$
$(j=1,2)$
and $\lambda \in (0,1)$ satisfying
$\Ga \ppeq \lambda \La_1 \oplus (1-\lambda ) \La_2 .$
Then there exists channels 
$\Xi \in \Ch (F \to G_1 \oplus G_2)$
and 
$\Theta \in \Ch (G_1 \oplus G_2 \to F)$
such that
\[
	\Ga = (\lambda \La_1 \oplus (1-\lambda ) \La_2 ) \circ \Xi ,
	\quad
	\lambda \La_1 \oplus (1-\lambda ) \La_2 
	=
	\Ga \circ \Theta .
\]
Then we have 
$\lambda \La_1 (b) = \Ga \circ \Theta (b \oplus 0)$
$(b \in G_1) .$
Thus by putting $b = u_{G_1} $ we obtain
\[
	\Ga (\lambda^{-1} \Theta (u_{G_1} \oplus 0))
	=
	\La_1 (u_{G_1})
	=
	u_E
	=
	\Ga (u_F) .
\]
By the injectivity of $\Ga$ this implies 
$\lambda^{-1} \Theta (u_{G_1} \oplus 0) = u_F .$
Hence we may define $\Phi_1 \in \Ch (G_1 \to F)$
by
\[
	\Phi_1 (b) := \lambda^{-1} \Theta (b \oplus 0)
	\quad
	(b \in G_1) .
\]
Similarly the linear map $\Phi_2$ defined by
\[
	\Phi_2 (c) := (1-\lambda)^{-1} \Theta (0 \oplus c)
	\quad
	(c\in G_2)
\]
is a channel in $\Ch (G_2 \to F) .$
Then by the definition of $\Phi_j$ we have
$\La_j = \Ga \circ \Phi_j \pp \Ga $
$(j=1,2) .$
If we write as
\[
	\Xi (a) = 
	\Xi_1 (a)\oplus \Xi_2 (a)
	\quad
	(a \in F) ,
\]
where $\Xi_j \in \Ch (F \to G_j),$
then
\[
	\Ga
	=
	\lambda \La_1 \circ \Xi_1 
	+
	(1-\lambda) \La_2 \circ \Xi_2 
	=
	\Ga \circ (
	\lambda \Phi_1 \circ \Xi_1 
	+ 
	(1-\lambda) \Phi_2 \circ \Xi_2 
	).
\]
By the injectivity of $\Ga$ this implies 
$	\lambda \Phi_1 \circ \Xi_1 
	+ 
	(1-\lambda) \Phi_2 \circ \Xi_2 
	= \id_F .
$
Hence by Lemma~\ref{lemm:idex} we have
$\Phi_1 \circ \Xi_1 = \Phi_2 \circ \Xi_2 = \id_F .$
Therefore we obtain
\[
	\Ga = \Ga \circ \Phi_j \circ \Xi_j
	= \La_j \circ \Xi_j 
	\pp 
	\La_j
	\quad
	(j=1,2) .
\]
This proves $\Ga \ppeq \La_1 \ppeq \La_2 $
and hence $\omega = [\Ga]$ is extremal.
\qed

\subsection{Maximal measurement} \label{subsec:maxmeas}
We next study post-processing 
maximal measurements \cite{Martens1990,Dorofeev1997349,buscemi2005clean}.

\begin{defi} \label{def:maximal}
A measurement $\omega \in \ME$ is said to be \textit{post-processing maximal,}
or just \textit{maximal,}
if $\omega$ is a maximal element of $\ME $ in the post-processing order,
i.e.\ for any $\nu \in \ME ,$
$\omega \pp \nu$ implies $\nu \pp \omega .$
The set of maximal measurements in $\ME$ is denoted by
$\MmaxE .$ \qed
\end{defi}
From Theorem~\ref{thm:compact}, Proposition~\ref{prop:posp},
and Lemma~\ref{lemm:cposet},
application of Zorn\rq{}s lemma immediately gives
\begin{coro}\label{coro:maxexists}
$\ME = {\downarrow \MmaxE} ,$ i.e.\
every measurement in $\ME$ is a post-processing of a
maximal measurement.
\end{coro}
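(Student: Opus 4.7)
The plan is to apply Zorn's lemma to the upper set $U_\omega := \set{\nu \in \ME | \omega \pp \nu}$ for a given $\omega \in \ME,$ verifying the chain-completeness hypothesis via the topological tools cited.

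First I would fix $\omega \in \ME$ and observe that $U_\omega$ is non-empty (it contains $\omega$) and is partially ordered by the restriction of $\pp.$ To invoke Zorn's lemma I have to produce an upper bound in $U_\omega$ for every non-empty chain $C \subset U_\omega.$ Viewing $C$ as a net indexed by itself (with the directed order inherited from $\pp$), it is by definition an increasing net in the compact pospace $(\ME, \pp)$ given by Theorem~\ref{thm:compact} and Proposition~\ref{prop:posp}. Lemma~\ref{lemm:cposet} then supplies a supremum $\mu := \sup C \in \ME$ to which the net $C$ weakly converges. Since every element of $C$ lies above $\omega$ and $\mu$ is an upper bound of $C,$ we have $\omega \pp \mu,$ so $\mu \in U_\omega$ and is the required upper bound.

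Zorn's lemma now yields a maximal element $\nu$ of $U_\omega.$ The last step is to promote maximality in $U_\omega$ to maximality in $\ME$: if $\nu \pp \mu$ for some $\mu \in \ME,$ then transitivity gives $\omega \pp \mu,$ whence $\mu \in U_\omega,$ and the maximality of $\nu$ within $U_\omega$ forces $\mu \pp \nu.$ Thus $\nu \in \MmaxE$ and $\omega \pp \nu,$ proving $\omega \in {\downarrow \MmaxE}.$ Combined with the trivial inclusion ${\downarrow \MmaxE} \subset \ME,$ this gives the claimed equality.

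I do not expect any serious obstacle here: the only subtlety is the standard bookkeeping in Zorn's lemma, namely that a chain in $(U_\omega, \pp)$ can be legitimately regarded as an increasing net so that Lemma~\ref{lemm:cposet} applies directly. No cofinality or cardinality issues arise because $\ME$ is a genuine set (Proposition~\ref{prop:small}) and the supremum produced by Lemma~\ref{lemm:cposet} is automatically an upper bound in $\pp.$
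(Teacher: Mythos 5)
Your proof is correct and is exactly the argument the paper intends: the paper states the corollary follows "immediately" from Theorem~\ref{thm:compact}, Proposition~\ref{prop:posp}, Lemma~\ref{lemm:cposet}, and Zorn's lemma, and your write-up simply fills in the standard details (chains as increasing nets, supremum as upper bound, promotion of maximality from the upper set $U_\omega$ to all of $\ME$).
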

For a finite-outcome POVM $\oM$ on a quantum system, 
$[\GM]$ is maximal if and only if each element of
$\oM$ is rank-$1$
\cite{Martens1990}.
A similar characterization can be shown for continuous-outcome quantum POVMs
\cite{1751-8121-46-2-025303,kuramochi2018incomp}.

We now prove
\begin{prop} \label{prop:maxface}
$\MmaxE$ is a face of $\ME .$
\end{prop}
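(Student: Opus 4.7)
The plan is to verify the two defining conditions of a face: (i) convexity of $\MmaxE$, and (ii) if $\la\omega_1 + (1-\la)\omega_2 \in \MmaxE$ for some $\omega_1 , \omega_2 \in \ME$ and $\la \in (0,1) ,$ then $\omega_1 , \omega_2 \in \MmaxE .$

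The face property (ii) follows almost formally from the BSS theorem and the affinity of the gain functional. Assume $\omega = \la \omega_1 + (1-\la) \omega_2$ is maximal and suppose $\omega_1 \pp \nu$ for some $\nu \in \ME .$ By Proposition~\ref{prop:gaineasy}.\ref{i:gaineasy1} the assumption $\Pg (\E ; \omega_1) \leq \Pg (\E ; \nu)$ for all ensembles $\E$ (Theorem~\ref{thm:bss}) lifts to $\Pg(\E ; \omega) \leq \Pg (\E ; \la \nu + (1-\la) \omega_2) ,$ i.e.\ $\omega \pp \la \nu + (1-\la) \omega_2 .$ Maximality of $\omega$ then yields $\la \nu + (1-\la) \omega_2 \pp \omega ,$ from which, taking gain functionals, subtracting the $(1-\la) \Pg (\E ; \omega_2)$ contribution, and dividing by $\la > 0 ,$ we obtain $\Pg (\E ; \nu) \leq \Pg (\E ; \omega_1)$ for every ensemble $\E .$ Another application of Theorem~\ref{thm:bss} gives $\nu \pp \omega_1 ,$ as desired.

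For the convexity part (i), let $\omega_j \in \MmaxE$ be represented by $\Ga_j \in \Chw (F_j \to E)$ $(j=1,2)$ and put $\Gamma := \la \Ga_1 \oplus (1-\la) \Ga_2 .$ Suppose $[\Ga] \pp [\La]$ for some $\La \in \Chw (G \to E) .$ By Proposition~\ref{prop:postw*} there is a \wstar-channel $\Theta \colon F_1 \oplus F_2 \to G$ with $\Ga = \La \circ \Theta .$ Set $P_1 := u_{F_1} \oplus 0 ,$ $P_2 := 0 \oplus u_{F_2} ,$ and $e_j := \Theta (P_j) \in G ;$ they satisfy $e_1 + e_2 = u_G$ and $\La (e_1) = \la u_E ,$ $\La (e_2) = (1-\la) u_E .$ Since $G$ is classical with a predual, exploiting its abelian \Wstar-algebra structure we define \wstar-channels $\La_{e_j} \colon G \to E$ by $\La_{e_j} (f) := \la_j^{-1} \La (e_j \cdot f)$ $(\la_1 := \la , \la_2 := 1-\la) ;$ unitality and positivity follow from $\La(e_j) = \la_j u_E ,$ and direct computation gives $\La = \la \La_{e_1} + (1-\la) \La_{e_2}$ as maps $G \to E .$

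The crucial step is to produce \wstar-channels $\Xi_j \colon F_j \to G$ with $\Ga_j = \La_{e_j} \circ \Xi_j .$ Since $0 \leq \Theta (a \oplus 0) \leq \| a \| e_1$ for $0 \leq a \in F_1 ,$ the classical-space pseudo-inverse $e_1^{-1}$ on the support projection $\s(e_1)$ makes $e_1^{-1} \cdot \Theta (a \oplus 0)$ a bounded element of $G ,$ and fixing an arbitrary \wstar-continuous state $\vph_1 \in S_\ast (F_1)$ one sets
\[
\Xi_1 (a) := e_1^{-1} \cdot \Theta (a \oplus 0) + \braket{\vph_1 , a} (u_G - \s(e_1)) ,
\]
and analogously $\Xi_2 ;$ one checks unitality, positivity, weak$\ast$ continuity, and $\La_{e_j} \circ \Xi_j = \Ga_j$ from $e_j \cdot (u_G - \s (e_j)) = 0$ and $e_j \cdot e_j^{-1} = \s (e_j) .$ Thus $\Ga_j \pp \La_{e_j} ,$ so by maximality of $\omega_j$ we obtain $[\La_{e_j}] = [\Ga_j] = \omega_j .$ Combining Proposition~\ref{prop:direct}.\ref{i:direct2} and \ref{i:direct1} then gives
\[
[\La] \pp [\la \La_{e_1} \oplus (1-\la) \La_{e_2}] = \la \omega_1 + (1-\la) \omega_2 = [\Ga] ,
\]
proving maximality of $[\Ga] .$ The main obstacle is precisely the construction and verification of the conditional \wstar-channels $\La_{e_j}$ and the lifts $\Xi_j$ via pseudo-inverses in the abelian \Wstar-algebra $G ,$ which is the only nontrivial use of the classical structure; everything else is a routine consequence of the BSS theorem, affinity of gain functionals, and Proposition~\ref{prop:direct}.
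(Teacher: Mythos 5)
Your proof is correct, and the two halves split as follows. The face (extremality) half is essentially the paper's argument: the paper lifts $\nu_1 \pp \nu_1'$ to $\la\nu_1 + \ola\nu_2 \pp \la\nu_1' + \ola\nu_2$ via Proposition~\ref{prop:direct} and cancels in the embedding vector space, while you run the same lifting through the gain functionals and cancel there; these are interchangeable. The convexity half, however, takes a genuinely different route. The paper invokes Lemma~\ref{lemm:subalg} to replace the mother measurement $\La$ by an equivalent one whose outcome algebra contains $F_1 \oplus F_2$ as a unital subalgebra, so that $u_{F_1}\oplus 0$ and $0 \oplus u_{F_2}$ become honest projections $\s_1 , \s_2$ in $G$ and the conditional measurements $\La_j$ live on the corner algebras $\s_j \cdot G$ with no analytic work. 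You instead keep an arbitrary mother $\La$ and intertwiner $\Theta ,$ accept that $e_j = \Theta (P_j)$ is merely a positive element, and compensate with the support projection and pseudo-inverse $e_j^{-1}$ in the abelian \Wstar-algebra $G ,$ using the domination $|\Theta (a \oplus 0)| \leq \| a \| e_1$ to keep $e_1^{-1}\cdot \Theta(a\oplus 0)$ bounded. Your approach avoids the (nontrivial) Lemma~\ref{lemm:subalg} at the price of operator-theoretic machinery the paper never sets up explicitly (Proposition~\ref{prop:proj} only defines supports of normal functionals, not of algebra elements); the paper's approach buys a purely algebraic decomposition at the price of first proving the subalgebra lemma. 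Both are valid.

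Two small caveats. First, your parenthetical claim that the lifts $\Xi_j$ are weakly$\ast$ continuous is not needed and is in fact doubtful: multiplication by the unbounded affiliated element $e_1^{-1}$ need not be normal, so $a \mapsto e_1^{-1}\cdot\Theta(a\oplus 0)$ need not be $\sigma(F_1 , F_{1\ast})/\sigma(G,G_\ast)$-continuous. Fortunately Definition~\ref{def:ppch} only requires $\Xi_j \in \Ch (F_j \to G) ,$ a plain channel, so the relation $\Ga_j \pp \La_{e_j}$ and hence the proof survive; just drop that claim. Second, you should say a word on why $e_1^{-1}\cdot b$ is a well-defined element of $G$ for $|b| \leq C e_1$ (work in an $L^\infty$ realization of $G$ and note $b$ vanishes off the support of $e_1$); as written this is asserted rather than argued, though it is standard.
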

For the proof of Proposition~\ref{prop:maxface}
we need the following lemma, 
which can be shown in the same way as in \cite{kuramochi2018directedv1}
(Lemma~6).
\begin{lemm} \label{lemm:subalg}
Let $\Ga \in \Chw (F \to E) $ and $\La \in \Chw (G \to E)$
be \wstar-measurements.
Suppose $\Ga \pp \La .$
Then there exists a \wstar-measurement $\tL \in \Chw (\wt{G} \to E)$
such that $\La \ppeq \tL ,$
$F$ is included in $\wt{G}$ as a unital subalgebra of $\wt{G} ,$
and $\Ga$ is the restriction of $\tL$ to the subalgebra $F .$
\end{lemm}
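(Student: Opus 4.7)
The plan is to take $\wt{G}$ to be the von Neumann algebraic tensor product $F \bar\otimes G$ of the two classical outcome spaces, include $F$ via the canonical unital embedding $\iota_F \colon a \mapsto a \otimes u_G ,$ and build a \wstar-channel $\tL \colon \wt{G} \to E$ whose restriction along $\iota_F$ recovers $\Ga$ exactly while $\tL$ itself is post-processing equivalent to $\La .$

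First, by Proposition~\ref{prop:postw*} applied to $\Ga \pp \La ,$ there exists a \wstar-channel $\Psi \in \Chw(F \to G)$ such that $\Ga = \La \circ \Psi .$ Since $F$ and $G$ are classical spaces with preduals, the von Neumann tensor product $\wt{G} := F \bar\otimes G$ is again a classical space with predual $F_\ast \hat\otimes G_\ast ,$ and $\iota_F$ makes $F$ a unital \wstar-subalgebra of $\wt{G} .$ The parallel embedding $\iota_G \colon b \mapsto u_F \otimes b$ exhibits $G$ as a unital \wstar-subalgebra of $\wt{G}$ as well.

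The key construction is a \wstar-channel $\Theta \in \Chw (\wt{G} \to G)$ satisfying $\Theta(a \otimes b) = \Psi(a) \cdot b$ on pure tensors; then one sets $\tL := \La \circ \Theta .$ Given such a $\Theta ,$ the required conclusions are routine verifications: one gets $\tL \circ \iota_F (a) = \La(\Psi(a) \cdot u_G) = \La \circ \Psi (a) = \Ga(a) ,$ so $\tL$ restricts to $\Ga$ on $F ;$ the relation $\tL = \La \circ \Theta \pp \La$ is immediate; and $\Theta \circ \iota_G (b) = \Psi(u_F) \cdot b = u_G \cdot b = b$ yields $\La = \tL \circ \iota_G \pp \tL ,$ hence $\La \ppeq \tL$ as desired.

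The main obstacle is the rigorous construction of $\Theta ,$ since the naive decomposition $\Theta = M_G \circ (\Psi \otimes \id_G)$ (with $M_G$ the algebraic multiplication on $G$) breaks down at the \wstar-level: for commutative $G \cong L^\infty(Y, \nu) ,$ the would-be multiplication $M_G \colon G \bar\otimes G \to G$ corresponds to the ill-defined restriction to the diagonal of $Y \times Y$ and is not a \wstar-continuous linear map. The way around this I would pursue is to first handle the case where $F$ is finite-dimensional: there $F \bar\otimes G \cong \bigoplus_{P \in \mathcal{P}_\mathrm{atom}(F)} G ,$ and $\Theta ((b_P)_P) := \sum_P \Psi(P) \cdot b_P$ is manifestly a well-defined \wstar-channel. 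The general case would then be obtained by approximating $\Ga$ by its restrictions $\Ga_\Delta$ to the finite-dimensional subalgebras $F_\Delta \subset F$ furnished by Theorem~\ref{thm:finapp}, producing compatible channels $\tL_\Delta$ on the finite-dimensional tensor products, and extracting a BW-limit channel $\tL$ on all of $F \bar\otimes G$ via the compactness of $\Ch (\wt{G} \to E)$ (Proposition~\ref{prop:BWcompact}), with the post-processing equivalence to $\La$ surviving the limit by arguments parallel to those in Lemma~\ref{lemm:RoUlsc} together with the weak$\ast$-closedness of the positive cone $E_+ .$
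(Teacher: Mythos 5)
You correctly diagnose the central difficulty --- the multiplication-type map $\Theta (a\otimes b) = \Psi (a)\cdot b$ is not weakly$\ast$ continuous on $F \bar\otimes G$ --- but the repair you propose does not close the gap. The BW-limit extracted from Proposition~\ref{prop:BWcompact} lives in $\Ch (\wt{G} \to E) ,$ and $\Chw (\wt{G} \to E)$ is \emph{not} BW-closed in $\Ch (\wt{G} \to E) ,$ so the limiting channel $\tL$ need not be a \wstar-measurement, which is exactly what the lemma (and its use in Proposition~\ref{prop:maxface}, where $\tL$ must serve as a weakly$\ast$ continuous representative) requires. The failure is not hypothetical: take $F = G = E$ non-atomic, $\La = \Ga = \id_F$ and $\Psi = \id_F .$ Your finite-dimensional channels, extended to $F \bar\otimes G$ by $\condi_\Delta \otimes \id_G ,$ BW-converge precisely to a (non-normal) positive extension of the ``restriction to the diagonal'' that you set out to avoid; no subnet limit is weakly$\ast$ continuous. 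So the approximation scheme reproduces the obstruction rather than circumventing it.

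Two ways to actually finish. (a) Keep your construction at the level of plain channels --- $\Theta (h)(y) := \Psi (h(\cdot , y))(y)$ is a perfectly good unital positive map on the $C^\ast$- (or algebraic) tensor product, giving $\tL_1 := \La \circ \Theta \in \Ch (F \otimes G \to E)$ with $\tL_1 \circ \iota_F = \Ga$ and $\La \ppeq \tL_1$ --- and then pass to the \wstar-extension $\ovl{\tL_1} \in \Chw ((F\otimes G)^\aast \to E) .$ Proposition~\ref{prop:w*extch} gives $\ovl{\tL_1} \ppeq \La ,$ the double dual is again a classical space with a predual containing $F$ as a unital subalgebra, and the restriction of $\ovl{\tL_1}$ to $F$ is unchanged, i.e.\ equals $\Ga .$ (b) Alternatively, recognize that the ``right'' outcome space is not $F \bar\otimes G$ but the joint-distribution algebra, i.e.\ the von Neumann algebra generated by the images of $F$ and $G$ in the representation induced by the states $\phi \circ \La \circ \Theta$ $(\phi \in S_\ast (E)) ;$ this is in general a proper quotient of $F \bar\otimes G$ (in the identity example above it collapses back to $F$ itself), and on it the induced channel is weakly$\ast$ continuous. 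As written, your proof establishes the statement only with $\Ch$ in place of $\Chw ,$ which is strictly weaker than the lemma.
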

Here for a classical space $G ,$ a subset $F \subset G$ is called a (unital) subalgebra of $G$
if $F$ is linear subspace of $G$ (containing the unit $u_G$) and
$F$ is closed under the multiplication on $G .$

\noindent \textit{Proof of Proposition~\ref{prop:maxface}.}
(Convexity).
Take $\omega_1 , \omega_2 \in \MmaxE $
and $\la \in (0,1) .$
We prove $\la \omega_1 + \ola \omega_2 \in \MmaxE .$
Assume $\la \omega_1 + \ola \omega_2 \pp \nu \in \ME$
and let $\Ga_j \in \Chw(F_j \to E)$
$(j=1,2)$ and $\La \in \Chw (G \to E)$
be representatives of $\omega_j$ and $\nu ,$
respectively.
By Lemma~\ref{lemm:subalg}, we can take $\La$ so that
$F_1 \oplus F_2 $ is a unital subalgebra of $G$
and $\la \Ga_1 \oplus \ola \Ga_2$ is the restriction of $\La$
to $F_1 \oplus F_2  .$
We define projections $\s_1 , \s_2 \in G$
by
\[ \s_1 := u_{F_1} \oplus 0 , \quad \s_2 := 0 \oplus u_{F_2} \] .
Then \wstar-measurements $\La_j \in \Chw (\s_j \cdot G \to E)$
$(j=1,2)$ are well-defined by
\begin{gather*}
	\La_1 (a) := \la^{-1} \La (a) \quad (a \in \s_1 \cdot G) ,
	\\
	\La_2 (b) := \ola^{-1} \La (b) \quad (b \in \s_2 \cdot G),
\end{gather*}
where $\s_j \cdot G := \set{\s_j \cdot c | c \in G}$
which has the order unit $\s_j .$
Indeed we have
\begin{gather*}
	\La_1 (\s_1) 
	=\la^{-1} [\la \Ga_1 \oplus \ola \Ga_2] (u_{F_1}\oplus 0) 
	=\Ga_1 (u_{F_1}) = u_E ,
	\\
	\La_2 (\s_2) 
	=\ola^{-1} [\la \Ga_1 \oplus \ola \Ga_2] (0 \oplus u_{F_2}) 
	=\Ga_2 (u_{F_2}) = u_E .
\end{gather*}
Define channels $\Psi_j \in \Ch (F_j \to \s_j \cdot G)$ by 
\begin{equation*}
	\Psi_1 (a) : = a\oplus 0 \quad (a \in F_1) ,
	\quad
	\Psi_2 (b) := 0 \oplus b \quad (b \in F_2) .
\end{equation*}
Then we have 
\[
	\La_1 \circ \Psi_1 (a) = \la^{-1} \La (a\oplus 0) = \Ga_1 (a) 
	\quad
	(a \in F_1) 
\]
and similarly $\La_2 \circ \Psi_2 = \Ga_2 .$
Thus by the maximality of $[\Ga_1] = \omega_1$ and $[\Ga_2]=\omega_2$
there exist channels $\Phi_j \in \Chw (\s_j \cdot G \to F_j)$
$(j=1,2)$ such that
$\La_j = \Ga_j \circ \Phi_j .$
Then for any $c \in G, $
\begin{align*}
	\La (c) 
	&= \La (\s_1 \cdot c) + \La (\s_2 \cdot c)
	\\
	&=
	\la \La_1 (\s_1 \cdot c) + \ola \La_2 (\s_2 \cdot c)
	\\
	&= 
	\la \Ga_1 \circ \Phi_1 (\s_1 \cdot c)
	+ \ola \Ga_2 \circ \Phi_2 (\s_2 \cdot c)
	\\
	&=
	( \la \Ga_1 \oplus \ola \Ga_2 ) \circ \tPhi (c) ,
\end{align*}
where we defined $\tPhi \in \Ch (G \to F_1 \oplus F_2)$
by 
$
\tPhi (c) := \Phi_1 (\s_1 \cdot c) \oplus \Phi_2 (\s_2 \cdot c)
$
$(c \in G) .$
Therefore this shows $\nu = [\La] \pp \la \omega_1 + \ola \omega_2 ,$
which proves the convexity of $\MmaxE .$

(Extremality).
Take 
$\nu_1 , \nu_2 \in \ME $
and
$\la \in (0,1)$ such that 
$\la \nu_1 + \ola \nu_2 \in \MmaxE .$
If $\nu_1 \pp \nu_1^\prime \in \ME ,$
we have 
\[
	\la \nu_1 + \ola \nu_2 
	\pp
	\la \nu_1^\prime + \ola \nu_2 
\]
by Proposition~\ref{prop:direct}.
Therefore the maximality of $\la \nu_1 + \ola \nu_2 $ implies
\[
	\la \nu_1 + \ola \nu_2 
	=
	\la \nu_1^\prime + \ola \nu_2 
\]
and hence $\nu_1 = \nu_1^\prime ,$
which proves $\nu_1 \in \MmaxE .$
We can show $\nu_2 \in \MmaxE  $ similarly.
Thus $\MmaxE $ is a face.
\qed

\subsection{Simulation irreducible measurement} \label{subsec:sirr}
We now introduce the simulation irreducibility of measurements,
generalizing the finite-dimensional concept in \cite{PhysRevA.97.062102}.

\begin{defi}[Simulation irreducible measurement] \label{def:irr}
A measurement $\omega \in \ME$ is said to be \textit{simulation irreducible}
if $\omega \in \ssimuL$ implies $\omega \in \fL$ for any subset 
$\fL \subset \ME .$
The set of simulation irreducible measurements in $\ME$ is denoted by
$\MirrE .$ \qed
\end{defi}
A simulation irreducible measurement is a measurement that can be 
simulated only by itself.

Now we give equivalent characterizations of the simulation irreducibility.

\begin{prop}[cf. \cite{PhysRevA.97.062102}, Proposition~5]
\label{prop:irrexmax}
For a measurement $\omega \in \ME ,$ the following conditions are equivalent.
\begin{enumerate}[(i)]
\item \label{i:em1}
$\omega \in \MmaxE \cap \de \ME .$
\item \label{i:em2}
For any subset $\fL \subset \ME ,$
$\omega \in \simL$ implies $\omega \in \overline{\fL} ,$
where the closure is with respect to the weak topology.
\item \label{i:em3}
$\omega$ is simulation irreducible.
\end{enumerate}
Specifically, $\MirrE = \MmaxE \cap \de \ME  $ holds.
\end{prop}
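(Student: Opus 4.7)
The plan is to prove the three-way equivalence by a short cycle: (i)$\implies$(iii) and (iii)$\implies$(i) by elementary arguments using singletons and pairs as test sets $\fL$, and (i)$\iff$(ii) by the same trick together with one invocation of Milman's theorem. The implication (ii)$\implies$(iii) then follows through (i). Throughout I will use that $\pp$ is an \emph{actual} partial order on $\ME$ (so $\omega\pp\nu\pp\omega$ forces $\omega=\nu$) and that $\ME$ is a compact convex set by Theorems~\ref{thm:compact} and~\ref{thm:cconvME}.

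\textbf{(i)$\implies$(iii).} Assume $\omega\in\MmaxE\cap\de\ME$ and $\omega\in\ssimuL$. Then $\omega\pp\nu$ for some $\nu=\sum_{j=1}^n\la_j\mu_j$ with $\mu_j\in\fL$, $\la_j>0$, $\sum_j\la_j=1$. Maximality gives $\omega=\nu$, so $\omega$ lies in the finite convex hull $\conv(\fL)$. A standard inductive extremality argument (at each step write $\omega=\la_1\mu_1+(1-\la_1)\cdot\frac{\sum_{j\ge 2}\la_j\mu_j}{1-\la_1}$ and invoke $\omega\in\de\ME$) yields $\omega=\mu_j$ for each $j$, in particular $\omega\in\fL$.

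\textbf{(iii)$\implies$(i) and (ii)$\implies$(i).} Both use the same two instantiations. For maximality, given $\omega\pp\nu$ set $\fL:=\{\nu\}$; then $\omega\in\ssimuL\subset\simL$ and $\ovl{\fL}=\{\nu\}$, so (iii) or (ii) forces $\omega=\nu$. For extremality, given $\omega=\la\omega_1+\ola\omega_2$ with $\la\in(0,1)$, set $\fL:=\{\omega_1,\omega_2\}$; then $\omega\in\conv(\fL)\subset\cconv(\fL)\subset\simL\cap\ssimuL$ (the latter because $\omega\pp\omega$), and $\ovl{\fL}=\{\omega_1,\omega_2\}$, so either condition yields $\omega\in\{\omega_1,\omega_2\}$, whence $\omega_1=\omega_2=\omega$.

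\textbf{(i)$\implies$(ii).} Assume $\omega\in\MmaxE\cap\de\ME$ and $\omega\in\simL$. There exists $\nu\in\cconv(\fL)$ with $\omega\pp\nu$, and maximality gives $\omega=\nu\in\cconv(\fL)=\cconv(\ovl{\fL})$. The set $\ovl{\fL}$ is weakly compact by Theorem~\ref{thm:compact}, so $K:=\cconv(\ovl{\fL})$ is a compact convex subset of the locally convex Hausdorff space $\Ac(\ME)^\ast$ in which $\ME$ is regularly embedded (Theorem~\ref{thm:cconvME}). Since $\omega$ is extremal in $\ME\supset K$, it is extremal in $K$ as well. Milman's converse to the Krein--Milman theorem gives $\de K\subset\ovl{\ovl{\fL}}=\ovl{\fL}$, hence $\omega\in\ovl{\fL}$. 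This completes the cycle, and in particular $\MirrE=\MmaxE\cap\de\ME$ is the content of (i)$\iff$(iii).

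The only nontrivial ingredient is Milman's theorem in the step (i)$\implies$(ii); the main thing to check is that the ambient locally convex Hausdorff structure provided by Theorem~\ref{thm:cconvME} is what legitimises its use, and that extremality in the big space $\ME$ transfers to extremality in the compact convex subset $K$. Everything else is a matter of choosing the right small test sets $\fL$ and unwinding Definitions~\ref{def:sim} and~\ref{def:irr}.
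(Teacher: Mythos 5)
Your proposal is correct and uses essentially the same ingredients as the paper's proof: maximality plus antisymmetry of $\pp$ to upgrade $\omega\pp\nu$ to $\omega=\nu$, Milman's partial converse to the Krein--Milman theorem for the closure statement in (ii), and singleton/pair test sets $\{\nu\}$ and $\{\omega_1,\omega_2\}$ for the converse directions. The only difference is organizational: the paper closes the cycle via (ii)$\implies$(iii) (reducing to a finite $\mathfrak{F}\subset\fL$, where $\ssimu(\mathfrak{F})=\simu(\mathfrak{F})$ and $\ovl{\mathfrak{F}}=\mathfrak{F}$), whereas you prove (i)$\implies$(iii) directly by the elementary finite-convex-combination extremality argument and add the (logically redundant but harmless) implication (ii)$\implies$(i); both arrangements are valid.
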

\begin{proof}
\eqref{i:em1}$\implies$\eqref{i:em2}. 
Assume \eqref{i:em1} and suppose $\omega \in \simL $ for some subset
$\fL \subset \ME .$
Then there exists a measurement $\nu \in \cconv (\fL)$ such that
$\omega \pp \nu .$
Since $\omega $ is maximal, this implies $\omega = \nu \in \cconv(\fL) .$
By the extremality of $\omega ,$ this implies that $\omega$ is also an 
extremal point of $\cconv (\fL).$
Hence by Milman\rq{}s partial converse to the Krein-Milman theorem 
(\cite{schaefer1999topological}, II.10.5)
we have $\omega \in \ovl{\fL} .$

\eqref{i:em2}$\implies$\eqref{i:em3}.
Assume \eqref{i:em2} and let $\omega \in \ssimuL$ for some 
subset $\fL \subset \ME .$
Then there exists a finite subset $\mathfrak{F} \subset \fL$
such that $\omega$ is a post-processing of a convex combination of 
elements of $\mathfrak{F} .$
Thus $\omega \in \ssimu (\mathfrak{F}) = \simu (\mathfrak{F}) .$
Then by assumption we have 
$\omega \in \ovl{\mathfrak{F}} = \mathfrak{F} \subset \fL ,$
which proves the simulation irreducibility of $\omega .$

\eqref{i:em3}$\implies$\eqref{i:em1}.
Assume \eqref{i:em3}.
Let $\omega \pp \omega^\prime  \in \ME .$
Then $\omega \in \ssimu (\{ \omega^\prime \})$
and the assumption implies $\omega = \omega^\prime .$
Thus $\omega \in \MmaxE .$
To prove the extremality, take  
$\omega_1 , \omega_2 \in \ME$ and $\la \in (0,1)$
such that 
\begin{equation}
	\omega = \la \omega_1 + \ola \omega_2 .
	\label{eq:omegas}
\end{equation}
Then $\omega \in \ssimu (\{\omega_1 , \omega_2 \})$
and the simulation irreducibility of $\omega$ implies either 
$\omega = \omega_1$ or $\omega = \omega_2 .$
In both cases, from \eqref{eq:omegas} we obtain $\omega = \omega_1 =\omega_2 .$
Therefore $\omega \in \de \ME .$ 
\end{proof}
In \cite{PhysRevA.97.062102},
it is shown that a finite-outcome measurement $\GM$ with
$\oM \in \evm (X;E)$ is simulation irreducible
if and only if it is maximal and $\oM$ is extremal in $\evm (X;E) ,$ 
which is a condition different from our extremality in $\ME .$
Indeed, for maximal measurements, we can show these two notions of extremality 
coincide as in the following proposition.
\begin{prop} \label{prop:irrex}
Let $\omega \in \MmaxE$ be a maximal measurement
and let $\Gamma  \in \Chw (F \to E)$ be a minimally sufficient representative of 
$\omega .$
Then $\omega \in \de \ME$ if and only if
$\Ga \in \de \Chw (F \to E) .$
\end{prop}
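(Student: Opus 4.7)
The plan is to prove both directions by mirroring, in the two opposite directions, the argument already carried out in the proof of Theorem~\ref{thm:extremal}, using that minimal sufficiency of $\Gamma$ supplies both the faithfulness of $\Gamma$ and the cancellability property ``$\Gamma\circ\Psi=\Gamma\Rightarrow\Psi=\id_F$.''

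For the forward direction, suppose $\omega\in\de\ME$ and write a convex decomposition $\Gamma=\lambda\Gamma_1+(1-\lambda)\Gamma_2$ in $\Chw(F\to E)$ with $\lambda\in(0,1)$. The plan is: apply Proposition~\ref{prop:direct}.\ref{i:direct2} to get $[\Gamma]\pp\lambda[\Gamma_1]+(1-\lambda)[\Gamma_2]$; use that $\omega$ is maximal in the pospace $(\ME,\pp)$ (Proposition~\ref{prop:posp}), so that antisymmetry forces equality $\omega=\lambda[\Gamma_1]+(1-\lambda)[\Gamma_2]$; use extremality of $\omega$ in $\ME$ to conclude $[\Gamma_1]=[\Gamma_2]=\omega$; apply Proposition~\ref{prop:postw*} to produce $\Phi_j\in\Chw(F\to F)$ with $\Gamma_j=\Gamma\circ\Phi_j$; substitute back to obtain $\Gamma=\Gamma\circ(\lambda\Phi_1+(1-\lambda)\Phi_2)$; invoke the defining property of minimal sufficiency to conclude $\lambda\Phi_1+(1-\lambda)\Phi_2=\id_F$; and finally invoke Lemma~\ref{lemm:idex} to conclude $\Phi_1=\Phi_2=\id_F$, whence $\Gamma_1=\Gamma_2=\Gamma$.

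For the reverse direction, assume $\Gamma$ is extremal in $\Chw(F\to E)$. By Theorem~\ref{thm:extremal} it suffices to show $\Gamma$ is injective, and for this I would replay the injectivity half of that proof with one short-cut. Namely, suppose $\Gamma(a)=0$ with $\|a\|\le1$, set $e_\pm:=\tfrac12(u_F\pm a)$ and $\Gamma_\pm(b):=2\Gamma(b\cdot e_\pm)$; exactly as in Theorem~\ref{thm:extremal}, $\Gamma_\pm\in\Chw(F\to E)$ and $\Gamma=\tfrac12\Gamma_++\tfrac12\Gamma_-$. Here, instead of passing through $\Gamma\ppeq\tfrac12\Gamma_+\oplus\tfrac12\Gamma_-$ and applying extremality in $\ME$ plus Proposition~\ref{prop:postw*} plus Lemma~\ref{lemm:idex} to arrive at $\Gamma_\pm=\Gamma$, the assumed extremality of $\Gamma$ in $\Chw(F\to E)$ gives this directly. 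From $\Gamma_\pm=\Gamma$ onward the remainder of Theorem~\ref{thm:extremal}'s proof applies verbatim: the identity $\Gamma(b)=2\Gamma(b\cdot e_+)$, combined with faithfulness of the minimally sufficient $\Gamma$ applied to a hypothetical projection $Q$ with $2Q\cdot e_+\ge(1+\delta)Q$, forces $\|2e_+\|\le1$; this yields $2e_+\le u_F$ and hence $a\le0$, and symmetry in $e_\pm$ gives $a=0$.

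No serious obstacle is expected: the argument is essentially a recombination of ingredients already assembled in Theorem~\ref{thm:extremal} and Lemma~\ref{lemm:idex}. The one place demanding care is the forward direction's use of maximality to promote the post-processing inequality of Proposition~\ref{prop:direct}.\ref{i:direct2} to equality of equivalence classes in $\ME$, since without maximality the argument breaks down precisely because $\lambda\Gamma_1+(1-\lambda)\Gamma_2$ and $\lambda\Gamma_1\oplus(1-\lambda)\Gamma_2$ need not be post-processing equivalent.
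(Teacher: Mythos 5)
Your proof is correct. The forward direction is essentially the paper's argument: the paper routes the step $[\Ga]=[\Ga_1]=[\Ga_2]$ through Proposition~\ref{prop:irrexmax} (simulation irreducibility $=$ maximality $+$ extremality), which you simply inline as ``maximality plus antisymmetry of $\pp$ on $\ME$, then extremality,'' and the paper cancels $\Ga$ on the left using its injectivity (obtained from Theorem~\ref{thm:extremal}) where you instead invoke the defining property of minimal sufficiency directly --- an equivalent and if anything slightly cleaner move, since $\la\Phi_1+\ola\Phi_2$ is a \wstar-channel $F\to F$ and minimal sufficiency applies verbatim. The reverse direction, however, is genuinely different from the paper's. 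The paper takes a convex decomposition $\omega=\la\omega_1+\ola\omega_2$ in $\ME$, writes $\Ga=\la\La_1\circ\Theta_1+\ola\La_2\circ\Theta_2$ via the equivalence $\Ga\ppeq\la\La_1\oplus\ola\La_2$, applies extremality of $\Ga$ in $\Chw(F\to E)$ to get $\omega\pp\omega_1,\omega_2$, and then uses \emph{maximality} of $\omega$ to conclude $\omega=\omega_1=\omega_2$. You instead prove that $\Ga$ is injective by transplanting the injectivity half of Theorem~\ref{thm:extremal}'s proof (where the extremality of $\Ga$ in $\Chw(F\to E)$ replaces the longer detour through $\Ga\ppeq\frac12\Ga_+\oplus\frac12\Ga_-$, Proposition~\ref{prop:postw*}, and Lemma~\ref{lemm:idex}), using faithfulness of the minimally sufficient $\Ga$ for the spectral-projection step, and then cite Theorem~\ref{thm:extremal}. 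Your route has the mild bonus of not using maximality at all in that direction --- it shows that for a minimally sufficient $\Ga$, extremality in $\Chw(F\to E)$ already forces injectivity and hence $[\Ga]\in\de\ME$ --- at the cost of redoing a page of computation that the paper's shorter direct argument avoids. One cosmetic remark: the antisymmetry you invoke in the forward direction comes from the construction of $\ME$ as a set of equivalence classes (so $\pp$ is a partial order by definition), not from Proposition~\ref{prop:posp}, which only asserts closedness of the order.
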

\begin{proof}
Suppose $\omega \in \de \ME .$
Then by  
Theorem~\ref{thm:extremal} and the uniqueness of the minimally sufficient measurement (Proposition~\ref{prop:msmeas}), $\Ga$ is injective.
To show $\Ga \in \de \Chw (F \to E) ,$
take $\la \in (0,1)$ and $\Ga_1 , \Ga_2 \in \Chw (F \to E) $ 
such that $\Ga = \la \Ga_1 + \ola \Ga_2 .$
Then from Proposition~\ref{prop:direct} we have
\[
	\Ga \pp \la \Ga_1 \oplus \ola \Ga_2
\]
and hence $\omega = [\Ga] \in \ssimu (\{[\Ga_1] , [\Ga_2]  \}) .$
Therefore by Proposition~\ref{prop:irrexmax} this implies $[\Ga] = [\Ga_1] = [\Ga_2] .$
Thus there exist channels $\Psi_j \in \Ch (F\to F)$
$(j=1,2)$ such that $\Ga_j = \Ga \circ \Psi_j .$
Thus 
\[
	\Ga = \la \Ga_1 + \ola \Ga_2 
	=\Ga \circ (\la \Psi_1 + \ola \Psi_2) .
\]
and the injectivity of $\Ga$ implies $\la \Psi_1 + \ola \Psi_2 = \id_F .$
Hence by Lemma~\ref{lemm:idex} we obtain $\Psi_1 = \Psi_2 = \id_F .$
Therefore $\Ga_1 = \Ga_2 = \Ga ,$
which proves $\Ga \in \de \Chw (F \to E) .$

Conversely assume $\Ga \in \de \Chw (F \to E) $
and take $\la \in (0,1)$ and $\omega_1 , \omega_2 \in \ME$
such that $\omega = \la \omega_1 + \ola \omega_2 .$
Let $\La_j \in \Chw (G_j \to E)$ be a representative of $\omega_j$
$(j=1,2).$
Since $\Ga \ppeq \la \La_1 \oplus \ola \La_2 ,$
there exists a channel $\Theta \in \Chw (F \to G_1 \oplus G_2)$
such that 
\[
	\Ga = (\la \La_1 \oplus \ola \La_2) \circ \Theta .
\]
If we write as
\[
\Theta (a) = \Theta_1 (a) \oplus \Theta_2 (a)
\quad
(a \in F) ,
\]
then $\Theta_j \in \Chw (F\to G_j)$
$(j=1,2)$ and we have
\[
	\Ga = 
	\la \La_1 \circ \Theta _1 +
	\ola \La_2 \circ \Theta _2  .
\]
Therefore the extremality of $\Ga$ in $\Chw (F\to E)$ 
implies $\Ga = \La_1 \circ \Theta_1
= \La_2 \circ \Theta_2 $
and hence $\omega \pp \omega_1 , \omega_2.$
Thus the maximality of $\omega$ implies $\omega=\omega_1 = \omega_2 ,$
which proves $\omega \in \de \ME .$ 
\end{proof}

\subsection{Simulability by simulation irreducible measurements}
\label{subsec:irrmain}
We now show that every measurement is simulable by 
the set of simulation irreducible measurements,
generalizing the finite-dimensional results in \cite{Haapasalo2012,PhysRevA.97.062102}.
While the proof for the corresponding finite-dimensional result 
\cite{Haapasalo2012,PhysRevA.97.062102}
is constructive, the proof of the following theorem, 
a part of which is analogous to the common proof of the Krein-Milman theorem,
is non-constructive and based on the well-ordering theorem and Theorem~\ref{thm:simdisc}.
\begin{thm} \label{thm:irrsim}
$\ME = \simu (\MirrE) ,$
i.e.\ every measurement in $\ME$ is simulable by 
the set of simulation irreducible measurements.
\end{thm}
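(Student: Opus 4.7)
My plan is a Krein--Milman-type argument refined by the post-processing order, driven by Theorem~\ref{thm:simdisc}. Suppose for contradiction that some $\omega \in \ME \setminus \simu(\MirrE)$. By Theorem~\ref{thm:simdisc}, there exists an ensemble $\E$ with $\Pg(\E;\omega) > \Pg(\E;\MirrE)$. Since $\Pg(\E;\cdot)$ is weakly continuous on the compact space $\ME$, it attains its maximum $c := \max_{\mu \in \ME}\Pg(\E;\mu) \geq \Pg(\E;\omega)$, and the support set $F := \{\mu \in \ME : \Pg(\E;\mu) = c\}$ is a non-empty closed face of $\ME$. My goal is to locate an element of $\MirrE$ inside $F$, which will contradict $\Pg(\E;\MirrE) < c$.

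To that end, I would call a subset $G \subseteq \ME$ \emph{good} if it is a non-empty closed face of $\ME$ contained in $F$ and is upward-closed in $F$ under $\pp$ (i.e., $\mu \in G$, $\nu \in F$, $\mu \pp \nu$ imply $\nu \in G$). The face $F$ itself is good, so the collection of good sets is non-empty. For any chain of good sets ordered by reverse inclusion, the intersection is non-empty by compactness of $\ME$ and the finite intersection property, remains a closed face of $\ME$ contained in $F$, and trivially preserves upward-closure; hence it is good. Zorn's lemma then yields a minimal good set $G_0$. I next claim $G_0$ is a singleton: if not, by the BSS theorem (Theorem~\ref{thm:bss}) gain functionals separate points of $\ME$, so some ensemble $\E'$ makes $\Pg(\E';\cdot)$ non-constant on $G_0$; setting $c' := \max_{\mu \in G_0}\Pg(\E';\mu)$, the set $G_0' := \{\mu \in G_0 : \Pg(\E';\mu) = c'\}$ is a proper closed face of $G_0$ (hence of $\ME$) contained in $F$, and the $\pp$-monotonicity of $\Pg(\E';\cdot)$ (again BSS) together with the upward-closure of $G_0$ shows $G_0'$ is upward-closed in $F$, contradicting minimality.

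Therefore $G_0 = \{\omega_0\}$. Being a singleton face, $\omega_0 \in \de \ME$; by construction $\omega_0 \in F$; and $\omega_0 \in \MmaxE$ since any $\nu$ with $\omega_0 \pp \nu$ satisfies $\Pg(\E;\nu) \geq c$ so $\nu \in F$, whence $\nu \in G_0 = \{\omega_0\}$ by upward-closure, trivially forcing $\nu \pp \omega_0$. By Proposition~\ref{prop:irrexmax}, $\omega_0 \in F \cap \MirrE$, yielding $\Pg(\E;\MirrE) \geq \Pg(\E;\omega_0) = c \geq \Pg(\E;\omega) > \Pg(\E;\MirrE)$, the desired contradiction. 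The principal obstacle, and the reason a naive Krein--Milman application to $F \cap \MmaxE$ does not work, is that $\MmaxE$ is not manifestly weakly closed; the upward-closure clause in the definition of good set is precisely the device that promotes the Krein--Milman-style selection into $\MmaxE$ without requiring closedness of the latter.
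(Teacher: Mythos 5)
Your proof is correct, and it arrives at the same key object as the paper's proof --- a simulation irreducible measurement inside the face $F$ on which $\Pg(\E;\cdot)$ attains its maximum, identified via Proposition~\ref{prop:irrexmax} --- but by a different selection mechanism. The paper well-orders the set $\Ens (E)$ of ensembles and runs a transfinite recursion, refining $F$ to a decreasing transfinite chain of compact faces by maximizing each gain functional in turn; the intersection is a singleton by BSS separation, and its maximality is checked by transfinite induction, using that each stage is cut out by maximizing a $\pp$-monotone functional and therefore automatically absorbs post-processing-larger elements. You replace the recursion by Zorn's lemma applied to the family of closed faces contained in $F$ that are upward-closed in $F$ under $\pp$: minimality plus BSS separation forces a singleton, and the upward-closure invariant then delivers maximality. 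Your invariant is precisely the property the paper's faces enjoy for free, so the two arguments are of essentially equal strength (both non-constructive, both resting on the axiom of choice); yours is arguably cleaner in isolating the invariant explicitly, and your closing observation --- that one cannot simply apply Krein--Milman to $F\cap\MmaxE$ because $\MmaxE$ is not known to be weakly closed --- correctly pinpoints why some such device is needed in either formulation. One small shared subtlety: invoking Theorem~\ref{thm:simdisc} with $\fL=\MirrE$ presupposes $\MirrE\neq\varnothing$, but since your construction of $\omega_0$ never uses the hypothesis $\omega\notin\simu(\MirrE)$, it establishes this non-emptiness along the way, just as the paper's construction does.
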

\begin{proof}
By Theorem~\ref{thm:simdisc}, we have only to prove that for any measurement 
$\omega_0 \in \ME$ and an ensemble $\E \in \Ens (E)$
the inequality
\[
	\Pg (\E ; \omega_0) \leq \sup_{\nu \in \MirrE} \Pg (\E ; \nu)
\]
holds, where $\Ens (E)$ is the set of ensembles defined 
in Proposition~\ref{prop:small}.
Well-order $\Ens (E)$ so that $\Ens (E) = \set{ \E_\alpha | 0 \leq \alpha < \gamma }$
and $\E_0 = \E ,$
where the index $\alpha$ runs over all the ordinals smaller than the ordinal $\gamma .$
Define
\[
	F_0 :=
	\set{
	\omega \in \ME |
	\Pg(\E_0 ; \omega) = \sup_{\nu \in \ME} \Pg (\E_0 ; \nu )
	} ,
\]
which is a non-empty compact face of $\ME .$ 
We then inductively define $(F_\alpha)_{0 \leq \alpha < \gamma}$
by
\[
	F_\alpha 
	:=
	\set{\omega \in \bigcap_{0 \leq \beta < \alpha} F_\beta 
	|
	\Pg (\E_\alpha ; \omega)
	=
	\sup_{\nu \in \bigcap_{0 \leq \beta < \alpha} F_\beta }
	\Pg (\E_\alpha ; \nu)
	} 
	\quad (0 < \alpha < \gamma).
\]
Then $(F_\alpha)_{0 \leq \alpha < \gamma}$ is a decreasing transfinite sequence 
of compact faces.
Moreover, if $F_\beta \nono$ for all $0 \leq \beta < \alpha ,$
then, being the intersection of compact sets satisfying 
the finite-intersection property, $\bigcap_{0\leq \beta < \alpha} F_\beta$
is non-empty, and hence so is $F_\alpha .$
Thus by induction $F_\alpha \nono$ for all $0 \leq \alpha < \gamma .$
Therefore $F := \bigcap_{0 \leq \alpha < \gamma} F_\alpha$
is a non-empty compact face.
If $\omega_1 , \omega_2 \in F ,$ then 
\[
	\Pg (\E_\alpha ; \omega_1) 
	=\sup_{\nu \in \bigcap_{0 \leq \beta < \alpha} F_\beta}
	\Pg (\E_\alpha ; \nu)
	=
	\Pg (\E_\alpha ; \omega_2)
\]
for all $0 \leq \alpha <\gamma $
and hence Theorem~\ref{thm:bss} implies $\omega_1 =\omega_2 .$
Therefore $F$ is a singleton $\{ \wt{\nu}  \} .$
Since $F$ is a face in $\ME,$ $\wt{\nu} $ is an extremal point of $\ME .$
To show the maximality of $\wt{\nu} ,$
take a measurement 
$\wt{\nu}^\prime \in \ME$ satisfying $\wt{\nu} \pp \wt{\nu}^\prime .$
Then we have
\[
	\Pg (\E_0 ; \wt{\nu}^\prime)
	\leq 
	\sup_{\nu \in \ME}
	\Pg (\E_0 ; \nu)
	=\Pg (\E_0 ; \wt{\nu})
	\leq \Pg (\E_0 ; \wt{\nu}^\prime) ,
\]
which implies $\wt{\nu}^\prime \in F_0 .$
For $0< \alpha < \gamma ,$
assume $\wt{\nu}^\prime \in \bigcap_{0 \leq \beta < \alpha}F_\beta .$
Then 
\[
	\Pg(\E_\alpha ; \wt{\nu}^\prime)
	\leq
	\sup_{\nu \in \bigcap_{0 \leq \beta < \alpha}F_\beta }
	\Pg (\E_\alpha ; \nu)
	= \Pg (\E_\alpha ; \wt{\nu})
	\leq
	\Pg (\E_\alpha ; \wt{\nu}^\prime)
\]
and hence $\wt{\nu}^\prime \in F_\alpha .$ Therefore by induction 
we have $\wt{\nu}^\prime \in \bigcap_{0 \leq \alpha < \gamma} F_\alpha
= \{ \wt{\nu} \} $
and hence $\wt{\nu} = \wt{\nu}^\prime .$
This proves that $\wt{\nu}$ is maximal.
Thus by Proposition~\ref{prop:irrexmax} 
we have $\wt{\nu} \in \MirrE .$
Furthermore from $\wt{\nu} \in F_0 $ and $\E = \E_0 $ we have
\[
	\Pg (\E ; \omega_0) \leq
	\sup_{\nu \in \ME} \Pg (\E; \nu)
	= \Pg (\E ; \wt{\nu}) 
	\leq \sup_{\nu \in \MirrE } \Pg (\E ; \nu) ,
\]
which proves the claim.
\end{proof}

\section{Incompatibility and robustness of incompatibility} \label{sec:incomp}
In this section, we consider incompatibility of measurements
and generalizes some known results in finite dimensions
\cite{PhysRevLett.122.130402,PhysRevLett.122.130403,PhysRevLett.122.130404}.
The main result in this section is Theorem~\ref{thm:RoI} that characterizes the operational meaning of the robustness of incompatibility.

\subsection{Basic properties of (in)compatible measurements}
\label{subsec:incc}

\begin{defi}[Compatibility and incompatibility of measurements] \label{def:incomp}
Let $X\nono $ be a set which may be finite or infinite. 
A family $(\Ga_x)_\xin$ of measurements is called \textit{compatible},
or \textit{jointly measurable},
if there exists a measurement $\La $ such that 
$\Ga_x \pp \La $ for all $\xin $
and \textit{incompatible} if not.
Such a measurement $\La ,$ if exists, is called a mother measurement.
We can always take $\La$ to be a \wstar-measurement by replacing $\La$ with the \wstar-extension $\ovl{\La}$ if necessary.

A family of measurements $([\Ga_x])_{\xin} $ in $\ME$ is called 
(in)compatible if the family $(\Ga_x)_\xin$ of representatives is (in)compatible.
Note that this definition does not depend on the choices of $\Ga_x .$
We define the sets of compatible and incompatible measurements in $\ME$ by
\begin{gather*}
	\McompXE
	:= \set{(\omega_x)_\xin \in \ME^X
	|
	\exists \nu \in \ME , \,  [ \omega_x \pp \nu \, (\forall \xin) ]
	} ,
	\\
	\MincompXE := \ME^X \setminus \McompXE ,
\end{gather*}
respectively.
\qed
\end{defi}
Before investigating the properties of the sets $\McompXE$ and $\MincompXE ,$
let us show that we can introduce a natural compact convex structure 
on the Cartesian power $\ME^X .$  

\begin{prop} \label{prop:dprod}
Let $X \nono$ and define the convex combination map on $\ME^X$ by
\[
	\braket{\la ; (\omega_x)_\xin , (\nu_x)_\xin}
	:=
	(\la \omega_x + \ola \nu_x)_\xin 
	\quad
	(\la \in [0,1] ; \, \omega_x , \nu_x \in \ME \,(\xin)) .
\]
Then the convex prestructure $(\ME^X , \braket{\cdot ; \cdot , \cdot})$
equipped with the product topology of the weak topology on $\ME$ is 
a compact convex structure.
\end{prop}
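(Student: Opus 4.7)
The plan is to verify the two defining properties of a compact convex structure (Definition~\ref{def:convex}.4) for $\ME^X$, namely that the underlying space is compact Hausdorff and that $\Ac(\ME^X)$ separates points. First I would observe that the coordinatewise convex combination is well-defined, so $(\ME^X, \braket{\cdot;\cdot,\cdot})$ is indeed a convex prestructure; moreover, each coordinate projection $\pi_x \colon \ME^X \to \ME$, $(\mu_{x'})_{x' \in X} \mapsto \mu_x$, is continuous (by definition of the product topology) and affine (because the convex combination is coordinatewise).

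For the topological half, I would invoke Theorem~\ref{thm:compact}, which gives that $\ME$ equipped with the weak topology is compact Hausdorff, and then apply Tychonoff's theorem together with the fact that arbitrary products of Hausdorff spaces are Hausdorff. This immediately yields that $\ME^X$ with the product topology is compact Hausdorff.

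For the separation half, I would take any two distinct points $(\omega_x)_\xin \neq (\nu_x)_\xin$ of $\ME^X$, pick an index $x_0 \in X$ with $\omega_{x_0} \neq \nu_{x_0}$, and apply Theorem~\ref{thm:cconvME} (equivalently, the fact that $\ME$ is a compact convex structure) to obtain $f \in \Ac(\ME)$ with $f(\omega_{x_0}) \neq f(\nu_{x_0})$. The composition $f \circ \pi_{x_0} \colon \ME^X \to \realn$ is then an element of $\Ac(\ME^X)$ separating the two points, since $\pi_{x_0}$ is continuous and affine.

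There is no real obstacle here: the proof is essentially a routine combination of Tychonoff's theorem with pullback of separating functionals through the affine continuous coordinate projections. The only point worth being explicit about is that the product topology is exactly the topology for which all $\pi_x$ are continuous and that the coordinatewise convex operation is exactly what makes these same maps affine, so the two structures on $\ME^X$ are compatible in the sense required by Definition~\ref{def:convex}.
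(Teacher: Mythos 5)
Your proposal is correct and follows essentially the same route as the paper: Tychonoff for compactness and Hausdorffness of the product, and separation of points via continuous affine functionals on $\ME$ pulled back through the coordinate projections (the paper simply uses the concrete gain functionals $\Pg(\E;\cdot)\circ\pi_x$ rather than an abstract separating $f\in\Ac(\ME)$).
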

\begin{proof}
By Tychonoff\rq{}s theorem, the product topology of the weak topology is 
a compact Hausdorff topology.
Moreover the family of continuous affine functionals
\[
	\ME^X \ni (\omega_{x^\prime})_{x^\prime \in X}
	\mapsto 
	\Pg(\E ; \omega_x) \in \realn
	\quad
	(\xin , \, \E \colon \mathrm{ensemble})
\]
separates points of $\ME^X .$ Therefore 
$(\ME^X , \braket{\cdot ; \cdot , \cdot})$ is a compact convex structure.
\end{proof}
Thus we identify $\ME^X$ with the state space $S(\Ac (\ME^X))$
regularly embedded into $\Ac(\ME^X)^\ast .$
We also define the post-processing partial order $\pp$ on $\ME^X$
by the product order of $\pp$ on $\ME ,$ i.e.\
\[
	 (\omega_x)_\xin \pp (\nu_x)_\xin
	 \, :\defarrow
	 \,
	 [\omega_x \pp \nu_x \quad (\forall \xin) ].
\]
\begin{prop}\label{prop:compcc}
Let $X \nono .$
Then $\McompXE$ is a weakly compact, convex, lower subset of $\ME^X .$
\end{prop}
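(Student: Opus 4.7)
The plan is to verify the three properties separately, each reducing quickly to results already established in the paper. Throughout, let $\Gxin$ denote a family of measurements in $\ME$ and $\nu$ a candidate mother measurement.

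For the \emph{lower set} property, suppose $(\omega_x)_\xin \in \McompXE$ with mother $\nu \in \ME$, and $(\omega_x')_\xin \in \ME^X$ satisfies $\omega_x' \pp \omega_x$ for all $\xin$. Then by transitivity of $\pp$, we get $\omega_x' \pp \nu$ for all $\xin$, so $\nu$ is also a mother of $(\omega_x')_\xin$. This step is essentially immediate.

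For \emph{convexity}, take $(\omega_x^j)_\xin \in \McompXE$ with mothers $\nu^j \in \ME$ for $j=1,2$, and $\la \in [0,1]$. The natural candidate mother for $(\la \omega_x^1 + \ola \omega_x^2)_\xin$ is $\la \nu^1 + \ola \nu^2$. By Proposition~\ref{prop:direct}.\ref{i:direct1} applied to representatives, $\omega_x^j \pp \nu^j$ $(j=1,2)$ implies $\la \omega_x^1 + \ola \omega_x^2 \pp \la \nu^1 + \ola \nu^2$ for each $\xin$, which gives the desired compatibility.

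The main work is \emph{compactness}, but this too is quite short given the machinery available. Let $((\omega_x^i)_\xin)_\iin$ be a net in $\McompXE$ converging in the product topology to some $(\omega_x)_\xin \in \ME^X$, and let $\nu^i \in \ME$ be a mother for each $\iin$. By weak compactness of $\ME$ (Theorem~\ref{thm:compact}), the net $(\nu^i)_\iin$ has a subnet $(\nu^{i(j)})_\jin$ weakly converging to some $\nu \in \ME$. The corresponding subnet $((\omega_x^{i(j)})_\xin)_\jin$ still converges to $(\omega_x)_\xin$, so in particular $\omega_x^{i(j)} \wto \omega_x$ for each fixed $\xin$. Since $\omega_x^{i(j)} \pp \nu^{i(j)}$ for all $j$, the pospace property of $(\ME, \pp)$ from Proposition~\ref{prop:posp} yields $\omega_x \pp \nu$. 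As $\xin$ was arbitrary, $\nu$ is a mother of $(\omega_x)_\xin$, proving $(\omega_x)_\xin \in \McompXE$.

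There is no real obstacle here; the proposition is essentially a packaging of Propositions~\ref{prop:posp} and~\ref{prop:direct} together with Theorem~\ref{thm:compact}. The only mild subtlety is remembering that compactness of $\McompXE$ requires passing to a subnet for the mothers, which works precisely because the closedness of the graph of $\pp$ allows us to take limits in both arguments simultaneously.
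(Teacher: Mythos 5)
Your proof is correct and follows essentially the same route as the paper: transitivity for the lower-set property, Proposition~\ref{prop:direct} for convexity, and the pospace property (Proposition~\ref{prop:posp}) together with a subnet of the mother measurements for compactness. The only cosmetic difference is that you establish closedness of $\McompXE$ in the compact space $\ME^X$ and let compactness follow, whereas the paper extracts convergent subnets of the pairs $((\omega_x^i)_\xin , \nu_i)$ directly; these are the same argument in substance.
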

\begin{proof}
(Compactness).
Let $(\omega_x^i)_\xin $ $(\iin)$ be a net in $\McompXE .$
Then for each $\iin $ we take $\nu_i \in \ME$ such that
$\omega_x^i \pp \nu_i $ $(\xin).$
Since $\ME^X \times \ME$ is compact in the product topology of the weak topology,
there exist subnets $(\omega_x^\ikj)_\xin$ and $\nu_\ikj$
$(\jin)$ and elements $(\omega_x)_\xin \in \ME^X$
and $\nu \in \ME $ such that 
$\omega_x^\ikj \wto \omega_x$ $(\xin)$
and $\nu_\ikj \wto \nu .$
Moreover, since $\ME$ is a pospace, this implies $\omega_x \pp \nu$
$(\xin)$
and hence $(\omega_x)_\xin \in \McompXE ,$
which proves the compactness of $\McompXE .$

(Convexity).
Let $(\omega_x^1)_\xin , (\omega_x^2)_\xin \in \McompXE $
and take measurements $\nu_1 , \nu_2 \in \ME$
such that $\omega_x^j \pp \nu_j$
$(j=1,2; \, \xin) .$
Then for each $\la \in [0,1]$ we have
\[
	\la \omega_x^1 + \ola \omega_x^2 
	\pp
	\la \nu_1 + \ola \nu_2
	\quad (\xin) ,
\]
which implies $(\la \omega_x^1 + \ola \omega_x^2 )_\xin \in \McompXE .$

(Lower set condition).
Suppose $\ME^X \ni (\nu_x)_\xin \pp (\omega_x )_\xin \in \McompXE $
and take a measurement $\nu \in \ME$ satisfying $\omega_x\pp \nu$
$(\xin) .$ 
Then  $\nu_x\pp \nu$ $(\xin) $ and hence $(\nu_x)_\xin$ is also compatible.
\end{proof}

It is common to consider the (in)compatibility of finite family of measurements
(e.g.\ \cite{PhysRevLett.122.130402}).
The following proposition ensures that the (in)compatibility of 
arbitrary finite subfamilies sufficiently characterizes that of an infinite family of measurements.

\begin{prop} \label{prop:compf}
Let $X \nono .$
Then $(\omega_x)_\xin \in \ME^X$ is compatible if and only if
$(\omega_x)_{x \in A}$ is compatible for any finite subset $\varnothing \neq A \subset X .$
\end{prop}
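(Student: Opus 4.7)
The plan is to prove the non-trivial ``if'' direction by a standard finite-intersection-property argument, exploiting the compactness of $\ME$ established in Theorem~\ref{thm:compact} and the closedness of the post-processing order established in Proposition~\ref{prop:posp}.

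First, for each $\xin$ define the ``upper set'' $U_x := \set{\nu \in \ME | \omega_x \pp \nu}$. I would show that $U_x$ is weakly closed: if $(\nu_i)_{\iin}$ is a net in $U_x$ with $\nu_i \wto \nu \in \ME$, apply the closedness of $\pp$ (Proposition~\ref{prop:posp}) to the constant net $\omega_x \pp \nu_i$ to conclude $\omega_x \pp \nu$, hence $\nu \in U_x$. (Alternatively, one may write $U_x$ directly as the intersection over all ensembles $\E$ of $\set{\nu \in \ME | \Pg (\E ; \omega_x) \leq \Pg (\E ; \nu)}$, which is manifestly closed by the continuity of the gain functionals.)

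Next, I would consider, for each non-empty finite subset $A \subset X$, the intersection $U_A := \bigcap_{x \in A} U_x$, which is again a weakly closed subset of $\ME$. By hypothesis the subfamily $(\omega_x)_{x \in A}$ is compatible, so there exists a mother measurement $\nu_A \in \ME$ with $\omega_x \pp \nu_A$ for all $x \in A$; this means $U_A \nono$. The family $\set{U_A | A \subset X \text{ finite, non-empty}}$ has the finite intersection property because $U_{A_1} \cap \cdots \cap U_{A_n} \supset U_{A_1 \cup \cdots \cup A_n} \nono$.

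Finally, since $\ME$ is compact Hausdorff by Theorem~\ref{thm:compact}, any family of closed subsets with the finite intersection property has non-empty intersection; hence there exists $\nu \in \bigcap_{A} U_A = \bigcap_{\xin} U_x$, and this $\nu$ is a mother measurement for the full family $(\omega_x)_{\xin}$, establishing compatibility. The converse direction is immediate from the definition, since any mother measurement for the whole family is also a mother for each finite subfamily. I do not anticipate a real obstacle here: the only subtlety is to verify closedness of $U_x$, which is essentially already packaged in Proposition~\ref{prop:posp}.
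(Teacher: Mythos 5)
Your proposal is correct and is essentially the paper's own argument: the paper also directs the nonempty finite subsets of $X$ by inclusion, picks a mother measurement $\nu_A$ for each, and combines the compactness of $\ME$ (Theorem~\ref{thm:compact}) with the pospace property (Proposition~\ref{prop:posp}) to pass to the limit. The only difference is cosmetic — the paper extracts a weakly convergent subnet of $(\nu_A)_{A}$ rather than invoking the finite-intersection property of the closed upper sets $U_x$, but these are interchangeable formulations of the same compactness argument.
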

\begin{proof}
Let us denote the set of non-empty finite subsets of $X$ by $\mathbb{F} (X),$
which is directed by the set inclusion $\subset .$ 
Assume that $(\omega_x)_{x \in A}$ is compatible for 
any $A \in \mathbb{F} (X) .$
Then for each $A\in \mathbb{F}(X)$ there exists $\nu_A \in \ME$ such that
$\omega_x \pp \nu_A$ $(x \in A) .$
By the compactness of $\ME ,$ there exists a subnet
$(\nu_{A(i)})_\iin $ of $(\nu_A)_{A \in \mathbb{F}(X)}$ 
weakly converging to some $\nu \in \ME .$
Then since $\omega_x \pp \nu_{A(i)}$ eventually for each $\xin ,$
the pospace property of $\ME$ implies $\omega_x \pp \nu $
for each $\xin ,$
which proves the \lq\lq{}if\rq\rq{} part of the claim.
The \lq\lq{}only if\rq\rq{} part is obvious.
\end{proof}

\subsection{Outperformance in the state discrimination task}
\label{subsec:incompout}
Let $(\omega_x)_\xin \in \ME^X $
and let $\fL : = \set{\omega_x \in \ME | \xin} .$
For each \wstar-family $\E$ and partitioned ensemble $\vecE = (\E_z)_{z\in Z}$
with $\E_z = (\vph_{z,y})_{y \in Y_x } ,$
we define
\begin{gather*}
	\Pg (\E ; (\omega_x)_\xin)
	:= \Pg (\E ; \fL) ,
	\\
	\Pg (\vecE ; (\omega_x)_\xin)
	:=
	\Pg (\vecE ; \fL) .
\end{gather*}
For a family $(\Ga_x)_\xin$ of measurements, we define
\begin{gather*}
	\Pg (\E ; (\Ga_x)_\xin)
	:= \Pg (\E ; ([\Ga_x])_\xin) =
	\Pg (\E ; \fL^\prime) ,
	\\
	\Pg (\vecE ; (\Ga_x)_\xin)
	:= \Pg (\vecE ; ([\Ga_x])_\xin) =
	\Pg (\vecE ; \fL^\prime),
\end{gather*}
where $\fL^\prime := \set{[\Ga_x] \in \ME |  \xin } .$
We also define 
\begin{align*}
	\Pgcomp (\vecE)
	&:=
	\sup_{\text{$Y$: set} ; \, (\nu_y)_{y\in Y } \in \Mcomp^Y (E)} 
	\Pg (\vecE ; (\nu_y)_{y\in Y } )
	\\
	&=
	\sup_{\nu \in \ME}
	\sum_{z\in Z} \Pg (\E_z ; \nu) ,
\end{align*}
where the second equality follows from the post-processing monotonicity of 
the gain functional.

The operational meaning of the quantity $\Pgcomp (\vecE)$ is as follows
\cite{PhysRevLett.122.130402,PhysRevLett.122.130403}.
Suppose that Alice prepares system\rq{}s state according to the the ensemble 
$(\vph_{z,y})_{z\in Z, \, y \in Y_z} .$
Then Bob performs the measurement on the system
and, \textit{after} the measurement, Alice informs Bob of the value of $z\in Z .$
This is in contrast 
Then based on the measurement outcome and the information on the label $z ,$
Bob guesses the label $y \in Y_z.$
The quantity $\Pgcomp (\vecE)$ is then the optimal probability 
that Bob\rq{}s guess coincides with the actual value.
This is in contrast to the operational setting for $\Pg (\E ; \fL)$ in which Bob is informed of the label $x \in X$ before he performs the measurement and can choose a proper measurement from $\fL$ to which incompatible measurements may belong.
We now show that an incompatible family of measurements outperforms 
in this state discrimination task for some partitioned ensemble,
generalizing the result for finite-dimensional quantum systems in \cite{PhysRevLett.122.130402}
(Theorem~2).

\begin{thm} \label{thm:incompdisc}
Let $X \nono$ and let $(\omega_x )_\xin \in \ME^X .$
Then $(\omega_x )_\xin \in \McompXE$ if and only if
\begin{equation}
	\Pg (\vecE ; (\omega_x)_{\xin}) \leq \Pgcomp (\vecE)
	\label{eq:outp}
\end{equation}
holds for all partitioned ensemble $\vecE .$
\end{thm}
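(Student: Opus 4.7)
Suppose $\omega_x \pp \nu$ for all $x \in X$ with common mother $\nu \in \ME$. For any partitioned ensemble $\vecE = (\E_z)_{z \in Z}$, post-processing monotonicity of $\Pg (\E_z ; \cdot)$ combined with Proposition~\ref{prop:Pgdef} gives $\Pg (\E_z ; (\omega_x)_{x \in X}) = \sup_{x \in X} \Pg (\E_z ; \omega_x) \leq \Pg (\E_z ; \nu)$, and summing over $z$ yields $\Pg (\vecE ; (\omega_x)_{x \in X}) \leq \sum_{z} \Pg (\E_z ; \nu) \leq \Pgcomp (\vecE)$.

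\textbf{Reductions for the converse.} Assume $(\omega_x)_{x \in X}$ is incompatible. Proposition~\ref{prop:compf} supplies a finite subset $A \subset X$ with $(\omega_x)_{x \in A}$ incompatible; any partitioned ensemble violating \eqref{eq:outp} for the subfamily still violates it for the full family, since enlarging the index set in the supremum defining $\Pg (\E_z ; (\omega_x)_{x \in X})$ only increases that quantity. Hence assume $X$ finite. Next, by Theorem~\ref{thm:finapp}, each $\omega_x$ is the supremum in $\pp$ of an increasing net $\omega_x^{\Delta_x}$ of finite-outcome measurements. The product net converges weakly to $(\omega_x)_{x \in X}$ in $\ME^X$, and the weak compactness of $\McompXE$ (Proposition~\ref{prop:compcc}) forces the approximants to be eventually incompatible. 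Monotonicity of $\Pg$ then transports a violating partitioned ensemble back. We therefore reduce to the case where $X$ is finite and $\omega_x = [\Ga^{\oM_x}]$ with $\oM_x \in \evm (Y_x ; E)$.

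\textbf{Hahn-Banach separation.} Set $Y := \prod_{x \in X} Y_x$. Using Proposition~\ref{prop:fEVM}.1 applied to a representative of a putative mother, together with the commutative product on a classical space (Proposition~\ref{prop:nb}), compatibility in this finite-outcome regime is equivalent to the existence of a joint EVM $\oN \in \evm (Y; E)$ with marginals $\oN_x = \oM_x$. Incompatibility thus places $(\oM_x)_{x \in X}$ outside the weakly$\ast$ compact convex image of the marginal map $T \colon \evm (Y ; E) \to \prod_x \evm (Y_x ; E)$. Applying the Hahn-Banach separation in the topology $\sigma (E^{\sqcup_x Y_x} , E_\ast^{\sqcup_x Y_x})$ produces $(\psi_{x, y_x}) \in E_\ast^{\sqcup_x Y_x}$ and $r \in \realn$ with
\begin{equation*}
\sum_{x, y_x} \braket{\psi_{x, y_x} , \oM_x (y_x)} > r > \sup_{\oN \in \evm (Y; E)} \sum_{x, y_x} \braket{\psi_{x, y_x} , \oN_x (y_x)} .
\end{equation*}
Since $E_{\ast +}$ generates $E_\ast$, add $\chi_x := \sum_{y_x'} \psi_{x, y_x'}^-$ to each $\psi_{x, y_x}$: both sides shift by the common constant $\sum_x \braket{\chi_x , u_E}$, so the strict separation is preserved while all components become positive. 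A positive rescaling yields a partitioned ensemble $\vecE = (\E_x)_{x \in X}$ with $\E_x = (\vph_{x, y_x})_{y_x \in Y_x}$.

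\textbf{Identification.} The left-hand side of the separation is bounded above by the gain of $\vecE$ against $(\omega_x)$: the identity decision rule $\oB (y_x) = \delta_{y_x} \in \linf (Y_x)$ gives $\Pg (\E_x ; \omega_x) \geq \sum_{y_x} \braket{\vph_{x, y_x} , \oM_x (y_x)}$, hence $\Pg (\vecE ; (\omega_x)_{x \in X}) \geq \sum_x \Pg (\E_x ; \omega_x) \geq$ (separation LHS). The core obstacle is the reverse identification $\Pgcomp (\vecE) = \sup_{\oN \in \evm (Y; E)} \sum_{x, y_x} \braket{\vph_{x, y_x} , \oN_x (y_x)}$. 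By Theorem~\ref{thm:finapp} combined with weak continuity and post-processing monotonicity of $\sum_x \Pg (\E_x ; \cdot)$, the supremum defining $\Pgcomp (\vecE) = \sup_\nu \sum_x \Pg (\E_x ; \nu)$ is attained in the limit over finite-outcome $\nu = [\Ga^\oN]$ with $\oN \in \evm (Z ; E)$. The explicit optimization over decision rules yields $\sum_x \Pg (\E_x ; [\Ga^\oN]) = \sum_z \braket{\sum_x \vph_{x, y_x^*(z)} , \oN (z)}$ where $y_x^*(z) \in Y_x$ maximizes $\braket{\vph_{x, y_x} , \oN(z)}$, and re-indexing via $z \mapsto y^*(z) = (y_x^*(z))_{x \in X} \in Y$ produces $\oN' \in \evm (Y ; E)$ with $\oN'(y) := \sum_{z \colon y^*(z) = y} \oN(z)$ satisfying $\sum_x \Pg (\E_x ; [\Ga^\oN]) = \sum_{x, y_x} \braket{\vph_{x, y_x} , \oN'_x (y_x)}$; the converse inequality follows by evaluating $\sum_x \Pg (\E_x ; [\Ga^{\oN'}])$ using the coordinate-projection decision rules $\oB_x (y_x)(y) = \mathbbm{1} [y_x = y^{(x)}]$. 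Combining all bounds gives $\Pg (\vecE ; (\omega_x)_{x \in X}) > \Pgcomp (\vecE)$, which completes the proof.
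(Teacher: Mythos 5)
Your proof is correct and follows essentially the same route as the paper's: the easy direction via monotonicity, the reduction to a finite index set via Proposition~\ref{prop:compf}, the reduction to finite-outcome measurements via Theorem~\ref{thm:finapp} together with the openness of $\MincompXE$, and then a Hahn--Banach separation of $(\oM_x)_{\xin}$ from the weakly$\ast$ compact convex set of compatible finite-outcome tuples, followed by the positivity shift of Proposition~\ref{prop:gaineasy}. The only difference is one of packaging: you describe the compatible tuples as the image of the marginal map on $\evm\bigl(\prod_{\xin} Y_x ; E\bigr)$ (using the classical product of Proposition~\ref{prop:nb} to build the joint EVM from a mother) and verify the identity $\Pgcomp(\vecE)=\sup_{\oN}\sum_{x,y}\braket{\vph_{x,y},\oN_x(y)}$ by explicit re-indexing over finite-outcome mothers, whereas the paper records the same two facts as Lemma~\ref{lemm:EVMcompc} and Lemma~\ref{lemm:Pgcomp} (the latter avoiding the finite-outcome reduction of the mother by working with an arbitrary representative directly); both implementations are sound.
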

For the proof of the theorem, we first consider the set of compatible 
EVMs.
For a family $(Y_x)_\xin $
of finite sets indexed by a finite set $X ,$ 
we define
\begin{gather*}
	\evm (\Yxin ; E) :=
	\prod_\xin \evm (Y_x ; E)
	\subset \prod_\xin E^{Y_x} ,
	\\
	\evmcomp (\Yxin ; E)
	:=
	\set{ (\oM_x)_\xin \in \evm (\Yxin ; E) | \text{$(\Ga^{\oM_x})_\xin$ is compatible}}.
\end{gather*}
A family $(\oM_x)_\xin$ EVMs is called (in)compatible
if the family $(\Ga^{\oM_x})_\xin$ of the associated measurements 
is (in)compatible.

\begin{lemm} \label{lemm:EVMcompc}
Let $X$ and $Y_x$ $(\xin)$ be non-empty finite sets.
Then $\evmcomp (\Yxin ; E)$ is a weakly$\ast$ compact
(i.e.\ $\sigma (\prod_\xin E^{Y_x} , \prod_\xin E_\ast^\Yx)$-compact)
convex subset of $\evm (\Yxin ; E) .$
\end{lemm}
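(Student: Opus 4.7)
The plan is to characterize compatibility of a finite family of finite-outcome EVMs via the existence of a single joint EVM and then deduce convexity and compactness from this characterization. Specifically, I would establish the following equivalence: $(\oM_x)_\xin \in \evmcomp (\Yxin ; E)$ if and only if there exists a joint EVM $\oN \in \evm (\prod_{x'\in X} Y_{x'} ; E)$ whose marginals are the given $\oM_x$, i.e.\
\[
	\oM_x (y) = \sum_{\substack{z \in \prod_{x' \in X} Y_{x'} \\ z_x = y}} \oN (z)
	\quad (x \in X,\, y \in Y_x).
\]

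For the easier ``if'' direction, given such a joint $\oN$, the associated \wstar-measurement $\Gamma^\oN$ serves as a mother measurement. Indeed, for each $x \in X$ the linear map $\Psi_x \colon \linf(Y_x) \to \linf (\prod_{x' \in X} Y_{x'})$ sending $\delta_y \mapsto \sum_{z : z_x = y}\delta_z$ is a channel, and Proposition~\ref{prop:fEVM}.1 together with the marginal relation gives $\Gamma^{\oM_x} = \Gamma^\oN \circ \Psi_x \pp \Gamma^\oN$ for every $x \in X$. For the ``only if'' direction, suppose $(\Gamma^{\oM_x})_\xin$ admits a mother \wstar-measurement $\Lambda \in \Chw (F \to E)$. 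By Proposition~\ref{prop:fEVM}.1 there exist EVMs $\oN_x \in \evm (Y_x ; F)$ with $\oM_x (y) = \Lambda (\oN_x (y))$ for each $x, y$. Since $F$ is classical, Proposition~\ref{prop:nb} provides a commutative, associative, bipositive product on $F$ with $u_F$ as unit, so I would define
\[
	P_z := \prod_{x \in X} \oN_x (z_x) \in F_+ \quad (z \in \prod_{x' \in X} Y_{x'}),
\]
which is well-defined and positive thanks to commutativity, associativity and bipositivity. The broadcasting property yields $\sum_z P_z = \prod_{x \in X}\bigl(\sum_{y \in Y_x} \oN_x (y)\bigr) = \prod_{x \in X} u_F = u_F$, so $(P_z)$ is an EVM on $\prod_{x' \in X} Y_{x'}$ with values in $F$, and setting $\oN (z) := \Lambda (P_z)$ produces an EVM in $\evm (\prod_{x' \in X} Y_{x'} ; E)$ whose marginals are exactly the $\oM_x$.

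Granted this characterization, convexity of $\evmcomp (\Yxin ; E)$ is immediate: if $\oN^{(j)}$ ($j=1,2$) are joint EVMs for $(\oM_x^{(j)})_\xin$, then $\la \oN^{(1)} + \ola \oN^{(2)}$ is a joint EVM for $(\la \oM_x^{(1)} + \ola \oM_x^{(2)})_\xin$ for every $\la \in [0,1]$. For weak$\ast$ compactness, I would observe that $\evmcomp (\Yxin ; E)$ is precisely the image of the weakly$\ast$ compact convex set $\evm (\prod_{x' \in X} Y_{x'} ; E) \subset E^{\prod_{x'} Y_{x'}}$ under the marginal map
\[
	\oN \mapsto \left(\,\Bigl(\, \sum_{z : z_x = y} \oN(z)\,\Bigr)_{y \in Y_x}\,\right)_{x \in X},
\]
which is affine and weakly$\ast$ continuous, being a finite sum of coordinate evaluations. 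Hence its image is weakly$\ast$ compact and convex, completing the proof.

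The main obstacle is the ``only if'' direction of the characterization, which in the general GPT setting cannot be argued by a quantum-style dilation; the argument hinges on using the full algebraic (lattice / product) structure of the classical outcome space $F$ of the mother measurement via Proposition~\ref{prop:nb}, rather than any Hilbert-space structure.
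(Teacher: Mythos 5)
Your proof is correct, but it takes a genuinely different route from the paper's. The paper's proof is short and structural: compactness is obtained by noting that $\evmcomp (\Yxin ;E)$ is the intersection of the compact set $\evm (\Yxin ;E)$ with the preimage of the compact (hence closed) set $\McompXE$ under the continuous map $(\oM_x)_\xin \mapsto ([\Ga^{\oM_x}])_\xin$ of Lemma~\ref{lemm:gainconti}, and convexity follows from Proposition~\ref{prop:direct} together with the fact that $\McompXE$ is a convex lower set (Proposition~\ref{prop:compcc}). You instead prove the stronger, more explicit statement that a finite family of finite-outcome EVMs is compatible if and only if it arises as the family of marginals of a single joint EVM on $\prod_{x' \in X} Y_{x'}$, and then read off both compactness and convexity from the fact that $\evmcomp (\Yxin ;E)$ is the image of the weakly$\ast$ compact convex set $\evm (\prod_{x'\in X} Y_{x'} ;E)$ under the affine, weakly$\ast$ continuous marginal map. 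Your ``only if'' direction is the nontrivial step, and your use of the product on the classical outcome space $F$ of the mother measurement (Proposition~\ref{prop:nb}) to form $P_z := \prod_{x\in X}\oN_x (z_x)$ is sound: positivity follows from bipositivity by induction, normalization from multilinearity and the broadcasting property, and the marginal identity from $\oN_x (y)\cdot u_F = \oN_x(y)$. The paper's route is more economical given the machinery already in place and generalizes verbatim to infinite index sets $X$ (where no finite joint outcome space is available), whereas your route yields as a byproduct the joint-observable characterization of compatibility for finite-outcome EVMs, which is of independent interest and makes the geometry of $\evmcomp (\Yxin ;E)$ completely transparent.
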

\begin{proof}
(Compactness).
By Lemma~\ref{lemm:gainconti}.\ref{i:gainconti1}, the map
\begin{equation}
	\evm (\Yxin ;E)
	\ni (\oM_x)_\xin \mapsto ([\Ga^{\oM_x}])_\xin \in \ME^X
	\label{eq:compc}
\end{equation}
is continuous with respect to 
$\sigma (\prod_\xin E^{Y_x} , \prod_\xin E_\ast^\Yx)$
and the product topology of the weak topology on $\ME .$
Since $\evmcomp (\Yxin ; E)$ is the inverse image of the compact set
$\McompXE$ under the map \eqref{eq:compc}
and $\evm (\Yxin ; E)$ is compact, 
$\evmcomp (\Yxin ; E)$ is also compact.

(Convexity).
Let $\voM^j = (\oM^j_x)_\xin \in \evmcomp (\Yxin ; E)$
$(j=1,2) .$
Then by Proposition~\ref{prop:direct} for each $\la \in [0,1] ,$
\[
	([\Ga^{\la \oM^1_x + \ola \oM_x^2}])_\xin
	\pp
	(\la [\Ga^{\oM_x^1}] + \ola [\Ga^{\oM_x^2}])_\xin
	\in 
	\McompXE .
\]
Since $\McompXE$ is a lower set by Proposition~\ref{prop:compcc}, this implies 
\[(\la \oM^1_x + \ola \oM_x^2)_\xin \in \evmcomp (\Yxin ; E), \]
which proves the convexity.
\end{proof}

\begin{lemm} \label{lemm:Pgcomp}
Let $\vecE = (\E_x)_\xin$ be a partitioned ensemble with $\E_x = (\vph_{x,y})_{y \in Y_x}.$
Then
\begin{equation}
	\Pgcomp (\vecE)
	=
	\sup_{(\oN_x)_\xin \in \evmcomp (\Yxin ;E)}
	\sum_\xin \sum_{y \in \Yx}
	\braket{\vph_{x,y} , \oN_x(y)}.
	\label{eq:Pgcomp1}
\end{equation}
\end{lemm}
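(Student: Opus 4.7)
The plan is to prove the two inequalities directly from the definitions, using Proposition~\ref{prop:fEVM} to convert the post-processing relation into a concrete equality of EVMs, and using the fact (noted after Definition~\ref{def:gain}) that for a \wstar-measurement the supremum in $\Pg(\E_x;\La)$ is attained.

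For the inequality $(\mathrm{RHS}) \leq \Pgcomp(\vecE)$, I would take an arbitrary compatible family $(\oN_x)_\xin \in \evmcomp(\Yxin;E)$. By the definition of compatibility there is a \wstar-measurement $\La \in \Chw(F\to E)$ (the \wstar-extension of a mother measurement) with $\Ga^{\oN_x} \pp \La$ for every $\xin$. Applying Proposition~\ref{prop:fEVM}.1 to each $x$, I obtain $\oK_x \in \evm(Y_x;F)$ with $\oN_x(y) = \La(\oK_x(y))$ for $y \in Y_x$. Then
\begin{equation*}
\sum_\xin\sum_{y\in Y_x} \braket{\vph_{x,y},\oN_x(y)}
= \sum_\xin\sum_{y\in Y_x} \braket{\vph_{x,y},\La(\oK_x(y))}
\leq \sum_\xin \Pg(\E_x;\La)
= \sum_\xin \Pg(\E_x;[\La])
\leq \Pgcomp(\vecE),
\end{equation*}
and taking the supremum over $(\oN_x)_\xin$ yields the bound.

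For the reverse inequality $\Pgcomp(\vecE) \leq (\mathrm{RHS})$, I would fix any $\nu \in \ME$ and choose a \wstar-representative $\La \in \Chw(F\to E)$. Since $\La$ is weakly$\ast$ continuous, for each $\xin$ there exists an EVM $\oK_x \in \evm(Y_x;F)$ attaining $\Pg(\E_x;\La) = \sum_{y\in Y_x} \braket{\vph_{x,y},\La(\oK_x(y))}$. Define $\oN_x \in \evm(Y_x;E)$ by $\oN_x(y) := \La(\oK_x(y))$. By Proposition~\ref{prop:fEVM}.1 we have $\Ga^{\oN_x} \pp \La$ for every $x$, so $(\oN_x)_\xin$ admits $\nu$ as a common upper bound and therefore $(\oN_x)_\xin \in \evmcomp(\Yxin;E)$. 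Hence
\begin{equation*}
\sum_\xin \Pg(\E_x;\nu)
= \sum_\xin\sum_{y\in Y_x}\braket{\vph_{x,y},\oN_x(y)}
\leq \sup_{(\oN_x')_\xin \in \evmcomp(\Yxin;E)} \sum_\xin \sum_{y\in Y_x}\braket{\vph_{x,y},\oN_x'(y)}.
\end{equation*}
Taking the supremum over $\nu \in \ME$ gives $\Pgcomp(\vecE)$ on the left, completing the proof. There is no real obstacle here; the only subtlety is noting that each coordinate optimizer $\oK_x$ may be chosen independently because $\La$ is \wstar-continuous, so simultaneous attainment on a single classical space $F$ is automatic.
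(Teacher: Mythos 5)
Your proof is correct and follows essentially the same route as the paper: the upper bound on the right-hand side comes from feeding the compatible family's EVMs (via a common mother measurement) into the gain functionals, and the reverse inequality comes from extracting, for each subensemble, an optimal EVM on the outcome space of a \wstar-representative of $\nu$ and observing that the resulting family $(\La\circ\oK_x)_\xin$ is compatible. The only cosmetic difference is that in the first inequality you route through Proposition~\ref{prop:fEVM} and an explicit mother measurement, where the paper bounds directly by $\sum_\xin \Pg(\E_x;[\Ga^{\oN_x}])$; both are valid.
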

\begin{proof}
For any family $(\oN_x)_\xin \in \evmcomp (\Yxin ;E) $ of compatible EVMs, we have
\[
	\sum_\xin \sum_{y \in \Yx}
	\braket{\vph_{x,y} , \oN_x(y)}
	\leq 
	\sum_\xin \Pg (\E_x ; [\Ga^{\oN_x}])
	\leq 
	\Pgcomp (\vecE) ,
\]
which implies $(\mathrm{LHS})\geq (\mathrm{RHS})$ of \eqref{eq:Pgcomp1}.
Conversely for any $\omega \in \ME$ with the representative 
$\Ga \in \Chw (F \to E) ,$  for each $\xin $ we can take $\oM_x \in \evm (Y_x ; F)$ such that 
\[
	\Pg (\E_x ; \omega) 
	= \sum_{y \in Y_x} \braket{\vph_{x,y} , \Ga(\oM_x (y))}.
\]
Therefore we have
\[
	\Pg (\vecE ; \omega)
	= \sum_\xin \Pg (\E_x ; \omega)
	=\sum_\xin \sum_{y \in Y_x}
	 \braket{\vph_{x,y} , \Ga(\oM_x (y))} .
\]
Since $(\Ga \circ \oM_x)_\xin \in \evmcomp (\Yxin ; E) ,$
this implies $(\mathrm{LHS})\leq (\mathrm{RHS})$ of \eqref{eq:Pgcomp1}.
\end{proof}

\noindent
\textit{Proof of Theorem~\ref{thm:incompdisc}.}
The \lq\lq{}only if\rq\rq{} part of the claim is obvious from the definition of $\Pgcomp (\vecE) .$
To show the \lq\lq{}if\rq\rq{} part, we assume 
$(\omega_x)_\xin \in \MincompXE$ and find a partitioned ensemble
$\vecE$ violating \eqref{eq:outp}.
By Proposition~\ref{prop:compf} there exists a finite subset
$X_0 \subset X$ such that $(\omega_x)_{x \in X_0}\in \Mincomp^{X_0} (E) .$
If we can find a partitioned ensemble $\vecE $ such that 
$\Pg (\vecE ; (\omega_x)_{x \in X_0}) > \Pgcomp (\vecE ) ,$
then 
\[
	\Pg (\vecE ; (\omega_x)_{x \in X})
	\geq 
	\Pg (\vecE ; (\omega_x)_{x \in X_0})
	>
	\Pgcomp (\vecE ) .
\]
Therefore we may assume that $X$ is finite.

We first assume that every $\omega_x$ $(\xin)$ is finite-outcome
and $\omega_x = [\Ga^{\oM_x}]$ for some finite-outcome EVM 
$ \oM_x \in \evm (\Yx ; E) .$
Then since $(\oM_x)_\xin \notin \evmcomp (\Yxin ; E)$ 
and $\evmcomp (\Yxin ; E)$ is convex and weakly$\ast$ compact, 
the Hahn-Banach separation theorem implies that there exist 
weakly$\ast$ continuous linear functionals
$\vph_{x,y} \in E_\ast$
$(x \in X , \, y \in Y_x)$
such that
\[
	\sum_\xin \sum_{y \in \Yx}
	\braket{\vph_{x,y} , \oM_x(y)}
	>
	\sup_{(\oN_x)_\xin \in \evmcomp (\Yxin ;E)}
	\sum_\xin \sum_{y \in \Yx}
	\braket{\vph_{x,y} , \oN_x(y)}.
\]
Similarly as in Proposition~\ref{prop:gaineasy}, we can take 
$(\vph_{x,y})_{\xin , \, y \in \Yx}$ so that
$\vecE := (\E_x)_\xin$ with $\E_x := (\vph_{x,y})_{y\in \Yx}$
is a partitioned ensemble. 
Then
\begin{align*}
	\Pg (\vecE ; (\Ga^{\oM_x})_\xin) 
	&\geq
	\sum_\xin \Pg (\E_x ; \Ga^{\oM_x})
	\\
	&\geq
	\sum_\xin \sum_{y \in \Yx}
	\braket{\vph_{x,y} , \oM_x(y)}
	\\
	&>
	\sup_{(\oN_x)_\xin \in \evmcomp (\Yxin ;E)}
	\sum_\xin \sum_{y \in \Yx}
	\braket{\vph_{x,y} , \oN_x(y)}
	\\
	&=
	\Pgcomp (\vecE) ,
\end{align*}
where the last equality follows from Lemma~\ref{lemm:Pgcomp}.
Therefore $\vecE$ violates \eqref{eq:outp}.

We now consider general $(\omega_x)_\xin \in \MincompXE .$
Then by Theorem~\ref{thm:finapp} there exists a post-processing increasing
net $(\omega_x^i)_\xin$ $(\iin )$ of finite-outcome measurements such that 
$\omega_x^i \wto \sup_\iin \omega_x^i =  \omega_x $ for each $\xin .$
Since $\MincompXE$ is an open subset of $\ME^X $ by 
Proposition~\ref{prop:compcc},
there is some $\iin$ such that $(\omega_x^i)_\xin \in \MincompXE .$
Then from what we have shown in the last paragraph, there exists a
partitioned ensemble $\vecE $ such that 
$\Pg (\vecE ; (\omega_x^i)_\xin) > \Pgcomp (\vecE) .$
Since we have $\Pg (\vecE ; (\omega_x)_\xin) \geq \Pg (\vecE ; (\omega_x^i)_\xin)  $
by the monotonicity of $\Pg (\vecE ; \cdot) ,$ we can readily see that 
$\vecE$ violates \eqref{eq:outp}.
\qed
\begin{coro}\label{coro:incompdisc}
Let $X \nono$ and let $(\Ga_x)_\xin$ be a family of measurements.
Then $(\Ga_x)_\xin$ is compatible if and only if 
$\Pg (\vecE ; (\Ga_x)_\xin) \leq \Pgcomp (\vecE)$ for any partitioned ensemble $\vecE .$
\end{coro}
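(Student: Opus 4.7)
The plan is to derive the corollary directly from Theorem~\ref{thm:incompdisc} by passing to equivalence classes. For each $\xin$, set $\omega_x := [\Ga_x] \in \ME$, where $[\cdot]$ denotes the equivalence class map of Proposition~\ref{prop:small}. The entire content of the corollary beyond the theorem is then the bookkeeping verification that both sides of the biconditional descend to the quotient $\ME$.

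First, I would verify the left-hand side: by Definition~\ref{def:incomp}, the family $(\Ga_x)_\xin$ is compatible if and only if there is a (w.l.o.g.\ weakly$\ast$-continuous) measurement $\La$ with $\Ga_x \pp \La$ for all $\xin$, and this property only depends on the post-processing equivalence classes $[\Ga_x]$. Hence $(\Ga_x)_\xin$ is compatible iff $(\omega_x)_\xin \in \McompXE$. Second, I would verify the right-hand side: by the definitions immediately following Definition~\ref{def:pens}, the gain functionals satisfy $\Pg(\vecE;(\Ga_x)_\xin) = \Pg(\vecE;(\omega_x)_\xin)$ for every partitioned ensemble $\vecE$, the well-definedness of the assignment $\omega \mapsto \Pg(\E;\omega)$ on equivalence classes being guaranteed by Theorem~\ref{thm:bss} together with Lemma~\ref{lemm:exgain} (which handles the fact that the $\Ga_x$ need not be weakly$\ast$-continuous a priori). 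Meanwhile $\Pgcomp(\vecE)$ is already a quantity on $\ME$ and requires no translation.

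With these two translations, the corollary becomes exactly Theorem~\ref{thm:incompdisc} applied to $(\omega_x)_\xin \in \ME^X$: compatibility of $(\omega_x)_\xin$ holds iff $\Pg(\vecE;(\omega_x)_\xin) \leq \Pgcomp(\vecE)$ for every partitioned ensemble $\vecE$. There is no genuine obstacle; the corollary is a purely bureaucratic restatement whose only purpose is to extend the main theorem from equivalence classes of \wstar-measurements to arbitrary measurements.
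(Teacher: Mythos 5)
Your proposal is correct and follows essentially the same route as the paper: the paper also reduces the corollary to Theorem~\ref{thm:incompdisc} by passing to the \wstar-extensions $\ovl{\Ga}_x$ (equivalently, the classes $[\Ga_x]$), using Proposition~\ref{prop:w*extch} for the compatibility side and Lemma~\ref{lemm:exgain} for the gain functionals. Your bookkeeping identifications are exactly the ones the paper makes.
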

\begin{proof}
Let $\barG_x$ denote the \wstar-extension of $\Ga_x .$
Then by Proposition~\ref{prop:w*extch} it holds that 
$(\Ga_x)_\xin$ is compatible if and only if $(\ovl{\Ga}_x)_\xin$ is compatible.
Moreover we have $\Pg (\vecE ;(\Ga_x)_\xin )
= \Pg(\vecE ; (\barG_x)_\xin)$ by Lemma~\ref{lemm:exgain}.
Thus the claim immediately follows from Theorem~\ref{thm:incompdisc}.
\end{proof}

\subsection{Robustness of incompatibility} \label{subsec:RoI}

We now define the robustness of incompatibility
\cite{Haapasalo_2015,PhysRevLett.122.130403,PhysRevLett.122.130404,designolle2019incompatibility}.
\begin{defi}[Robustness of incompatibility] \label{def:RoI}
Let $X \nono$ and let $\Ga_x \in \Ch (F_x \to E)$
$(\xin)$ be measurements.
Then we define the \textit{robustness of incompatibility} by
\[
\begin{aligned}
\Rinc ((\Ga_x)_\xin) := \inf_{r , (\La_x)_\xin} \quad &  r
\\
\textrm{subject to} \quad 
& r \in [0,\infty) \\ 
&  (\La_x)_\xin \in \prod_\xin \Ch (F_x \to E) \\
& \left( \frac{\Ga_x + r \La_x}{1+r} \right)_\xin
\text{ is compatible,}
\end{aligned}
\]
which coincides with
\[
\begin{aligned}
\inf_{r , (\Psi_x)_\xin} \quad &  r
\\
\textrm{subject to} \quad 
& r \in [0,\infty) \\ 
&  (\Psi_x)_\xin \in \prod_\xin \Ch (F_x \to E) \\
&  (\Psi_x)_\xin\text{ is compatible} \\
&  \Ga_x \leq (1+r) \Psi_x \quad (\forall \xin) . 
\end{aligned}
\]
Here $\Rinc (\Gxin) := \infty$ if the feasible region is empty
\qed
\end{defi}
The robustness $\Rinc (\Gxin)$ quantifies the minimal amount of  
noise which should be added to the family $\Gxin$ of measurements 
to make it compatible.

We now prove the main result of this section that the robustness of incompatibility coincides with
the maximal relative increase in the state discrimination probability 
of a partitioned ensemble
compared to compatible measurements,
generalizing the result in \cite{PhysRevLett.122.130403,PhysRevLett.122.130404}
for finite-dimensional quantum systems.
\begin{thm} \label{thm:RoI}
In the setting of Definition~\ref{def:RoI}, the equality
\begin{equation}
	1  + \Rinc (\Gxin)
	= \sup_{\vecE \colon \mathrm{partitioned\, ensemble}}
	\frac{\Pg(\vecE ; \Gxin)}{\Pgcomp (\vecE)}
	\label{eq:RoI}
\end{equation}
holds, where the supremum is taken over all the partitioned ensembles.
\end{thm}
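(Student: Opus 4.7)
The plan is to follow the proof of Theorem~\ref{thm:RoU} closely, replacing the simulability setup with the incompatibility one. I first establish the easy inequality $\geq$ in \eqref{eq:RoI}: if $r := \Rinc(\Gxin) < \infty$ and $(\Psi_x)_\xin$ is feasible with common mother measurement $\La,$ then for any partitioned ensemble $\vecE = (\E_x)_\xin$ with $\E_x = (\vph_{x,y})_{y \in Y_x}$ and any $\oM \in \evm(Y_x; F_x),$ positivity of each $\vph_{x,y}$ together with $\Ga_x \leq (1+r)\Psi_x$ gives $\sum_y \braket{\vph_{x,y}, \Ga_x(\oM(y))} \leq (1+r)\sum_y \braket{\vph_{x,y}, \Psi_x(\oM(y))}.$ Taking the sup over $\oM$ and using $\Psi_x \pp \La$ yields $\Pg(\E_x;\Ga_x) \leq (1+r)\Pg(\E_x;\La);$ summing over $x$ gives $\Pg(\vecE; \Gxin) \leq (1+r) \Pgcomp(\vecE).$

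For the reverse inequality I would prove three preparatory lemmas mirroring Lemmas~\ref{lemm:RoUe1}--\ref{lemm:RoUe3}: attainment of the infimum when $\Rinc$ is finite, monotonicity in the coordinate-wise post-processing order, and invariance under coordinate-wise \wstar-extension. Attainment requires BW-closedness of the compatible tuples inside $\prod_\xin \Ch(F_x \to E),$ which I would prove by the direct-sum trick from Theorem~\ref{thm:compact}: embed each mother $\La_i \in \Chw(G_i \to E)$ into $\Ch(\bigoplus_i G_i \to E),$ extract a BW-convergent subnet together with subnets of the corresponding post-processing channels $\Xi^i_x,$ and use the weak$\ast$ closedness of $E_+$ to pass the compatibility witnesses to the limit. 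Combined with the BW-compactness of each $\Ch(F_x \to E)$ (Proposition~\ref{prop:BWcompact}) and Tychonoff, this makes the set of compatible tuples BW-compact.

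The core step is the finite-$X,$ finite-outcome case with $\Ga_x = \Ga^{\oM_x},$ $\oM_x \in \evm(Y_x;E),$ where I would express $1 + \Rinc(\Gxin)$ as the value of the conic program
\begin{equation*}
\inf \{\, s : s \in \realn,\, (\oK_x)_\xin \in \cK^{\mathrm{comp}},\, \textstyle\sum_{y} \oK_x(y) \leq s u_E \,(\forall x),\, \oK_x(y) \geq \oM_x(y)\,(\forall x,y)\,\},
\end{equation*}
where $\cK^{\mathrm{comp}} = \{\la (\oN_x)_\xin : \la \in [0,\infty), (\oN_x)_\xin \in \evmcomp((Y_x)_\xin;E)\}.$ The first technical step is to prove weak$\ast$ closedness of $\cK^{\mathrm{comp}}$ by bounding $\la = \|\la u_E\|$ via the norms of the $\oK_x(y)$ and using Lemma~\ref{lemm:EVMcompc} together with the Banach-Alaoglu and Krein-\v{S}mulian theorems, exactly as for the set $\mathcal{K}_\fL(X;E)$ in the proof of Lemma~\ref{lemm:RoUfin}. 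Slater's condition is verified by the trivial tuple $(\abs{Y_x}^{-1} u_E)_y,$ so strong conic duality yields a dual program in variables $\psi_{x,y}, \chi_x \in E^\ast_+$ subject to $(\chi_x - \psi_{x,y})_{x,y} \in (\cK^{\mathrm{comp}})^\ast$ and $\sum_x \braket{\chi_x, u_E} \leq 1,$ with objective $\sum_{x,y} \braket{\psi_{x,y}, \oM_x(y)}.$ The passage from $E^\ast$ to $E_\ast$ is handled by the same Krein-\v{S}mulian plus bipolar argument as in Lemma~\ref{lemm:RoUfin}, applied to a suitable auxiliary convex set encoding the constraints. Given a near-optimal pre-dual feasible $(\psi_{x,y}, \chi_x)$ with $\psi_{x,y} \in E_{\ast+},$ setting $N := \sum_{x,y} \braket{\psi_{x,y}, u_E}$ and $\vph_{x,y} := N^{-1} \psi_{x,y}$ produces a partitioned ensemble $\vecE = ((\vph_{x,y})_{y \in Y_x})_\xin;$ the constraint $(\chi_x - \psi_{x,y})_{x,y} \in (\cK^{\mathrm{comp}})^\ast$ together with $\sum_x \braket{\chi_x, u_E} \leq 1$ and Lemma~\ref{lemm:Pgcomp} give $\Pgcomp(\vecE) \leq 1/N,$ while $\Pg(\vecE; \Gxin) \geq \sum_{x,y} \braket{\vph_{x,y}, \oM_x(y)},$ delivering the finite case of \eqref{eq:RoI}.

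Finally, to pass to the general case I would apply Theorem~\ref{thm:finapp} coordinate-wise to obtain nets $(\La_{x,\Delta})$ in $\Ch(F_x \to E)$ post-processing equivalent to finite-outcome measurements, satisfying $\La_{x,\Delta} \pp \Ga_x$ and BW-converging to $\Ga_x.$ Lower semicontinuity of $\Rinc$ under pointwise BW-convergence (the analog of Lemma~\ref{lemm:RoUlsc}, resting on the BW-compactness of compatible tuples established above) combined with coordinate-wise monotonicity shows that $\Rinc$ of the finite-outcome approximants increases up to $\Rinc(\Gxin).$ Interchanging the sup over $\vecE$ with this directed limit, which is justified by the weak continuity and post-processing monotonicity of $\Pg(\vecE; \cdot),$ transfers the finite-outcome identity to general \wstar-measurements; Lemma~\ref{lemm:exgain} and the \wstar-extension invariance then handle measurements that are not initially \wstar. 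For infinite $X,$ Proposition~\ref{prop:compf} together with a BW-compactness-extraction argument in $\prod_{x \in X} \Ch(F_x \to E)$ reduces the infimum (and, by restriction to finite subfamilies of $X,$ the supremum) to finite $X.$ The principal obstacle is the conic-duality step in the finite-outcome case: verifying weak$\ast$ closedness of $\cK^{\mathrm{comp}}$ and of the auxiliary set used in the pre-dual passage is more delicate here than in the single-measurement setting of Lemma~\ref{lemm:RoUfin}, because the compatibility constraint couples all coordinates $x \in X$ simultaneously.
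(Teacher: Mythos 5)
Your proposal is correct and follows essentially the same route as the paper: the same easy inequality, the same preparatory lemmas (attainment, post-processing monotonicity, \wstar-extension invariance, BW-compactness of compatible tuples), the same conic program with the cone generated by $\evmcomp(\Yxin;E)$, the same Slater point, the same Krein-\v{S}mulian/bipolar passage to predual functionals, and the same reduction to finite $X$ and finite outcomes via Theorem~\ref{thm:finapp} and restriction to finite subfamilies. The only cosmetic deviation is that you prove BW-compactness of compatible tuples by the direct-sum trick at the channel level, whereas the paper extracts a convergent subnet of mother measurements in the compact space $\ME$ and invokes Theorem~\ref{thm:prebss}; both work.
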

While the following proof of Theorem~\ref{thm:RoI} is almost parallel to 
those of Theorem~\ref{thm:RoU}
and the previous work \cite{PhysRevLett.122.130403},
we give it here for completeness.

We first establish some elementary properties of the 
set of compatible measurements with fixed outcome spaces and 
those of the robustness measure.

Let $\Fxin$ be a family of classical spaces.
We regard the product set $\prod_\xin \Ch (F_x \to E)$ as a compact convex set
by considering the direct product topology of the BW-topologies on
$\Ch (F_x \to E)$
and the convex operation
\[
	\la \Gxin + \ola \Lxin
	=(\la \Ga_x + \ola \La_x)_\xin
\]
$(\la \in [0,1];\, \Ga_x , \La_x \in \Ch(F_x \to E) \, (\xin) ) .$
We also denote by
$
	\Chcomp (\Fxin ;E) 
$
the set of compatible measurements in $\prod_\xin \Ch (F_x \to E) .$

\begin{lemm} \label{lemm:RoIe1}
Let $X\nono,$ let $\Fxin$ be a family of classical spaces,
and let $\Gxin \in \prod_\xin \Ch (F_x \to E).$
\begin{enumerate}[1.]
\item \label{i:e1}
The set $\Chcomp (\Fxin ;E)$ is a compact convex subset of 
$\prod_\xin \Ch (F_x\to E) .$
\item \label{i:e2}
If $X$ is finite, then $r : = \Rinc ((\Ga_x)_\xin) < \infty .$ 
\item \label{i:e99}
If $\Rinc (\Gxin) < \infty ,$
then there exists a compatible family $(\Psi_x)_\xin \in \Chcomp (\Fxin ; E)$ 
such that $\Ga_x \leq (1+r) \Psi_x$ $(\forall \xin) .$
\item \label{i:e3}
$\Rinc ((\Ga_x)_{x\in Y}) \leq \Rinc (\Gxin)$ for any subset
$\onon Y \subset X .$
\item \label{i:e4}
$\Rinc (\Gxin) = \sup_{A \in \FX} \Rinc ((\Ga_x)_{x\in A}) ,$
where $\FX$ denotes the set of non-empty finite subsets of $X .$
\end{enumerate}
\end{lemm}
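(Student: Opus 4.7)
The lemma bundles five technical properties used to extract limits in the proof of Theorem~\ref{thm:RoI}. The substantive work is in items~\ref{i:e1} and \ref{i:e4}; items \ref{i:e2}, \ref{i:e99}, and \ref{i:e3} fall out quickly once item~\ref{i:e1} is available.

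For item~\ref{i:e1}, convexity is direct: given compatible families $(\Psi_x^j)_\xin$ with mothers $\La_j\in\Ch(G_j\to E)$ ($j=1,2$) and $\la\in[0,1]$, Proposition~\ref{prop:direct} gives $\la\Psi_x^1+\ola\Psi_x^2\pp\la\Psi_x^1\oplus\ola\Psi_x^2\pp\la\La_1\oplus\ola\La_2$ for each $x$, so $(\la\Psi_x^1+\ola\Psi_x^2)_\xin$ is compatible with mother $\la\La_1\oplus\ola\La_2$. For BW-closedness, a direct mother-extraction approach runs into a composition-of-limits issue (mothers live on varying outcome spaces); I would bypass this via Corollary~\ref{coro:incompdisc}: a family $(\Psi_x)_\xin\in\prod_\xin\Ch(F_x\to E)$ is compatible iff $\Pg(\vecE;(\Psi_x)_\xin)\leq\Pgcomp(\vecE)$ for every partitioned ensemble $\vecE$. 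For each fixed $\oM\in\evm(Y;F_x)$ and ensemble $\E=(\vph_y)_{y\in Y}$, the functional $\Psi_x\mapsto\sum_y\braket{\vph_y,\Psi_x(\oM(y))}$ is BW-continuous, so $\Pg(\E;\Psi_x)$ is a supremum of BW-continuous functionals and hence BW-LSC; taking $\sup$ over $x\in X$ and the finite sum defining $\Pg(\vecE;\cdot)$ preserves BW-LSC, so $(\Psi_x)_\xin\mapsto\Pg(\vecE;(\Psi_x)_\xin)$ is BW-LSC on $\prod_\xin\Ch(F_x\to E)$. Each sublevel set $\{\Pg(\vecE;\cdot)\leq\Pgcomp(\vecE)\}$ is therefore BW-closed, and $\Chcomp(\Fxin;E)$, being their intersection, is BW-closed; Proposition~\ref{prop:BWcompact} and Tychonoff then give BW-compactness.

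For item~\ref{i:e2} with $|X|=n$, I would exhibit an explicit finite-robustness witness: let $\La\colon\bigoplus_\xin F_x\to E$ be $\La((a_x)_\xin):=n^{-1}\sum_\xin\Ga_x(a_x)$, pick any states $\beta_x\in S(F_x)$, and define $\iota_x\in\Ch(F_x\to\bigoplus_{x^\prime}F_{x^\prime})$ by placing $a$ in slot $x$ and $\beta_x(a)u_{F_{x^\prime}}$ in every other slot. Then $\Psi_x:=\La\circ\iota_x$ gives a family with common mother $\La$ satisfying $\Psi_x(a)=n^{-1}\Ga_x(a)+\tfrac{n-1}{n}\beta_x(a)u_E$, so $n\Psi_x-\Ga_x=(n-1)\beta_x(\cdot)u_E\geq0$ and $\Ga_x\leq n\Psi_x$; with noise $\La_x(a):=\beta_x(a)u_E$ this gives $\Rinc((\Ga_x)_\xin)\leq|X|-1<\infty$. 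Item~\ref{i:e99} is then a standard compactness argument: choose a minimizing sequence $(r_n,(\Psi_x^n)_\xin)$ with $r_n\downarrow r$ and $\Ga_x\leq(1+r_n)\Psi_x^n$, extract a BW-convergent subnet by item~\ref{i:e1}, and pass to the limit using weak$\ast$ closedness of $E_+$. Item~\ref{i:e3} is immediate: restriction of any feasible noise to $Y$ preserves feasibility, since a subfamily of a compatible family is compatible with the same mother.

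For item~\ref{i:e4}, $\geq$ is item~\ref{i:e3}; for $\leq$ set $r_0:=\sup_{A\in\FX}\Rinc((\Ga_x)_{x\in A})$ and assume $r_0<\infty$ (else both sides are infinite). For each $A\in\FX$ choose via item~\ref{i:e99} a compatible $(\Psi_x^A)_{x\in A}$ with $\Ga_x\leq(1+r_A)\Psi_x^A$ and $r_A\leq r_0$, extend arbitrarily to $x\in X\setminus A$ (e.g., by trivial channels), and view the result as a net indexed by the directed set $(\FX,\subset)$. Tychonoff yields a BW-convergent subnet $(\Psi_x^{A(j)})_\xin\bwto(\Psi_x)_\xin$. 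For each fixed $x$, $\{A:x\in A\}$ is cofinal, so eventually $\Ga_x\leq(1+r_0)\Psi_x^{A(j)}$, giving $\Ga_x\leq(1+r_0)\Psi_x$ by weak$\ast$ closedness of $E_+$. For compatibility, fix any finite $B\subset X$: since $\{A:A\supset B\}$ is cofinal, eventually $(\Psi_x^{A(j)})_{x\in B}\in\Chcomp((F_x)_{x\in B};E)$, so the BW-closedness in item~\ref{i:e1} yields $(\Psi_x)_{x\in B}\in\Chcomp((F_x)_{x\in B};E)$; Proposition~\ref{prop:compf} then extends compatibility to the full family $(\Psi_x)_\xin$, so $\Rinc((\Ga_x)_\xin)\leq r_0$. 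The main obstacle is the BW-closedness in item~\ref{i:e1}; the LSC bypass via Corollary~\ref{coro:incompdisc} is the decisive step and is legitimate because the proof of Theorem~\ref{thm:incompdisc} does not depend on the current lemma.
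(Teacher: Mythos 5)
Your proposal is correct, and four of the five items track the paper's proof essentially verbatim: the convexity via $\la\Psi^1_x\oplus\ola\Psi^2_x$ and Proposition~\ref{prop:direct}, the explicit witness $\Psi_x=\Phi\circ\Theta_x$ with $\Phi((a_x)_\xin)=\abs{X}^{-1}\sum_\xin\Ga_x(a_x)$ giving $\Rinc\leq\abs{X}-1$, the attainment of the infimum by extracting a subnet in the compact set $\Chcomp(\Fxin;E)$, and the monotonicity under restriction. The one genuine divergence is the BW-closedness in item~1. The paper argues ``softly'': from each compatible $(\Ga^i_x)_\xin$ it takes a mother $\nu_i\in\ME$, extracts a weakly convergent subnet $\nu_{i(j)}\wto\nu$ using the compactness of $\ME$, passes the inequality $\Pg(\E;\Ga^{i(j)}_x)\leq\Pg(\E;\nu_{i(j)})$ to the limit, and concludes $[\Ga_x]\pp\nu$ via Theorem~\ref{thm:prebss}. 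You instead invoke Corollary~\ref{coro:incompdisc} to write $\Chcomp(\Fxin;E)$ as the intersection over all partitioned ensembles $\vecE$ of the sublevel sets $\{\Pg(\vecE;\cdot)\leq\Pgcomp(\vecE)\}$ of BW-lower-semicontinuous functionals; this is valid (arbitrary suprema and finite sums of BW-continuous functionals are LSC, and there is no circularity since Theorem~\ref{thm:incompdisc} is established independently of this lemma), but it imports the heavier separation machinery of Section~7.2 where the paper gets by with the BSS theorem and compactness of $\ME$ alone. In item~5 you also differ mildly in form: you argue directly that $r_X\leq\sup_A r_A$ by routing compatibility of the limit family through finite subfamilies and Proposition~\ref{prop:compf}, whereas the paper argues by contradiction and lands the limit directly in the compact set $\Chcomp(\Fxin;E)$; both are sound, and your use of ``cofinal'' for $\{A:x\in A\}$ should really read ``residual'' (it contains the tail above $\{x\}$), which is what the subsequent ``eventually'' actually needs.
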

\begin{proof}
\begin{enumerate}[1.]
\item
(Compactness).
Let $(\Ga^i_x)_{x\in X}$ $(i \in I)$ be a net in $\Chcomp (\Fxin ;E)$
such that $\Ga^i_x \bwto \Ga_x \in \Ch (F_x \to E)$
$(\xin) .$
Then by the compatibility and Theorem~\ref{thm:bss}, for each $\iin$ 
there exists a measurement $\nu_i \in \ME$ such that
\[
	\Pg (\E ; \Ga_x^i) \leq \Pg (\E ; \nu_i) 
	\quad
	(\xin)
\]
for any ensemble $\E . $ We take a subnet $(\nu_\ikj)_\jin$
weakly converging to some $\nu \in \ME .$
Then for any $\xin,$ any ensemble $\E = (\vph_z)_{z\in Z},$
and any  EVM $\oM \in \evm (Z ; F_x) ,$ we have
\begin{align*}
	\sum_{z\in Z} \braket{\vph_z , \Ga_x (\oM(z))}
	&=
	\lim_\jin \sum_{z\in Z}
	\braket{\vph_z , \Ga_x^\ikj (\oM (z))}
	\\
	&\leq 
	\limsup_\jin \Pg (\E ; \Ga_x^\ikj)
	\\
	&\leq \limsup_\jin \Pg (\E ; \nu_\ikj)
	\\
	&=
	\Pg (\E ; \nu) .
\end{align*}
By taking the supremum of $\oM ,$ we obtain $\Pg (\E ; \Ga_x ) \leq \Pg (\E ; \nu) .$
Since the ensemble $\E$ is arbitrary, Theorem~\ref{thm:prebss} implies 
$[\Ga_x] \pp \nu .$
Therefore $\Gxin$ is compatible, which proves the compactness of 
$\Chcomp (\Fxin ;E) .$

(Convexity). 
The convexity can be shown analogously as in Lemma~\ref{lemm:EVMcompc} by 
using Propositions~\ref{prop:direct} and \ref{prop:compcc}.
\item
Define $\Psi_x^\prime \in \Ch (F_x \to E)$ $(\xin)$ by
\[
	\Psi_x^\prime (a) :=
	\frac{\Ga_x (a) + (\abs{X}-1) \phi_x (a) u_E }{\abs{X}} 
	\quad (a \in F_x),
\]
where $\phi_x \in S(F_x)$ is a fixed state.
We show that $(\Psi_x^\prime)_\xin$ is compatible, from which 
$\Rinc (\Gxin) \leq  \abs{X} -1 < \infty$ follows.
Define $\Phi \in \Ch (\bigoplus_\xin F_x \to E)$ and 
$\Theta_x \in \Ch (F_x \to \bigoplus_{x^\prime \in X}F_{x^\prime})$ $(\xin)$
by
\begin{gather*}
	\Phi ((a_x)_\xin) := \abs{X}^{-1} \sum_{\xin} \Ga_x (a_x)
	\quad
	((a_x)_\xin \in \bigoplus_\xin F_x) ,
	\\
	\Theta_x (a) 
	:=
	a \oplus \bigoplus_{x^\prime \in X \setminus \{ x\} }
	\phi_x (a) u_{F_{x^\prime}}
	\quad
	(a \in F_x) .
\end{gather*}
Then we have $\Psi_x^\prime = \Phi \circ \Theta_x \pp \Phi $
$(\xin).$
Hence $(\Psi_x^\prime)_\xin$ is compatible.

\item
Since $r= \Rinc (\Gxin) < \infty ,$ there exists a sequence
$(r_n , (\Psi_x^n)_\xin)_{n \in \natn}$ such that $r_n \downarrow r ,$
$\Ga_x \leq (1+r_n) \Psi_x^n ,$
and $(\Psi_x^n)_\xin \in \Chcomp (\Fxin ; E) $
$(\xin ; \, n\in \natn) .$
By the compactness of $\Chcomp (\Fxin ; E)$ there exists a subnet
$(\Psi_x^{n(i)})_\xin $ $(\iin)$ converging to some 
$(\Psi_x)_\xin \in \Chcomp (\Fxin ;E) .$
Then we have $\Ga_x \leq (1+r) \Psi_x$ $(\xin),$ 
which proves the claim.
\item
For simplicity we write as $r_A : = \Rinc ((\Ga_x)_{x \in A})$ for each subset
$A \subset X .$
Without loss of generality we may assume $r_X < \infty .$
Then by the claim~\ref{i:e99} there exists a compatible family
$(\Psi_x)_\xin \in \Chcomp (\Fxin ;E) $
such that $\Ga_x \leq (1+r_X ) \Psi_x$
$(\forall \xin) .$
Then 
$\Ga_x \leq (1+r_X ) \Psi_x$
$(\forall x \in Y) $ 
and,
from the definition of the robustness of incompatibility,
this implies $r_Y \leq r_X .$
\item
From the claim~\ref{i:e3}, the net $(r_A)_{A\in \FX}$ is increasing and 
upper bounded by $r_X .$
Thus we have only to show that 
$s \leq r_A $ holds eventually for any $s < r_X .$
Suppose not.
Then there exist $s < r_X$ and a subnet $(r_{A(i)})_\iin$ such that 
$r_{A(i)} < s$ for all $\iin .$
For each $\iin $ we take a compatible family $(\Psi^i_x)_{x\in A} 
\in \Chcomp ((F_x)_{x\in A} ;E)$ such that
$\Ga_x \leq (1+s) \Psi_x^i $ $(x \in A(i)) .$
We also define $\Psi_x^i \in \Ch (F_x \to E)$ for $x \in X \setminus A(i)$
by 
\[
	\Psi_x^i (a) := \psi_x( a) u_E \quad (a\in F_x)
\]
for some fixed state $\psi_x \in S(F_x) .$
Then since $\Psi_x^i \pp \Lambda$ for any $x\in X \setminus A(i)$
and any measurement $\La ,$ the family $(\Psi_x^i)_\xin$ is compatible.
Then from the compactness of $\Chcomp (\Fxin ;E)$
it follows that there exists a compatible family
$(\Psi_x)_\xin 
\in \Chcomp (\Fxin ;E)
$
to which a subnet of $(\Psi_x^i)_\xin$ $(\iin)$ converges.
Since $\Ga_x \leq (1+s) \Psi_x^i $ eventually for each $\xin ,$ we have $\Ga_x \leq (1+s) \Psi_x$ $(\xin) .$
This implies $r_X \leq s ,$
which contradicts the assumption $s < r_X .$
\qedhere
\end{enumerate}
\end{proof}

\begin{lemm} \label{lemm:RoIe2}
Let $X\nono $ and let $F_x^j$ $(j=1,2 ; \, \xin)$ be classical spaces.
Then for any families $(\Ga^j_x)_\xin \in \prod_\xin \Ch(F_x^j \to E)$
$(j=1,2)$
of measurements, $\Ga^1_x \pp \Ga^2_x $
$(\xin)$ implies $\Rinc ((\Ga_x^1)_\xin) \leq \Rinc ((\Ga_x^2)_\xin) .$
\end{lemm}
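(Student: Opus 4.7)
The plan is to imitate the argument of Lemma~\ref{lemm:RoUe2} (monotonicity of $\Runs$) at the level of families. Without loss of generality we may assume $r_2 := \Rinc((\Ga_x^2)_\xin) < \infty$, since otherwise the inequality is trivial. By Lemma~\ref{lemm:RoIe1}.\ref{i:e99} there exists a compatible family $(\Psi_x^2)_\xin \in \Chcomp ((F_x^2)_\xin ; E)$ such that $\Ga_x^2 \leq (1+r_2) \Psi_x^2$ for every $\xin$. By the hypothesis $\Ga_x^1 \pp \Ga_x^2$, for each $\xin$ we may pick a channel $\Xi_x \in \Ch (F_x^1 \to F_x^2)$ satisfying $\Ga_x^1 = \Ga_x^2 \circ \Xi_x$.

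Set $\Psi_x^1 := \Psi_x^2 \circ \Xi_x \in \Ch(F_x^1 \to E)$. Applying $\Xi_x$ to a positive element $a \in (F_x^1)_+$ and using positivity of $\Xi_x$ together with the operator inequality above, one obtains
\[
	\Ga_x^1 (a) = \Ga_x^2 (\Xi_x (a)) \leq (1+r_2) \Psi_x^2 (\Xi_x (a)) = (1+r_2) \Psi_x^1 (a),
\]
so $\Ga_x^1 \leq (1+r_2) \Psi_x^1$ for every $\xin$. It remains to verify that the family $(\Psi_x^1)_\xin$ is compatible. But compatibility of $(\Psi_x^2)_\xin$ yields a mother measurement $\nu \in \ME$ with $[\Psi_x^2] \pp \nu$ for all $\xin$, and since $[\Psi_x^1] = [\Psi_x^2 \circ \Xi_x] \pp [\Psi_x^2]$ by definition of the post-processing order (taking $\Xi_x$ itself as the witnessing channel), transitivity gives $[\Psi_x^1] \pp \nu$ for all $\xin$. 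Hence $(\Psi_x^1)_\xin \in \Chcomp ((F_x^1)_\xin ; E)$, and by the definition of $\Rinc$ we conclude $\Rinc ((\Ga_x^1)_\xin) \leq r_2 = \Rinc ((\Ga_x^2)_\xin)$.

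There is no real obstacle here beyond invoking the right previously-proved lemma: the only point that needs care is that the infimum in the definition of $\Rinc$ is attained (or at least approached by a compatible family whose noise level is exactly $r_2$), which is precisely what Lemma~\ref{lemm:RoIe1}.\ref{i:e99} provides via the BW-compactness of $\Chcomp ((F_x^2)_\xin ; E)$ established in Lemma~\ref{lemm:RoIe1}.\ref{i:e1}.
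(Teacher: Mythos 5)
Your proof is correct and follows essentially the same route as the paper's: extract an optimal compatible family $(\Psi_x^2)_\xin$ via the attainment result in Lemma~\ref{lemm:RoIe1}, compose with the post-processing channels $\Xi_x$, and observe that the resulting family is still compatible and still dominates $(\Ga_x^1)_\xin$ up to the factor $1+r_2$. The paper's version is merely terser in that it leaves the preservation of the operator inequality under composition and the compatibility of the composed family as immediate observations, both of which you verify explicitly.
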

\begin{proof}
We write as $r_j := \Rinc ((\Ga_x^j)_\xin) .$
Without loss of generality we may assume $r_2 < \infty .$
Then by Lemma~\ref{lemm:RoIe1}, there exists a compatible family
$(\Psi^2_x)_\xin \in \Chcomp ((F_x^2)_\xin ; E)$ 
such that 
\begin{equation}
	\Ga_x^2 \leq (1+r_2) \Psi_x^2 \quad (\xin) .
	\label{eq:RoIe2}
\end{equation}
By assumption there are channels $\Phi_x \in \Ch (F_x^1 \to F_x^2)$
$(\xin)$ such that $\Ga_x^1 = \Ga_x^2 \circ \Phi_x. $
Then \eqref{eq:RoIe2} implies
\[
	\Ga_x^1 = \Ga_x^2 \circ \Phi_x
	\leq (1+r_2) \Psi_x^2 \circ \Phi_x .
\]
Since $(\Psi_x^2 \circ \Phi_x)_\xin$ is compatible, this implies $r_1 \leq r_2 .$
\end{proof}

\begin{lemm} \label{lemm:RoIe3}
Let $X \nono ,$
let $\Ga_x \in \Ch (F_x \to E)$ $(\xin)$ be measurements, 
and let $\barG_x \in \Chw (F_x^\aast \to E)$ be the \wstar-extension of
$\Ga_x .$ Then 
\[\Rinc (\Gxin) = \Rinc ((\barG_x)_\xin) . \]
\end{lemm}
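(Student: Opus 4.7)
The strategy is to prove the two inequalities separately. The inequality $\Rinc(\Gxin) \leq \Rinc((\barG_x)_\xin)$ is immediate from Lemma~\ref{lemm:RoIe2}: since $\Ga_x \pp \barG_x$ for every $\xin$ (because $\Ga_x$ is the restriction of $\barG_x$ to $F_x$), the monotonicity gives the desired bound.

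For the reverse inequality, I may assume $r := \Rinc(\Gxin) < \infty .$ By Lemma~\ref{lemm:RoIe1}.\ref{i:e99} there exists a compatible family $(\Psi_x)_\xin \in \Chcomp(\Fxin ; E)$ with $\Ga_x \leq (1+r)\Psi_x$ for all $\xin .$ For each $\xin$ let $\barPsi_x \in \Chw(F_x^\aast \to E)$ be the \wstar-extension of $\Psi_x$ (Proposition~\ref{prop:w*ext}). I will show two things: (a) the family $(\barPsi_x)_\xin$ is compatible, and (b) $\barG_x \leq (1+r)\barPsi_x$ for all $\xin .$ Together these will yield $\Rinc((\barG_x)_\xin) \leq r,$ completing the proof.

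For (a), take a mother measurement $\La$ of $(\Psi_x)_\xin ;$ by replacing $\La$ with its \wstar-extension if necessary, we may assume $\La \in \Chw(G \to E)$ is a \wstar-measurement (the passage to the \wstar-extension preserves the post-processing order from below by Proposition~\ref{prop:w*extch}). Then $\Psi_x \pp \La$ for every $\xin ,$ and applying Proposition~\ref{prop:w*extch} to each $\Psi_x$ gives $\barPsi_x \pp \La .$ Hence $\La$ is also a mother measurement of $(\barPsi_x)_\xin ,$ proving compatibility.

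For (b), fix $\xin$ and any $a^\pprime \in F_{x,+}^\aast .$ By the bipolar theorem the positive cone $F_{x,+}$ is weakly$\ast$ dense in $F_{x,+}^\aast ,$ so there is a net $(a_i)_\iin$ in $F_{x,+}$ with $a_i \xrightarrow{\mathrm{weakly}\ast} a^\pprime .$ Since both $\barG_x$ and $\barPsi_x$ are weakly$\ast$ continuous, $\Ga_x(a_i) = \barG_x(a_i) \xrightarrow{\mathrm{weakly}\ast} \barG_x(a^\pprime)$ and similarly $\Psi_x(a_i) \xrightarrow{\mathrm{weakly}\ast} \barPsi_x(a^\pprime)$ in $E .$ From $\Ga_x(a_i) \leq (1+r)\Psi_x(a_i)$ and the weak$\ast$ closedness of $E_+ ,$ the limit inequality $\barG_x(a^\pprime) \leq (1+r)\barPsi_x(a^\pprime)$ follows, giving (b). No step is a serious obstacle here; the only subtlety is recognizing that Proposition~\ref{prop:w*extch} is the right tool to transfer compatibility through \wstar-extension, which requires choosing the mother measurement to be weakly$\ast$ continuous so that the proposition applies.
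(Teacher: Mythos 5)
Your proof is correct and follows essentially the same route as the paper: the easy inequality via the monotonicity lemma, and the converse by extending the compatible family $(\Psi_x)_\xin$ to $(\barPsi_x)_\xin$ and passing the inequality $\Ga_x \leq (1+r)\Psi_x$ to the \wstar-extensions via weak$\ast$ density and the weak$\ast$ closedness of the positive cone. The only difference is that you spell out the compatibility of $(\barPsi_x)_\xin$ (via Proposition~\ref{prop:w*extch} with a weakly$\ast$ continuous mother measurement) and the limiting argument, both of which the paper asserts more tersely.
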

\begin{proof}
By Lemma~\ref{lemm:RoIe2} we have 
$\Rinc (\Gxin) \leq \Rinc ((\barG_x)_\xin) . $
We prove the converse inequality.
Without loss of generality we may assume $\Rinc (\Gxin) < \infty .$
Then by Lemma~\ref{lemm:RoIe1} we can take a compatible 
family $(\Psi_x)_\xin \in \Chcomp (\Fxin ;E)$ such that
\[
\Ga_x \leq (1+\Rinc (\Gxin)) \Psi_x 
\quad (\xin) .
\]
Let $\barPsi_x \in \Chw (F_x^\aast \to E)$ be the \wstar-extension
of $\Psi_x$ $(\xin).$
Then 
\[(\barPsi_x)_\xin \in \Chcomp ((F_x^\aast)_\xin ; E) . \]
Furthermore, since $E_+^\aast$ is closed in the weak$\ast$ topology $\sigma (E^\aast ,E^\ast),$
we have 
\[ \barG_x \leq (1+\Rinc (\Gxin))  \barPsi_x , \]
which implies 
$\Rinc (\Gxin) \geq \Rinc ((\barG_x)_\xin) .$
\end{proof}

We now show $\mathrm{(LHS)} \geq  \mathrm{(RHS)} $
in \eqref{eq:RoI}.
\begin{lemm}\label{lemm:RoIineq}
In the setting of Theorem~\ref{thm:RoI}, the inequality
\begin{equation}
	1  + \Rinc (\Gxin)
	\geq \sup_{\vecE \colon \mathrm{partitioned\, ensemble}}
	\frac{\Pg(\vecE ; \Gxin)}{\Pgcomp (\vecE)}
	\label{eq:RoIineq}
\end{equation}
holds.
\end{lemm}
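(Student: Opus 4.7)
The plan is to establish the inequality by unpacking the definition of $\Rinc(\Gxin)$ and exploiting the positivity of the functionals in any partitioned ensemble. This is the ``easy'' direction of the duality in Theorem~\ref{thm:RoI}, analogous to Lemma~\ref{lemm:RoUineq}; the converse, which requires a Hahn-Banach/conic duality argument in the spirit of Lemma~\ref{lemm:RoUfin}, will be treated afterwards.

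First, we may assume $r := \Rinc(\Gxin) < \infty$, otherwise there is nothing to prove. By Lemma~\ref{lemm:RoIe1}.\ref{i:e99}, there exists a compatible family $(\Psi_x)_\xin \in \Chcomp(\Fxin ; E)$ such that $\Ga_x \leq (1+r)\Psi_x$ for all $\xin$.

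Next, fix an arbitrary partitioned ensemble $\vecE = (\E_z)_{z \in Z}$ with $\E_z = (\vph_{z,y})_{y \in Y_z}$. For each $z \in Z$, each $\xin$, and any EVM $\oM \in \evm(Y_z ; F_x)$, the positivity $\vph_{z,y} \geq 0$ and $\oM(y) \geq 0$ combined with the operator inequality $\Ga_x \leq (1+r)\Psi_x$ give
\begin{equation*}
\sum_{y \in Y_z} \braket{\vph_{z,y} , \Ga_x(\oM(y))} \leq (1+r) \sum_{y \in Y_z} \braket{\vph_{z,y} , \Psi_x(\oM(y))} \leq (1+r) \Pg(\E_z ; \Psi_x).
\end{equation*}
Taking the supremum over $\oM \in \evm(Y_z ; F_x)$ yields $\Pg(\E_z ; \Ga_x) \leq (1+r) \Pg(\E_z ; \Psi_x)$, and then taking the supremum over $\xin$ gives $\Pg(\E_z ; \Gxin) \leq (1+r) \Pg(\E_z ; (\Psi_x)_\xin)$ by the definition of $\Pg(\E_z ; \cdot)$ on a family.

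Finally, summing over $z \in Z$ produces $\Pg(\vecE ; \Gxin) \leq (1+r) \Pg(\vecE ; (\Psi_x)_\xin)$. Since $(\Psi_x)_\xin$ is compatible, the definition of $\Pgcomp$ (equivalently, taking a mother measurement $\nu \in \ME$ with $[\Psi_x] \pp \nu$ for all $\xin$ and using post-processing monotonicity of the gain functional) gives $\Pg(\vecE ; (\Psi_x)_\xin) \leq \Pgcomp(\vecE)$. Combining these inequalities yields $\Pg(\vecE ; \Gxin) \leq (1+r) \Pgcomp(\vecE)$, from which \eqref{eq:RoIineq} follows upon dividing and taking the supremum over $\vecE$.

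The only mildly nontrivial ingredient is the existence of an optimal compatible ``denoising'' family, which is precisely the content of Lemma~\ref{lemm:RoIe1}.\ref{i:e99}, so no further obstacles arise in this direction.
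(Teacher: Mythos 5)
Your proof is correct and follows essentially the same route as the paper: invoke Lemma~\ref{lemm:RoIe1} to obtain a compatible family $(\Psi_x)_\xin$ with $\Ga_x \leq (1+r)\Psi_x$, then use positivity of the ensemble functionals and of the EVM elements together with post-processing monotonicity to bound $\Pg(\vecE ; \Gxin)$ by $(1+r)\Pgcomp(\vecE)$. Your subensemble-by-subensemble organization (bounding $\Pg(\E_z;\Ga_x)$ first, then taking $\sup_x$ and summing over $z$) is, if anything, a slightly cleaner handling of the order of suprema and sums than the paper's single display.
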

\begin{proof}
Without loss of generality, we may assume $\Rinc (\Gxin) < \infty .$
Then by Lemma~\ref{lemm:RoIe1} there exists a compatible family
$(\Psi_x)_\xin \in \Chcomp (\Fxin ;E)$ such that
\[
	\Ga_x \leq (1  + \Rinc (\Gxin)) \Psi_x \quad (\xin).
\]
Take an arbitrary partitioned ensemble $\vecE = (\E_y)_{y\in Y}$
with 
$\E_y = (\vph_{y,z})_{z\in Z_y} .$
Then for any EVMs
$\oM_{x,y} \in \evm (Z_y ;F_x)$
$(x\in X , y\in Y)$
we have
\begin{align*}
	&\sup_\xin \sum_\yin \sum_{z\in Z_y}
	\braket{\vph_{y,z} , \Ga_x (\oM_{x,y} (z))}
	\\
	&\leq
	(1  + \Rinc (\Gxin))
	\sup_\xin
	\sum_\yin \sum_{z\in Z_y}
	\braket{\vph_{y,z} , \Psi_x (\oM_{x,y} (z))}
	\\
	&\leq
	(1  + \Rinc (\Gxin))
	\sup_\xin \Pg (\vecE ; \Psi_x)
	\\
	&\leq
	(1  + \Rinc (\Gxin))
	\Pgcomp (\vecE) .
\end{align*}
By taking the supremum of $\oM_{x,y} ,$ we obtain
\[
	\Pg (\vecE ; \Gxin) \leq (1  + \Rinc (\Gxin))
	\Pgcomp (\vecE),
\]
from which \eqref{eq:RoIineq} follows.
\end{proof}

We now prove the theorem when $X$ is finite and each 
$\Ga_x$ is finite-outcome.

\begin{lemm}[cf.\ \cite{PhysRevLett.122.130403}] \label{lemm:RoIfin}
The statement of Theorem~\ref{thm:RoI} is true when $\abs{X} < \infty$
and $\Ga_x = \Ga^{\oM_x}$ for some finite-outcome EVM
$\oM_x \in \evm (Y_x ; E)$ $(\xin) .$
\end{lemm}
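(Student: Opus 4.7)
The plan is to parallel the proof of Lemma~\ref{lemm:RoUfin} step by step, exploiting that for a finite family of finite-outcome EVMs compatibility is equivalent to the existence of a single joint EVM. Writing $Z := \prod_\xin Y_x ,$ the family $(\Ga^{\oM_x})_\xin$ is compatible precisely when there exists a joint EVM $\oN \in \evm (Z ; E)$ with marginals $\sum_{z \in Z : z_x = y_x} \oN(z) = \oM_x (y_x) ,$ and using this together with Lemma~\ref{lemm:RoIe1} the robustness can be rewritten as the conic program
\begin{align*}
1 + \Rinc ((\Ga^{\oM_x})_\xin) = \inf_{s, \oN} \quad & s \\
\textrm{subject to} \quad & s \in \realn , \quad \oN \in E_+^Z , \\
& s u_E - \sum_{z \in Z} \oN(z) \geq 0 , \\
& \sum_{z \in Z : z_x = y_x} \oN (z) \geq \oM_x (y_x) \quad (x \in X , y_x \in Y_x) .
\end{align*}
This fits the standard form~\eqref{eq:coneprgrm} upon setting $V := E^Z \times \realn ,$ $C := E_+^Z \times \realn ,$ $U := \prod_\xin E^{Y_x} \times E ,$ $K := \prod_\xin E_+^{Y_x} \times E_+ ,$ with the obvious linear map $A ,$ constant vector $b ,$ and objective $c^\ast .$ An interior-point construction analogous to Lemma~\ref{lemm:RoUfin}, for instance $\oN_0 (z) \equiv u_E$ and $s_0 := \abs{Z} + 1 ,$ verifies Slater's condition and hence strong duality without a gap.

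Computing $A^\ast$ directly, I would obtain the dual problem
\[
1 + \Rinc = \sup_{\chi , (\psi_{x, y_x})} \sum_{x \in X} \sum_{y_x \in Y_x} \braket{\psi_{x, y_x} , \oM_x (y_x)}
\]
subject to $\psi_{x, y_x} \in E_+^\ast ,$ $\chi \in E_+^\ast ,$ $\braket{\chi , u_E} \leq 1 ,$ and $\chi - \sum_{x \in X} \psi_{x, z_x} \in E_+^\ast$ for every $z = (z_x)_\xin \in Z .$ To replace $E^\ast$ by the predual $E_\ast ,$ I would introduce the convex set
\[
\mathcal{C} := \set{
\bigl( \bigl( a_{x, y_x} - \sum_{z \in Z : z_x = y_x} b_z \bigr)_{x, y_x} , \sum_{z \in Z} b_z + c - u_E \bigr)
| a_{x, y_x} , b_z , c \in E_+
}
\]
in $\prod_\xin E^{Y_x} \times E$ and verify by a direct calculation that the dual feasible region displayed above equals the polar of $\mathcal{C}$ in the pair $(\prod_\xin E^{Y_x} \times E , \prod_\xin (E^\ast)^{Y_x} \times E^\ast) ,$ while its polar in the pair $(\prod_\xin E^{Y_x} \times E , \prod_\xin E_\ast^{Y_x} \times E_\ast)$ is the same feasible region restricted to predual-valued functionals. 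Norm bookkeeping on the decomposition $(a_{x, y_x} , b_z , c)$ combined with Banach-Alaoglu shows that $(\mathcal{C})_r$ is weakly$\ast$ closed in the latter pair, and Krein-\v{S}mulian together with the bipolar theorem conclude that the supremum above is unchanged if one further imposes $\psi_{x, y_x} \in E_{\ast +}$ and $\chi \in E_{\ast +} .$

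For the concluding argument, fix $\varepsilon \in (0 , 1)$ and select a weakly$\ast$ continuous dual feasible $((\psi_{x, y_x}) , \chi)$ with objective value exceeding $1 + \Rinc - \varepsilon .$ Setting $N := \sum_{x, y_x} \braket{\psi_{x, y_x} , u_E} \geq 1 + \Rinc - \varepsilon > 0 ,$ the functionals $\vph_{x, y_x} := N^{-1} \psi_{x, y_x}$ assemble into a partitioned ensemble $\vecE := ((\vph_{x, y_x})_{y_x \in Y_x})_\xin .$ Taking the decision rule $\oK (y_x) := \delta_{y_x}$ in the definition of $\Pg (\E_x ; \Ga^{\oM_x})$ immediately yields $\Pg (\vecE ; (\Ga^{\oM_x})_\xin) \geq N^{-1} \sum_{x, y_x} \braket{\psi_{x, y_x} , \oM_x (y_x)} .$ Conversely, for every compatible family $(\oN_x)_\xin \in \evmcomp ((Y_x)_\xin ; E)$ with joint EVM $\oN \in \evm (Z ; E) ,$ the dual constraint $\chi \geq \sum_x \psi_{x, z_x}$ gives
\[
\sum_{x, y_x} \braket{\vph_{x, y_x} , \oN_x (y_x)}
= N^{-1} \sum_{z \in Z} \braket{\sum_{x \in X} \psi_{x, z_x} , \oN (z)}
\leq N^{-1} \braket{\chi , u_E} \leq N^{-1} ,
\]
so Lemma~\ref{lemm:Pgcomp} implies $\Pgcomp (\vecE) \leq N^{-1} .$ Dividing the two bounds yields $\Pg (\vecE ; (\Ga^{\oM_x})_\xin) / \Pgcomp (\vecE) \geq 1 + \Rinc - \varepsilon ,$ and letting $\varepsilon \downarrow 0$ and combining with Lemma~\ref{lemm:RoIineq} finishes the proof.

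The main obstacle I anticipate is the density passage from $E^\ast$ to $E_\ast$: designing the convex set $\mathcal{C}$ so that its two polars correctly encode the Banach dual and predual versions of the dual feasible region, and carrying out the norm bookkeeping on $(\mathcal{C})_r$ needed to invoke the Krein-\v{S}mulian theorem. Everything else is either a routine specialization to the finite-outcome setting or a direct adaptation of the reasoning already carried out in Lemma~\ref{lemm:RoUfin} and in~\cite{PhysRevLett.122.130403}.
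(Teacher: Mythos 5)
Your proposal is correct in substance but takes a genuinely different route from the paper. The paper never parametrizes compatible families by joint observables: it works directly with the abstract cone $\cK$ generated by $\evmcomp (\Yxin ; E) ,$ which forces it to prove separately that $\cK$ is weakly$\ast$ closed and leaves the dual constraint in the implicit form $(\chi_x - \psi_{x,y})_{\xin , y \in Y_x} \in \cK^\ast$ with one normalization functional $\chi_x$ per $x .$ You instead take the primal variable to be a single positive family $\oN \in E_+^Z$ on the product outcome set $Z = \prod_\xin Y_x$ with marginal constraints; this makes the primal cone $E_+^Z \times \realn ,$ which is weakly$\ast$ closed for free, and yields the explicit dual constraint $\chi \geq \sum_\xin \psi_{x , z_x}$ for each $z \in Z$ with a single $\chi ,$ exactly as in the finite-dimensional treatment of \cite{PhysRevLett.122.130403}. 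The rest of your argument (Slater point, duality without gap, the polar/Krein-\v{S}mulian passage from $E^\ast$ to $E_\ast ,$ and the extraction of the partitioned ensemble with $\Pgcomp (\vecE^k) \leq N_k^{-1}$ via Lemma~\ref{lemm:Pgcomp}) matches the paper's proof step for step and is sound.

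The one point you must actually prove rather than assert is the opening claim that a finite family of finite-outcome EVMs is compatible if and only if it admits a joint EVM on $Z .$ In the paper's framework compatibility is defined by the existence of a mother measurement $\La \in \Chw (G \to E)$ with an arbitrary classical outcome space $G ,$ and the equivalence with joint observables is nowhere stated. It is true and short: given such a $\La ,$ Proposition~\ref{prop:fEVM} provides $\oK_x \in \evm (Y_x ; G)$ with $\oN_x (y) = \La (\oK_x (y)) ,$ and the product on the classical space $G$ from Proposition~\ref{prop:nb} (bilinear, bipositive, unital, associative) lets you set $\oJ (z) := \La \bigl( \prod_\xin \oK_x (z_x) \bigr) ,$ which is a joint EVM with the required marginals; the converse direction is immediate since marginalization is a post-processing. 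Without this paragraph your reformulated primal is not known to compute $\Rinc ,$ so it is the load-bearing step of your alternative route and should be written out.
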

\begin{proof}
By the one-to-one correspondence between $\Ch (\linf (Y_x) \to E)$
and $\evm (Y_x ;E) ,$ the robustness measure $\Rinc ((\Ga^{\oM_x})_\xin)$
can be written as
\[
\begin{aligned}
\Rinc ((\Ga^{\oM_x})_\xin)
= \inf_{r , (\oN_x)_\xin} \quad &  r
\\
\textrm{subject to} \quad 
& r \in [0,\infty), \quad  
(\oN_x)_\xin \in \evmcomp (\Yxin ;E) \\ 
& \oM_x (y) \leq (1+r) \oN_x (y) \quad (\xin , y \in Y_x) .
\end{aligned}
\]
Define
\[
	\cK := \set{(\la \oN_x)_\xin | \la \in [0,\infty), \, 
	( \oN_x)_\xin \in \evmcomp (\Yxin ; E) }.
\]
It can be shown similarly as in Lemma~\ref{lemm:RoUfin} that
$\cK$ is a weakly$\ast$ closed convex cone in $\prod_\xin E^{Y_x} .$
Then we have
\begin{equation}
\begin{aligned}
1+ \Rinc ((\Ga^{\oM_x})_\xin)
= \inf_{s , (\oN_x)_\xin} \quad &  s
\\
\textrm{subject to} \quad 
& s \in \realn, \quad  
(\oN_x)_\xin \in \cK \\
& \sum_{y \in Y_x} \oN_x(y) \leq s u_E \quad (\xin)  \\
& \oM_x (y) \leq   \oN_x (y) \quad (\xin , y \in Y_x) .
\end{aligned}
\label{eq:RoIpr}
\end{equation}
The optimization problem~\eqref{eq:RoIpr} can be written in the 
standard form \eqref{eq:coneprgrm} of the conic programming by putting
\begin{gather*}
	V:= \left( \prod_\xin E^{Y_x} \right) \times \realn
	,\quad
	U:= \left( \prod_\xin E^{Y_x} \right) \times E^X ,
	\\
	C:= \cK \times \realn ,\quad 
	K := 
	\left( \prod_\xin (E_+)^{Y_x} \right) \times( E_+)^X  ,
	\\
	\braket{c^\ast , (w,s)} := s \quad
	((w,s) \in V) ,\\
	b := ((-\oM_x)_\xin , (0)_\xin) \in U ,\\
	A\colon 
	V \ni ( (w_{x,y})_{x\in X , y\in Y_x} , s )
	\mapsto 
	\left((w_{x,y})_{x\in X , y\in Y_x} , \,
	\Bigl(s u_E - \sum_{y\in Y_x} w_{x,y} \Bigr)_\xin
	\right)
	\in U,
\end{gather*}
where
\[
	(w_{x,y})_{x\in X , y\in Y_x}
	:= \left(  \left(   w_{x,y}\right)_{y \in Y_x} \right)_\xin .
\]
The convex cones $C$ and $K$ are weakly$\ast$ closed in $V$ and $U,$ respectively.
Let $v_0 := ( (\abs{Y_x}^{-1}u_X)_{\xin,y\in Y_x} , 2) \in V. $
Since the family $\left((\abs{Y_x}^{-1} u_E)_{y \in Y_x}\right)_{\xin}$
of trivial observables is compatible,
we have $v_0 \in C .$ Moreover
\begin{align*}
	U &\supset A(C) -K
	\\
	&\supset \realn_+ A(v_0) - K
	\\
	&=\{ \, ( (\la \abs{Y_x}^{-1} u_E - w_{x,y})_{x\in X , y\in Y_x} , 
	(\la u_E - w^\prime_x)_\xin   ) \, |\\
	&\quad
	\la \in \realn_+ , \, w_{x,y} , w_x^\prime \in E_+ \, (\xin , y \in Y_x )
	\}
	\\
	&= U ,
\end{align*}
which implies $-b \in \interi (A(C)-K) (= U) .$
Therefore the optimal value of \eqref{eq:RoIpr} coincides with its dual problem
\eqref{eq:RoUd1} with 
\begin{gather*}
	K^\ast = \left( \prod_\xin ( E_+^{\ast})^{Y_x} \right) \times (E_+^\ast)^X ,
	\quad
	C^\ast = \cK^\ast \times \{ 0 \} ,
	\\
	\cK^\ast =
	\set{
	(\omega_{x,y})_{x\in X , y\in Y_x} \in \prod_\xin (E^\ast)^{Y_x} | 
	\sum_{x\in X , y\in Y_x} \braket{\omega_{x,y} , G_{x,y}} \geq 0
	\, 
	(\forall (G_{x,y})_{x\in X , y\in Y_x} \in \cK)
	} ,
	\\
	A^\ast ((\psi_{x,y})_{x\in X , y\in Y_x} , (\chi_x)_\xin) 
	=
	\left( 
	(\psi_{x,y} - \chi_x)_{x\in X , y\in Y_x} , \,
	\sum_\xin \braket{\chi_x , u_E}
	\right)
	\quad
	(\psi_{x,y} , \chi_x \in E^\ast) .
\end{gather*}
Therefore the dual problem can be written as 
\begin{equation}
\begin{aligned}
1+ \Rinc ((\Ga^{\oM_x})_\xin)
= \sup_{(\psi_{x,y})_{x\in X , y\in Y_x} , (\chi_x)_\xin} \quad &  
\sum_{\xin , y\in Y_x} \braket{\psi_{x,y } , \oM_x (y)}
\\
\textrm{subject to} \quad 
& \psi_{x,y} , \chi_x \in E^\ast_+ \quad (\xin , y\in Y_x) \\
& (\chi_x - \psi_{x,y})_{\xin , y\in Y_x} \in \cK^\ast ,
\\
&\sum_\xin \braket{\chi_x , u_E} =1 ,
\end{aligned}
\notag
\end{equation}
which coincides with
\begin{equation}
\begin{aligned}
1+ \Rinc ((\Ga^{\oM_x})_\xin)
= \sup_{(\psi_{x,y})_{x\in X , y\in Y_x} , (\chi_x)_\xin} \quad &  
\sum_{\xin , y\in Y_x} \braket{\psi_{x,y } , \oM_x (y)}
\\
\textrm{subject to} \quad 
& \psi_{x,y} , \chi_x \in E^\ast_+ \quad (\xin , y\in Y_x) \\
& (\chi_x - \psi_{x,y})_{\xin , y\in Y_x} \in \cK^\ast ,
\\
&\sum_\xin \braket{\chi_x , u_E}  \leq 1 .
\end{aligned}
\label{eq:RoId2}
\end{equation}

We next show that the feasible region of \eqref{eq:RoId2}
can be restricted to the weakly$\ast$ functionals, i.e.\
\begin{equation}
\begin{aligned}
1+ \Rinc ((\Ga^{\oM_x})_\xin)
= \sup_{(\psi_{x,y})_{x\in X , y\in Y_x} , (\chi_x)_\xin} \quad &  
\sum_{\xin , y\in Y_x} \braket{\psi_{x,y } , \oM_x (y)}
\\
\textrm{subject to} \quad 
& \psi_{x,y} , \chi_x \in E_{\ast +} \quad (\xin , y\in Y_x) \\
& (\chi_x - \psi_{x,y})_{\xin , y\in Y_x} \in \cK^\ast ,
\\
&\sum_\xin \braket{\chi_x , u_E}  \leq 1 .
\end{aligned}
\label{eq:RoId3}
\end{equation}
For this we have only to show that the feasible region of \eqref{eq:RoId3}
is weakly$\ast$ dense in that of \eqref{eq:RoId2}.
An element $((\psi_{x,y})_{\xin , y\in Y_x} , (\chi_x)_\xin) \in 
\left( \prod_\xin (E^\ast)^{Y_x} \right) \times (E^\ast)^X  $
is in the feasible region of \eqref{eq:RoId2} if and only if
\begin{align*}
	-1  &\leq 
	\sum_{\xin , y\in Y_x}
	\braket{\psi_{x,y} , a_{x,y}} + \sum_\xin \braket{\chi_x , b_x}
	+ \sum_{\xin , y\in Y_x } \braket{\chi_x -\psi_{x,y} , G_{x,y}}
	-\sum_{\xin} \braket{\chi_x , u_E}
	\\
	&=
	\sum_{\xin , y\in Y_x}
	\braket{\psi_{x,y} , a_{x,y} - G_{x,y}} 
	+ \sum_\xin \braket{\chi_x , b_x + \sum_{y \in Y_x} G_{x,y} - u_E}
	\\
	& \quad
	(\forall a_{x,y} , b_x \in E_+ ; \, \forall (G_{x,y})_{x\in X , y\in Y_x} \in \cK) .
\end{align*}
Therefore if we define 
\begin{align*}
\mathcal{L} & :=
\{ \,
\Bigl((a_{x,y} - G_{x,y})_{\xin , y\in Y_x} , 
( b_x + \sum_{y \in Y_x} G_{x,y} - u_E)_\xin \Bigr) | \\
&\quad
 a_{x,y} , b_x \in E_+  \, (x\in X , y\in Y_x)  ; \,  (G_{x,y})_{x\in X , y\in Y_x} \in \cK
\, \} ,
\end{align*}
then $\mathcal{L}$ is a convex subset of 
$\left( \prod_{\xin}E^{Y_x}\right) \times
E^X$
containing the origin and 
the polar of $\mathcal{L}$ in the pair 
$( 
\left( \prod_{\xin}E^{Y_x}\right) \times
E^X
,  \left( \prod_{\xin}(E^\ast)^{Y_x}\right) \times
(E^\ast)^X)$ 
coincides with the feasible region of \eqref{eq:RoId2}.
Similarly the polar of $\mathcal{L}$ in the pair 
$( 
\left( \prod_{\xin}E^{Y_x}\right) \times
E^X
,  \left( \prod_{\xin}(E_\ast)^{Y_x}\right) \times
(E_\ast)^X)$ 
coincides with the feasible region of \eqref{eq:RoId3}.
Thus by the bipolar theorem and Krein-\v{S}mulian theorem, 
it suffices to show that $(\mathcal{L})_r$
is weakly$\ast$ closed for any $r \in (0,\infty) .$ 
Suppose that the element
\[
	\Bigl((a_{x,y} - G_{x,y})_{\xin , y\in Y_x} , 
( b_x + \sum_{y \in Y_x} G_{x,y} - u_E)_\xin \Bigr)
\]
with
\[
	 a_{x,y} , b_x \in E_+  \, (x\in X , y\in Y_x)  ; \,  (G_{x,y})_{x\in X , y\in Y_x} \in \cK
\]
is in $(\mathcal{L})_r .$ 
Then from $b_x , G_{x,y} \geq 0$ we obtain
\begin{gather*}
	\| b_x \| , \|G_{x,y} \|
	\leq \left\| b_x + \sum_{y^\prime \in Y_x} G_{x,y^\prime}\right\|
	\leq \left\| b_x + \sum_{y^\prime \in Y_x} G_{x,y^\prime}   - u_E\right\| +1
	\leq r+1 ,
	\\
	\| a_{x,y} \|
	\leq \| a_{x,y} -G_{x,y} \| + \| G_{x,y} \| \leq 2r+1 
\end{gather*}
$(\xin , y \in Y_x) .$
Thus by using the Banach-Alaoglu theorem, the weak$\ast$ closedness of 
$(\mathcal{L})_r$ follows similarly as in Lemma~\ref{lemm:dense}.
Therefore we have shown \eqref{eq:RoId3}.

Now from \eqref{eq:RoId3} there exists a sequence 
$\left(
(\psi^k_{x,y})_{\xin , y\in Y_x} , 
(\chi_x^k )_\xin
\right)$
$(k\in \natn)$
in the feasible region of \eqref{eq:RoId3} such that
\[
	\sum_{\xin , y\in Y_x} \braket{\psi_{x,y}^k , \oM_x(y)}
	> 1 + \Rinc ((\Ga^{\oM_x})_\xin) - \frac{1}{k} .
\]
Let $N_k := \sum_{\xin , y\in Y_x} \braket{\psi_{x,y}^k , u_E}  ,$
which is $>0$ by the above inequality,
and define a partitioned ensemble $\vecE^k = (\E_x^k)_\xin$ by
\[
	\E_x^k := (\vph_{x,y}^k)_{y \in Y_x},
	\quad
	\vph_{x,y}^k := N_k^{-1} \psi_{x,y}^k .
\]
Then since $(\chi^k_x - \psi_{x,y}^k)_{\xin , y\in Y_x} \in \cK^\ast ,$
for any $(\oN_x)_\xin \in \evmcomp (\Yxin ; E)$ we have
\begin{align*}
	0 & \leq \sum_{\xin , y\in Y_x} \braket{\chi^k_x - \psi_{x,y}^k , \oN_x (y)}
	\\
	&= \sum_\xin \braket{\chi^k_x , u_E}
	- \sum_{\xin , y\in Y_x} \braket{ \psi_{x,y}^k , \oN_x (y)}
	\\
	&\leq 
	1 -  \sum_{\xin , y\in Y_x} \braket{ \psi_{x,y}^k , \oN_x (y)}
\end{align*}
and therefore
\[
	\sum_{\xin , y\in Y_x} \braket{ \vph_{x,y}^k , \oN_x (y)}
	\leq N_k^{-1} .
\]
By taking the supremum of $\oN_x$ we obtain
\[
	\Pgcomp (\vecE^k)
	\leq N_k^{-1} .
\]
Thus
\begin{align*}
	\Pg (\vecE^k ; (\Ga^{\oM_x})_\xin)
	&= \sum_\xin \max_{x^\prime \in X} \Pg(\E_x^k ; \Ga^{\oM_{x^\prime}})
	\\
	&\geq
	\sum_\xin \Pg(\E_x^k ; \Ga^{\oM_x})
	\\
	& \geq 
	N_k^{-1} \sum_{\xin , y\in Y_x} 
	\braket{ \psi_{x,y}^k , \oM_x (y)}
	\\
	&> N_k^{-1} \left(
	1 + \Rinc ((\Ga^{\oM_x})_\xin) - \frac{1}{k}
	\right)
	\\
	& \geq 
	\Pgcomp (\vecE^k)
	\left(
	1 + \Rinc ((\Ga^{\oM_x})_\xin) - \frac{1}{k}
	\right) 
\end{align*}
and hence
\[
	1 + \Rinc ((\Ga^{\oM_x})_\xin)
	\leq \sup_{k \in \natn}
	\frac{\Pg(\vecE^k ;(\Ga^{\oM_x})_\xin )}{\Pgcomp (\vecE^k)}
	\leq
	\sup_{\vecE \colon \mathrm{partitioned \, ensemble}}
	\frac{\Pg(\vecE ;(\Ga^{\oM_x})_\xin )}{\Pgcomp (\vecE )}.
\]
By combining this with Lemma~\ref{lemm:RoIineq} 
we obtain \eqref{eq:RoI}.
\end{proof}

We next consider general measurements.

\begin{lemm} \label{lemm:RoIlsc}
Let $X \nono$ and let $F_x $ $(\xin)$ be classical spaces.
Then the extended real valued function
\begin{equation}
	\prod_\xin \Ch (F_x \to E) 
	\ni 
	\Gxin \mapsto 
	\Rinc (\Gxin)
	\in [0,\infty]
	\label{eq:RoImap}
\end{equation}
is lower semicontinuous with respect to the product topology of the BW
topologies on $\Ch (F_x \to E) .$
\end{lemm}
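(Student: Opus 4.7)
The plan is to model the argument directly on the proof of Lemma~\ref{lemm:RoUlsc}, replacing the single simulable measurement with a compatible family and invoking the compactness of $\Chcomp(\Fxin; E)$ established in Lemma~\ref{lemm:RoIe1}.\ref{i:e1}.

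First, I would argue by contradiction. Suppose there exist a net $(\Gxin^i)_\iin$ in $\prod_\xin \Ch(F_x \to E)$ with $\Ga_x^i \bwto \Ga_x$ for each $\xin$, and a real number $r < \Rinc(\Gxin)$ (allowing $\Rinc(\Gxin) = \infty$), such that $\Rinc(\Ga_x^i)_\xin \leq r$ frequently. Passing to a subnet, I may assume $\Rinc((\Ga_x^i)_\xin) \leq r$ for every $\iin$; in particular the robustness is finite at each $i$, so Lemma~\ref{lemm:RoIe1}.\ref{i:e99} furnishes, for every $\iin$, a compatible family $(\Psi_x^i)_\xin \in \Chcomp(\Fxin;E)$ with
\[
	\Ga_x^i \leq (1+r)\Psi_x^i \qquad (\forall \xin).
\]

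Next, I would invoke Lemma~\ref{lemm:RoIe1}.\ref{i:e1}: the set $\Chcomp(\Fxin;E)$ is compact in the product of the BW topologies. Hence some subnet $((\Psi_x^{i(j)})_\xin)_\jin$ converges to a compatible family $(\Psi_x)_\xin \in \Chcomp(\Fxin;E)$, meaning $\Psi_x^{i(j)} \bwto \Psi_x$ for each $\xin$. Fixing $\xin$ and any $a \in (F_x)_+$ and $\psi \in E_{\ast +}$, from $\Ga_x^{i(j)}(a) \leq (1+r)\Psi_x^{i(j)}(a)$ we obtain in the limit, using the weak$\ast$ continuity of $\psi$,
\[
	\braket{\psi, \Ga_x(a)} \leq (1+r) \braket{\psi, \Psi_x(a)}.
\]
As $\psi$ ranges over $E_{\ast +}$, this is equivalent to $(1+r)\Psi_x(a) - \Ga_x(a) \in E_+$ (here I use that $E_+$ is the bipolar of $E_{\ast +}$ in the pair $(E, E_\ast)$, so a weakly$\ast$ limit of positive elements is positive), i.e.\ $\Ga_x \leq (1+r)\Psi_x$ for all $\xin$. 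Since $(\Psi_x)_\xin$ is compatible, this implies $\Rinc(\Gxin) \leq r$, contradicting the choice $r < \Rinc(\Gxin)$.

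The proof is therefore a straightforward adaptation of Lemma~\ref{lemm:RoUlsc}, and I do not anticipate any real obstacle: the only nontrivial ingredients are the BW-compactness of $\Chcomp(\Fxin;E)$ (already in Lemma~\ref{lemm:RoIe1}.\ref{i:e1}), the existence of an optimal compatible family attaining the infimum when $\Rinc < \infty$ (Lemma~\ref{lemm:RoIe1}.\ref{i:e99}), and the weak$\ast$ closedness of $E_+$, all of which have been established earlier in the paper.
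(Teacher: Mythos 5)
Your proof is correct and follows essentially the same route as the paper's: contradiction, extraction of optimal compatible families via Lemma~\ref{lemm:RoIe1}, BW-compactness of $\Chcomp (\Fxin ;E)$ to pass to a convergent subnet, and the weak$\ast$ closedness of $E_+$ to take the limit in the inequality $\Ga_x^i \leq (1+r)\Psi_x^i .$ If anything, your version is slightly more careful than the paper's in handling the negation of lower semicontinuity (the ``frequently'' versus ``eventually'' subnet step) and in justifying the limiting inequality.
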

\begin{proof}
Suppose that \eqref{eq:RoImap} is not lower semicontinuous.
Then there exist a net $(\Ga_x^i)_\xin$ $(\iin)$ in $\prod_\xin \Ch (F_x \to E)$
BW-convergent to some $\Gxin \in \prod_\xin \Ch (F_x \to E) $
and $r  \in [ 0,  \Rinc (\Gxin)  )$ such that
$\Rinc ((\Ga^i_x)_\xin) < r$ for all $\iin .$
Then 
for each $\iin$ there exists a compatible family 
$(\Psi^i_x)_\xin \in \Chcomp (\Fxin ; E)$
such that 
\[\Ga_x^i \leq (1+r)\Psi_x^i  \quad (\xin) . \]
By the compactness of $\Chcomp (\Fxin ;E) ,$
there exists a subnet of $(\Psi^{j(i)}_x)_\xin$ $(\jin)$
BW-converging to some $(\Psi_x)_\xin \in \Chcomp (\Fxin ;E) .$
Then we have $\Ga_x \leq (1+r) \Psi_x$ $(\xin) ,$
which contradicts $r < \Rinc (\Gxin). $
Therefore \eqref{eq:RoImap} is lower semicontinuous.
\end{proof}

\begin{lemm} \label{lemm:Pgvec}
Let $(\Ga_x)_\xin$ be a non-empty family of measurements on $E$ 
and let$\vecE = (\E_y)_\yin$ be a partitioned ensemble.
Then 
\begin{equation}
	\Pg (\vecE ; (\Ga_x)_\xin)
	= \sup_{A \in \FX} \Pg (\vecE ; (\Ga_x)_{x\in A }),
	\notag
\end{equation}
where $\FX$ denotes the set of non-empty finite subsets of $X .$
\end{lemm}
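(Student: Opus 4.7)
My plan is to unfold both sides of the claimed equality using the definitions of $\Pg(\vecE;\cdot)$ for a family of measurements and for a set of measurements, and then exploit the finiteness of the label set $Y$ of the partitioned ensemble.

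First I would write, using Definition~\ref{def:pens} and Proposition~\ref{prop:Pgdef},
\[
	\Pg(\vecE ; (\Ga_x)_\xin)
	= \sum_{y\in Y} \Pg(\E_y ; \fL^\prime)
	= \sum_{y\in Y} \sup_{\xin} \Pg(\E_y ; \Ga_x),
\]
where $\fL^\prime := \set{[\Ga_x] \in \ME | \xin}$, and similarly
\[
	\Pg(\vecE ; (\Ga_x)_{x\in A})
	= \sum_{y\in Y} \sup_{x\in A} \Pg(\E_y ; \Ga_x)
\]
for each $A \in \FX$. The inequality $\geq$ is then immediate from $\{[\Ga_x] \mid x\in A\} \subset \fL^\prime$, which yields $\sup_{x\in A} \Pg(\E_y;\Ga_x) \leq \sup_\xin \Pg(\E_y;\Ga_x)$ termwise.

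For the reverse inequality $\leq$, I would use that $Y$ is finite by the definition of a partitioned ensemble. Given $\epsilon > 0$, for each $y \in Y$ choose $x_y \in X$ with $\Pg(\E_y ; \Ga_{x_y}) > \sup_\xin \Pg(\E_y ; \Ga_x) - \epsilon/\abs{Y}$ (this is possible since each supremum is finite, being bounded above by $\sum_{z} \| \vph_{y,z} \| \cdot \| u_E \|$ for the ensemble decomposition of $\E_y$). Set $A_\epsilon := \set{x_y | y \in Y} \in \FX$. Then
\[
	\Pg(\vecE ; (\Ga_x)_{x \in A_\epsilon})
	\geq \sum_{y\in Y} \Pg(\E_y ; \Ga_{x_y})
	> \Pg(\vecE ; (\Ga_x)_\xin) - \epsilon.
\]
Letting $\epsilon \downarrow 0$ yields $\sup_{A\in\FX} \Pg(\vecE;(\Ga_x)_{x\in A}) \geq \Pg(\vecE;(\Ga_x)_\xin)$, completing the proof.

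There is no real obstacle here: the argument is just the standard fact that a finite sum commutes with suprema in the sense that $\sum_{y \in Y} \sup_\xin f(x,y)$ can be approximated by $\sum_{y\in Y} \max_{x\in A} f(x,y)$ for a suitable finite $A$, when $Y$ is finite. The only point worth checking carefully is that the finiteness of the outer index set $Y$ of the partitioned ensemble (built into Definition~\ref{def:pens}) is what lets us pick one near-optimizer $x_y$ per $y$ and pool them into a single finite set $A$.
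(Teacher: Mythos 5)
Your proof is correct and takes essentially the same route as the paper: the paper simply observes that $\Pg(\vecE;(\Ga_x)_\xin)=\sum_{y\in Y}\sup_{\xin}\Pg(\E_y;\Ga_x)$ (and likewise for finite $A$) and calls the conclusion immediate. You have merely written out the standard finite-sum-of-suprema approximation argument that the paper leaves implicit, and your use of the finiteness of $Y$ to pool the near-optimizers $x_y$ into a single $A_\epsilon\in\FX$ is exactly the right point to make explicit.
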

\begin{proof}
The claim is immediate from 
\[
	\Pg (\vecE ; (\Ga_x)_\xin)
	=
	\sum_{y \in Y} \sup_\xin  \Pg (\E_y ; \Ga_x)
\]
and a similar expression for $\Pg (\vecE ; (\Ga_x)_{x \in A})$ $(A \in \FX) .$
\end{proof}

\noindent 
\textit{Proof of Theorem~\ref{thm:RoI}.}
By Lemmas~\ref{lemm:exgain} and \ref{lemm:RoIe3},
we have only to prove \eqref{eq:RoI} when for each $\xin $ $\Ga_x$ is a \wstar-measurement.
Then by Theorem~\ref{thm:finapp} there exists a net 
$(\Ga_x^i)_\xin$ $(\iin = \prod_\xin \mathcal{D} (F_x))$
in $\Ch ((F_x)_\xin ;E)$
satisfying the following conditions:
\begin{enumerate}[(i)]
\item
$\Ga_x^i \bwto \Ga_x$ $(\xin) .$
\item
Each $[\Ga^i_x]$ is finite-outcome $(\iin , \xin).$
\item
$([\Ga_x^i])_\iin$ is an increasing net in $\ME$ weakly converging to
$\sup_\iin [\Ga_x^i] = [\Ga_x]$
for each $\xin .$
\end{enumerate}
From Lemmas~\ref{lemm:RoIe1}, \ref{lemm:RoIe2} and \ref{lemm:RoIlsc} we have
\begin{equation}
	1+ \Rinc ( (\Ga_x^i)_\xin  )
	=
	\sup_{A \in \FX}
	\sup_\iin 
	\left(
	1 + \Rinc (  (\Ga_x^i)_{x \in A} )
	\right) ,
	\label{eq:last1}
\end{equation}
where $\FX$ denotes the set of non-empty finite subsets of $X.$
From Lemma~\ref{lemm:RoIfin} we also have
\begin{equation}
	1 + \Rinc (  (\Ga_x^i)_{x \in A} )
	=\sup_{\vecE \colon \mathrm{partitioned \, ensemble}}
	\frac{\Pg(\vecE ;  (\Ga_x^i)_{x \in A} )}{\Pgcomp (\vecE)} 
	\quad
	(\iin , A \in \FX).
	\label{eq:last2}
\end{equation}
Then from \eqref{eq:last1} and \eqref{eq:last2} we have
\begin{align*}
	1+ \Rinc ( (\Ga_x^i)_\xin  )
	&=
	\sup_{A \in \FX}
	\sup_\iin 
	\sup_{\vecE \colon \mathrm{partitioned \, ensemble}}
	\frac{\Pg(\vecE ;  (\Ga_x^i)_{x \in A} )}{\Pgcomp (\vecE)} 
	\\
	&=
	\sup_{\vecE \colon \mathrm{partitioned \, ensemble}}
	\sup_{A \in \FX}
	\sup_\iin 
	\frac{\Pg(\vecE ;  (\Ga_x^i)_{x \in A} )}{\Pgcomp (\vecE)} 
	\\
	&=
	\sup_{\vecE \colon \mathrm{partitioned \, ensemble}}
	\sup_{A \in \FX}
	\frac{\Pg(\vecE ;  (\Ga_x)_{x \in A} )}{\Pgcomp (\vecE)} 
	\\
	&=
	\sup_{\vecE \colon \mathrm{partitioned \, ensemble}}
	\frac{\Pg(\vecE ;  (\Ga_x)_{x \in X} )}{\Pgcomp (\vecE)} ,
\end{align*}
where in the third equality we used the fact that
for each partitioned ensemble $\vecE =(\E_y)_{y \in Y}$ 
and $A\in \FX ,$ the map 
\[
	\ME^A \ni (\omega_x)_{x\in A}
	\mapsto 
	\Pg(\vecE ; (\omega_x)_{x\in A})
	=
	\sum_{y \in Y} \max_{x\in A} \Pg (\E_y ; \omega_x) 
\]
is weakly continuous and monotonically increasing in the post-processing order.
The fourth equality follows from Lemma~\ref{lemm:Pgvec}.
\qed

\section{Concluding remarks} \label{sec:concl}
In this paper, we have investigated general properties of the measurement space
$\ME$ for a given order unit Banach space $E$ with a predual corresponding to a GPT.
Among these general facts, the compactness of $\ME$ (Theorem~\ref{thm:compact})
and the density of finite-outcome measurements (Theorem~\ref{thm:finapp})
are proved to be essential in the applications to simulability and incompatibility 
of measurements with general outcome spaces in Sections~\ref{sec:sim},
\ref{sec:irr},
and \ref{sec:incomp}.
Our study revealed that the compact convex structure 
naturally arises in the measurement space $\ME ,$
whose physical meaning is fundamentally different from the state space
of a general probabilistic theory.
The compact convex structure of the measurement space $\ME$ is introduced
based on the state discrimination probabilities of a finite-label ensembles.
The general theory developed in this paper applies whenever such quantities are involved and not restricted to the specific examples considered in this paper.

Finally we list some related questions that are left for further research.
\begin{enumerate}[1.]
\item
The measurement space $\ME$ has not only topological and convex structures
but also the post-processing order structure.
As we have shown in Theorem~\ref{thm:vnm}, the post-processing order is a special example of the orders characterized by the independence and the continuity axioms. 
From the mathematical point of view, 
this motivates us to ask when such an ordered 
compact convex set can be regarded as a measurement space $\ME ,$
especially for $E$ corresponding to a quantum or a classical system. 
From Theorem~\ref{thm:infinite}, we can see that such compact convex set has an infinite dimension except when it is a singleton.
\item
We can also ask whether the measurement space $\ME$ characterizes the space $E$
up to weakly$\ast$ isomorphism.
To be specific, the question is formalized as follows:
consider order unit Banach spaces $E_1$ and $E_2$ which respectively have the Banach preduals $E_{1\ast}$ and $E_{2\ast}$
and suppose that there exists a continuous, affine, and order isomorphism $\Psi \colon \mathfrak{M} (E_1) \to \mathfrak{M} (E_2) $ between the measurement spaces.
Then is there a weakly$\ast$ continuous, order bi-preserving, linear isomorphism between $E_1$ and $E_2$?
Note that we can easily see that the converse implication holds, namely an isomorphism between $E_1$ and $E_2$ induces an isomorphism between the measurement spaces $\mathfrak{M} (E_1) $ and $\mathfrak{M} (E_2).$
\item
Recently in \cite{ducuara2019weight,ducuara2019operational,uola2019quantum}
it is shown that the weight of resource is related to the ratio of 
state exclusion probability.
Specifically, in \cite{ducuara2019weight} the resource theory of measurements
based on the state exclusion probability is studied.
For an ensemble $\E = (\vph_x)_{x \in X}$
and a measurement $\Gamma \in \Ch  (F \to E) $
the state exclusion probability is given by
\begin{align}
	P_{\mathrm{ex}} (\E ; \Gamma)
	&:= 
	\sup_{\oM \in \evm (X ; F) }
	\sum_{x, x^\prime \in X \colon x \neq x^\prime}
	\braket{\vph_x , \Gamma (\oM(x^\prime))}
	= \Pg (\wt{\E} ; \Gamma) ,
	\label{eq:exprob}
\end{align}
where 
$\wt{\E} := (\sum_{x^\prime \in X \colon x^\prime \neq x } \vph_{x^\prime})_{x \in X} .$
Since \eqref{eq:exprob} is apparently weakly continuous,
the methods developed in Sections~\ref{sec:sim} and \ref{sec:incomp} will be 
straightforwardly generalized to this case. 
\item
We may also ask whether we can generalize our results for measurements to 
more general class of channels with
non-classical outcome spaces.
If we consider the order induced by the state discrimination probability,
this order is the one induced by statistical morphisms,
much weaker notion than that of channels,
and does not coincide in general with the order induces by the post-processing channels~\cite{doi:10.1063/1.5074187}.
For any of these orders, 
the compactness result (Theorem~\ref{thm:compact})
seems to still hold because the classicality of the outcome spaces in the proof
is used only to guarantee the limit channel has also classical outcome space.
\end{enumerate}

\noindent 
\textit{Acknowledgement.}\\
The author would like to thank Hayata Yamasaki for 
helpful discussions and comments,
Masato Koashi for helpful discussions,
and Erkka Haapasalo for helpful comments on the paper.

\appendix 
\section{Proof of Proposition~\ref{prop:convst}} \label{app:convst}
In this appendix we prove Proposition~\ref{prop:convst}.
\begin{enumerate}[1.]
\item
The first part of the claim is easy to verify.
The affinity of $\Psi$ can be shown in the same way as \cite{Gudder1973}
(Theorem~2.2).
The continuity of $\Psi$ is immediate from the definition.
The injectivity of $\Psi$ follows from that 
$\Ac (S)$ separates points of $S .$
Then since $S$ is a compact Hausdorff space, 
we have only to establish the surjectivity of $\Psi .$
Suppose that there exists a state $\phi \in S (\Ac (S)) \setminus \Psi (S) .$
Since $\Psi (S)$ is a weakly$\ast$ compact convex subset of $\Ac (S)^\ast ,$
by the Hahn-Banach separation theorem we can take $f \in \Ac (S)$
such that
$
\sup_{s \in S} f (s) < \braket{\phi , f} .
$
By replacing $f$ with $f + \| f \| 1_S$ if necessary, 
we may assume $f\geq 0 .$
Then we have
\[
	\| f \|
	= \sup_{s \in S} \abs{ f(s) }
	= \sup_{s \in S} f(s)
	< \braket{\phi , f}
	\leq 
	\| \phi \| \|f \|
	=\| f \| ,
\]
which is a contradiction.
Therefore $\Psi$ is a continuous affine isomorphism.
\item
The first part of the claim is again easy to show.
By the definition of the metric on $S ,$ 
we can easily see that $\Phi$ is an isometry.
The affinity of $\Phi$ can be again shown in the same way as in \cite{Gudder1973}.
Consider the locally convex Hausdorff topology $\sigma (\Ab (S) , S)$ 
on $\Ab (S) ,$
which is the pointwise convergence topology on $\Ab (S) .$
Then the unit ball $(\Ab (S) )_1$ is compact in this topology
and by \cite{Kaijser1978} this implies that
$\Ab (S)$ has the Banach predual 
$\overline{\lin (S)} = \overline{E_\ast},$
where the closure is with respect to the norm topology.
Let $B (\supset S)$ be the base of the positive cone of $\overline{E_\ast} . $
Assume $S \subsetneq B$  and take $\psi \in B \setminus S .$
Since $S$ is norm-complete and convex,
the Hahn-Banach separation theorem implies that there exists 
$g \in \Ab (S) = (\overline{E_\ast})^\ast$
such that $\sup_{s \in S} g(s) < \braket{g ,  \psi} .$
As in the proof of the claim~1, 
this yields a contradiction.
Therefore $S=B$ and hence 
$E_\ast = \lin (S) = \lin (B) = \overline{E_\ast}$
is a Banach predual of $\Ab (S)$ with the base $S$
of the predual positive cone $E_{\ast +} .$
\qed
\end{enumerate}

\section{Proof of Proposition~\ref{prop:nb}} \label{app:nb}
In this section we prove Proposition~\ref{prop:nb}.

If $E$ is classical, 
we may assume $(E , u_E ) = (C(X) , 1_X)$ for some compact Hausdorff space $X.$
If we define $B_0 \colon E \times E \to E$ by the pointwise multiplication 
$B_0(f,g) (x) := f(x) g(x) ,$ then we can easily see that  the conditions \eqref{i:nb1} and \eqref{i:nb2} 
hold.

Conversely assume that there exists a bilinear map $B\colon E \times E \to E$
satisfying \eqref{i:nb1} and \eqref{i:nb2}.
Then as in \cite{barnum2006cloning} (Lemma~3), we can show 
\begin{equation}
	\phi \circ B(a,b) = \phi (a) \phi(b)
	\label{eq:Bab}
\end{equation}
for any pure state $\phi \in \de S(E) .$
We show that 
\[X : = \set{\phi \in S(E) | 
\phi\circ B(a,b) = \phi (a) \phi (b) \, (\forall a, b \in E)}
\]
is a Hausdorff topological space in the relative topology of the weak$\ast$ 
topology on $S(E) .$
Take a net $(\phi_i )_{i \in I} $ in $X$ weakly$\ast$ converging to 
$\phi \in S(E) .$
Then
$
\phi \circ B(a,b) 
= \lim_{i \in I} \phi_i \circ B(a,b) 
=\lim_{i \in I } \phi_i (a) \phi_i (b)
=\phi (a) \phi (b) 
$
for any $a,b \in E .$
Therefore, being a closed subset of $S(E) ,$
$X$ is weakly$\ast$ compact.
We define a linear map $\Psi \colon E \to C(X)$
by
$\Psi (a) (\phi) := \braket{\phi , a}$
$(a\in E , \phi \in X) .$
Then $\Psi$ is unital and positive.
Furthermore, by the Krein-Milman theorem, 
for any $a \in E $
\[
\| a\| = \sup_{\phi \in S(E)} \abs{\braket{\phi , a} }
=\sup_{\phi \in \de S(E)}  \abs{\braket{\phi , a} }
=\sup_{\phi \in X} \abs{\braket{\phi , a} }
= \| \Psi (a) \| 
\]
and 
\begin{align*}
	a \geq 0 &\iff 
	\braket{\phi , a} \geq 0 \quad (\forall \phi \in S(E))
	\\
	&\iff 
	\braket{\phi , a} \geq 0 \quad (\forall \phi \in \de S(E))
	\\
	&\iff 
	\braket{\phi , a } \geq 0 \quad (\forall \phi \in X)
	\\
	&\iff
	\Psi (a) \geq 0 .
\end{align*}
where we used $\de S(E) \subset X.$
Thus to show that $\Psi$ is an isomorphism between
the order unit Banach spaces $E$ and $C(X) ,$
it suffices to prove that $\Psi$ is a surjection.
Since $\Psi (a) (\phi) \Psi (b) (\phi) = \braket{\phi ,B(a,b)} = \Psi (B(a,b)) (\phi)$
$(a,b \in E ; \phi \in X)$
the image $\Psi (E)$ is a norm-complete subalgebra of $C(X)$
containing the unit $1_X = \Psi (u_E) .$
Moreover $\Psi (E)$ separates points of $X$ since $E$ separates $S(E) ,$
a fortiori $X (\subset S(E)) .$
Therefore the Stone-Weierstrass theorem implies $\Psi (E) = C(X) ,$
which proves the classicality of $E .$

Let $B^\prime \colon E \times E \to E$ be another bilinear map satisfying (i) and (ii).
Then we can similarly show $\braket{\phi , B^\prime (a,b)} = \phi (a) \phi (b)$
$(\phi \in \de S(E) ; a,b\in E) .$
This implies $\braket{\phi , B(a,b)} = \braket{\phi , B^\prime (a,b)}$
$(\phi \in \de S(E) ; a,b\in E) $
and hence the Krein-Milman theorem implies 
$B(a,b) = B^\prime (a,b) ,$
which proves the uniqueness.
The commutativity and the associativity of $B$ follows again from \eqref{eq:Bab} and 
the Krein-Milman theorem. \qed

\section{Proof of Proposition~\ref{prop:EVM}} \label{app:1}
In this appendix, we prove Proposition~\ref{prop:EVM}.
Throughout this appendix, we fix the system order unit Banach space $E$
and its Banach predual $E_\ast $ corresponding to the system.

An EVM $(X, \Sigma , \oM)$ on $E$ is called regular 
(\cite{busch2016quantum}, Section~4.10)
if $X$ is a compact Hausdorff space,
$\Sigma$ is the Borel $\sigma$-algebra $\mathcal{B} (X)$ of $X ,$
and $\mu^\oM_\psi$ is a regular signed measure for any $\psi \in E_\ast .$
The following Riesz-Markov-Kakutani-type representation theorem 
can be shown similarly as in \cite{busch2016quantum} (Theorem~4.4).

\begin{prop} \label{prop:RMK}
Let $X$ be a compact Hausdorff space.
Then for each channel $\Psi \in \Ch (C(X) \to E)$ there exists a unique
regular EVM $(X , \mathcal{B}(X) , \oM)$ such that 
\[
	\Psi (f)
	=
	\int_X f(x) d\oM (x) \quad (f \in C(X)) .
\]
\end{prop}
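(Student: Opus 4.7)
The plan is to construct the EVM $\oM$ by applying the classical scalar Riesz-Markov-Kakutani representation theorem pointwise on the predual $E_\ast$ and then assembling the values $\oM(A) \in E$ via the weak$\ast$ duality $E = (E_\ast)^\ast .$ Concretely, for each $\psi \in E_{\ast +}$ the composition $\psi \circ \Psi \colon C(X) \to \realn$ is a positive linear functional with $(\psi \circ \Psi)(1_X) = \braket{\psi , u_E} = \| \psi \| ,$ so the classical scalar Riesz-Markov-Kakutani theorem produces a unique regular Borel measure $\mu_\psi$ on $X$ with $\mu_\psi(X) = \| \psi\|$ representing it. Decomposing an arbitrary $\psi \in E_\ast$ as $\psi = \alpha \phi_1 - \beta \phi_2$ with $\phi_j \in S_\ast (E)$ via the base norm property of $E_\ast$ recalled in Section~\ref{subsec:ouB}, and invoking uniqueness of the scalar representation, I obtain a well-defined linear map $E_\ast \ni \psi \mapsto \mu_\psi$ into regular signed Borel measures on $X$ satisfying $\abs{\mu_\psi}(X) \leq \| \psi \| .$

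Next, for each $A \in \mathcal{B}(X)$ I would define $\oM(A)$ as the unique element of $E$ representing the bounded linear functional $E_\ast \ni \psi \mapsto \mu_\psi (A),$ which is bounded by the above estimate. Positivity $\oM(A) \in E_+$ follows because $\mu_\psi(A) \geq 0$ for all $\psi \in E_{\ast +}$ and $E_+$ is the dual cone of $E_{\ast +}$ (cf.\ Section~\ref{subsec:ouB}). The normalizations $\oM(X) = u_E$ and $\oM(\varnothing) = 0$ are immediate from $\mu_\psi (X) = \braket{\psi , u_E}$ and $\mu_\psi(\varnothing) = 0 .$ Weak$\ast$ $\sigma$-additivity then reduces to the scalar $\sigma$-additivity of each $\mu_\psi$: for any disjoint sequence $(A_n)_{n \in \natn}$ in $\mathcal{B}(X)$ and any $\psi \in E_\ast ,$
\[
	\braket{\psi , \oM(\cup_n A_n) } = \mu_\psi(\cup_n A_n) = \sum_n \mu_\psi(A_n) = \sum_n \braket{\psi , \oM(A_n)} ,
\]
which is precisely the required weak$\ast$ convergence of $\sum_n \oM(A_n)$ to $\oM(\cup_n A_n) .$

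Finally, the integral formula is verified by noting that $\mu^\oM_\psi = \mu_\psi$ by the very construction of $\oM ,$ so for every $f \in C(X)$ and $\psi \in E_\ast$ we have $\braket{\psi , \int_X f \, d\oM } = \int_X f \, d\mu_\psi = \braket{\psi , \Psi(f)},$ and since $E_\ast$ separates points of $E$ by the Banach predual property we conclude $\Psi(f) = \int_X f \, d\oM .$ Uniqueness of the regular EVM is inherited from the uniqueness in the classical Riesz-Markov-Kakutani theorem applied pointwise to each $\psi \circ \Psi .$ The main obstacle will be verifying that $\oM(A)$ genuinely lies in $E$ rather than merely in $E^{\aast};$ this is where the predual hypothesis on $E$ is essential, and it requires the bound $\abs{\mu_\psi}(X) \leq \| \psi \|$ to be established for possibly signed $\psi$ using the base norm decomposition of $E_\ast ,$ combined with the identification $E = (E_\ast)^\ast .$
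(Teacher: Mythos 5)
Your proposal is correct, and it is essentially the argument the paper delegates to the literature: the paper gives no proof of Proposition~\ref{prop:RMK} and only remarks that it "can be shown similarly as in \cite{busch2016quantum} (Theorem~4.4)," whose Hilbert-space proof is exactly your scheme of applying the scalar Riesz--Markov--Kakutani theorem to $\psi \circ \Psi$ for $\psi \in E_{\ast +},$ extending by the base-norm decomposition of $E_\ast ,$ and assembling $\oM (A)$ through the duality $E = (E_\ast)^\ast .$ All the steps you flag (well-definedness of $\psi \mapsto \mu_\psi$ via RMK uniqueness, the bound $\abs{\mu_\psi}(X) \leq \| \psi \| ,$ positivity via $E_+$ being the dual cone of $E_{\ast +},$ and weak$\ast$ $\sigma$-additivity reducing to scalar $\sigma$-additivity) go through as you describe.
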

\noindent 
\textit{Proof of Proposition~\ref{prop:EVM}.}
Since $F$ is classical, we may assume $F=C(X)$ for some compact Hausdorff space
$X .$
Then by Proposition~\ref{prop:RMK}
there exists a unique regular EVM $(X , \mathcal{B}(X) , \oM)$ on $E$ such that
\[
	\Gamma (f) 
	= \int_X f(x) d\oM (x) 
	\quad
	(f \in F = C(X) ) .
\]
We show $\Gamma \ppeq \Gamma^\oM .$
Let $\gamma^\oM \in \Ch (B(X , \mathcal{B}(X)) \to E)$
be the measurement associated with $\oM .$
Then $\Gamma $ is the restriction of $\gamma^\oM$ to the subalgebra
$C(X) \subset B(X , \mathcal{B}(X)) $
and hence $\Gamma \pp \gamma^\oM \pp \Gamma^\oM .$
 
We now prove
$\gamma^\oM \pp \ovl{\Gamma} ,$
where $\ovl{\Gamma} \in \Chw (C(X)^\aast \to E)$ is the \wstar-extension of 
$\Gamma .$ 
By the ordinary Riesz-Markov-Kakutani representation theorem,
the Banach dual space $C(X)^\ast$ is identified with the set $\mathbf{M} (X)$
of signed regular measures on $X$ with the bilinear form
\[
	\braket{\nu , f}
	=
	\int_X f(x) d\nu (x)
	\quad
	(f \in C(X) , \nu \in \mathbf{M} (X)) .
\]
We define a linear map $\Phi \colon \mathbf{M} (X) \to B(X , \mathcal{B} (X))^\ast$
by 
\[
	\braket{\Phi (\nu) , f}
	:=
	\int_X f(x) d\nu (x)
	\quad 
	(f \in B(X , \mathcal{B} (X)) , \nu \in \mathbf{M} (X)).
\]
Then $\Phi$ is positive and sends a state (i.e.\ a probability measure) 
in $\MX$ to a state in $B(X , \BX)^\ast .$
Therefore the dual map
$\Phi^\ast \colon B(X , \BX )^\aast \to C(X)^\aast (= \mathbf{M} (X)^\ast)$
is a \wstar-channel.
Define a channel $\Psi \in \Ch (B(X ,\BX) \to C(X)^\aast)$
by the restriction of $\Phi^\ast$ to $B(X , \BX) .$
Then for any $f\in B(X , \BX)$ and $\psi \in E_\ast$
\begin{align*}
	\braket{\psi , \ovl{\Gamma} \circ \Psi (f)}
	=
	\braket{\Gamma^\ast (\psi ) , \Psi (f) }
	=
	\braket{\mu_\psi^\oM , \Psi (f) }
	=
	\int_X f(x ) d\mu_\psi^\oM (x)
	=
	\braket{\psi , \gamma^\oM (f)},
\end{align*}
which implies $\gamma^\oM = \ovl{\Gamma} \circ \Psi \pp \ovl{\Gamma } .$

Since $\Gamma \ppeq \ovl{\Gamma}$ by Proposition~\ref{prop:w*extch},
this implies $\gamma^\oM \pp \Gamma $
and hence again by Proposition~\ref{prop:w*extch} we obtain
$\Gamma^\oM \pp \Gamma ,$
which completes the proof.
\qed

\section{Measurement space and types of statistical experiments} \label{app:se}

In this appendix, we discuss the relation between the general theory of \wstar-measurements
developed in this paper and the theory of (classical) statistical 
experiments~\cite{lecam1986asymptotic,torgersen1991comparison}.
It will be shown that there is a one-to-one correspondence between 
the statistical experiments with a given parameter set and 
the \wstar-measurements with the discrete classical space corresponding to the parameter set.
Conversely the class $ \Meas$ of \wstar-measurements for a given input space $E$
is shown to be regarded as a face-like subclass of the \lq\lq{}larger\rq\rq{} class of statistical experiments
with the parameter set $S_\ast (E) .$
The former statement indicates that our results on general \wstar-measurements are 
more general than the corresponding results for statistical experiments~\cite{lecam1986asymptotic,torgersen1991comparison},
while, according to the latter one, we can define a quantity or relation 
known in the general statistical experiments to \wstar-measurements 
by restricting the quantity or relation defined in the \lq\lq{}large\rq\rq{} class of statistical experiments to the class of \wstar-measurements.
We remark that these correspondences are also valid in the setup of 
quantum statistical experiments and post-processing completely positive channels
(\cite{kuramochi2018directedv1}, Section~2.2).

A (classical) statistical experiment is a parameterized family of probability measures.
Formal definition is as follows.
\begin{defi}[Statistical experiment] \label{def:se}
A triple $\bfE = (E, \Theta , \phthin)$ is called 
a (classical) statistical experiment if $E$ is a classical space with a Banach predual,
$\Theta \neq \varnothing$ is a set, and $\phthin \in S_\ast (E)^{\Theta}$ is a family of weakly$\ast$ continuous states indexed by $\Theta . $
$E$ and $\Theta$ are called the outcome (or sample) space and the parameter set of 
$\bfE  ,$ respectively.
For each set $\Theta \neq \varnothing$ the class of statistical experiments with the parameter set $\Theta$ is denoted by $\Exper (\Theta),$ which is a proper class.
\qed
\end{defi}
As in the case of \wstar-measurements or channels, we can define the post-processing (or randomization) order and equivalence relations for statistical experiments:

\begin{defi}[Post-processing relation for statistical experiments] \label{def:sepp}
For any statistical experiments $\bfE = (E , \Theta , \phthin)$ and $\bfF = (F , \Theta , \psthin)$ with the same parameter set $\Theta \nono ,$ we define the following binary relations $\pp$ and $\ppeq .$
\begin{enumerate}[(i)]
\item
$\bfE \pp \bfF$ ($\bfE$ is a post-processing of $\bfF$)
$:\defarrow$
there exists a channel $\Psi \in \Ch (E \to F)$ such that $\phth = \psth \circ \Psi$
for all $\theta \in \Theta .$
\item
$\bfE \ppeq \bfF$ ($\bfE$ is post-processing equivalent to $\bfF$)
$:\defarrow$ 
$\bfE \pp \bfF$ and $\bfF \pp \bfE .$
\end{enumerate}
The relations $\pp$ and $\ppeq$ are binary preorder and equivalence relations on $\Exper (\Theta) ,$ respectively. \qed
\end{defi}

An operational meaning of a statistical experiment $\bfE = (E , \Theta , \phthin)$ is that
the system is governed by the parameter $\theta \in \Theta$
and the system\rq{}s state is prepared to $\phth$ when $\theta$ prevails.
If $\bfE \pp \bfF$ (respectively, $\bfE \ppeq \bfF$),
then the information about $\theta$ when we can access $\bfE$
is at least as much as (respectively, the same as) the information 
when we can access to $\bfF .$

The class of statistical experiments $\Exper (\Theta )$ equipped with the post-processing relations can be identified with a class of \wstar-measurements in the following way.

\begin{prop} \label{prop:sem}
Let $\Theta \nono$ be a set.
For each statistical experiment $\bfE = (E , \Theta , \phthin)$ we define 
a channel $\Ga_\bfE \in \Chw (E \to \linf (\Theta))$ by
\[
	\Ga_\bfE (a) := \sum_\thin \phth (a) \delta_\theta
	\quad (a\in E),
\]
where $\linf (\Theta)$ denotes the classical space of bounded real-valued functions on $\Theta,$ $\delta_\theta := 1_{\{ \theta\}} ,$ and the summation is convergent in the weak$\ast$ topology, or equivalently the pointwise convergence topology, on $\linf (\Theta) .$
Then following assertions hold.
\begin{enumerate}[1.]
\item
The class-to-class map
\begin{equation}
	\Exper (\Theta) \ni \bfE \mapsto \Ga_\bfE \in \meas (\linf (\Theta))
	\label{eq:emmap}
\end{equation}
is bijective.
\item
For any statistical experiments $\bfE = (E , \Theta , \phthin) , \bfF =  (F , \Theta , \psthin)\in \Exper (\Theta) ,$
$\bfE \pp \bfF$ if and only if $\Ga_\bfE \pp \Ga_\bfF .$
\end{enumerate}
\end{prop}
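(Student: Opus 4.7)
The plan is to establish a direct class-to-class equivalence between $\Exper(\Theta)$ and $\meas(\linf(\Theta))$ by writing down the explicit inverse of the map $\bfE \mapsto \Ga_\bfE$, then transferring the post-processing relation through it with the help of Proposition~\ref{prop:postw*}.

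First I would verify that $\Ga_\bfE$ is a bona fide \wstar-channel. Unitality and positivity follow routinely from each $\phth$ being a state. The weak$\ast$ continuity is best obtained by exhibiting the predual map $(\Ga_\bfE)_\ast \colon \linf(\Theta)_\ast \to E_\ast$ explicitly as $\mu \mapsto \sum_\thin \mu(\theta) \phth$, with the series converging in norm in $E_\ast$ because $\|\phth\| = 1$ and the predual of $\linf(\Theta)$ is $\ell^1(\Theta)$. The defining series of $\Ga_\bfE(a)$ then converges weakly$\ast$ in $\linf(\Theta)$ to the bounded function $\theta \mapsto \phth(a)$, confirming that $\Ga_\bfE \in \Chw(E \to \linf(\Theta))$.

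For bijectivity of \eqref{eq:emmap}, injectivity is immediate since $\Ga_\bfE$ determines both its outcome space $E$ and the family $\phthin$ through $\phth(a) = \Ga_\bfE(a)(\theta)$. For surjectivity, given any $\Psi \in \meas(\linf(\Theta))$ with classical outcome space $E$ having a Banach predual, I would set $\phth(a) := \Psi(a)(\theta)$ and check that each $\phth$ is a weakly$\ast$ continuous state. Linearity, positivity, and unitality are clear from the corresponding properties of $\Psi$, and weak$\ast$ continuity of $\phth$ is inherited from that of $\Psi$ because point-evaluation at $\theta$ is precisely the predual element $\delta_\theta \in \ell^1(\Theta)$. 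The resulting $\bfE = (E, \Theta, \phthin)$ evidently satisfies $\Ga_\bfE = \Psi$.

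For the order equivalence, the forward direction is a one-line computation: if $\phth = \psth \circ \Psi$ for some $\Psi \in \Ch(E \to F)$, then $\Ga_\bfE = \Ga_\bfF \circ \Psi$, so $\Ga_\bfE \pp \Ga_\bfF$. The reverse direction is where Proposition~\ref{prop:postw*} is essential: applied to the pair of \wstar-measurements $\Ga_\bfE \pp \Ga_\bfF$, it yields a \wstar-channel $\Lambda \in \Chw(E \to F)$ with $\Ga_\bfE = \Ga_\bfF \circ \Lambda$, from which evaluation at $\theta \in \Theta$ gives $\phth = \psth \circ \Lambda$, hence $\bfE \pp \bfF$. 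I do not anticipate any serious obstacle; the only point requiring care is to pin down the weak$\ast$ continuity claims precisely, since these are the technical content that identifies the theory of statistical experiments on $\Theta$ with the \wstar-measurement framework having input space $\linf(\Theta)$.
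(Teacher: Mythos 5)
Your proof is correct and follows essentially the same route as the paper: an explicit inverse for bijectivity, and a direct transfer of the post-processing witness by evaluation at each $\theta .$ One small remark: your appeal to Proposition~\ref{prop:postw*} in the reverse direction is unnecessary (not \lq\lq{}essential\rq\rq{} as you claim) --- Definition~\ref{def:sepp} only asks for a channel $\Psi \in \Ch (E \to F) ,$ not a \wstar-channel, so the channel witnessing $\Ga_\bfE \pp \Ga_\bfF$ already yields $\phth = \psth \circ \Psi$ directly, which is exactly what the paper does.
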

\begin{proof}
\begin{enumerate}[1.]
\item
Take statistical experiments $\bfE = (E , \Theta , \phthin) , \bfF = (F , \Theta , \psthin) \in \Exper (\Theta ) $ and suppose $\Ga_\bfE = \Ga_\bfF .$
Then $E = F$ and 
\[
	\phth (a) = \Ga_\bfE (a) (\theta) = \Ga_\bfF (a) (\theta) = \psth (a)
	\quad (a \in E ; \thin) ,
\]
which implies $\bfE = \bfF .$
Thus \eqref{eq:emmap} is injective.
If $\Ga \in \Chw (E \to \linf(\Theta)) $ is a \wstar-measurement, 
then for each $\thin ,$ $\phth (a) := \Ga (a) (\theta)$ $(a \in E)$ is a weakly$\ast$ continuous state and 
\[
	\Ga (a) = \sum_\thin \phth (a) \delta_\theta =\Ga_\bfE (a) 
	\quad
	(a\in E),
\]
where $\bfE = (E , \Theta , \phthin) \in \Exper (\Theta) .$
Therefore \eqref{eq:emmap} is surjective.
\item
To establish the \lq\lq{}only if\rq\rq{} part of the claim,
suppose $\bfE  \pp \bfF $
and take a channel $\Psi \in \Ch (E \to F)$ such that $\phth = \psth \circ \Psi$ $(\thin) .$
Then 
\[
	\Ga_\bfE (a) 
	= \sum_\thin \phth (a) \delta_\theta 
	= \sum_\thin \psth \circ \Psi (a) \delta_\theta 
	= \Ga_\bfF \circ \Psi (a) 
	\quad
	(a \in E)
\]
which implies $\Ga_\bfE \pp \Ga_\bfF .$
Conversely if $\Ga_\bfE = \Ga_\bfF \circ \Phi$ for some $\Phi \in \Ch (E\to F),$
then by using the injectivity of \eqref{eq:emmap}, we have 
$\phth = \psth \circ \Phi$ $(\thin) ,$ which proves the \lq\lq{}if\rq\rq{} part of the claim.
\qedhere
\end{enumerate}
\end{proof}
By Proposition~\ref{prop:sem} 
we can define the convex combination $\la \bfE \oplus \ola \bfF$ $(\la \in [0,1])$ of two statistical experiments $\bfE , \bfF \in \Exper (\Theta)$ by 
$\Ga_{\la \bfE \oplus \ola \bfF} := \la \Ga_\bfE \oplus \ola \Ga_\bfF .$
If $\bfE = (E , \Theta , \phthin)$ and $\bfF = (F , \Theta , \psthin) ,$
the convex combination is given by
\[
	\la \bfE \oplus \ola \bfF 
	= (E \oplus F  , \Theta , (\la \phth \oplus \ola \psth)_\thin) .
\]
Furthermore, we can define the set $\condi (\Theta)$ of post-processing equivalence classes of statistical experiments by $\condi (\Theta) := \mathfrak{M}(\linf (\Theta)),$
where for each statistical experiment $\bfE \in \Exper (\Theta)$ the corresponding equivalence class is defined by $[\bfE] := [\Ga_\bfE] \in \mathfrak{M} (\linf (\Theta)) .$
In \cite{lecam1986asymptotic,torgersen1991comparison} the equivalence class $[\bfE] \in \condi (\Theta)$ is called the type of $\bfE .$

The weak topology on $\condi(\Theta) = \mathfrak{M} (\linf (\Theta))$ in our sense coincides with the weak topology on $\condi (\Theta)$ in the sense of \cite{lecam1986asymptotic,torgersen1991comparison},
which can be seen from Theorem~7.4.15 of \cite{torgersen1991comparison}.

We next show that the class of \wstar-measurements can be regarded as a special class of 
statistical experiments.
\begin{prop} \label{prop:msmap}
Let $E$ be an order unit Banach space with a Banach predual $E_\ast . $
Define 
\begin{equation}
	\Meas \ni \Ga \mapsto \bfE_\Ga \in \Exper (S_\ast (E))
	\label{eq:msmap}
\end{equation}
by $\bfE_\Ga := (F , S_\ast (E) , (\phi \circ \Ga )_{\phi \in S_\ast (E) })$ for 
$\Ga \in \Chw (F \to E) .$
Then the following assertions hold.
\begin{enumerate}[1.]
\item
The map \eqref{eq:msmap} is injective and affine in the following sense:
\begin{equation}
	\bfE_{\la \Ga \oplus \ola \La} = 
	\la \bfE_\Ga \oplus \ola \bfE_\La
	\quad
	(\la \in [0,1] ; \Ga , \La \in \Meas) .
	\label{eq:exaff}
\end{equation}
\item
A statistical experiment $\bfE = (F , S_\ast (E) , (\xi_\phi)_{\phi \in S_\ast (E)}) \in \Exper (S_\ast (E))$ is in the image of \eqref{eq:msmap} if and only if the map
\begin{equation}
	S_\ast (E) \ni \phi \mapsto \xi_\phi \in S_\ast (F)
	\label{eq:SSaff}
\end{equation}
is affine. $\bfE$ is called affine if the map \eqref{eq:SSaff} is affine.
\item
The image of \eqref{eq:msmap} is a face of $\Exper (S_\ast (E))$ in the following sense:
for any $\la \in (0,1) ,$ and any statistical experiments 
$\bfE = (F , S_\ast (E) , (\xi_\phi)_{\phi \in S_\ast (E)}) $
and 
$\bfF = (G , S_\ast (E) , (\eta_\phi)_{\phi \in S_\ast (E)}) , $
if $\la \bfE \oplus \ola \bfF $ is in the image of \eqref{eq:msmap},
then so are $\bfE$ and $\bfF .$
\item
For any \wstar-measurements $\Ga  \in \Chw (F \to E)$ and $\La  \in \Chw (G \to E),$ $\Ga \pp \La$ if and only if $\bfE_\Ga \pp \bfE_\La .$
\end{enumerate}
\end{prop}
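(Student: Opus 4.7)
The plan is to derive all four claims from one central observation, already laid out in Section~\ref{subsec:ouB} and before Definition~\ref{def:ppch}: weakly$\ast$ continuous channels $\Ga \in \Chw(F\to E)$ correspond bijectively with affine maps $\Ga_\ast \colon S_\ast(E) \to S_\ast(F)$ via $\Ga_\ast(\phi) = \phi\circ\Ga .$ Under this correspondence, the statistical experiment $\bfE_\Ga$ is simply the graph of $\Ga_\ast$ viewed as an indexed family, so claims~1--3 become tautological translations, and claim~4 reduces to the separating-functional argument used for claim~1. Throughout I will repeatedly use the base-norm fact $E_\ast = \lin S_\ast(E)$ (recalled in Section~\ref{subsec:ouB}) together with the fact that $E_\ast$ separates points of $E$ (since $E \hookrightarrow E_\ast^\ast$ isometrically).

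For claim~1, injectivity: if $\bfE_\Ga = \bfE_\La$ then the outcome spaces agree and $\phi\circ\Ga = \phi\circ\La$ for every $\phi \in S_\ast(E) ;$ by linearity this extends to all $\psi \in \lin S_\ast(E) = E_\ast ,$ and the separation property forces $\Ga(a)=\La(a)$ for every $a \in F .$ Affinity is a direct unpacking: for $a\oplus b \in F\oplus G$ and $\phi \in S_\ast(E) ,$
\[
\phi\circ(\la\Ga \oplus \ola\La)(a\oplus b) = \la\,(\phi\circ\Ga)(a) + \ola\,(\phi\circ\La)(b),
\]
which is exactly the value of $\la(\phi\circ\Ga) \oplus \ola(\phi\circ\La)$ on $a\oplus b ,$ so $\bfE_{\la\Ga\oplus\ola\La} = \la\bfE_\Ga \oplus \ola\bfE_\La .$

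For claim~2, one direction is immediate: if $\bfE = \bfE_\Ga$ then $\xi_\phi = \phi\circ\Ga$ is affine in $\phi .$ Conversely, given an affine map $S_\ast(E) \ni \phi \mapsto \xi_\phi \in S_\ast(F) ,$ the correspondence recalled above produces a unique \wstar-channel $\Ga \in \Chw(F\to E)$ satisfying $\xi_\phi = \phi\circ\Ga$ for all $\phi \in S_\ast(E) ,$ so $\bfE = \bfE_\Ga .$ For claim~3, assume $\la\bfE \oplus \ola\bfF$ lies in the image, so $\phi \mapsto \la\xi_\phi \oplus \ola\eta_\phi$ is affine. For $\phi_1,\phi_2 \in S_\ast(E)$ and $\mu \in [0,1] ,$ set $\phi_3 := \mu\phi_1 + (1-\mu)\phi_2 .$ Affinity gives
\[
\la\xi_{\phi_3} \oplus \ola\eta_{\phi_3} = \la(\mu\xi_{\phi_1}+(1-\mu)\xi_{\phi_2}) \oplus \ola(\mu\eta_{\phi_1}+(1-\mu)\eta_{\phi_2}),
\]
and comparing the two direct-sum components (legitimate because $\la,\ola > 0$) yields affinity of $\phi\mapsto\xi_\phi$ and $\phi\mapsto\eta_\phi$ separately, whence $\bfE$ and $\bfF$ are in the image by claim~2.

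Claim~4 is the meat: the forward implication is immediate from $\Ga = \La\circ\Psi \implies \phi\circ\Ga = (\phi\circ\La)\circ\Psi$ for every $\phi \in S_\ast(E) ,$ so the very same $\Psi \in \Ch(F\to G)$ witnesses $\bfE_\Ga \pp \bfE_\La$ in the sense of Definition~\ref{def:sepp}. Conversely, if $\Psi \in \Ch(F\to G)$ satisfies $\phi\circ\Ga = (\phi\circ\La)\circ\Psi$ for every $\phi \in S_\ast(E) ,$ then $\braket{\phi, \Ga(a) - \La\circ\Psi(a)} = 0$ for every $\phi \in S_\ast(E)$ and every $a\in F ;$ the separation argument from claim~1 gives $\Ga = \La\circ\Psi ,$ so $\Ga \pp \La .$ There is no serious obstacle here: the main point to handle with care is that in claim~4 the statistical-experiment post-processing comes with a channel $\Psi$ between the outcome spaces in the correct direction ($F$ to $G ,$ matching Definition~\ref{def:ppch}), and in claim~2 one must cite the predual correspondence rather than attempting a direct construction of $\Ga .$ With these references in hand, each of the four claims becomes a short formal verification.
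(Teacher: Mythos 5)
Your proof is correct and follows essentially the same route as the paper: the same separation argument ($E_\ast=\lin S_\ast(E)$ plus $E=(E_\ast)^\ast$) for injectivity and for claim~4, the same componentwise computation for affinity and for the face property, and the same appeal to the predual correspondence between affine maps $S_\ast(E)\to S_\ast(F)$ and \wstar-channels $F\to E$ for claim~2 (the paper merely re-derives that correspondence by extending to a bounded linear map on $E_\ast$ and dualizing, rather than citing it). No gaps.
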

\begin{proof}
\begin{enumerate}[1.]
\item
For \wstar-measurements $\Ga , \La \in \Meas,$ suppose $\bfE_\Ga = \bfE_\La .$
Then $\Ga$ and $\La$ have the same outcome classical space $F$ with a Banach predual and $\phi \circ \Ga = \phi \circ \La$ for all $\phi \in S_\ast (E) .$
Since $S_\ast (E)$ generates $E_\ast , $ this implies $\Ga = \La .$

For any $\la \in [0,1]$ and any \wstar-measurements 
$ \Ga \in \Chw (F\to E)$ and $\La \in \Chw (G \to E)$ we have
\begin{align*}
	\bfE_{\la \Ga \oplus \ola \La}
	&=
	(F \oplus G , S_\ast (E) , (\phi\circ (\la \Ga \oplus \ola \La))_{\phi \in S_\ast (E)})
	\\
	&=
	(F \oplus G , S_\ast (E) , (\la\phi\circ   \Ga \oplus  \ola \phi \circ  \La)_{\phi \in S_\ast (E)})
	\\
	&= 
	\la \bfE_\Ga \oplus \ola \bfE_\La ,
\end{align*}
which proves \eqref{eq:exaff}.
\item
If $\bfE = \bfE_\Ga$ for some \wstar-measurement $\Ga \in \Meas ,$
we can readily see that $\bfE$ is affine.
Conversely suppose that $\bfE$ is affine.
Then the map \eqref{eq:SSaff} is uniquely extended to a bounded linear map
$\Ga_\ast \colon E_\ast \to F_\ast .$
If we define $\Ga \colon F \to E$ by the dual map of $\Ga_\ast ,$
it is easy to show that $\Ga$ is a \wstar-channel and $\bfE = \bfE_\Ga .$
\item
By the claim~2, the assumption implies that the map 
\[
	S_\ast (E) \ni \phi \mapsto 
	\la \xi_\phi \oplus \ola \eta_\phi \in S_\ast (F\oplus G)
\]
is affine.
Then we can easily see that the maps
\[
	S_\ast (E) \ni \phi \mapsto \xi_\phi \in S_\ast (F) ,\quad
	S_\ast (E) \ni \phi \mapsto \eta_\phi \in S_\ast (G)
\]
are also affine, and therefore, again by the claim~2, $\bfE$ and $\bfF$ are in the image of \eqref{eq:msmap}.
\item
The claim readily follows from the definitions of the post-processing orders on $\Meas$
and $\Exper (S_\ast (E))$ and from the injectivity of \eqref{eq:msmap}.
\qedhere
\end{enumerate}
\end{proof}
The affine injection \eqref{eq:msmap} induces the following affine injection for the sets of equivalence classes:
\begin{equation}
	\ME \ni [\Ga] \mapsto [\bfE_\Ga] \in \condi (S_\ast (E)) .
	\label{eq:MEE}
\end{equation}
The image of \eqref{eq:MEE} is a face of $\condi (S_\ast (E)) .$
As for the weak topology, we have
\begin{prop} \label{prop:compactimage}
The map \eqref{eq:MEE} is continuous with respect to the weak topologies on $\ME$ and on $\condi (S_\ast (E)) ,$ respectively,
and hence the image of \eqref{eq:MEE} is a compact face of $\condi (S_\ast (E)) .$
\end{prop}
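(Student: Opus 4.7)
The approach is two-pronged: for weak continuity, I will exploit the duality $\linf(S_\ast(E))_\ast = \ell^1(S_\ast(E))$ to rewrite every generating gain functional on $\condi(S_\ast(E))$ as a gain functional on $\ME,$ reducing the claim to the definition of the weak topology on $\ME;$ for the face condition, I will lift the post-processing equivalence via Proposition~\ref{prop:postw*} to a concrete \wstar-channel and read off explicit affine representatives.

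For continuity, let $\E' = (\varphi_x)_{x\in X}$ be an arbitrary \wstar-family on $\linf(S_\ast(E)),$ so that each $\varphi_x$ lies in the predual $\ell^1(S_\ast(E))$ and satisfies $\sum_{\phi \in S_\ast(E)}|\varphi_x(\phi)| < \infty.$ Since each $\phi \in S_\ast(E)$ has unit norm in $E_\ast,$ the series $\psi_x := \sum_{\phi \in S_\ast(E)} \varphi_x(\phi)\,\phi$ converges absolutely in the Banach space $E_\ast,$ so $\E'' := (\psi_x)_{x\in X}$ is a \wstar-family on $E.$ For any \wstar-measurement $\Ga \in \Chw(F \to E)$ and any $\oN \in \evm(X;F),$ direct computation using $\Ga_{\bfE_\Ga}(a) = \sum_\phi \phi(\Ga(a))\delta_\phi$ and Fubini (justified by absolute convergence) gives
\[
	\sum_{x\in X}\braket{\varphi_x, \Ga_{\bfE_\Ga}(\oN(x))}
	= \sum_{x\in X}\sum_{\phi \in S_\ast(E)} \varphi_x(\phi)\, \phi(\Ga(\oN(x)))
	= \sum_{x\in X}\braket{\psi_x, \Ga(\oN(x))} .
\]
Taking the supremum over $\oN$ yields $\Pg(\E'; [\bfE_\Ga]) = \Pg(\E''; [\Ga]),$ which is weakly continuous in $[\Ga]$ by definition, establishing continuity of \eqref{eq:MEE}. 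Compactness of the image is then immediate from Theorem~\ref{thm:compact} and the Hausdorffness of $\condi(S_\ast(E)).$

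For the face property, suppose $\la \in (0,1)$ and $\omega_1,\omega_2 \in \condi(S_\ast(E))$ satisfy $\la\omega_1 + \ola\omega_2 = [\bfE_\Ga]$ for some \wstar-measurement $\Ga \in \Chw(F \to E),$ and choose representatives $\bfF_i = (G_i, S_\ast(E), (\eta^i_\phi)_{\phi \in S_\ast(E)})$ of $\omega_i.$ The equation translates to $\la \bfF_1 \oplus \ola\bfF_2 \ppeq \bfE_\Ga,$ which, carried over to the associated \wstar-measurements via Proposition~\ref{prop:sem} and fed into Proposition~\ref{prop:postw*}, supplies a \wstar-channel $\Gamma \in \Chw(G_1 \oplus G_2 \to F)$ with $\la \eta^1_\phi \oplus \ola\eta^2_\phi = \phi \circ \Ga \circ \Gamma$ for every $\phi \in S_\ast(E).$ Evaluating at $u_{G_1}\oplus 0$ yields $\phi(\Ga(\Gamma(u_{G_1}\oplus 0))) = \la$ for all $\phi \in S_\ast(E);$ since $S_\ast(E)$ separates points of $E,$ this forces $\Ga(\Gamma(u_{G_1}\oplus 0)) = \la u_E.$ Consequently $\Ga_1(a) := \la^{-1}\Ga(\Gamma(a\oplus 0))$ defines a \wstar-measurement $\Ga_1 \in \Chw(G_1 \to E)$ with $\phi \circ \Ga_1 = \eta^1_\phi,$ so that $\bfE_{\Ga_1} = \bfF_1$ and $\omega_1 = [\bfF_1]$ lies in the image of \eqref{eq:MEE}. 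The argument for $\omega_2$ is symmetric, with $\ola$ in place of $\la.$

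The principal, though modest, difficulties are verifying absolute convergence of $\sum_\phi \varphi_x(\phi)\,\phi$ in $E_\ast$ and the legitimacy of the Fubini step in the continuity calculation, ensuring that the channel $\Gamma$ furnished by Proposition~\ref{prop:postw*} is truly weakly$\ast$ continuous (so that $\Ga_1$ is a bona fide \wstar-measurement), and extracting the identity $\Ga(\Gamma(u_{G_1}\oplus 0)) = \la u_E$ from the separation of $E$ by $S_\ast(E),$ which is what delivers the unitality of $\Ga_1.$
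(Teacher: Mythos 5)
Your proof is correct, and the continuity argument is essentially the paper's own: you pull back each generating gain functional on $\condi (S_\ast (E))$ through the $\ell^1$--$\ell^\infty$ duality to the gain functional of the push-forward family $(\psi_x)_{x\in X}$ on $E ,$ exactly as in the paper (the paper phrases it with positive ensembles $q_x \geq 0 ,$ you with general \wstar-families; by Proposition~\ref{prop:gaineasy}.\ref{i:gaineasy3} these generate the same topology, so this is immaterial). Where you go beyond the paper is the face property: the paper asserts it just before the proposition by appeal to Proposition~\ref{prop:msmap}, whose claim~3 is stated at the level of experiments rather than equivalence classes, whereas you supply the missing lifting step --- using Proposition~\ref{prop:postw*} to extract a \wstar-channel $\Gamma ,$ deducing $\Ga (\Gamma (u_{G_1} \oplus 0)) = \la u_E$ from the separating property of $S_\ast (E) ,$ and exhibiting the explicit affine representative $\Ga_1 .$ That extra argument is sound and closes a detail the paper leaves implicit.
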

\begin{proof}
Let $(\vphx)_\xin \in (\linf (S_\ast (E))_\ast)^X $ be an ensemble.
Then each $\vphx$ corresponds to $q_x \in \ell^1 (S_\ast (E))$ such that
\[
	\braket{ \vphx , f} = \sum_{\psi \in S_\ast (E)} f(\psi ) q_x (\psi) 
	\quad
	(f \in \linf (S_\ast (E)))
\]
and $q_x(\psi) \geq 0$ $(\psi \in S_\ast (E)) ,$ where $\ell^1 (\Omega)$ denotes the set of summable real functions on a set $\Omega$ equipped with the $\ell^1$-norm
$\| q \|_1 := \sum_{\omega \in \Omega} | q(\omega) | .$
Then for any \wstar-measurement $\Ga \in \Chw (F \to E)$ and any EVM $\oM   \in \evm (X; F) ,$ we have
\begin{align*}
	\sum_{\xin} \braket{\vphx , \Ga_{\bfE_\Ga} ( \oM (x) )}
	&=
	\sum_\xin \sum_{\psi \in S_\ast (E)}
	q_x (\psi) \braket{\psi , \Ga (\oM (x))}
	\\ 
	&=
	\sum_\xin 
	\left\langle 
	\sum_{\psi \in S_\ast (E)} q_x (\psi) \psi , \, \Ga (\oM(x)) 
	\right\rangle .
\end{align*}
Note that $\sum_{\psi \in S_\ast (E)} q_x (\psi) \psi$ makes sense since 
the summation is at most countable and absolutely convergent with respect to the norm on $E_\ast .$
This implies 
\[
	\Pg (\vphxin ; \Ga_{\bfE_\Ga})
	=
	\Pg (\E ; \Ga ) ,
\]
where 
\[
	\E := \left( \sum_{\psi \in S_\ast (E)} q_x (\psi) \psi  \right)_\xin 
\]
is an ensemble on $E .$
Therefore 
\[
	\ME \ni [\Ga] \mapsto \Pg ((\vphx)_\xin ; \Ga_{\bfE_\Ga}) \in \realn
\]
is weakly continuous on $\ME$ for any ensemble $(\vphx)_\xin ,$
which implies the continuity of
\eqref{eq:MEE}.
\end{proof}
By Proposition~\ref{prop:compactimage}, the measurement space $\ME$ can be regarded as a compact face of the set $\condi (S_\ast (E)) .$
The above proof also shows that the restriction of any gain functional on $\condi (S_\ast (E))$ restricted to (the image of) $\ME$ is a gain functional on $\ME .$
We note that this does not imply that the theory of measurements reduces to that of statistical experiments since in general we cannot obtain all the information about a mathematical structure from another larger structure into which the structure in consideration is embedded.

Conversely, as we have seen in Proposition~\ref{prop:sem}, the statistical experiment
is a special kind of \wstar-measurements. 
Moreover, in the case of statistical experiments, the notions of maximal and simulation irreducible measurements are trivial.
Indeed for $\mathfrak{M}( \linf (\Theta)) = \condi (\Theta) ,$ the maximum element $[\id_{\linf (\Theta)}]$ is the unique maximal, and hence simulation irreducible, measurement and the results in Sections~\ref{subsec:maxmeas}, \ref{subsec:sirr}, and \ref{subsec:irrmain} are trivial and not interesting in this case.

\section{Proof of Theorem~\ref{thm:vnm}} \label{app:vnm}
In this section we prove Theorem~\ref{thm:vnm} in the line of \cite{DUBRA2004118}.

The proof of the following lemma is the same as in 
\cite{DUBRA2004118} and omitted.

\begin{lemm}[\cite{DUBRA2004118}, Lemmas~1 and 2]
\label{lemm:vnm2}
Let $S$ be a compact convex structure and let $\preceq$ be a preorder on $S$
satisfying the independence and continuity axioms of Theorem~\ref{thm:vnm}.
Then the following assertions hold
\begin{enumerate}[1.]
\item
For any $\omega , \nu, \mu \in S$ and any $\la \in (0,1] ,$
the cancellation law
\[
	\la \omega + \ola \mu \preceq \la \nu + \ola \mu
	\implies
	\omega \preceq \nu 
\]
holds.
\item
Define
\begin{equation}
	\Cprec
	:= \set{
	\la (\nu - \omega) \in \Ac (S)^\ast | 
	\la \in (0,\infty) ; \omega , \nu \in S ; \omega \preceq \nu
	}. 
	\label{eq:Cprec}
\end{equation}
Then $\Cprec$ is a convex cone in $\Ac (S)^\ast .$
Moreover for any $\omega , \nu \in S ,$
\[
	\omega \preceq \nu \iff
	\nu -\omega \in \Cprec 
\]
holds.
\end{enumerate}
\end{lemm}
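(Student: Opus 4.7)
The plan is to establish Part~1 (cancellation) first and then derive Part~2 from it: the convex cone properties of $\Cprec$ need only independence and transitivity, while the biconditional characterization reduces to cancellation after simple algebra in $\Ac(S)^\ast$. The main obstacle is Part~1, because the independence axiom is stated as a one-sided implication, so one cannot simply invert the map $x \mapsto \la x + \ola\mu$; a bootstrapping argument using continuity is required.

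For the cancellation law, the case $\la = 1$ is vacuous, so assume $\la \in (0,1)$. The key trick is to apply independence to the hypothesis $\la\omega + \ola\mu \preceq \la\nu + \ola\mu$ twice with the same weight $1-r$ but with two different fixed elements $\omega$ and $\nu$, yielding
\[
\bigl((1-r)\la + r\bigr)\omega + (1-r)\ola\mu \preceq (1-r)\la\,\nu + r\omega + (1-r)\ola\mu
\]
and
\[
(1-r)\la\,\omega + r\nu + (1-r)\ola\mu \preceq \bigl((1-r)\la + r\bigr)\nu + (1-r)\ola\mu .
\]
The crucial observation is that with the specific choice $r = \la/(1+\la)$ the right-hand side of the first inequality coincides identically with the left-hand side of the second, so transitivity yields
\[
\la'\omega + (1-\la')\mu \preceq \la'\nu + (1-\la')\mu \qquad \text{with} \qquad \la' = \frac{2\la}{1+\la} > \la .
\]
Iterating via $\la_{n+1} = 2\la_n/(1+\la_n)$ produces a monotone increasing sequence whose only fixed point in $(0,1]$ is $1$, so $\la_n \uparrow 1$. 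Since $\la_n\omega + (1-\la_n)\mu \to \omega$ and $\la_n\nu + (1-\la_n)\mu \to \nu$, the continuity axiom (closedness of $\preceq$) gives $\omega \preceq \nu$ in the limit.

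For Part~2, the cone properties of $\Cprec$ are routine: $0 = 1 \cdot (\omega - \omega) \in \Cprec$ by reflexivity, closure under positive scalars is immediate, and closure under addition uses the identity $\la_1(\nu_1 - \omega_1) + \la_2(\nu_2 - \omega_2) = \la[(p\nu_1 + (1-p)\nu_2) - (p\omega_1 + (1-p)\omega_2)]$ with $\la = \la_1 + \la_2$ and $p = \la_1/\la$, combined with two applications of independence (fixing first $\omega_2$, then $\nu_1$) and transitivity to get $p\omega_1 + (1-p)\omega_2 \preceq p\nu_1 + (1-p)\nu_2$. For the biconditional, the forward direction is immediate with scalar $1$. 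For the reverse, given $\nu - \omega = \la(\nu' - \omega')$ with $\omega' \preceq \nu'$ and $\la > 0$, rewrite in $\Ac(S)^\ast$ as $\nu + \la\omega' = \omega + \la\nu'$ and divide by $1+\la$ to obtain the genuine convex combination identity $t\nu + (1-t)\omega' = t\omega + (1-t)\nu'$ with $t = 1/(1+\la) \in (0,1)$. Applying independence to $\omega' \preceq \nu'$ with fixed element $\omega$ and weight $1-t$ gives $t\omega + (1-t)\omega' \preceq t\omega + (1-t)\nu' = t\nu + (1-t)\omega'$, and the cancellation law of Part~1 (with $\mu = \omega'$) concludes $\omega \preceq \nu$.
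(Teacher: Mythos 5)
Your proof is correct and is essentially the argument of Dubra--Maccheroni--Ok that the paper cites (the paper itself omits the proof): the doubling iteration $\la_{n+1}=2\la_n/(1+\la_n)$ combined with the closedness of $\preceq$ for the cancellation law, and the two-step independence-plus-transitivity argument for the cone property and the biconditional. All applications of the independence axiom use weights in $(0,1)$ and the identification of formal convex combinations with points of $S\subset\Ac(S)^\ast$ is used correctly, so there is nothing to fix.
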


\begin{lemm}[\cite{DUBRA2004118}, Claim~1] \label{lemm:vnm3}
Let $S$ be a compact convex structure and let $\preceq$ be a preorder on $S$
satisfying the independence and continuity axioms of Theorem~\ref{thm:vnm}.
Then $\Cprec$ defined by \eqref{eq:Cprec} is weakly$\ast$ closed.
\end{lemm}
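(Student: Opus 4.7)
Following \cite{DUBRA2004118}, the plan is to apply the Krein--\v{S}mulian theorem and reduce to showing that the ball $(\Cprec)_r$ is weakly$\ast$ closed for each fixed $r > 0 .$ So I consider a net $\psi_i = \lambda_i (\nu_i - \omega_i) \in (\Cprec)_r$ with $\lambda_i \in (0,\infty) ,$ $\omega_i , \nu_i \in S ,$ $\omega_i \preceq \nu_i ,$ converging weakly$\ast$ to some $\psi \in \Ac (S)^\ast ,$ and aim to show $\psi \in \Cprec .$ The main auxiliary object is
\[
	D := \set{\nu - \omega \in \Ac (S)^\ast | \omega , \nu \in S ,\, \omega \preceq \nu} ,
\]
which is weakly$\ast$ compact as the image under the continuous affine map $(\omega,\nu) \mapsto \nu - \omega$ of the closed subset $\set{(\omega, \nu) \in S \times S | \omega \preceq \nu}$ of the compact product $S \times S$ (closedness uses the continuity axiom), and convex: for $d_j = \nu_j - \omega_j \in D$ with $\omega_j \preceq \nu_j$ $(j=1,2) ,$ two applications of the independence axiom give $\tfrac{1}{2}\omega_1 + \tfrac{1}{2}\omega_2 \preceq \tfrac{1}{2}\nu_1 + \tfrac{1}{2}\omega_2 \preceq \tfrac{1}{2}\nu_1 + \tfrac{1}{2}\nu_2 ,$ whence transitivity yields $\tfrac{1}{2}(d_1 + d_2) \in D .$ Note that $\Cprec = \bigcup_{\lambda \geq 0} \lambda D ,$ and by Lemma~\ref{lemm:vnm2} it is a convex cone.

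Next I would split into cases on the behaviour of $(\lambda_i) .$ If some subnet is bounded I pass to it and use weak$\ast$ compactness of $S$ together with a bounded interval to extract further so that $\lambda_i \to \lambda \in [0,\infty) ,$ $\omega_i \to \omega \in S ,$ $\nu_i \to \nu \in S ;$ the continuity axiom closes the relation $\omega_i \preceq \nu_i$ into $\omega \preceq \nu ,$ so $\psi = \lambda (\nu - \omega) \in \Cprec .$ Otherwise $\lambda_i \to \infty ,$ which forces $\| \nu_i - \omega_i \| \leq r / \lambda_i \to 0$ and, along a subnet, both $\omega_i$ and $\nu_i$ converge to a common state $\omega^\ast \in S .$ For this regime I would exploit the base-norm structure of $\Ac (S)^\ast$ relative to the base $S$ (Proposition~\ref{prop:convst}.\ref{i:convst1}): since $\braket{\psi_i , 1_S} = 0$ and $\| \psi_i \| \leq r ,$ the base-norm infimum yields (up to $o(1)$) a second representation $\psi_i = \alpha_i (\phi_i^1 - \phi_i^2)$ with $\phi_i^1 , \phi_i^2 \in S$ and $\alpha_i \leq r/2 + o(1)$ uniformly bounded. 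Following \cite{DUBRA2004118}, I would combine the two representations through the identity $\lambda_i \omega_i + \alpha_i \phi_i^1 = \lambda_i \nu_i + \alpha_i \phi_i^2$ (a positive functional whose normalisation is a common state in $S$), and use the independence axiom to transfer the order relation $\omega_i \preceq \nu_i$ onto convex mixtures of $\{\omega_i , \nu_i , \phi_i^1 , \phi_i^2\} ,$ producing an ordered representation of $\psi_i$ with controllable scale; a weak$\ast$ subnet limit, together with compactness of $S$ and closedness of $\preceq ,$ should then deliver an ordered pair $\phi^2 \preceq \phi^1$ in $S$ and $\alpha \in [0, r/2]$ with $\psi = \alpha (\phi^1 - \phi^2) \in \Cprec .$

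\textbf{Main obstacle.} The critical step is the unbounded case $\lambda_i \to \infty ,$ in which the vector parts $\nu_i - \omega_i$ collapse to zero in norm while the scalars blow up; neither compactness nor the continuity axiom alone produces any ordered bounded-scale representative of the limit $\psi .$ Indeed, the cone generated by a weakly$\ast$ compact convex set containing the origin need not be weakly$\ast$ closed in general, so closedness of $\Cprec$ cannot follow from compactness of $D$ alone: it genuinely requires the independence axiom, which enforces the rigidity needed to close up the cone at its apex. Converting this rigidity into a bounded-scale ordered decomposition of the limit $\psi$ — marrying the base-norm optimal decomposition (which bounds the scale at the cost of losing the order) with the independence axiom (which transports the order along convex mixtures) — is the technical heart of the proof and the step where the argument of \cite{DUBRA2004118} must be carefully transcribed into the general compact-convex-structure setting.
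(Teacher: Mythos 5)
Your reduction via the Krein--\v{S}mulian theorem and your diagnosis of where the difficulty lies (the case $\lambda_i \to \infty$) both match the paper, but the proposal does not actually close that case: the passage ``combine the two representations \dots should then deliver an ordered pair'' is a placeholder, not an argument, so as written there is a genuine gap at exactly the step you yourself identify as the technical heart.

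The gap closes much more easily than your sketch suggests, and the key is a tool you already quote but do not deploy. By Proposition~II.1.14 of \cite{alfsen1971compact} (the exact base-norm decomposition for functionals annihilating $1_S$), each $\psi_i$ with $\braket{\psi_i , 1_S}=0$ can be rewritten as $\psi_i = \tfrac{\| \psi_i \|}{2}(\nu_i - \omega_i)$ with $\omega_i , \nu_i \in S$, so the scalar is automatically bounded by $r/2$. Your worry that this re-representation ``loses the order'' is unfounded: since $\psi_i \in \Cprec$ and $\Cprec$ is a cone, $\nu_i - \omega_i = \tfrac{2}{\| \psi_i \|}\psi_i \in \Cprec$, and Lemma~\ref{lemm:vnm2}.2 (the equivalence $\omega \preceq \nu \iff \nu - \omega \in \Cprec$) then gives $\omega_i \preceq \nu_i$ for the \emph{new} pair with no further work. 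After that, compactness of $S$, boundedness of $\| \psi_i \|$, and the continuity axiom finish the proof exactly as in your bounded case; there is no need to juggle two simultaneous representations or to invoke the independence axiom again at this stage (it has already done its work inside Lemma~\ref{lemm:vnm2}). So the approach is salvageable, but you must replace the sketched ``marriage'' of representations by the norm-exact decomposition plus Lemma~\ref{lemm:vnm2}.2; without that, the unbounded case remains unproved.
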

\begin{proof}
Since $\Cprec$ is a convex set from Lemma~\ref{lemm:vnm2},
by the Krein-\v{S}mulian theorem it suffices to show that 
$(\Cprec)_r$ is weakly$\ast$ closed for any $r \in (0,\infty) .$
We take an arbitrary net $(\psi_i)_\iin$ in $(\Cprec)_r $ weakly$\ast$ convergent to 
$\psi \in \Ac(S)^\ast$ and prove $\psi \in \Cprec .$ 
From Proposition~II.1.14 of \cite{alfsen1971compact}, 
by noting that $\braket{\psi_i  , 1_S} = 0 ,$
for each $\iin$ we can write as
\[
	\psi_i =\frac{ \| \psi_i \|}{2} ( \nu_i - \omega_i)
\]
for some $\omega_i , \nu_i \in S ,$ where we take as $\omega_i = \nu_i$
when $\psi_i =0 .$
Then by Lemma~\ref{lemm:vnm2}, $\omega_i \preceq \nu_i$
holds for all $\iin .$
Since $\| \psi_i \| \leq r$ for all $\iin , $
we can take a subnet $(\psi_\ikj)_\jin ,$
a real number $\la \in [0,r] ,$
and states $\omega , \nu \in S$
such that
\[
	\| \psi_{i(j)} \| \to \la ,
	\quad
	\omega_\ikj \to \omega ,
	\quad
	\nu_\ikj \to \nu .
\]
Then \[\psi = \frac{\la}{2} (\nu - \omega ) .\]
Furthermore from the continuity axiom we have $\omega \preceq \nu .$
Therefore $\psi \in \Cprec ,$
which completes the proof.
\end{proof}

\noindent
\textit{Proof of Theorem~\ref{thm:vnm}.}
The implication \eqref{i:ordA}$\implies$\eqref{i:vnm} is trivial.
We assume \eqref{i:vnm} and prove \eqref{i:ordA}.
Define
\[
	\mathcal{U}
	:=
	\set{f \in \Ac (S) | \braket{\psi , f} \geq 0 \, (\forall \psi \in \Cprec)} ,
\]
where $\Cprec$ is defined by \eqref{eq:Cprec}.
Then $\mathcal{U}$ is the dual cone of $\Cprec $ in the pair
$(\Ac (S)  , \Ac (S)^\ast) .$
Since $\Cprec$ is a weakly$\ast$ closed convex cone by Lemmas~\ref{lemm:vnm2}
and \ref{lemm:vnm3}, 
the bipolar theorem implies that 
\[
	\Cprec
	=\set{ \psi \in \Ac (S)^\ast |
	\braket{\psi , f} \geq 0 \, (\forall f \in \mathcal{U})
	} .
\]
Therefore from Lemma~\ref{lemm:vnm2}, for any $\omega , \nu \in S$ we have
\begin{align*}
	\omega \preceq \nu 
	& \iff
	\nu - \omega \in \Cprec
	\\
	&\iff
	\braket{\nu - \omega , f} \geq 0 \quad (\forall f \in \mathcal{U})
	\\
	& \iff
	\omega \preceq_\mathcal{U} \nu .
\end{align*}
Hence $\preceq$ coincides with $\preceq_\mathcal{U},$
which proves \eqref{i:ordA}.

Now we establish the remaining uniqueness part of the claim.
Take subsets $A, B \subset \Ac (S)$ such that $\ordA$ and $\ordB$
coincide.
Then by definition any $f \in A$ is monotonically increasing in $\ordA $
and hence in $\ordB .$
Thus by Theorem~\ref{thm:monoGen} we have 
$ f \in \ovl{\cone} (B \cup \{ \pm 1_S\})$ and therefore
$\ovl{\cone} (A \cup \{ \pm 1_S\}) \subset \ccone (B \cup \{ \pm 1_S\}) $ holds.
The converse inclusion can be shown similarly and hence we obtain 
$\ccone (A \cup \{ \pm 1_S\}) = \ccone (B \cup \{ \pm 1_S\}) .$
Conversely suppose that 
$\ccone (A \cup \{ \pm 1_S\}) = \ccone (B \cup \{ \pm 1_S\}) .$
Then since we can easily see that the orders 
$\preceq_{\ccone (A \cup \{ \pm 1_S\})}$
and 
$\preceq_{\ccone (B \cup \{ \pm 1_S\})}$
respectively coincide with $\ordA$ and $\ordB , $
the orders $\ordA$ and $\ordB$ coincide.
\qed

\section{Minimal sufficiency} \label{app:ms}
In this appendix, we summarize the facts on minimally sufficient \wstar-measurements 
(\cite{kuramochi2017minimal}; \cite{torgersen1991comparison}, Section~7.3)
needed in Section~\ref{sec:irr}.

A \wstar-measurement $\Gamma \in \Chw (F\to E)$ is called \textit{minimally sufficient}
if for any $\Psi \in \Chw (F \to F) ,$
$\Gamma \circ \Psi = \Gamma$ implies $\Psi = \id_F ,$
where $\id_S$ denotes the identity map on a set $S .$
It can be shown that every minimally sufficient \wstar-measurement 
$\Gamma \in \Chw (F\to E)$
is faithful, i.e.\
$\Gamma (a) = 0$ implies $a = 0$ for $a \in F_+ .$
Since a classical space $F$ with a predual is isomorphic to the
self-adjoint part of an abelian \Wstar-algebra,
the following proposition readily follows from \cite{kuramochi2017minimal}
(Corollary~2).
\begin{prop} \label{prop:msmeas}
Let $\Gamma \in \Chw (F\to E)$ be a \wstar-measurement.
Then there exists a minimally sufficient \wstar-measurement
$\Gamma_0 \in \Chw (F_0 \to E)$
post-processing equivalent to $\Gamma .$
Furthermore such a minimally sufficient \wstar-measurement is unique up to isomorphism
of the outcome space, 
i.e.\ if $\Gamma_1 \in \Chw (F_1 \to E)$ is a minimally sufficient \wstar-measurement
and $\Gamma \ppeq \Gamma_1,$
then there exists a weakly$\ast$ continuous isomorphism $\Phi \colon F_0 \to F_1$ such that
$\Gamma_0 = \Gamma_1 \circ \Phi .$
\end{prop}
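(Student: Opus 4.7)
My plan is to reduce the statement to the known minimal sufficiency theory for normal positive unital maps between \Wstar-algebras, specifically Corollary~2 of \cite{kuramochi2017minimal}, which is already cited by the paper. The translation is as follows: by the discussion in Section~\ref{subsec:classical}, every classical space $F$ with Banach predual $F_\ast$ is isomorphic, as an order unit Banach space with predual, to the self-adjoint part $\A_\sa$ of a unique abelian \Wstar-algebra $\A$; under this identification the \wstar-channels in $\Chw(F \to F^\prime)$ correspond bijectively to normal unital positive maps between the associated abelian \Wstar-algebras in the Heisenberg picture. Since positive maps with abelian domain are automatically completely positive, the post-processing preorder between \wstar-measurements here matches exactly the one used in \cite{kuramochi2017minimal}, and so does the notion of minimal sufficiency (the stabilizer condition $\Gamma \circ \Psi = \Gamma \implies \Psi = \id_F$). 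The proposition is then an immediate translation of Corollary~2 of that reference.

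The uniqueness portion admits a clean, self-contained argument and I would present it as follows. Let $\Gamma_0 \in \Chw(F_0 \to E)$ be a minimally sufficient representative produced by the existence part, and let $\Gamma_1 \in \Chw(F_1 \to E)$ be any other minimally sufficient \wstar-measurement with $\Gamma \ppeq \Gamma_1$. Then $\Gamma_0 \ppeq \Gamma_1$, and Proposition~\ref{prop:postw*} supplies weakly$\ast$ continuous channels $\Phi \in \Chw(F_0 \to F_1)$ and $\Xi \in \Chw(F_1 \to F_0)$ satisfying $\Gamma_0 = \Gamma_1 \circ \Phi$ and $\Gamma_1 = \Gamma_0 \circ \Xi$. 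Composing yields $\Gamma_0 \circ (\Xi \circ \Phi) = \Gamma_0$ and $\Gamma_1 \circ (\Phi \circ \Xi) = \Gamma_1$, so minimal sufficiency of $\Gamma_0$ and $\Gamma_1$ force $\Xi \circ \Phi = \id_{F_0}$ and $\Phi \circ \Xi = \id_{F_1}$. Hence $\Phi$ is a weakly$\ast$ continuous channel isomorphism with $\Gamma_0 = \Gamma_1 \circ \Phi$, as required.

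The main obstacle is the existence portion, which is where the real work of \cite{kuramochi2017minimal} lies. Internally, the strategy is to construct a minimal sufficient \Wstar-subalgebra $F_0 \subset F$ together with a weakly$\ast$ continuous conditional expectation $\condi \colon F \to F_0$ satisfying $\Gamma = \Gamma \circ \condi$; concretely, $F_0$ should be generated by the Radon-Nikodym derivatives of the family $\{\Gamma_\ast(\psi) : \psi \in E_{\ast +}\}$ relative to a fixed weakly$\ast$ continuous state dominating this family, localized using support projections (Proposition~\ref{prop:proj}) when no globally faithful dominating state exists. One then defines $\Gamma_0$ as the restriction of $\Gamma$ to $F_0$, verifies $\Gamma \ppeq \Gamma_0$ via $\condi$ (using Lemma~\ref{lemm:subalg} in spirit), and finally checks minimal sufficiency by showing any $\Psi_0 \in \Chw(F_0 \to F_0)$ fixing $\Gamma_0$ must be $\id_{F_0}$, because otherwise the subalgebra generated by the image of $\Psi_0$ would properly refine $F_0$ while still satisfying the dominating condition, contradicting minimality. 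The delicate steps are the existence of the conditional expectation and the well-definedness of the Radon-Nikodym construction in the possibly non-$\sigma$-finite setting, which is exactly the technical content carried out in \cite{kuramochi2017minimal}.
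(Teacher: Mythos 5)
Your proposal takes essentially the same route as the paper, which likewise observes that a classical space with a Banach predual is the self-adjoint part of an abelian \Wstar-algebra and then invokes Corollary~2 of \cite{kuramochi2017minimal}; your self-contained uniqueness argument via Proposition~\ref{prop:postw*} and the minimal-sufficiency cancellation is correct and spells out what the paper leaves to the reference. The only divergence is cosmetic: the construction the paper sketches for existence uses the fixed-point subalgebra of the stabilizer semigroup $\set{\Psi \in \Chw(F\to F) \mid \Gamma\circ\Psi = \Gamma}$ together with the mean ergodic theorem rather than Radon--Nikodym derivatives, but since both you and the paper defer the technical content to the cited corollary, this does not affect correctness.
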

Let us see how to construct such a minimally sufficient \wstar-measurement $\Gamma_0$
when $\Gamma$ is faithful.
Define $\F \subset \Chw (F\to F)$ 
and $F_0 \subset F$ by
\begin{gather*}
	\F :=
	\set{ \Psi \in \Chw (F \to F) | \Gamma \circ \Psi = \Gamma } ,
	\\
	F_0 :=
	\set{a \in F | \Psi (a)= a \, (\forall \Psi \in \F)} .
\end{gather*}
Then $F_0$ is a weakly$\ast$ closed unital subalgebra of $F$
and by the mean ergodic theorem~\cite{ISI:A1979HD47500016}
there exists weakly$\ast$ continuous conditional expectation (norm-$1$ projection)
$\condi $ from $F$ onto $F_0$ such that
$\condi \circ \Psi = \Psi \circ \condi = \condi$
$(\Psi \in \F)$
and 
$\Gamma \circ \condi = \Gamma .$
Then it can be shown that the restriction $\Gamma_0$
of $\Gamma $ to the subalgebra $F_0$ is 
a minimally sufficient \wstar-measurement and post-processing
equivalent to $\Gamma .$

The following statements can also be shown similarly as 
in the case of the quantum theory
\cite{Martens1990,10.1063/1.4934235,kuramochi2017minimal}.
For a finite-outcome EVM $\oM \in \evm (X;E) ,$
the associated \wstar-measurement 
$\GM \in \Ch ( \linf (X) \to E)$ 
is minimally sufficient if and only if
$\oM$ is pairwise linearly independent,
i.e.\ $(\oM (x ) , \oM (x^\prime))$ is linearly independent 
for any $x , x^\prime \in X$ with
$x \neq x^\prime .$
Every finite-outcome EVM $\oM$ is post-processing equivalent to 
a pairwise linearly independent EVM $\oM_0$ and such $\oM_0$
is unique up to the bijective permutation of outcome sets.

\end{document}